\theoremstyle{plain}
\theoremstyle{plain}
\newtheorem{theorem}{Theorem} [section]
\newtheorem{lemma}[theorem]{Lemma}
\newtheorem{proposition}[theorem]{Proposition}
\theoremstyle{definition}
\theoremstyle{remark}
\newtheorem{remark}[theorem]{Remark}
\DeclareMathOperator{\tr}{tr}
\DeclareMathOperator{\opRic}{Ric}
\numberwithin{theorem}{section}
\numberwithin{equation}{section}
\numberwithin{figure}{section}
\newcommand{\Rc}{\text{\rm Rc}}
\newcommand{\Sph}{\mathbb{S}}
\newcommand{\minn}{{m\in\mathbb{N}}}
\newcommand{\Cal}{\mathcal}
\newcommand{\opId}{\text{\rm{Id}}}
\newcommand{\opSupp}{\text{\rm{supp}}}
\newcommand{\opKer}{\text{\rm{Ker}}}
\newcommand{\opdVol}{dV}
\newcommand{\opcat}{\text{\rm cat}}
\newcommand{\opsgn}{\text{\rm sgn}}
\newcommand{\minter}{\cap}
\newcommand{\oprob}{\text{\rm{rob}}}
\newcommand{\opIm}{\text{\rm{Im}}}
\newcommand{\opSym}{\text{\rm{Symm}}}
\newcommand{\oploc}{\text{\rm{loc}}}
\newcommand{\opNull}{\text{\rm{Null}}}
\newcommand{\opCodim}{\text{\rm{Codim}}}
\newcommand{\opDim}{\text{\rm{Dim}}}
\newcommand{\opdisk}{\text{\rm{disk}}}
\def\makeop#1{\global\expandafter\def\csname op#1\endcsname{{\text{{\rm{#1}}}}}}%
\def\makeopsmall#1{\global\expandafter\def\csname op#1\endcsname{{\text{{\rm{\lowercase{#1}}}}}}}%
\def\munion{\mathop{\cup}}%
\def\minter{\mathop{\cap}}%
\def\ninn{{n\in\Bbb{N}}}%
\def\minn{{m\in\Bbb{N}}}%
\def\mliminf{\mathop{{\text{LimInf}}}}%
\newcommand{\tildeExp}{\text{\rm E}}%
\newcommand{\loc}{\text{\rm loc}}%
\begin{document}

\title[Free boundary minimal annuli in convex three-manifolds]{Free boundary minimal annuli in convex three-manifolds}
\author[Davi M{{a}}ximo]{Davi M{{a}}ximo}
\address{Department of Mathematics, Stanford University, Stanford, CA 94305 and Mathematical Sciences Research Institute, Berkeley, CA 94720}
\email{maximo@math.stanford.edu}
\author[Ivaldo Nunes]{Ivaldo Nunes}
\address{Universidade Federal do Maranh\~{a}o, S\~{a}o Lu\'{i}s, MA - Brazil}
\email{ivaldo82@impa.br}
\author[Graham Smith]{Graham Smith}
\address{Universidade Federal do Rio de Janeiro, Rio de Janeiro, RJ - Brazil}

\date{}
\begin{abstract}
We prove the existence of free boundary minimal annuli inside suitably convex subsets of three-dimensional Riemannian manifolds with nonnegative Ricci curvature $-$ including strictly convex domains of the Euclidean space $\mathbb{R}^3$.
\end{abstract}

\maketitle

%%%%%%%%%%%%%%%%%%%%%%%%%%%%%%%%%%%%%%%%%%%%%%%%
%%%%%%%%%%%%Introduction%%%%%%%%%%%%%%%%%%%%
%%%%%%%%%%%%%%%%%%%%%%%%%%%%%%%%%%%%%%%%%%%%%%
\section{Introduction}
\subsection{Definitions and results} %In this paper, we prove the existence of free boundary minimal annuli inside convex subsets of three-dimensional manifolds of nonnegative Ricci curvature. We begin with some definitions.
Let $M$ be a compact three-dimensional manifold with smooth boundary and $g$ a Riemannian metric over $M$. We say that a smooth compact surface $\Sigma$ in $M$ with $\partial\Sigma\subseteq\partial M$ is {\it free boundary minimal} with respect to the metric $g$ whenever it has zero mean curvature, and $T\Sigma$ is orthogonal to $T\partial M$ at every point of $\partial\Sigma$.

Free boundary minimal surfaces are precisely the critical points of the area functional for surfaces in $M$ with boundary in $\partial M$. These surfaces were already studied in the nineteenth century, notably with Schwarz's work on Gergonne's problem (c.f., for example, \cite{DHT}), and have since attracted the interest of numerous mathematicians, including Courant \cite{Cou}, Lewy \cite{Lew}, Meeks and Yau \cite{MY}, Smyth \cite{Smy}, Nitsche \cite{Ni1}, Ros \cite{Ros1}, and Fraser and Schoen \cite{FrSc1,FrSc2}, to name but a few.

The problem of existence of free boundary minimal disks in domains of $\mathbb{R}^3$ diffeomorphic to the three-ball was studied in the mid-eighties by Struwe \cite{Stru}, using the $\alpha$-pertubed method of Sacks-Uhlenbeck for parametric surfaces, and by Gr\"uter and Jost \cite{GrueterJost}, using several ingredients from geometric measure theory, including the min-max theory of Almgren-Pitts. In particular, Gr\"uter and Jost showed the existence of properly embedded free boundary minimal disks inside strictly convex subsets of $\Bbb{R}^3$. In both cases, the techniques used leave open the problem of existence of free boundary minimal surfaces of non-trivial prescribed topology. We prove existence for the case of annuli:
\begin{theorem}\label{babythm}
If $K\subseteq\Bbb{R}^3$ is a compact, strictly convex subset of $\Bbb{R}^3$ with smooth boundary, then there exists a properly embedded free boundary minimal annulus $\Sigma$ in $K$.
\end{theorem}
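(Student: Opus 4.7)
My plan is to produce $\Sigma$ via a min-max procedure for annular sweepouts of $K$ with free boundary, adapting the Almgren-Pitts type machinery used by Gr\"uter-Jost \cite{GrueterJost} in the disk case.

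Concretely, I would fix a chord $\gamma$ of $K$ and consider a continuous one-parameter family $\{\Sigma_t\}_{t\in[0,1]}$ of properly embedded annuli with $\partial\Sigma_t\subset\partial K$ and degenerate endpoints, arranged so that each non-degenerate slice links $\gamma$. A convenient concrete model is to take $\Sigma_t$ as the lateral boundary of a tubular neighborhood of $\gamma$ of growing radius, intersected with $K$. Let $\Pi$ denote the homotopy class of this sweepout through annular sweepouts of the same type, and define the min-max width
\[
W \;=\; \inf_{\{\Sigma_t\}\in\Pi}\;\sup_{t\in[0,1]}\opArea(\Sigma_t).
\]
Free-boundary min-max regularity theory should then yield a stationary integer rectifiable varifold $V$ realizing $W$, whose support is a smooth, properly embedded free-boundary minimal surface in $K$.

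The convexity of $K$ supplies all the necessary barriers. Strict convexity of $\partial K$ excludes interior tangencies of the limit with $\partial K$ via the strong maximum principle, and rules out boundary tangencies away from $\partial\Sigma$ via the Hopf boundary point lemma applied with the oblique free-boundary condition; consequently the limit lies properly in $K$ and meets $\partial K$ orthogonally along its boundary.

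The main obstacle, and where I expect the real work to lie, is to ensure that the min-max limit has the correct annular topology and comes with multiplicity one. A priori, $V$ could be a pair of disjoint free-boundary minimal disks (whose existence Gr\"uter-Jost already supplies), a single disk counted with multiplicity two, or an annulus with a hidden cover. To rule out the double-disk degeneration I would implement a free-boundary version of the catenoid estimate of Ketover-Marques-Neves: by splicing a suitably scaled catenoidal neck between two nearby sheets approximating a double disk, one can perturb any such sweepout into an honestly annular sweepout with strictly smaller maximum area. This forces $W$ to be strictly less than twice the area of any free-boundary minimal disk in $K$, excluding the double-disk limit. The remaining degenerations should then be excluded by combining (i) the topological linking with $\gamma$ built into $\Pi$, which gives a positive lower bound on $W$ and prevents the limit from collapsing, and (ii) a careful analysis of the catenoid neck splicing to control multiplicity. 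The successful execution of this last step is the crux of the argument, producing the claimed properly embedded free-boundary minimal annulus $\Sigma$.
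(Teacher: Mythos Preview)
Your approach is entirely different from the paper's. The paper does not use min-max at all: Theorem~\ref{babythm} is obtained as an immediate corollary of Theorem~\ref{ThmMainTheorem}, which is proven by constructing a $\mathbb{Z}$-valued mapping degree for the projection $\Pi:\mathcal{Z}(X)\to X$ (in the style of White~\cite{WhiteII}), and then computing this degree to be $\pm 2$ for the annulus by analysing the family of critical catenoids in nearly Euclidean spherical caps. The degree-theoretic route has the advantage that it simultaneously yields the more general Theorem~\ref{ThmMainTheorem} for functionally strictly convex domains in three-manifolds of nonnegative Ricci curvature, and that the topology of the resulting surface is built into the construction from the outset rather than recovered a posteriori.

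Your proposal, by contrast, is a min-max programme, and as written it has a genuine gap at precisely the point you yourself flag as ``the crux.'' You assert that a free-boundary version of the Ketover--Marques--Neves catenoid estimate would rule out the double-disk limit, but you do not supply this estimate, and it is not a result available in the literature you cite; developing it requires controlling the interaction of the catenoidal neck with the free-boundary condition on $\partial K$, which is a substantial piece of independent work. Even granting such an estimate, your argument does not rule out the remaining degenerations: a single free-boundary minimal disk with multiplicity one is not excluded by the inequality $W<2\cdot\opArea(\text{disk})$, and your appeal to ``linking with $\gamma$'' is too vague to exclude it, since the notion of an embedded surface with boundary in $\partial K$ linking a chord is not formulated precisely enough here to survive varifold limits. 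Nor do you address why the limit could not be a higher-genus free-boundary minimal surface, or an annulus with multiplicity. In short, the proposal identifies the right obstacles but does not overcome them; what you have written is an outline of a plausible research programme rather than a proof.
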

\begin{remark}
In fact, the techniques of this paper also recover the result \cite{GrueterJost} of Gr\"uter and Jost (c.f. Remark \ref{RmkFinalRemark}).
\end{remark}

We actually prove a more general existence result for free boundary minimal annuli inside suitably convex subsets of three-manifolds with nonnegative Ricci curvature, of which Theorem \ref{babythm} is an immediate consequence. Existence results for free boundary minimal surfaces in general Riemannian manifolds have appeared in the literature before. Recently, in \cite{Li}, using Almgren-Pitts' min-max theory, Li proved a general existence result for properly embedded free boundary minimal surfaces in arbitrary three-manifolds with boundary. This result assumes no curvature conditions on the boundary and, in addition, using recent ideas from \cite{DLP} of De Lellis and Pellandini, provides genus bounds for the resulting surfaces. In particular, whenever the ambient manifold is diffeomorphic to the three-ball, Li's result implies the existence of an oriented free boundary minimal surface of genus zero, but it gives no information on the number of connected components of the boundary. We refer the interested reader to the introduction of \cite{Li} for a discussion on other existence results for free boundary minimal surfaces. Our general result can be stated as follows:

\begin{theorem}\label{ThmMainTheorem}
If $(M,g)$ is a smooth, compact, functionally strictly convex Riemannian three-manifold of nonnegative Ricci curvature, then there exists a properly embedded annulus $\Sigma\subseteq M$ which is free boundary minimal with respect to $g$.
\end{theorem}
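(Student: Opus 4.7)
The plan is a min-max construction over annular sweepouts of $M$, in the spirit of the Gr\"uter--Jost construction for free-boundary disks, with explicit control on the topology of the limit. Fix the model annulus $A = S^1 \times [-1,1]$ and let $\mathcal{F}$ denote the space of continuous one-parameter families $\{\phi_t\}_{t\in[0,1]}$ of $W^{1,2}$ maps $\phi_t\colon A\to M$ with $\phi_t(\partial A)\subset \partial M$, and with the endpoints $\phi_0,\phi_1$ degenerate in a prescribed way (e.g., each boundary circle collapsed to a point on $\partial M$). Define the width
\[
W \;=\; \inf_{\{\phi_t\}\in\mathcal{F}}\;\sup_{t\in[0,1]}\opArea(\phi_t).
\]
Functional strict convexity supplies a strictly convex defining function on $M$; restricting it to sweepout slices gives a positive lower bound $W>0$, provided $\mathcal{F}$ has been chosen to record a nontrivial linking of the two boundary components.

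Next I would run a pull-tight / min-max scheme producing, from any $\mathcal{F}$-minimizing sequence, a properly embedded minimal surface $\Sigma$ with $\partial \Sigma \perp \partial M$ and $\opArea(\Sigma)=W$. This can be done either by adapting the Colding--De Lellis harmonic replacement framework (with free-boundary harmonic replacement on half-balls meeting $\partial M$) or by an $\alpha$-perturbed Sacks--Uhlenbeck energy following Struwe's treatment of free-boundary disks. Regularity up to $\partial M$ is ensured by the strict convexity of $\partial M$, which acts as a barrier and rules out boundary branch points.

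The decisive geometric step is the strict energy gap
\[
W \;<\; 2D,
\]
where $D$ is the least area of a free-boundary minimal disk in $M$, which exists by Gr\"uter--Jost. Such a gap prevents the min-max limit from degenerating into two disjoint disks joined by a pinched neck. I would establish it by producing an explicit competitor in $\mathcal{F}$: glue two near-optimal free-boundary disks with a thin catenoidal neck, and then open and collapse this neck along a one-parameter family of annuli. The strict saving $<2D$ comes from the strict concavity of the catenoid's area functional in its waist radius, which is stable under small perturbations of the ambient metric.

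The main obstacle will be topological: ruling out that $\Sigma$ is a disk counted with multiplicity, a pair of disjoint disks, or a surface of higher genus. Multiplicity is excluded because the sweepouts in $\mathcal{F}$ carry a nontrivial class in $H_1(A;\mathbb{Z})$ which is preserved under pull-tight. Disconnection into two disjoint disks is excluded by a Frankel-type argument that exploits $\opRic\geq 0$ together with functional strict convexity: joining two hypothetical components by a shortest geodesic $\gamma$ orthogonal to both, the second-variation inequality for $L(\gamma)$, combined with minimality and the convex defining function restricted to $\gamma$, yields a contradiction. Higher genus is ruled out by a genus-upper-bound for min-max limits, following the De Lellis--Pellandini ideas invoked in \cite{Li}, since the slices of our sweepouts have genus zero. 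Together these constraints force $\Sigma$ to be a properly embedded annulus realizing $W$.
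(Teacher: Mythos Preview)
Your approach is entirely different from the paper's: the paper does not use min-max at all, but rather a degree-theoretic argument in the style of White. It shows that the projection $\Pi$ from the moduli space of free-boundary minimal annuli to the space of admissible metrics has $\mathbb{Z}$-valued degree $\pm 2$, by computing it explicitly at the round metric on a ball (where the critical catenoids form a non-degenerate $\mathbb{RP}^2$-family) and invoking a symmetry argument of White to show all other contributions cancel. The passage to nonnegative Ricci is then a limiting argument using Fraser--Li compactness.

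Your proposal has a genuine gap at the very first step: the sweepout class $\mathcal{F}$ is not well-defined. Functional strict convexity forces $M$ to be diffeomorphic to a ball, hence simply connected with $\partial M\cong S^2$. In such a space there is no evident homotopy or homology invariant that distinguishes a one-parameter family of annuli with degenerate endpoints from a trivial family; your appeal to ``linking of the two boundary components'' has no content when every curve on $\partial M$ bounds a disk in $M$. Without a topologically nontrivial class, there is no mechanism forcing $W>0$, and the whole scheme collapses.

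Even granting a workable sweepout, two further steps fail as written. Your multiplicity argument---that a class in $H_1(A;\mathbb{Z})$ is ``preserved under pull-tight''---does not exclude the min-max varifold being a disk counted with multiplicity two; multiplicity control in free-boundary min-max is a deep issue not resolved by domain homology. And the gluing argument for $W<2D$ is self-defeating: the very Frankel property you invoke (valid under $\opRic\geq 0$ with strictly convex boundary, by Fraser--Li) says any two free-boundary minimal surfaces in $M$ must intersect, so there are no two disjoint near-optimal disks to glue by a neck. If instead you double a single disk and attach a neck, you must explain how this produces an admissible annular sweepout and why the neck's area cost does not swamp the concavity saving---neither of which is addressed.
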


We clarify the notion of convexity used here. $(M,g)$ is said to be {\it functionally strictly convex} whenever there exists a smooth function $f:M\rightarrow[0,1]$ which is strictly convex with respect to the metric $g$ and whose restriction to $\partial M$ is constant and equal to $1$ (recall that $f$ is said to be {\it strictly convex} with respect to a given metric whenever its Hessian is everywhere positive definite). Functional strict convexity may be thought of as a barrier condition in the sense of PDEs. In addition, if $M$ is an open subset of $\mathbb{R}^3$ with smooth boundary, and if $\delta$ is the Euclidean metric over $\mathbb{R}^3$, then $(M,\delta)$ is functionally strictly convex if and only if it is strictly convex in the usual sense. It follows that Theorem \ref{babythm} is an immediate consequence of Theorem \ref{ThmMainTheorem}.

In more general manifolds, functional strict convexity trivially implies strict convexity in the usual sense although the converse does not in general hold. The interest of this concept follows from the observation (c.f. Proposition \ref{ThmConnectedness}, below) that the space of functionally strictly convex manifolds is connected, which is a necessary prerequisite for the degree theoretic techniques of this paper to be applied. Although other connected spaces of manifolds with locally strictly convex boundary can be constructed (using, for example, \cite{Huisken}), we feel the condition of functional strict convexity is the simplest.

\subsection{Idea of the proof} Theorem \ref{ThmMainTheorem} is proven using a differential topological technique inspired by the work \cite{WhiteII} of White. We reason as follows. Let $\Sigma$ be a compact oriented surface with boundary. Let $\mathcal{E}$ be the space of equivalence classes $[e]$ of embeddings $e:\Sigma\rightarrow M$ modulo reparametrisation. Let $(g_x)_{x\in X}$ be a smooth family of Riemannian metrics with {\it positive} Ricci curvature parametrised by a compact, connected, finite-dimensional manifold $X$ (possibly with non-trivial boundary). Let $\mathcal{Z}(X)\subseteq X\times\mathcal{E}$ to be the set of all pairs $(x,[e])$ such that $e$ is free boundary minimal with respect to $g_x$, and let $\Pi:\mathcal{Z}(X)\rightarrow X$ be the projection onto the first factor. $\Pi$ is trivially continuous, and, by the compactness result of \cite{FraserLi}, $\Pi$ is proper.

If $\Cal{Z}(X)$ were a finite-dimensional differential manifold with the same dimension as $X$ and if, moreover, $\Pi$ were to map $\partial\Cal{Z}(X)$ into $\partial X$, then it would follow from classical differential topology (c.f.~\cite{GuillemanPollack}) that $\Pi$ would have a well-defined ${\mathbb Z}_2$-valued mapping degree. If, in addition, both $X$ and $\Cal{Z}(X)$ were shown to be orientable, then this degree could be taken to be integer-valued. Furthermore, this mapping degree would be independent of $X$, and since knowing $\Pi^{-1}(Y)$ for any subset $Y$ of $X$ amounts to knowing the space of free boundary minimal embeddings for any given metric, it would then yield the sort of existence result that we require. We show that, although $\Cal{Z}(X)$ might not necessarily have the aforementioned properties, $X$ may be embedded into a higher dimensional manifold $\tilde{X}$ for which these properties do indeed hold. The proof of Theorem \ref{ThmMainTheorem} for metrics with positive Ricci curvature follows by showing this degree to be non-zero when $\Sigma$ is topologically an annulus. From it, by a perturbative analysis, we finish the proof to include metrics with nonnegative Ricci curvature.

\subsection{Overview of the paper} The reader familiar with the work \cite{WhiteII} of White will notice both similarities and differences to his approach. The key observation in the current setting is that the Jacobi operator $\text{\rm J}:=(\text{\rm J}^h,\text{\rm J}^\theta)$, which measures the perturbations of the mean curvature and of the boundary angle resulting from a normal perturbation of the embedding, actually defines a {\it Fredholm mapping of Fredholm index zero} (Proposition \ref{PropEllipticRegularityOfJacobiOperatorII}). This brings free boundary problems within the scope of White's analysis with minimal technical modifications. We have nonetheless chosen to further adapt White's ideas in two respects, which, although not strictly necessary in the current context, will be of use, we believe, for future applications. First, we have chosen a non-variational approach, treating free boundary minimal surfaces as zeroes vector fields over infinite-dimensional manifolds rather than as critical points of functionals. This allows one to study not only free boundary minimal surfaces (which are variational), but also other, non-variational, notions of curvature such as, for example, extrinsic curvature. Second, wheras White studies the problem by constructing infinite dimensional Banach manifolds of solutions, we focus instead on finite dimensional sections of the solution space. This allows one to treat a larger class of functionals over the solution space (such as, for example, the weakly smooth functionals introduced by the third author in \cite{SmiRos}). Finally, the explicit calculation of the degree carried out in Section $6$ requires considerable modifications of White's argument in order to adapt it to the very different geometrical setting studied here.

The paper is structured as follows. We underline that we have preferred to sacrifice brevity in the interests of clarity and of obtaining a relatively self-contained text.

\subsubsection{Section 2} We construct the framework to be used throughout the paper. We introduce the space $\mathcal{E}$ of reparametrisation equivalence classes of embeddings, $e$, of a given surface, $\Sigma$, into $M$ such that $e(\partial\Sigma)\subseteq\partial M$. For any finite dimensional family, $X:=(g_x)_{x\in X}$, of metrics, we define the solution space $\mathcal{Z}(X)$ as outlined above, and we define $\Pi:\mathcal{Z}(X)\longrightarrow X$ to be the projection onto the first factor. At this stage, we are only interested in $\mathcal{E}$ and $\mathcal{Z}(X)$ as topological spaces with the obvious topologies: more sophisticated structures will be introduced in Section $3$. It follows that $\Pi$ is continuous and, by recent work of Fraser and Li \cite{FraserLi}, $\Pi$ is also proper. The formal construction of a $\mathbb{Z}$-valued mapping degree of $\Pi$ and its explicit calculation in certain cases constitute the main aims of this paper.

The remainder of Section $2$ is devoted to studying the infinitesimal theory of extremal embeddings. In Section \ref{JacobiOperators}, we calculate the Jacobi operator $\text{\rm J}:=(\text{\rm J}^h,\text{\rm J}^\theta)$ of an embedding, where $\text{\rm J}^h$ is the usual Jacobi operator of mean curvature, and $\text{\rm J}^\theta$ measures the perturbation of the boundary angle arising from a normal perturbation of the embedding. In Section \ref{PerturbationOperators}, we calculate the perturbation operator $\text{\rm P}:=(\text{\rm P}^h,\text{\rm P}^\theta)$ of an embedding, which measures the perturbations of mean curvature and of the boundary angle arising from perturbations of the ambient metric. In Section \ref{SubHeadHoelderSpaces} we review the general theory of elliptic operators, and in Section \ref{TheEllipticTheoryOfJacobiOperators} we show that $\text{\rm J}$ defines a Fredholm mapping of Fredholm index zero. As indicated above, this key observation allows us to extend the degree theory of \cite{WhiteII} to the current context with minimal technical difficulty.

\subsubsection{Section 3} We introduce the local theory of extremal embeddings. In Section \ref{TheLocalSolutionSpace} we introduce ``graph charts'' which map open subsets of $\mathcal{E}$ homeomorphically onto open subsets of $C^\infty(\Sigma)$. Viewing these charts as coordinate charts, we treat $\mathcal{E}$ formally as an infinite dimensional manifold. Within a given graph chart, we define the mean curvature and boundary angle functionals, $H$ and $\Theta$ respectively. The zero-set of the pair $(H,\Theta)$ coincides over each chart with the solution space $\mathcal{Z}(X)$. This makes $\Cal{Z}(X)$ amenable to standard functional analytic techniques. In Section \ref{BanachSpaces}, we review the theories of H\"older spaces and of smooth maps over Banach spaces. In Section \ref{Conjugations}, we study the relationship between the functionals $H$ and $\Theta$ and the perturbation and Jacobi operators introduced in Sections \ref{JacobiOperators} and \ref{PerturbationOperators}.

It is important to note the care required in carrying out this construction as, in contrast to the usual theory of differential manifolds, the transition maps between graph charts are not smooth. Fortunately, this does not present a serious problem in the current context, since it follows from elliptic regularity, as we shall see in Section $4$, that the restrictions of the transition maps to the solution space are indeed smooth, justifying the differential manifold formalism used.

\subsubsection{Section 4} We show how to extend $X$ so that $\Cal{Z}(X)$ carries the structure of a smooth compact oriented finite-dimensional differential manifold, possibly with boundary. In Section \ref{SubheadExtensionsAndSurjectivity}, we extend $X$ so that the functional $(H,\Theta)$ defined over each chart in Section \ref{TheLocalSolutionSpace} has surjective derivative at every point of $\mathcal{Z}(X)$. In Section \ref{Smoothness}, we use ellipticity together with the standard theory of smooth functionals over Banach spaces to show that $\mathcal{Z}(X)$ then restricts to a smooth, finite-dimensional submanifold of every graph chart and that the transition maps are smooth, thus furnishing $\mathcal{Z}(X)$ with the structure of a finite dimensional differential manifold. Finally, in Section \ref{TheOrientationOfTheSolutionSpace}, we recall general results of functional analysis which allow us to furnish $\mathcal{Z}(X)$ with a canonical orientation form, from which it immediately follows that $\Pi$ has a well-defined, integer-valued mapping degree, as desired.

\subsubsection{Section 5} In order to calculate the mapping degree of $\Pi$, we should count algebraically the number of extremal embeddings for some generic, admissable metric $g$. The problem is that generic metrics are hard to find explicitely. In particular, in the case at hand, the natural candidate, being the Euclidean metric in a closed ball, is clearly not generic. Indeed, generic metrics are characterised by having finitely many extremal embeddings all of which are non-degenerate, but in the Euclidean case, the action of the rotation group yields a non-trivial continuum of extremal embeddings out of every extremal embedding.

In this section, we study the technique used to calculate the degree in the case where the metric $g$ admits non-degenerate families of free boundary minimal embeddings. These are smooth families with the property that the Jacobi operator of each element of the family has kernel of dimension equal to that of the family itself. In Section \ref{NonDegenerateFamilies}, we show that if $[e]$ lies in a non-degenerate family, then for any infinitesimal perturbation $\delta g$ of the metric, there exists a (more or less) unique infinitesimal perturbation $\delta e$ of $e$ such that the mean curvature of $e+\delta e$ lies in a fixed, finite-dimensional space which we identify with the cotangent space of the family at $[e]$. In Section \ref{GlobalSectionsOverNonDegenerateFamilies}, by perturbing the whole family we therefore obtain a smooth section of the cotangent bundle of this family whose zeroes correspond to free boundary minimal embeddings for the perturbed metric. In Sections \ref{NonDegenerateSections} and \ref{DeterminingTheIndex}, we show moreover how to choose the metric perturbation in such a manner that this section has non-degenerate zeroes, which in turn correspond to free boundary minimal embeddings with non-degenerate Jacobi operators. In short, upon perturbing the metric, we transform a non-degenerate family into a finite set of non-degenerate free boundary minimal embeddings corresponding to the zeroes of a generic section of the cotangent space of this family thus allowing us to determine its contribution to the degree.

\subsubsection{Section 6} We apply the degree theory to the current setting in order to prove Theorem \ref{ThmMainTheorem}. Since, for topological reasons, the theory is developed for metrics of positive Ricci curvature, in Section \ref{RotationallyInvariantExtremalSurfaces} we use perturbation techniques to study rotationally symmetric free boundary minimal surfaces inside closed, strictly convex, geodesic balls in the three-dimensional sphere $\Sph^3(t)$. In Sections \ref{SectionNonDegenerateFamiliesOfDisks} and \ref{NonDegenerateFamiliesOfCatenoids}, by determining the dimensions of the kernels of the Jacobi operators of rotationally symmetric surfaces, we show that they define non-degenerate families of free boundary minimal embeddings, so that the results of Section $5$ may be applied in order to calculate their contribution to the mapping degree. In Section \ref{CalculatingTheDegree}, we adapt White's symmetry argument (c.f. \cite{WhiteII}) to the current context, showing that even though there may exist other extremal embeddings, their contribution to the mapping degree is zero. Finally, combining these results yields the mapping degree and the proof Theorem \ref{ThmMainTheorem}.

\subsection{Acknowledgements} The authors would like to thank Fernando Cod\'{a} Marques, Harold Rosenberg and Richard Schoen for interesting and useful discussions.  During the preparation of this work, the first author was partly supported by the National Science Foundation grant DMS-0932078, while in residence at the Mathematical Sciences Research Institute during the Fall of 2013, the second author was partly supported by a CNPq post-doctoral fellowship at the IMPA, Rio de Janeiro, and the third author was partly supported by a FAPERJ post-doctoral fellowship also at the IMPA, Rio de Janeiro.

%%%%%%%%%%%%%%%%%%%%%%%%%%%%%%%%%%%%%%%%%
% THE GLOBAL AND INFINITESIMAL THEORIES %
%%%%%%%%%%%%%%%%%%%%%%%%%%%%%%%%%%%%%%%%%

\section{The Global and Infinitesimal Theories}\label{TheGlobalAndInfinitesimalTheory}
\subsection{The solution space} Let $M$ be a compact  three-manifold with boundary and let $\Sigma$ be a compact surface with boundary. Throughout the sequel, we will assume that all manifolds are smooth and oriented. We denote by $\hat{\mathcal{E}}$ the space of all  proper embeddings $e:\Sigma\rightarrow M$ with the properties that $e(\partial \Sigma)\subseteq\partial M$ and $e(\partial\Sigma)=e(\Sigma)\cap\partial M$. We furnish this space with the topology of $C^\infty$ convergence. We say that two embeddings $e,e'\in\hat{\mathcal{E}}$ are {\it equivalent} whenever there exists an orientation-preserving diffeomorphism $\alpha:\Sigma\rightarrow \Sigma$ such that $e'=e\circ\alpha$. We denote by $\mathcal{E}$ the space of equivalence classes $[e]$ of elements $e$ of $\hat{\mathcal{E}}$ furnished with the quotient topology.

A metric $g$ over $M$ is said to be {\it admissable} whenever it has positive Ricci curvature and there exists a smooth function $f:M\rightarrow[0,1]$ which is strictly convex with respect to $g$ and whose restriction to $\partial M$ is constant and equal to $1$.  The following observation is key to developing a degree theory for free boundary minimal surfaces:
\begin{proposition}\label{ThmConnectedness}
The space of admissable metrics over $M$ is connected.
\end{proposition}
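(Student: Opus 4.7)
The proposition asserts that the space of admissible metrics on $M$---those with positive Ricci curvature and admitting a strictly convex $f: M \to [0,1]$ with $f|_{\partial M} = 1$---is connected. My plan is to show it is in fact path connected by reducing every admissible metric to a canonical model on the three-ball via an explicit deformation.

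First, I would observe that the existence of such an $f$ forces $M \cong D^{3}$. If $f$ attained its minimum on $\partial M$, then $f \equiv 1$ would contradict $\opHess(f) > 0$; hence the minimum lies in the interior and strict convexity forces it to be a unique, nondegenerate critical point. Since $\partial M = f^{-1}(1)$ is a regular level set of a Morse function with a single interior critical point of index $0$, standard Morse theory identifies $M$ with $D^{3}$. Accordingly, fix once and for all a model admissible pair: let $g_{*}$ be the round metric restricted to a small geodesic ball $B_{r_{0}}(p) \subset \mathbb{S}^{3}$ with $r_{0} < \pi/2$, and let $f_{*}(x) = (1 - \cos d(x,p))/(1 - \cos r_{0})$. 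A direct calculation shows $\opHess_{g_{*}}(f_{*}) > 0$, and $\opRic(g_{\mathbb{S}^{3}}) > 0$, so $(g_{*}, f_{*})$ is admissible.

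Given an arbitrary admissible $(g, f)$, I would construct a continuous path from $g$ to $g_{*}$ in two phases. Phase A (reparametrisation): after identifying $M$ with $D^{3}$, use an isotopy $\Phi_{s}$ of diffeomorphisms of $D^{3}$, built by a Moser-type argument applied to the Morse functions $f$ and $f_{*}$, so that $\Phi_{0} = \opId$ and $f \circ \Phi_{1} = f_{*}$. The family $\Phi_{s}^{*} g$ is then a continuous path of admissible metrics, since admissibility is preserved under pullback by diffeomorphisms, ending at a metric $\Phi_{1}^{*} g$ that shares the convex scaffold $f_{*}$. Phase B (metric interpolation): the metrics $\Phi_{1}^{*} g$ and $g_{*}$ are both admissible with the same convex function $f_{*}$, and I would connect them either by a convex combination $g_{s} = (1-s)\Phi_{1}^{*} g + s g_{*}$ or by a warped-product interpolation adapted to the level-set foliation of $f_{*}$.

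The principal obstacle is that positive Ricci curvature is not a convex condition on the space of metrics, so preserving $\opRic(g_{s}) > 0$ throughout Phase B is delicate. To handle this, I would exploit the $C^{2}$-openness of admissibility: after a preliminary conformal normalisation bringing $\Phi_{1}^{*} g$ into a $C^{2}$-neighbourhood of $g_{*}$ within the open cone of admissible metrics, a linear interpolation remains admissible throughout. Alternatively, decompose both metrics in Fermi coordinates adapted to $f_{*}$ and interpolate the induced metrics on the level sets, translating $\opRic > 0$ into explicit inequalities on the warping functions that can be preserved by continuous interpolation. Verifying that both $\opRic(g_{s}) > 0$ and $\opHess_{g_{s}}(f_{*}) > 0$ hold simultaneously along the chosen path is where I expect the main technical work to lie.
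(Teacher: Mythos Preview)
Your proposal correctly identifies the topological setup and the overall architecture (deform every admissible metric to a fixed model), but Phase~B has a genuine gap that your own concluding paragraph essentially concedes. You write that you would bring $\Phi_{1}^{*}g$ into a $C^{2}$-neighbourhood of $g_{*}$ by a ``preliminary conformal normalisation'', but this cannot work: $\Phi_{1}^{*}g$ and $g_{*}$ need not be conformally equivalent, so no conformal change will make one $C^{2}$-close to the other. The alternative warped-product scheme is left as a hope rather than an argument, and there is no reason to expect the relevant curvature inequalities to be preserved under naive interpolation of warping data. Since positive Ricci curvature is not a convex condition on metrics, the linear interpolation $g_{s}$ may genuinely leave the admissible set, and nothing in your proposal prevents this.

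The paper closes this gap with a different idea: rather than interpolating between two metrics of comparable scale, it \emph{shrinks the domain} to force $C^{2}$-closeness. Concretely, one flows $M$ by the (renormalised) gradient of $f$ onto arbitrarily small sublevel sets $M_{\epsilon}=f^{-1}([0,\epsilon])$ around the minimum $p_{0}$, then passes to a geodesic ball and identifies with the tangent space via the exponential map. Pulling back and rescaling via $g_{t}(x)=g(tx)$ gives a path of admissible metrics converging in $C^{\infty}$ to the flat metric $g_{0}$ on the unit ball. The only remaining issue is that $g_{0}$ has zero (not positive) Ricci curvature; this is repaired by a small conformal factor $e^{-2s\|x\|^{2}}$, using the first-variation formula for Ricci under conformal change. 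The zoom-in step is the mechanism that produces the $C^{2}$-closeness you need but could not obtain; once you have it, the interpolation is trivial.
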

\begin{proof} Let $g$ be an admissable metric over $M$. %Observe that since $\partial M$ is locally strictly convex, the shortest curve in $M$ joining any two interior points is a geodesic which does not touch the boundary.
Since $f$ is strictly convex, it then follows that $f$ has a unique global minimum in $M$. Let $p_0\in M$ be this minimum, and without loss of generality, assume that $f(x_0)=0$. For all $\epsilon>0$, we define $M_\epsilon=f^{-1}([0,\epsilon])$. Let $\chi\in C_0^\infty(M)$ be a nonnegative function equal to $1$ near $p_0$, and let $(\Phi_t)_{t\in[0,\infty[}$ be the gradient flow of the vector field $X$ by $X=-(1-\chi)\nabla f/\|\nabla f\|^2$. Upon choosing the support of $\chi$ in a sufficiently small neighbourhood of $p_0$, $\Phi_{1-\epsilon}$ maps $M$ diffeomorphically into $M_\epsilon$. In other words, $(M,g)$ lies in the same connected component as $(M_\epsilon,g)$ for all $\epsilon>0$.

Let $d:M\rightarrow\Bbb{R}$ be the distance in $M$ to $p_0$. Let $r>0$ be such that the closure of $B_r(p_0)$ is contained in the interior of $M$ and $d^2$ is strictly convex over this ball. For all $t\in[0,1]$, we define $f_t = (1-t)f + td^2$. If $\epsilon\in]0,r^2[$ is chosen such that $f(x)>\epsilon$ for all $x\in\partial B_r(p_0)$. Then, for all $t$, $f_t^{-1}([0,\epsilon])$ is a strictly convex subset of $M$ contained inside $B_r(p_0)$. Now let $(\Phi_t)_{t\in[0,1]}$ be the gradient flow of the vector field $X_t = -(1-\chi)(\partial_tf_t)\nabla f_t/\|\nabla f_t\|^2$. Again, upon choosing the support of $\chi$ in a sufficiently small neighbourhood of $p_0$, $\Phi_t$ maps $M_\epsilon$ diffeomorphically into $f_t^{-1}([0,\epsilon])$. In other words, $(M,g)$ lies in the same connected component as $(B_{\sqrt{\epsilon}}(p_0), g)$. Moreover, upon rescaling $g$, we may suppose that $\epsilon=1$.

We now identify the tangent space to $M$ at $p_0$ with $\Bbb{R}^3$. Upon pulling back through the exponential map, we view $g$ as a metric over $\Bbb{R}^3$. For $t\in[0,1]$, we define the metric $g_t$ by $g_t(x)=g(tx)$. Observe that $g_0$ is the Euclidean metric. Without loss of generality, we may suppose that the metric $g_t$ is sufficiently close to the Euclidean metric that the function $h:=\|x\|^2$ is strictly convex with respect to this metric. Denote ${g}_{t,s}:=e^{-2sh} g_t$ and let $\Rc^{t,s}$ be the Ricci-curvature tensor of this metric. Then:
$$
\left.\frac{\partial}{\partial_s}\right|_{\tiny s=0}\hspace{-15pt }{\Rc}^{t,s} = (n-2)\opHess\, h + \Delta h g_t.
$$
Since $h$ is strictly convex with respect to $g_t$, and since $g_t$ has positive Ricci curvature for $t>0$, there exists $\epsilon>0$ such that for all $(t,s)\in[0,1]\times[0,\epsilon]$ such that $(t,s)\neq (0,0)$, $g_{t,s}$ also has positive Ricci curvature. In particular, $(M,g)$ lies in the same connected component as $(B_1(0),g_{0,s})$, for all small $s>0$, and the space of admissable metrics is therefore connected, as desired.\end{proof}

With $X$ a compact, finite-dimensional manifold possibly with non-trivial boundary, let $g:X\times M\rightarrow\text{\rm{Sym}}^+\,TM$ be a smooth function with the property that $g_x:=g(x,\cdot)$ is an admissable metric for all $x\in X$. We henceforth refer to the pair $(X,g)$ simply by $X$. We define $\mathcal{Z}(X)\subseteq X\times\mathcal{E}$ to be the set of all pairs $(x,[e])$ such that $e$ is a free boundary minimal embedding with respect to the metric $g_x$.  We describe $\Cal{Z}(X)$ as the zero set of a functional. Indeed, for $(x,[e])\in X\times\mathcal{E}$ we denote by $N:\Sigma\longrightarrow TM$ the unit  normal vector field over $e$ with respect to $g_x$ which is compatible with the orientation  and we denote by $A:\Sigma\longrightarrow\text{\rm{End}}(T\Sigma)$ and $H:\Sigma\longrightarrow\Bbb{R}$ the corresponding {\it shape operator} and {\it mean curvature} respectively. That is, at each point $p\in \Sigma$:
$$
H =\tr A.
$$

\noindent We denote by $\nu$ the outward-pointing unit normal vector field over $\partial M$ with respect to $g_x$, and we denote by $\Theta:\partial \Sigma\longrightarrow\mathbb{R}$ the {\it boundary angle} that $e(\Sigma)$ makes with $\partial M$ with respect to this metric. That is,  at each $p\in\partial \Sigma$:
$$
\Theta = g\left(\nu,N\right).
$$
\begin{remark} The geometric quantities we have just defined depend on $(x,e)$. To avoid confusion, we often explicit this dependence in our notation by writing $N_{x,e}, H_{x,e}, \Theta_{x,e},$ etc.
\end{remark}

\noindent We define the {\it solution space} $\mathcal{Z}(X)\subseteq X\times\Cal{E}$ by:
$$
\mathcal{Z}(X) = \left\{ (x,[e])\in X\times\Cal{E}\ |\ H_{x,e}=0,\ \Theta_{x,e}=0\right\},
$$
and we define $\Pi:\mathcal{Z}(X)\rightarrow X$ to be the projection onto the first factor. Since both $H_{x,e}$ and $\Theta_{x,e}$ are equivariant under reparametrisation, this definition is consistent.

The main objective of this paper is to construct a ${\mathbb Z}$-valued mapping degree for the projection $\Pi$. A key element of this construction is the following compactness result:
\begin{theorem}[Fraser-Li \cite{FraserLi}]\label{ThmFraserLi}
Let $(g_m)_\minn$ be a sequence of metrics over $M$ of nonnegative Ricci curvature. Let $(e_m)_\minn:\Sigma\longrightarrow M$ be a sequence of embeddings such that, for all $m$, $e_m$ is a free boundary minimal embedding with respect to the metric $g_m$. If there exists a metric $g_\infty$ over $M$ towards which $(g_m)_\minn$ converges in the $C^\infty$ sense, and if $\partial M$ is strictly convex with respect to $g_\infty$, then there exists an embedding $e_\infty:\Sigma\rightarrow M$ and a sequence $(\alpha_m)_\minn:\Sigma\rightarrow\Sigma$ of diffeomorphisms of $\Sigma$ such that $(e_m\circ\alpha_m)_\minn$ subconverges towards $e_\infty$ in the $C^\infty$ sense. In particular, $e_\infty$ is a free boundary minimal embedding with respect to the metric $g_\infty$.
\end{theorem}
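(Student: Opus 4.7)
The plan is to establish uniform estimates on the areas and second fundamental forms of $\Sigma_m:=e_m(\Sigma)$, then extract a smooth subsequential limit via standard submanifold convergence, and finally produce the required reparametrizations.

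\emph{Area bound.} Since $\partial M$ is strictly convex with respect to $g_\infty$ and $(g_m)_{\minn}$ converges smoothly, for $m$ large one may construct a smooth function $f_m:M\to\mathbb{R}$ which is uniformly strictly convex with respect to $g_m$ and constant on $\partial M$. On the free boundary minimal surface $\Sigma_m$ the mean curvature vanishes, so $\Delta_{\Sigma_m}f_m=\mathrm{tr}_{\Sigma_m}\mathrm{Hess}_{g_m}(f_m)\geq c>0$. Because $f_m$ is constant on $\partial M$ and $\Sigma_m$ meets $\partial M$ orthogonally, the outward conormal to $\Sigma_m$ along $\partial\Sigma_m$ is parallel to $\nabla f_m$ there, so Stokes' theorem yields $c\,\mathrm{Area}(\Sigma_m)\leq C\,\mathrm{Length}(\partial\Sigma_m)$. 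A uniform bound on the boundary length in turn follows from strict convexity of $\partial M$ together with the free boundary condition and the fixed topology of $\Sigma$.

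\emph{Curvature bound.} The chief obstacle is a uniform pointwise bound on $|A_m|$, without which curvature may concentrate at interior or boundary points. One approaches this via a blow-up argument of Choi--Schoen type adapted to the boundary: assuming $\sup_{\Sigma_m}|A_m|\to\infty$, choose points $p_m\in\Sigma_m$ of near-maximal curvature via a standard point-picking lemma and rescale. If $p_m$ remains at uniformly positive distance from $\partial M$ at the curvature scale, one obtains in the limit a complete, non-flat, embedded minimal surface in $\mathbb{R}^3$ of bounded curvature and controlled area; Bernstein-type results exclude such a limit, using the nonnegative Ricci curvature of the ambient metrics to guarantee the requisite total curvature bounds. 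If instead $p_m$ approaches $\partial M$ at the curvature scale, the limit is a complete, non-flat free boundary minimal surface in a Euclidean half-space meeting the flat boundary orthogonally, the strict convexity of $\partial M$ ensuring that the limiting half-space boundary is totally geodesic; doubling across this hyperplane by Schwarz reflection yields a complete embedded non-flat minimal surface in $\mathbb{R}^3$, reducing to the previous case.

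\emph{Extraction and parametrization.} With uniform area and curvature bounds in hand, classical compactness for smoothly embedded submanifolds with boundary yields a subsequence such that $\Sigma_m$ converges, as a set, in the $C^{1,\alpha}$ topology to a smooth embedded surface $\Sigma_\infty\subseteq M$ with $\partial\Sigma_\infty\subseteq\partial M$, which is free boundary minimal with respect to $g_\infty$. For large $m$, $\Sigma_m$ is the normal graph over $\Sigma_\infty$ (with respect to $g_\infty$) of a function $u_m$ satisfying an elliptic system with an oblique-derivative boundary condition whose coefficients converge smoothly as $g_m\to g_\infty$; elliptic regularity up to the boundary bootstraps the convergence to $C^\infty$. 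Fix any smooth embedding $e_\infty:\Sigma\to\Sigma_\infty\subseteq M$; using the graph structure one adjusts the parametrizations of the $e_m$ by diffeomorphisms $\alpha_m:\Sigma\to\Sigma$ so that $e_m\circ\alpha_m\to e_\infty$ in $C^\infty$, which is the desired conclusion.
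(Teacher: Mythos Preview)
The paper does not prove this statement; Theorem~\ref{ThmFraserLi} is quoted from \cite{FraserLi} and used as a black box (see the sentence immediately following it and Proposition~\ref{ThmCompactness}). There is therefore no in-paper argument to compare your proposal against.

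Your outline is broadly the strategy of the Fraser--Li paper itself: uniform area and curvature estimates followed by a standard compactness and graph-parametrization step. A few of your intermediate claims, however, are asserted rather than argued. Most notably, the sentence ``A uniform bound on the boundary length in turn follows from strict convexity of $\partial M$ together with the free boundary condition and the fixed topology of $\Sigma$'' hides real work: strict convexity and topology alone do not obviously bound $\mathrm{Length}(\partial\Sigma_m)$ without already having curvature or index control, and in \cite{FraserLi} the area and length bounds are obtained via the second variation and the nonnegative Ricci hypothesis rather than by the convex-function-plus-Stokes route you describe. Similarly, the phrase ``Bernstein-type results exclude such a limit, using the nonnegative Ricci curvature of the ambient metrics to guarantee the requisite total curvature bounds'' conflates two separate ingredients (a stable Bernstein theorem and a finite-total-curvature argument) and would need to be made precise. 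These are genuine gaps if you intend this as a self-contained proof, but since the paper treats the result as a citation, they are beside the point for the present comparison.
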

\noindent In our current framework, this is restated (in slightly weaker form) as follows:
\begin{proposition}\label{ThmCompactness}
Let $\Pi:X\times\Cal{E}\longrightarrow X$ be the projection onto the first factor. Then the restriction of $\Pi$ to $\Cal{Z}(X)$ is proper.
\end{proposition}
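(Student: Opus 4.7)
The plan is to reduce the statement directly to the Fraser--Li compactness theorem (Theorem \ref{ThmFraserLi}) by a routine sequential argument. Since $\mathcal{E}$ and $X\times\mathcal{E}$ are sequential (the underlying space $\hat{\mathcal{E}}$ is metrizable in the $C^\infty$ topology, and $\mathcal{E}$ carries the quotient topology under the action of $\Diff^+(\Sigma)$), it suffices to show that if $K\subseteq X$ is compact and $(x_m,[e_m])_\minn$ is any sequence in $\Pi^{-1}(K)\cap\Cal{Z}(X)$, then some subsequence converges to a point of $\Pi^{-1}(K)\cap\Cal{Z}(X)$.

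First, I would extract a subsequence along which $x_m\to x_\infty\in K$. Since $g:X\times M\to\Sym^+\,TM$ is smooth and $X$ is finite-dimensional, $g_{x_m}$ converges to $g_{x_\infty}$ in $C^\infty(M)$. The key observation is then that $g_{x_\infty}$ is itself admissable: by hypothesis each $g_x$ is admissable, and admissability of the limit requires only that $g_{x_\infty}$ has nonnegative Ricci curvature and that $\partial M$ is strictly convex with respect to $g_{x_\infty}$. The first follows from the $C^\infty$ convergence; the second follows because the restriction of the convex function $f_{x_\infty}:M\to[0,1]$ to $\partial M$ is constant equal to $1$, so $\partial M$ is a level set of a strictly convex function and hence locally strictly convex.

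Second, each $e_m$ is a free boundary minimal embedding with respect to $g_{x_m}$, and $g_{x_m}\to g_{x_\infty}$ in $C^\infty$. Applying Theorem \ref{ThmFraserLi} with $(g_m,e_m)=(g_{x_m},e_m)$, there exists a sequence of diffeomorphisms $\alpha_m:\Sigma\to\Sigma$ (which, after possibly composing with a fixed orientation-reversing diffeomorphism, we may take to be orientation-preserving, adjusting the representative of $[e_m]$ accordingly) such that a subsequence of $(e_m\circ\alpha_m)_\minn$ converges in $C^\infty$ to an embedding $e_\infty$ which is free boundary minimal with respect to $g_{x_\infty}$. Since $[e_m\circ\alpha_m]=[e_m]$ in $\mathcal{E}$, the sequence $[e_m]$ converges to $[e_\infty]$ in the quotient topology, and so $(x_m,[e_m])\to(x_\infty,[e_\infty])\in\Cal{Z}(X)$. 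This gives the desired subsequential convergence inside $\Pi^{-1}(K)\cap\Cal{Z}(X)$.

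The only potential obstacle is verifying the hypotheses of Theorem \ref{ThmFraserLi} at the limit, and this is the content of the first step above: admissability (in particular positive Ricci) is preserved under $C^\infty$ limits only as nonnegative Ricci, but Fraser--Li only requires nonnegative Ricci for the sequence and strict convexity of $\partial M$ for the limit, both of which we have. Everything else is bookkeeping: passing from convergence in $\hat{\mathcal{E}}$ up to reparametrisation to convergence in $\mathcal{E}$, and combining the two subsequence extractions.
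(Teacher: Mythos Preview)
Your proposal is correct and follows essentially the same approach as the paper, which simply presents Proposition~\ref{ThmCompactness} as a restatement of Theorem~\ref{ThmFraserLi} without a separate proof. One minor simplification: your caution about the limit metric is unnecessary, since $x_\infty\in K\subseteq X$ and $g_x$ is admissable for \emph{every} $x\in X$ by hypothesis, so $g_{x_\infty}$ already has positive Ricci curvature and strictly convex boundary without any limiting argument.
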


If $\Cal{Z}(X)$ were a finite-dimensional differential manifold with boundary of dimension equal to that of $X$ and if, moreover, $\Pi$ were to map $\partial\Cal{Z}(X)$ into $\partial X$, then it would follow from classical differential topology that $\Pi$ has a well-defined ${\mathbb Z}_2$-valued mapping degree. Furthermore, this degree would be independent of $X$, and if, in addition, both $X$ and $\Cal{Z}(X)$ were orientable, then it could be taken to be integer-valued. The main objective of Sections $3$ and $4$ below is to show that although $\Cal{Z}(X)$ does not necessarily have the aforementioned properties, $X$ may be embedded into a higher dimensional manifold $\tilde{X}$ for which these properties actually hold. This is summarised in Theorem \ref{ThmIntegerValuedDegree} of Section 4. The existence result of Theorem \ref{ThmMainTheorem} then follows upon showing this degree to be non-zero in the case treated there. To this end, we require in particular Theorem \ref{ThmNonDegenerateFamilies}, which determines how smooth, non-degenerate families of solutions contribute to the degree. Theorems \ref{ThmIntegerValuedDegree} and \ref{ThmNonDegenerateFamilies} together constitute the main results of Sections $3$, $4$ and $5$, and the first-time reader may skim the rest, passing directly to Section $6$ after completing Section $2$ without losing much understanding.

%Thus, in Section \ref{JacobiOperators}, we calculate the Jacobi operator $\text{\rm J}:=(\text{\rm J}^h,\text{\rm J}^\theta)$ of an embedding, where $\text{\rm J}^h$ is the usual Jacobi operator of mean curvature, and $\text{\rm J}^\theta$ measures the perturbation of the boundary angle arising from a normal perturbation of the embedding. In Section \ref{PerturbationOperators}, we calculate the perturbation operator $P:=(\text{\rm P}_h,\text{\rm P}_\theta)$ of an embedding, which measures the perturbations of mean curvature and of the boundary angle arising from perturbations of the ambient metric. Finally, after reviewing the general theory of elliptic operators in Section \ref{SubHeadHoelderSpaces}, we show in Section \ref{TheEllipticTheoryOfJacobiOperators} that $J$ defines a Fredholm mapping of Fredholm index $0$, which, as indicated above, allows us to extend White's degree theory of minimal embeddings to the current context with minimal technical difficulty.

We devote the remainder of this section to studying the infinitesimal theory of minimal embeddings with free boundary. Our goal is to prove that the  Jacobi operator $\text{\rm J}:=(\text{\rm J}^h,\text{\rm J}^\theta)$, which measures the perturbation of mean curvature as well as the perturbation of the boundary angle resulting from a normal perturbation of the embedding, defines a Fredholm {mapping of Fredholm index zero}.

\subsection{Jacobi operators}\label{JacobiOperators} Given $(x,[e])\in\Cal{Z}(X)$, we denote by $\text{\rm J}^h:C^\infty(\Sigma)\rightarrow C^\infty(\Sigma)$ and by $\text{\rm J}^\theta:C^\infty(\Sigma)\rightarrow C^\infty(\partial\Sigma)$ respectively the {\it Jacobi operator of mean curvature} of $e$ and the {\it Jacobi operator of the boundary angle} of $e$ with respect to $g_x$. That is, $\text{\rm J}^h$ and $\text{\rm J}^\theta$ are defined such that if $f:(-\delta,\delta)\times\Sigma\longrightarrow M$ is a smooth mapping with the properties that $e=f(0,\cdot)$, $e_t:=f(t,\cdot)$ is an embedding for all $t$, and $\frac{\partial f}{\partial t}\big|_{t=0} =\varphi\,{{N}}$ for some $\varphi\in C^\infty(\Sigma)$, then
$$
\text{\rm J}^h\varphi=\left.\frac{\partial}{\partial t}\right|_{\tiny t=0} \hspace{-15pt } H_{x,e_t},\, \  \text{\rm{and}}\, \ \text{\rm J}^\theta\varphi=\left.\frac{\partial}{\partial t}\right|_{\tiny t=0} \hspace{-15pt }\Theta_{x,e_t}.
$$

\noindent We denote by $\opRic$ the Ricci curvature tensor of $g_x$ and by $\Delta$ the Laplacian operator of $e^*g_x$ over $\Sigma$. We recall the second variation formula for the area:

\begin{lemma}\label{PropFormulaForJacobiOperatorOfMeanCurvature}
Given $(x,[e])\in\Cal{Z}(X)$,  for all $\varphi\in C^\infty(\Sigma)$:
$$
\text{\rm J}^h\varphi =-\Delta\varphi -\left(\opRic({{N}},{{N}})+\|A\|^2\right)\varphi.
$$
\end{lemma}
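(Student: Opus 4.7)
The plan is to differentiate $H_{x,e_t}$ directly in $t$ at $t=0$. Since the formula is purely local and does not involve the boundary condition on $\partial\Sigma$, the computation is essentially the classical derivation of the second variation formula for area of a closed minimal surface, and may be carried out at an arbitrary interior point of $\Sigma$. The boundary angle $\Theta$ does not play any role here; it will only enter when computing $\text{\rm J}^\theta$ in the next step of the paper.

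Concretely, I would take a variation $f:(-\delta,\delta)\times\Sigma\to M$ with $\partial_t f|_{t=0}=\varphi N$, fix an interior point $p\in\Sigma$, and choose a local orthonormal frame $(E_1,E_2)$ on $(\Sigma,e^*g_x)$ that is geodesic at $p$. After extending this frame along the flow of $\partial_t$, I would write
\[
H_{x,e_t} \;=\; -\sum_i g_x\bigl(\nabla_{(e_t)_*E_i}(e_t)_*E_i,\,N_t\bigr)
\]
(with appropriate sign for the chosen orientation) and compute the $t$-derivative at $t=0$ via the Leibniz rule, commuting $\partial_t$ past $\nabla_{E_i}$ using the Ricci identity
\[
\nabla_{E_i}\nabla_{\partial_t} - \nabla_{\partial_t}\nabla_{E_i} \;=\; R(E_i,\partial_t).
\]

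Three contributions then emerge: the tangential double derivative of $\varphi$, which at a geodesic frame produces $-\Delta\varphi$; the curvature commutator, which when contracted with $N$ gives $-\opRic(N,N)\,\varphi$; and the interaction between the $t$-derivatives of $N_t$ and of $(e_t)_*E_i$, which combined with the minimality assumption $H_{x,e}=0$ collapses to $-\|A\|^2\,\varphi$. Any remaining terms either vanish at $p$ by the geodesic property of the frame, or cancel using $H_{x,e}=0$. One must also verify that the tangential component of $\partial_t(e_t)_*E_i$ at $t=0$ contributes nothing of consequence, which follows since the formula is reparametrisation-invariant and one may assume the frame is flow-preserved.

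The main obstacle is bookkeeping rather than substance: separating the tangential from the normal components of the various derivatives of $N_t$ and $(e_t)_*E_i$, and pinning down the sign conventions for $H$, $\Delta$, $A$ and the curvature tensor so that the three contributions assemble with exactly the signs appearing in the stated formula. Once this is done, summing the three terms yields the expression $\text{\rm J}^h\varphi = -\Delta\varphi - (\opRic(N,N)+\|A\|^2)\varphi$.
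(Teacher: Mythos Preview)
Your proposal is correct; this is the standard derivation of the linearisation of mean curvature under a normal variation. The paper itself offers no proof of this lemma at all: it is introduced with the phrase ``We recall the second variation formula for the area'' and simply stated as a known fact, so your sketch is already more than what the paper provides.
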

\begin{remark}
In particular, $\text{\rm J}^h$ is a second-order linear elliptic partial differential operator.
\end{remark}

\noindent Let $II$ denote the shape operator of $\partial M$ with respect to $g_x$ and the outward pointing normal $\nu$. Since $(x,[e])\in\Cal{Z}(X)$, along the boundary points of $\Sigma$, the vector $N$ lies in the tangent space of $\partial M$, and we define $\kappa:\partial \Sigma\longrightarrow\Bbb{R}$ by:
$$
\kappa = II(N,N).
$$
Moreover, the vector field $\nu\circ e$ coincides with the conormal to $e(\partial \Sigma)$ inside $e(\Sigma)$ with respect to $g_x$, and we therefore define the operator $\partial_{\nu}:C^\infty(\Sigma)\longrightarrow C^\infty(\partial \Sigma)$ to be the derivative in the direction of the vector field $\nu\circ e$. That is, for all $f\in C^\infty(\Sigma)$ and at each $p\in\partial \Sigma$:
$$
\partial_{\nu} f = \langle e^*\nu,df \rangle.
$$
The following result is proven in %Proposition $17$ of the Appendix  of
\cite{Ambrozio}:
\begin{proposition}\label{PropFormulaForJacobiOperatorOfBoundaryAngle}
Given $(x,[e])\in\Cal{Z}(X)$, for all $\varphi\in C^\infty(\Sigma)$:
$$
\text{\rm J}^\theta\varphi = \kappa\varphi\circ\epsilon - \partial_\nu\varphi,
$$
where $\epsilon:\partial \Sigma\rightarrow \Sigma$ is the canonical embedding.
\end{proposition}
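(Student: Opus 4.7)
The plan is to compute $\text{\rm J}^\theta\varphi$ directly from its definition by differentiating $\Theta_{x,e_t}=g_x(\nu\circ e_t,N_t)$ at $t=0$ along a variation $f:(-\delta,\delta)\times\Sigma\to M$ with $\partial_t f|_{t=0}=\varphi N$, evaluated at boundary points of $\Sigma$. First I would extend the outward unit normal $\nu$ smoothly to a tubular neighbourhood of $\partial M$ in $M$, so that $\nu(e_t(p))$ makes sense even when $e_t$ fails to send $\partial\Sigma$ into $\partial M$; the free boundary condition $g_x(\nu,N)\equiv 0$ on $\partial\Sigma$ guarantees that $\varphi N$ is tangent to $\partial M$ there, which is exactly what is needed for the first-order derivative to be insensitive to the choice of extension.

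Next, by the Leibniz rule, at a point of $\partial\Sigma$,
$$
\text{\rm J}^\theta\varphi = g_x(\nabla_{\varphi N}\nu,\,N) + g_x(\nu,\,\dot{N}),
$$
where $\dot{N}=\partial_t N_t|_{t=0}$. For the first summand, I would pull out the scalar $\varphi$ and recognise $g_x(\nabla_N\nu,N)$ as the second fundamental form of $\partial M$ evaluated on the tangent vector $N$, that is, as $II(N,N)=\kappa$; the point is simply that $N$ lies in $T\partial M$ along $\partial\Sigma$, so this evaluation is intrinsic to $\partial M$ and does not depend on the extension above. For the second summand, I would invoke the standard normal-variation identity $\dot{N}=-\nabla^\Sigma\varphi$, where $\nabla^\Sigma\varphi$ is the tangential gradient on $(\Sigma,e^*g_x)$; this follows from differentiating the pointwise orthogonality $g_x(N_t,df_t(X))\equiv 0$ at $t=0$ for arbitrary fixed $X\in T\Sigma$, using that $\dot{N}$ is automatically tangent to $\Sigma$ because $N_t$ is unit length.

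Finally, using the observation already recorded in the excerpt that $\nu\circ e|_{\partial\Sigma}$ coincides with the conormal to $\partial\Sigma$ inside $\Sigma$, the inner product $g_x(\nu,\nabla^\Sigma\varphi)$ is precisely the conormal derivative $\partial_\nu\varphi$, so the second summand equals $-\partial_\nu\varphi$. Combining the two contributions yields $\text{\rm J}^\theta\varphi=\kappa\varphi\circ\epsilon-\partial_\nu\varphi$, as claimed.

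The only substantive difficulty here is sign bookkeeping: one must be careful with conventions for the outward normal $\nu$, for the second fundamental form (so that $II(N,N)$ genuinely equals $+\kappa$), and for the orientation of the conormal of $\partial\Sigma$ in $\Sigma$ relative to $\nu$ (so that $g_x(\nu,\nabla^\Sigma\varphi)$ appears as $+\partial_\nu\varphi$ rather than its negative). Each convention check is elementary on its own, but ensuring all three are mutually consistent — and consistent with the ambient sign conventions fixed in the excerpt — is where most of the care in the argument actually lies.
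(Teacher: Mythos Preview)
Your direct computation is correct and is the standard argument for this formula. The paper itself does not supply a proof: it simply cites \cite{Ambrozio} for the result. Your approach---differentiating $\Theta_{x,e_t}=g_x(\nu\circ e_t,N_t)$ via the Leibniz rule, identifying the $\nabla_{\varphi N}\nu$ contribution with $\kappa\varphi$ via the definition of $II$, and identifying the $\dot N$ contribution with $-\partial_\nu\varphi$ via the standard identity $\dot N=-\nabla^\Sigma\varphi$---is exactly how one establishes the formula from first principles, and your remarks about sign conventions and the extension of $\nu$ are apposite. One minor simplification: since $g_x(N,\nu)=0$ along $\partial\Sigma$, the variation field $\varphi N$ is tangent to $\partial M$ there, so you may instead choose the variation $f$ so that $e_t(\partial\Sigma)\subseteq\partial M$ for all $t$, obviating the need to extend $\nu$ off $\partial M$; either route gives the same answer, as you note.
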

Again, the geometric quantities we have just defined clearly depend on $(x,[e])$. To avoid confusion, we often explicit this dependence in our notation. In particular, we denote $\textrm{J}_{x,e}:=(\text{\rm J}^h_{x,e},\text{\rm J}^\theta_{x,e})$, and we refer to $\textrm{J}_{x,e}$ as the {\it Jacobi operator} of $[e]$ with respect to the metric $g_x$.

\subsection{Pertubation operators}\label{PerturbationOperators}For all $(x,[e])\in\Cal{Z}(X)$, we denote by $\textrm{P}^h_{x,e}:T_xX\rightarrow C^\infty(\Sigma)$ and by $\textrm{P}^\theta_{x,e}:T_xX\longrightarrow C^{\infty}(\partial\Sigma)$ respectively the {\it perturbation operator} of {\it mean curvature} of $e$ and the \textit{pertubation operator of the boundary angle} of $e$ with respect to changes in the metric. That is, if $\xi\in T_xX$, if $x:(-\delta,\delta)\rightarrow X$ is a smooth curve such that $x(0)=x$ and $\dot{x} (0)=\xi$, then we define:
$$
\textrm{P}^h_{x,e}\xi = \left.\frac{\partial}{\partial t}\right|_{\tiny t=0} \hspace{-15pt }H_{x_t,e},\, \  \text{\rm{and}}\, \ \text{\rm P}^\theta_{x,e}\varphi=\left.\frac{\partial}{\partial t}\right|_{\tiny t=0} \hspace{-15pt }\Theta_{x_t,e}.
$$
For all $(x,[e])\in\Cal{Z}(X)$, we denote $\textrm{P}_{x,e}:=\left(\textrm{P}^h_{x,e},\textrm{P}^\theta_{x,e}\right)$, and we refer to $\textrm{P}_{x,e}$ as the {\it perturbation operator} of $e$ with respect to changes in the metric.

It turns out only to be necessary to consider conformal perturbations of the ambient metric. Let $g:(-\delta,\delta)\times M\rightarrow\text{\rm{Sym}}^+(TM)$ be a smooth family of metrics. Denote $g_t:=g(t,\cdot)$ for all $t$ and $g(0)=g$. Let $e:\Sigma\rightarrow M$ be an embedding and let $N:\Sigma\rightarrow TM$ be the normal vector field over $e$ with respect to $g$ which is compatible with the orientation.

%Let $\theta:(-\delta,\delta)\times\partial \Sigma\rightarrow\Bbb{R}$ be such that for all $(t,p)\in(-\delta,\delta)\times\partial \Sigma$, $\theta_t(p):=\theta(t,p)$ is the boundary angle of the embedding $e$ with respect to the metric $g_t$ at the point $p$.
\begin{proposition}\label{PropVariationOfBoundaryAngle}
If $\dot{g}(0)=\varphi g$ for $\varphi\in C^\infty(M)$, then:
$$
\left.\frac{\partial}{\partial t}\right|_{\tiny t=0} \hspace{-15pt }\Theta_{g_t,e} = 0.
$$
\end{proposition}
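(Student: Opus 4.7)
The plan is to exploit the \emph{conformal invariance} of the boundary angle. Since $\textrm{P}^\theta_{x,e}$ is linear in its argument $\xi \in T_xX$ (being a partial derivative), the value $\frac{d}{dt}|_{t=0}\Theta_{g_t,e}$ depends only on the first-order jet $\dot g(0) = \varphi g$, not on the full family $g_t$. Thus it suffices to exhibit any one-parameter family of metrics $\tilde g_t$ with $\dot{\tilde g}(0) = \varphi g$ along which the boundary angle is constant. The natural candidate is the strictly conformal family $\tilde g_t = (1+t\varphi)g$, which is well-defined as a Riemannian metric for $t$ in a neighbourhood of $0$ (since $\varphi$ is bounded on the compact manifold $M$).

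First I would verify that $\Theta$ is invariant under conformal change of metric. Write $\tilde g = \Omega^2 g$ with $\Omega > 0$. Since $g$-orthogonality and $\tilde g$-orthogonality to a given subspace are equivalent (as $\tilde g(v,w) = \Omega^2 g(v,w)$ vanishes iff $g(v,w)$ does), the directions of the outward-pointing unit normal $\nu$ to $\partial M$ and of the orientation-compatible unit normal $N$ to $e(\Sigma)$ are unchanged under the conformal rescaling; only their lengths change. Normalising gives
$$
\tilde \nu = \Omega^{-1}\nu,\qquad \tilde N = \Omega^{-1}N,
$$
and hence
$$
\Theta_{\tilde g, e} = \tilde g(\tilde \nu, \tilde N) = \Omega^2\, g(\Omega^{-1}\nu,\Omega^{-1}N) = g(\nu,N) = \Theta_{g,e}.
$$

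Applying this with $\Omega^2 = 1 + t\varphi$ to the family $\tilde g_t = (1+t\varphi)g$ yields $\Theta_{\tilde g_t, e} \equiv \Theta_{g,e}$, so $\frac{d}{dt}|_{t=0}\Theta_{\tilde g_t, e} = 0$. Since $\dot{\tilde g}(0) = \varphi g = \dot g(0)$ and $\textrm{P}^\theta_{x,e}$ depends only on this first-order datum, the derivative along the original family $g_t$ also vanishes, completing the argument.

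I do not expect any serious obstacle: the proof is essentially the single observation that $\Theta$ is a pointwise conformal invariant, combined with the linearity of the perturbation operator. The one step requiring a small amount of care is the assertion that $\nu_t$ and $N_t$ (defined with respect to $g_t$) have the same directions as $\nu$ and $N$ when $g_t$ is conformal to $g$; this follows from the conformal invariance of orthogonality and of orientation compatibility (the latter being a purely topological condition, independent of the metric).
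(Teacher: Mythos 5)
Your proposal is correct and is essentially the paper's argument: the paper simply asserts in one line that a perturbation of the metric which is conformal to first order leaves angles invariant to first order, and your computation of $\tilde\nu=\Omega^{-1}\nu$, $\tilde N=\Omega^{-1}N$ for $\tilde g=\Omega^2 g$ is exactly the verification of that assertion. The reduction to the genuinely conformal family $(1+t\varphi)g$ via the dependence of the first variation only on $\dot g(0)$ is a sound and slightly more careful way of packaging the same idea.
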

\begin{proof}
By definition, a perturbation of the metric which is conformal up to order $1$ leaves angles invariant up to order $1$, and the result follows.
\end{proof}

The next proposition follows by direct calculation.
\begin{proposition}\label{PropVariationOfMeanCurvature}
If  $\dot{g}(0)=\varphi g$ for $\varphi\in C^\infty(M)$, then:
$$\left.\frac{\partial}{\partial t}\right|_{\tiny t=0}  \hspace{-15pt }H_{g_t,e}=d\varphi(N) - \frac{1}{2}\varphi H_{g(0),e}.
$$

\end{proposition}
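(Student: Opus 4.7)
This is the first-order form of the classical conformal transformation law for mean curvature, and the plan is to perform a direct calculation, tracking how each of the three geometric ingredients entering the definition of $H$ transforms to first order in $t$. With $\dot g(0)=\varphi g$, write $g_t = (1+t\varphi) g + O(t^2)$; the three ingredients are (i) the induced metric $e^*g_t$ on $\Sigma$, (ii) the $g_t$-unit normal $N_t$ along $e$, and (iii) the ambient Levi-Civita connection $\nabla^{g_t}$.

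The relevant first-order variations are standard. For (i), $e^*g_t = (1 + t(\varphi\circ e))\,e^*g + O(t^2)$, so the inverse induced metric acquires a factor $(1 - t(\varphi\circ e))$. For (ii), since a pointwise conformal perturbation preserves orthogonality of $N$ to $T\Sigma$ (as already exploited in Proposition \ref{PropVariationOfBoundaryAngle}), $N_t$ is simply $N$ rescaled by $(1 + t\varphi\circ e)^{-1/2}$, giving $\dot N|_{t=0} = -\tfrac12 (\varphi\circ e)\, N$. For (iii), the standard conformal variation of the Levi-Civita connection yields
\[
\nabla^{g_t}_X Y = \nabla^g_X Y + \tfrac{t}{2}\bigl(d\varphi(X) Y + d\varphi(Y) X - g(X,Y)\nabla^g\varphi\bigr) + O(t^2).
\]

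I would then assemble these to compute the first-order perturbation of the second fundamental form $II_{g_t,e}$. For $X, Y$ tangent to $\Sigma$, the tangential terms $d\varphi(X)Y$ and $d\varphi(Y)X$ vanish upon pairing with $N$, leaving from the connection a single surviving contribution proportional to $g(X,Y)\, d\varphi(N)$, together with $\varphi$-multiples of $II$ coming from the normalization of $N_t$ and from the conformal factor in $g_t$ itself. Tracing with respect to $(e^*g_t)^{-1}$ and collecting terms, the various $\varphi H$ contributions combine into the single coefficient $-\tfrac12$, producing the asserted formula $\dot H = d\varphi(N) - \tfrac12 \varphi H$.

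The calculation contains no deep difficulty; the only obstacle is careful sign and convention bookkeeping so that the three separate $\varphi H$ contributions combine with the correct coefficient, and so that the sign of $d\varphi(N)$ is consistent with the paper's conventions for $A$ and $H$. As an alternative route, one may invoke the classical conformal transformation formula $\tilde H = e^{-u}(H + 2\partial_N u)$ for $\tilde g = e^{2u} g$ in a three-manifold, applied with $u_t = t\varphi/2$, and differentiate at $t=0$ to obtain the same expression directly.
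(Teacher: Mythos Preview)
Your proposal is correct and is precisely the ``direct calculation'' that the paper invokes without details; the paper simply asserts that the proposition follows by direct calculation, and your sketch fills this in along the expected lines. Your alternative via the global conformal law $\tilde H = e^{-u}(H + 2\,\partial_N u)$ with $u_t = t\varphi/2$ is an equally valid shortcut to the same end.
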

% \begin{proof}We sketch the proof. For all $t$, let $\nabla_t$ be the Levi-Civita covariant derivative of $g_t$, and let ${(\Gamma_t)^k}_{ij}$ be the relative Christophel symbol of $\nabla_t$ with respect to $\nabla_0$. Differentiating the Kozhul formula (c.f., for example, Lemma $6.5$ of \cite{ChowKnopf}) yields:
% $$
% {{(\partial_t\Gamma)_0}^k}_{ij} = \frac{1}{2}(\varphi_{;i}{\delta^k}_j + \varphi_{;j}{\delta^k}_i - (g_0)^{kp}\varphi_{;p} (g_0)_{ij}).
%$$
% For all $t$, let $N_t:\Sigma\rightarrow TM$ be the unit normal vector field over $e$ with respect to $g_t$ which is compatible with the orientation. A straightforward calculation yields:
% $$
% (\partial_t\text{\sf{N}})_0 = -\frac{1}{2}(\varphi\circ e)\text{\sf{N}}_0.
% $$
% For all $t$, let $A_t:\Sigma\rightarrow\opEnd(T\Sigma)$ be the shape operator of $e$ with respect to $g_t$ and the normal direction $N_t$. Using the preceeding two relations we obtain:
% $$
% (\partial_t A)_0 = \frac{1}{2}(d\varphi\circ e)(\text{\sf{N}}_0)\opId - \frac{1}{2}(\varphi\circ e)A_0,
%$$
% and the result now follows upon taking the trace.\end{proof}

This yields the following surjectivity result:
\begin{proposition}\label{PropMetricsWhichYieldDesiredFunctions}
For all $f\in C^\infty(\Sigma)$, there exists $\varphi\in C^\infty(M)$ such that if $\dot{g}(0)=\varphi g$, then:
$$
\left.\frac{\partial}{\partial t}\right|_{\tiny t=0}  \hspace{-15pt }H_{g_t,e} = f.
$$
Moreover, for any neighbourhood $U$ of $e(\opSupp(f))$ in $M$, $\varphi$ may be chosen such that $\opSupp(\varphi)\subseteq U$.
\end{proposition}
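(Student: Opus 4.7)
The plan is to reduce the statement to a prescription problem for the normal derivative of $\varphi$ along $e(\Sigma)$, and then to solve that problem by working in an adapted tubular neighbourhood of $e(\Sigma)$ in $M$. By Proposition \ref{PropVariationOfMeanCurvature}, $\partial_t|_{t=0} H_{g_t,e} = d\varphi(N) - \tfrac12\varphi H_{g(0),e}$ on $e(\Sigma)$, so if we insist that $\varphi$ vanish on $e(\Sigma)$ the second term drops out and the problem becomes: find $\varphi \in C^\infty(M)$ with $\varphi\circ e \equiv 0$, $(N\varphi)\circ e = f$, and $\supp \varphi \subseteq U$.

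My ansatz would be $\varphi = \chi\cdot \tilde f\cdot \rho$, where $\rho$ is a defining function for $e(\Sigma)$ on a tubular neighbourhood $V$, normalised so that the gradient of $\rho$ equals $N$ on $e(\Sigma)$; $\tilde f\in C^\infty(M)$ is an extension of $f$ obtained by pulling back $f$ through the tubular projection and then multiplying by a cutoff whose support is contained in a preassigned neighbourhood of $e(\supp f)$ inside $U$; and $\chi\in C^\infty(M)$ is an auxiliary cutoff equal to $1$ on a smaller tubular neighbourhood of $e(\Sigma)$ and supported in $V$. On $e(\Sigma)$ we have $\rho=0$ and $\chi=1$, hence $d\varphi = \tilde f\, d\rho$ there, and $d\varphi(N) = \tilde f\cdot d\rho(N) = \tilde f = f$, as required. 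The support condition $\supp\varphi \subseteq U$ is built into the choice of $\tilde f$.

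The main technical point, and the only step where the free boundary condition $\Theta=0$ is used, is the construction of the tubular neighbourhood and of $\rho$ near $\partial\Sigma$. I would build them from the flow of a smooth vector field $\tilde N$, defined on a neighbourhood of $e(\Sigma)$ in $M$, that extends $N$ and is everywhere tangent to $\partial M$ along $\partial M$. Such a $\tilde N$ is produced by a partition of unity: in the interior of $\Sigma$ one extends $N$ arbitrarily, while near each point $p\in\partial\Sigma$ one uses that $N(p)\in T_p\partial M$ — this is exactly the content of $\Theta_{x,e}=g(\nu,N)=0$ — together with local coordinates in which $\partial M$ is a coordinate hyperplane, so that a tangent-to-$\partial M$ extension of $N$ is immediate. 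For sufficiently small $\epsilon>0$, the flow $\Phi(p,s):=\Phi^{\tilde N}_s(e(p))$ defines a diffeomorphism $\Sigma\times(-\epsilon,\epsilon)\to V$ onto a tubular neighbourhood of $e(\Sigma)$ in $M$ that maps $\partial\Sigma\times(-\epsilon,\epsilon)$ into $\partial M$. Taking $\rho$ to be the second coordinate of $\Phi^{-1}$ then gives $\rho=0$ on $e(\Sigma)$ and $d\rho(\tilde N)\equiv 1$, so in particular $d\rho(N)=1$ on $e(\Sigma)$, completing the construction.

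The obstacle worth flagging is precisely this last step: without the free boundary condition, $N$ at boundary points would not lie in $T\partial M$, and a flow-based tubular neighbourhood of $e(\Sigma)$ would generically leave $M$ near $\partial\Sigma$. One could alternatively bypass the vector-field construction and produce $\varphi$ by patching local solutions of $N\varphi=f$ via partition of unity, but the flow approach keeps the geometry transparent and fits naturally with the perturbation-of-conformal-factor framework used in Propositions \ref{PropVariationOfBoundaryAngle} and \ref{PropVariationOfMeanCurvature}.
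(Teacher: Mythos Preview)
Your approach is correct and close in spirit to the paper's: both construct $\varphi$ as (defining function) $\times$ (extension of $f$) $\times$ (cutoff), so that $\varphi|_{e(\Sigma)}=0$ and $d\varphi(N)=f$ there, whence Proposition~\ref{PropVariationOfMeanCurvature} gives $\partial_t|_{t=0}H_{g_t,e}=f$. The difference lies in how the tubular neighbourhood is produced near $\partial\Sigma$. The paper simply extends $\Sigma$ (and $f$) smoothly past $\partial\Sigma$ into an ambient extension of $M$, then takes $\rho$ to be the signed distance $d$ to the extended surface and uses the nearest-point projection $\pi$; this sidesteps the boundary issue entirely and works for \emph{any} embedding $e\in\hat{\mathcal{E}}$, without invoking $\Theta_{x,e}=0$. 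Your flow construction stays inside $M$ and is geometrically transparent, but it genuinely requires $N|_{\partial\Sigma}\in T\partial M$, i.e.\ the free-boundary condition, which the proposition as stated does not assume. Since the result is only ever applied in the paper to $(x,[e])\in\mathcal{Z}(X)$ this causes no harm downstream, but you should either add that hypothesis explicitly or, if you want the full generality, adopt the paper's extend-past-the-boundary device.
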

\begin{proof}
We identify $\Sigma$ with its image $e(\Sigma)$ in $M$. We extend $\Sigma$ and $f$ smoothly beyond $\partial \Sigma$. Let $d$ be the signed distance function to $\Sigma$ in $M$ with respect to the metric $g$. Let $\pi$ be the closest point projection onto $\Sigma$ with respect to the metric $g$. There exists $\delta>0$ such that the restrictions of $d$ and $\pi$ to $d^{-1}(-\delta,\delta)\minter M$ are smooth. Let $\chi\in C_0^\infty(\Bbb{R})$ be supported in $(-\delta,\delta)$ and equal to $1$ near $0$. We define:
$$
\varphi = (\chi\circ d)(f\circ\pi).
$$
Observe that, restricted to $\Sigma$, $\varphi=0$ and $d\varphi(N)=f$. It follows then from Proposition \ref{PropVariationOfMeanCurvature} that if $\dot{g}(0)=\varphi g$, then $\left.\frac{\partial}{\partial t}\right|_{\tiny t=0} H_{g_t,e} = f$. Moreover, upon reducing $\delta$ if necessary, we may suppose that $\opSupp(\phi)\subseteq U$ and this completes the proof.
\end{proof}

Proposition \ref{PropMetricsWhichYieldDesiredFunctions} is already sufficient for the proof of Theorem \ref{PropSurjectivityForExtensions} of Section 4. However, the following refinement will prove useful:
\begin{proposition}\label{PropFindingTheRightPerturbationSpace}
Let $f_1,...,f_m\in C^\infty(\Sigma)$ be a basis for $\opKer(\text{\rm J}_{g,e})$. Let $p$ be a point in $\Sigma$ and let $U$ be a neighbourhood of $e(p)$ in $M$. Then, there exists functions $\varphi_1,...,\varphi_m\in C^\infty(M)$, all supported in $U$, such that for all $1\leqslant i,j\leqslant m$, if g(t) is a path of metrics with $\dot g(0)=\varphi_i g$, where $g(0)=g$, then:
$$
\left\langle \left.\frac{\partial}{\partial t}\right\rvert_{\tiny t=0} \hspace{-15pt }H_{g_t,e} , f_j \right\rangle  = \delta_{ij},
$$
where $\left\langle\cdot,\cdot \right\rangle$ is the $L^2$ inner product with respect to $e^*g$ over $\Sigma$.
\end{proposition}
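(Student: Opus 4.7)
My plan is to reduce the problem to a linear algebra exercise on $\Sigma$ via Proposition \ref{PropMetricsWhichYieldDesiredFunctions}, with the crucial ingredient being a unique continuation property for the Jacobi operator.

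First, I would choose an open neighbourhood $V\subseteq\Sigma$ of $p$ small enough that $\overline{e(V)}\subseteq U$. If $p\in\partial\Sigma$, I shrink $V$ so that it meets the interior of $\Sigma$ in a nonempty open set $V^\circ$. The aim is to construct functions $h_1,\dots,h_m\in C_c^\infty(V^\circ)$ satisfying $\langle h_i,f_j\rangle=\delta_{ij}$ with respect to the $L^2$ inner product induced by $e^*g$, and then invoke Proposition \ref{PropMetricsWhichYieldDesiredFunctions} on each $h_i$ to produce a conformal perturbation $\varphi_i g$ supported in $U$ whose induced variation of the mean curvature equals $h_i$ on $\Sigma$; the desired pairing identity then follows immediately from the definition of $\varphi_i$.

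The main obstacle is showing that such $h_i$ exist, i.e.\ that the linear functionals $L_j:C_c^\infty(V^\circ)\to\R$ defined by $L_j(h)=\langle h,f_j\rangle$ are linearly independent. By a standard linear algebra argument, linear independence of the $L_j$ is equivalent to linear independence of the restrictions $f_1|_{V^\circ},\dots,f_m|_{V^\circ}$ as elements of $L^2_{\loc}(V^\circ)$; once this is known, one chooses any $m$ compactly supported smooth functions $\tilde h_1,\dots,\tilde h_m$ in $V^\circ$ on which the matrix $\langle\tilde h_i,f_j\rangle$ is invertible, and defines $(h_i)$ as the appropriate linear combination that diagonalises this matrix.

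To establish linear independence of $f_j|_{V^\circ}$, suppose a nontrivial linear combination $f:=\sum_j c_j f_j$ vanishes on $V^\circ$. By Lemma \ref{PropFormulaForJacobiOperatorOfMeanCurvature}, $f$ satisfies the scalar equation $-\Delta f -(\opRic(N,N)+\|A\|^2)f=0$ on $\Sigma$, which is a second-order linear elliptic equation with smooth coefficients. The strong unique continuation property of Aronszajn then forces $f\equiv 0$ on the connected component of the interior of $\Sigma$ meeting $V^\circ$, and by continuity $f$ vanishes on the whole of $\Sigma$, contradicting linear independence of $f_1,\dots,f_m$. (If $\Sigma$ is disconnected, the same argument is run on the component containing $p$, after first choosing $p$ in each component, or by noting that freedom to shrink the neighbourhood allows us to handle each connected component separately using partition of unity.)

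Finally, applying Proposition \ref{PropMetricsWhichYieldDesiredFunctions} to each $h_i$ with target neighbourhood $U\supseteq\overline{e(V^\circ)}\supseteq e(\opSupp(h_i))$ yields $\varphi_i\in C^\infty(M)$ supported in $U$ with $\partial_t|_{t=0}H_{g_t,e}=h_i$; pairing against $f_j$ gives $\langle h_i,f_j\rangle=\delta_{ij}$, completing the proof.
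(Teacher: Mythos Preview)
Your proof is correct and follows essentially the same route as the paper's: both invoke Aronszajn's unique continuation to show that the $f_j$ remain linearly independent on a small open set near $p$, then produce a compactly supported ``dual basis'' $(h_i)$ by elementary linear algebra, and finally lift to $M$ via Proposition \ref{PropMetricsWhichYieldDesiredFunctions}. The only cosmetic difference is that the paper realises the dual basis by first choosing $m$ points at which the evaluation map separates the $f_j$ and then smoothing the corresponding Dirac deltas, whereas you argue directly from linear independence of the restrictions to surjectivity of the pairing map.
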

\begin{remark}We will see in the following section that $\opKer(\textrm{J}_{g,e})$ is finite dimensional.
\end{remark}
\begin{proof}
We identify $\Sigma$ with its image $e(\Sigma)\subseteq M$. Let $r:C^\infty(\Sigma)\longrightarrow C^\infty(\Sigma\cap U)$ be the restriction mapping. For any vector $p:=(p_1,...,p_m)$ of points in $\Sigma\cap U$, we define the mapping $L_p:C^\infty(\Sigma\cap U)\rightarrow \Bbb{R}^m$ by:
$$
L_p(f) = (f(p_1),...,f(p_n)).
$$
Since $\text{\rm J}^h_{g,e}(f_k)=0$ for all $1\leqslant k\leqslant m$, and bearing in mind that $\text{\rm J}^h_{g,e}$ is a second-order elliptic linear partial-differential operator, it follows from Aronszajn's unique continuation theorem (c.f. \cite{Aronszajn}) that $r$ restricts to a linear isomorphism from $\opKer(\text{\rm J}_{g,e})$ to an $m$-dimensional subspace of $C^\infty(\Sigma\cap U)$. There therefore exists a vector $p$ such that $L_p$ defines a linear isomorphism from $\opKer(\text{\rm J}_{g,e})$ to $\Bbb{R}^m$.

Observe that, for all $1\leqslant k\leqslant m$:
$$
L_p(f)_k = \left\langle f,\delta_{p_k} \right\rangle
$$
where $\delta_{p_k}$ is the Dirac-delta distribution supported at $k$. For any vector $\psi:=(\psi_1,...,\psi_m)$ of smooth functions in $C_0^\infty(\Sigma\minter U)$, we define the mapping $L_\psi:C^\infty(\Sigma\cap U)\rightarrow\Bbb{R}^m$ such that for all $1\leqslant k\leqslant m$:
$$
L_\psi(f)_k = \left\langle f,\psi_k \right\rangle
$$
Observe that as $\psi$ converges to $(\delta_{p_1},...,\delta_{p_m})$ in the distributional sense, $L_\psi$ converges to $L_p$. There therefore exists a vector $\psi$ such that $L_\psi$ is invertible. We may suppose, moreover, that for all $1\leqslant k\leqslant m$, $\psi_k$ is supported in $\Sigma\minter U$. In addition, upon replacing each of $\psi_1,...,\psi_n$ by an appropriate linear combination of these functions if necessary, we may suppose that for all $1\leqslant i,j\leqslant m$:
$$
\left\langle \psi_i,  f_j \right\rangle= \delta_{ij}.
$$
By Proposition \ref{PropMetricsWhichYieldDesiredFunctions}, there exist functions $\varphi_1,...,\varphi_m\in C^\infty(M)$ such that for all $1\leqslant k\leqslant m$, $\opSupp(\varphi_k)\subseteq U$, and if $\dot{g}(0)=\varphi_kg$, then $\left.\frac{\partial}{\partial t}\right\rvert_{\tiny t=0} H_{g_t,e}=\psi_k$. Thus, for all $1\leqslant i,j\leqslant m$, if $(\partial_tg)_0=\varphi_p g_0$, then:
$$
\left\langle \left.\frac{\partial}{\partial t}\right\rvert_{\tiny t=0} \hspace{-15pt }H_{g_t,e} , f_j \right\rangle = \delta_{ij},
$$
as desired.
\end{proof}

\subsection{General elliptic theory}\label{SubHeadHoelderSpaces}For $\lambda\in[0,\infty]\setminus\Bbb{N}$, that is, $\lambda=k+\alpha$ where $k\in\mathbb{N}\munion\left\{\infty\right\}$ and $\alpha\in(0,1)$, and for any compact manifold $\Omega$, we denote by $C^\lambda(\Omega)$ the space of $\lambda$-times H\"older differentiable functions over $\Omega$. For $\lambda<\infty$, we denote by $\|\cdot\|_\lambda$ the $C^\lambda$-H\"older norm of $C^\lambda(\Omega)$ and we denote by $C^{*,\lambda}(\Omega)$ the closure of $C^\infty(\Omega)$ in $C^\lambda(\Omega)$. We remark that $C^{*,\lambda}(\Omega)$ is separable, but $C^{\lambda}(\Omega)$ is not (c.f. \cite{WhiteI}).

For $\varphi\in C^\infty(\partial \Omega)$, we define the {\it Robin operator} $R_\varphi:C^{*,\lambda+1}(\Omega)\longrightarrow C^{*,\lambda}(\partial \Omega)$ such that, for all $f\in C^{*,\lambda+1}(\Omega)$:
$$
R_\varphi(f) = \varphi(f\circ\epsilon) + \partial_\nu f,
$$
where $\epsilon:\partial \Omega\longrightarrow \Omega$ is the canonical embedding and $\partial_\nu f$ is the derivative of $f$ in the outward pointing conormal direction. For all $\lambda\in[0,\infty]\setminus\Bbb{N}$, we define $C^{*,\lambda+1}_{\oprob}(\Omega)$ to be the kernel of $R_\varphi$ in $C^{*,\lambda+1}(\Omega)$.

\begin{proposition}\label{PropOperatorIsSelfAdjoint}
If $\Delta$ is the Laplacian over $\Omega$, then for all $\xi,\eta\in C^{*,\lambda+2}_\oprob(\Omega)$:
$$
\int_\Omega \eta\Delta\xi\,\opdVol = \int_\Omega \xi\Delta\eta\,\opdVol,
$$
where $\opdVol$ is the volume form of $\Omega$.
\end{proposition}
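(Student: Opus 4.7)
The plan is to apply Green's second identity and then use the Robin boundary condition defining the space $C^{*,\lambda+2}_\oprob(\Omega)$ to show that the boundary term vanishes.

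First, since $\xi,\eta \in C^{*,\lambda+2}_\oprob(\Omega) \subseteq C^2(\Omega)$, the classical divergence theorem applies. I would start by invoking Green's first identity on each term:
$$
\int_\Omega \eta \Delta \xi\, \opdVol = -\int_\Omega \langle \nabla\eta,\nabla\xi\rangle\, \opdVol + \int_{\partial\Omega} \eta\,\partial_\nu\xi\, \dsigma,
$$
and symmetrically with the roles of $\xi$ and $\eta$ swapped. Subtracting and noting that the interior gradient term cancels yields Green's second identity:
$$
\int_\Omega \eta\Delta\xi\,\opdVol - \int_\Omega \xi\Delta\eta\,\opdVol = \int_{\partial\Omega} \bigl(\eta\,\partial_\nu\xi - \xi\,\partial_\nu\eta\bigr)\,\dsigma.
$$

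Next I would exploit the membership $\xi,\eta \in \opKer(R_\varphi)$. By definition of $R_\varphi$, this means $\partial_\nu\xi = -\varphi\,(\xi\circ\epsilon)$ and $\partial_\nu\eta = -\varphi\,(\eta\circ\epsilon)$ on $\partial\Omega$. Substituting these relations into the boundary integrand gives
$$
\eta\,\partial_\nu\xi - \xi\,\partial_\nu\eta = -\varphi\,\eta\,\xi + \varphi\,\xi\,\eta = 0
$$
pointwise on $\partial\Omega$, so the right-hand side of Green's second identity vanishes identically, yielding the claim.

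There is no serious obstacle here; the only point that requires a brief comment is that the regularity class $C^{*,\lambda+2}$ with $\lambda>0$ guarantees that $\nabla\xi$ and $\nabla\eta$ are continuous up to $\partial\Omega$ and that $\Delta\xi,\Delta\eta$ are integrable, so the integration by parts step is unambiguously justified in the classical sense without recourse to distributional interpretations. The symmetry of $\Delta$ under Robin boundary conditions is exactly what the Robin operator $R_\varphi$ was tailored to encode, and the proof amounts simply to unwinding that definition.
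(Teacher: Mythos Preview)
Your proof is correct and follows essentially the same approach as the paper: the paper's proof simply notes that $\eta\,\partial_\nu\xi - \xi\,\partial_\nu\eta = 0$ along $\partial\Omega$ by the Robin condition and invokes Stokes' Theorem, which is exactly your argument with Green's second identity spelled out in more detail.
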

\begin{proof}Since $\xi,\eta\in C^{*,\lambda+2}_\oprob(\Omega)$, we have $\eta\partial_\nu\xi - \xi\partial_\nu\eta = 0$ along $\partial\Omega$, and the result follows by Stokes' Theorem.
\end{proof}

We now recall some basic elliptic theory. Let $\Delta$ be the Laplacian of $\Omega$.

\begin{proposition}\label{PropBijectivityOverSmoothRobinSpaces}
$\opId-\Delta$ defines a bijective map from $C^\infty_\oprob(\Omega)$ into $C^\infty(\Omega)$.
\end{proposition}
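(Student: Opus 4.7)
The plan is to treat the Robin problem for $\opId - \Delta$ as a classical elliptic boundary value problem in the Agmon--Douglis--Nirenberg sense, to prove Fredholmness of index zero, and then to close the argument with an energy-based injectivity statement.

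First, I would extend the operator to H\"older scales, viewing it as a bounded linear map $\opId - \Delta : C^{*,\lambda+2}_{\oprob}(\Omega) \to C^{*,\lambda}(\Omega)$ for $\lambda\in[0,\infty)\setminus\mathbb{N}$. Interior ellipticity is standard, and the Robin condition $R_\varphi$ is readily seen to satisfy the Shapiro--Lopatinski complementing condition, being a Neumann-type boundary operator perturbed by a zeroth-order term. The ADN theory then yields the Schauder-type a priori estimate
$$
\|f\|_{\lambda+2} \leq C\bigl( \|(\opId-\Delta)f\|_{\lambda} + \|f\|_{L^\infty}\bigr),
$$
and Fredholmness of the operator. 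To identify the index, I use a homotopy argument: the family $\opId - \Delta$ paired with Robin data $R_{t\varphi}$, $t\in[0,1]$, is a continuous path of Fredholm operators; at $t=0$ this is the Neumann problem for $\opId - \Delta$, which is bijective via Lax--Milgram applied to the manifestly coercive bilinear form $a(u,v):=\int_\Omega uv\,\opdVol + \int_\Omega \nabla u\cdot\nabla v\,\opdVol$, hence has index zero, and homotopy invariance propagates this to $t=1$. (Alternatively, Proposition \ref{PropOperatorIsSelfAdjoint} shows $\opId - \Delta$ is formally symmetric on $C^{*,\lambda+2}_{\oprob}(\Omega)$, which identifies kernel and cokernel and gives index zero directly.)

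For injectivity, if $f\in C^\infty_\oprob(\Omega)$ satisfies $f-\Delta f = 0$, pairing with $f$ and integrating by parts using $\partial_\nu f = -\varphi\, f|_{\partial\Omega}$ yields
$$
\int_\Omega f^2 \,\opdVol + \int_\Omega|\nabla f|^2\,\opdVol + \int_{\partial\Omega} \varphi f^2\,\dsigma = 0,
$$
which forces $f\equiv 0$ under an appropriate sign hypothesis on $\varphi$ (in particular $\varphi\geq 0$ suffices). Index zero plus injectivity yields surjectivity onto $C^{*,\lambda}(\Omega)$ for each finite $\lambda$; boundary Schauder bootstrapping then promotes $C^\infty$ data to $C^\infty_\oprob$ solutions, recovering the stated smooth bijectivity.

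The delicate step is injectivity: for a generic Robin coefficient, $1$ may be an eigenvalue of $\Delta$ with boundary condition $R_\varphi$, obstructing it (one already sees this in one dimension). In the applications of this paper the relevant $\varphi$ arises from the second fundamental form of $\partial M$, and the functional convexity hypothesis in force provides the sign (or spectral gap) needed to close the energy identity. I expect this geometric input, rather than the functional-analytic framework, to be where the real content of the proof lies.
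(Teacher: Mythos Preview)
Your overall architecture (Fredholm of index zero via ADN/Lopatinski plus a homotopy to the Neumann problem, then injectivity, then bootstrapping) is sound and is \emph{not} the paper's route. The paper works in Sobolev scales rather than H\"older scales: it quotes a black-box from Taylor's textbook asserting that $\opId-\Delta$ is already \emph{invertible} as a map $H^{k+2}_{\oprob}(\Omega)\to H^k(\Omega)$ for every $k$, and then obtains the smooth statement by writing $C^\infty_{\oprob}(\Omega)=\bigcap_k H^{k+2}_{\oprob}(\Omega)$ and $C^\infty(\Omega)=\bigcap_k H^k(\Omega)$, with uniqueness at each level forcing the solutions to agree across $k$. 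No Fredholm/index/homotopy machinery and no energy identity appear; injectivity and surjectivity are both inherited from the cited Sobolev result. So where you expected the ``real content'' to be a geometric sign argument, the paper instead outsources everything to the $L^2$ theory of the Robin Laplacian.

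On your injectivity step: you are right that the energy identity only closes under a sign hypothesis on $\varphi$, and right to flag this as the delicate point. But your guess about how convexity resolves it is backward. In the paper's application (Section~\ref{TheEllipticTheoryOfJacobiOperators}) the Robin operator is $\text{\rm J}^\theta\psi=\kappa\,\psi\circ\epsilon-\partial_\nu\psi$, i.e.\ $-R_{-\kappa}$ in the notation of Section~\ref{SubHeadHoelderSpaces}, so the relevant Robin coefficient is $\varphi=-\kappa$ with $\kappa=II(N,N)>0$ by strict convexity of $\partial M$. That is exactly the \emph{wrong} sign for your identity
\[
\int_\Omega f^2\,\opdVol + \int_\Omega|\nabla f|^2\,\opdVol + \int_{\partial\Omega}\varphi f^2\,\dsigma = 0,
\]
and indeed one can build one-dimensional examples with $\varphi<0$ where $\opId-\Delta$ on $C^\infty_{\oprob}$ has nontrivial kernel. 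So the proposition, read for arbitrary $\varphi\in C^\infty(\partial\Omega)$, is not salvageable by your energy argument, and the functional-convexity hypothesis does not help. The paper does not address this explicitly; it simply defers to the reference. If you want a self-contained argument that is robust to the sign of $\varphi$, note that what the subsequent sections actually need (Proposition~\ref{PropEllipticRegularityOfJacobiOperatorII}) is only that $(\opId-\Delta,R_\varphi)$ be Fredholm of index zero, which your homotopy argument already delivers without any injectivity claim.
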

%\label{ThmInvertibilityOverRobinSpaces}
\begin{proof} For all $k\in\Bbb{N}$, let $H^k(\Omega)$ be the Sobolev space of functions over $\Omega$ whose distributional derivatives up to and including order $k$ are of type $L^2$. For all $k$, by the Sobolev Trace Formula, %(c.f. Proposition $4.5$ of Section $4$ of \cite{TaylorI})
$R_\varphi$ defines a continuous linear mapping from $H^{k+2}(\Omega)$ into $H^{k+1/2}(\partial \Omega)$ from which it follows that $H^{k+2}_\oprob(\Omega):=\opKer(R_\varphi)$ is closed in $H^{k+2}(\Omega)$ and is therefore also a Hilbert space. By Exercise $3$ of Section $5.7$ of \cite{TaylorI}, for all $k\in\Bbb{N}$, $\opId-\Delta$ defines an invertible linear mapping from $H^{k+2}_\oprob(\Omega)$ into $H^k(\Omega)$. However:
$$
C^\infty_\oprob(\Omega) = \minter_{k\geqslant 0}H^{k+2}_\oprob(\Omega).
$$
In particular $\opId-\Delta$ has trivial kernel in $C^\infty_\oprob(\Omega)$ and injectivity follows. For surjectivity, choose $f\in C^\infty(\Omega)$. Since:
$$
C^\infty(\Omega) = \minter_{k\geqslant 0}H^k(\Omega),
$$
\noindent for all $k$, $f\in H^k(\Omega)$ and so there exists a unique function $g_k\in H^{k+2}_\oprob(\Omega)$ such that $(\opId-\Delta)g_k=f$. However, for $l\geqslant k$, $g_l\in H^{l+2}_\oprob(\Omega)\subseteq H^{k+2}_\oprob(\Omega)$, and so by uniqueness $g_l=g_k$. In particular:
$$
g:=g_k\in\minter_{k\geqslant 0}H^{k+2}_\oprob(\Omega) = C^\infty_\oprob(\Omega),
$$
and surjectivity follows.\end{proof}
\begin{proposition}\label{PropBijectivityOverSmoothSpaces}
$(\opId-\Delta,R_\varphi)$ defines a bijective map from $C^\infty(\Omega)$ into $C^\infty(\Omega)\times C^\infty(\partial \Omega)$.
\end{proposition}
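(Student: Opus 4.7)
The plan is to leverage Proposition \ref{PropBijectivityOverSmoothRobinSpaces} by splitting the problem into a boundary-realisation step and an interior PDE step. Write $L := (\opId - \Delta, R_\varphi) : C^\infty(\Omega) \to C^\infty(\Omega) \times C^\infty(\partial\Omega)$ for the map in question. Injectivity is immediate from the previous proposition: if $Lu = 0$, then $R_\varphi u = 0$ places $u$ in $C^\infty_{\oprob}(\Omega)$, while $(\opId - \Delta)u = 0$ there forces $u = 0$ by the injectivity clause of Proposition \ref{PropBijectivityOverSmoothRobinSpaces}.

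For surjectivity, given data $(f, g) \in C^\infty(\Omega) \times C^\infty(\partial\Omega)$, I would first construct an auxiliary $\tilde u \in C^\infty(\Omega)$ realising the boundary data, i.e.\ satisfying $R_\varphi \tilde u = g$. With such a $\tilde u$ in hand, set $h := f - (\opId - \Delta)\tilde u \in C^\infty(\Omega)$; Proposition \ref{PropBijectivityOverSmoothRobinSpaces} then furnishes a unique $v \in C^\infty_{\oprob}(\Omega)$ with $(\opId - \Delta) v = h$. The function $u := \tilde u + v$ then satisfies $(\opId - \Delta) u = f$ and, since $R_\varphi v = 0$ by construction, $R_\varphi u = g$, yielding surjectivity.

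The one substantive step is producing the extension $\tilde u$. In a tubular neighbourhood $\mathcal{U}$ of $\partial\Omega$, let $t : \mathcal{U} \to [0, \delta)$ be the inward distance function and $\pi : \mathcal{U} \to \partial\Omega$ the nearest-point projection, so that the outward conormal satisfies $\partial_\nu = -\partial_t$ along $\partial\Omega$. Choosing a cutoff $\chi \in C^\infty_0([0, \delta))$ with $\chi(0) = 1$, define
$$
\tilde u(x) := -\,t(x)\,(g\circ\pi)(x)\,\chi(t(x))
$$
on $\mathcal{U}$, extended by zero outside. Smoothness of $\tilde u$ across $\partial \mathcal{U}$ is guaranteed by the cutoff, while at $\partial\Omega$ a direct computation gives $\tilde u|_{\partial\Omega} = 0$ and $\partial_\nu \tilde u = -\partial_t \tilde u|_{t=0} = g$, whence $R_\varphi \tilde u = \varphi\cdot 0 + g = g$ as required. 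I do not anticipate any serious obstacle here; the entire argument is a routine reduction to the Robin-kernel case already handled in Proposition \ref{PropBijectivityOverSmoothRobinSpaces}, the only real care being bookkeeping of the sign of the outward conormal in the tubular neighbourhood.
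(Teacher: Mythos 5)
Your proposal is correct and follows essentially the same route as the paper: injectivity via the Robin-kernel case, and surjectivity by first realising the boundary data with an auxiliary function and then solving the residual interior equation using Proposition \ref{PropBijectivityOverSmoothRobinSpaces}. The only difference is that you explicitly construct the extension $\tilde u$ with $R_\varphi\tilde u=g$ (correctly, via the tubular neighbourhood), whereas the paper simply asserts its existence.
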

\begin{proof}Choose $f\in\opKer(\opId-\Delta,R_\varphi)$. In particular, $f\in C^\infty_\oprob(\Omega)$ and so, by Proposition \ref{PropBijectivityOverSmoothRobinSpaces}, $f=0$ and injectivity follows. Choose $(u,v)\in C^\infty(\Omega)\times C^\infty(\partial \Omega)$. Let $g\in C^\infty(\Omega)$ be such that $R_\varphi(g)=v$. By Proposition \ref{PropBijectivityOverSmoothRobinSpaces} again, there exists $f\in C^\infty_\oprob(\Omega)$ such that:
$$
(\opId-\Delta)f = u - (\opId - \Delta)g.
$$
We see that $(\opId - \Delta,R_\varphi)(f+g)=(u,v)$ and surjectivity follows, completing the proof.\end{proof}
\begin{proposition}\label{PropOperatorIsInvertible}
$(\opId-\Delta,R_\varphi)$ defines an invertible, linear mapping from $C^{*,\lambda+2}(\Omega)$ into $C^{*,\lambda}(\Omega)\times C^{*,\lambda+1}(\partial \Omega)$.
\end{proposition}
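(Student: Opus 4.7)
The plan is to extend the smooth bijection of Proposition \ref{PropBijectivityOverSmoothSpaces} to the Hölder setting by combining interior and boundary Schauder estimates for the Robin problem with the density of $C^\infty$ in the spaces $C^{*,\lambda}$. Continuity of the operator is clear: since $\opId-\Delta$ is of order two and $R_\varphi$ is of order one, the pair defines a bounded linear map from $C^{*,\lambda+2}(\Omega)$ into $C^{*,\lambda}(\Omega)\times C^{*,\lambda+1}(\partial\Omega)$. The key analytic input will be the a priori Schauder estimate for the Robin problem with a positive zero-order coefficient, namely
\[
\|f\|_{C^{\lambda+2}(\Omega)} \leqslant C\bigl(\|(\opId-\Delta)f\|_{C^\lambda(\Omega)} + \|R_\varphi f\|_{C^{\lambda+1}(\partial\Omega)}\bigr),
\]
valid for all $f\in C^{\lambda+2}(\Omega)$. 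The absence of a lower-order $\|f\|_{C^0}$ term on the right is possible precisely because the operator $\opId-\Delta$ together with the boundary condition has trivial kernel (as established in Proposition \ref{PropBijectivityOverSmoothSpaces}); if one first writes the inequality with this lower-order term included and then argues by contradiction using the compactness of the embedding $C^{\lambda+2}\hookrightarrow C^0$, any normalised sequence violating the cleaner estimate would subconverge to a non-zero element of the kernel, contradicting injectivity.

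Granting this estimate, injectivity follows at once. If $f\in C^{*,\lambda+2}(\Omega)$ lies in the kernel, then it is a classical solution of the homogeneous Robin problem for $\opId-\Delta$; elliptic regularity (again applicable because the right-hand side data is smooth and the boundary data satisfies the standard complementing condition for Robin problems) promotes $f$ to an element of $C^\infty(\Omega)$, and Proposition \ref{PropBijectivityOverSmoothSpaces} forces $f=0$.

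For surjectivity, I would proceed by density. Given $(u,v)\in C^{*,\lambda}(\Omega)\times C^{*,\lambda+1}(\partial\Omega)$, the definition of $C^{*,\lambda}$ furnishes sequences $u_n\in C^\infty(\Omega)$ and $v_n\in C^\infty(\partial \Omega)$ converging to $u$ and $v$ in $C^\lambda$ and $C^{\lambda+1}$ respectively. Proposition \ref{PropBijectivityOverSmoothSpaces} yields a sequence $f_n\in C^\infty(\Omega)$ with $(\opId-\Delta)f_n = u_n$ and $R_\varphi f_n = v_n$. Applying the Schauder estimate to the differences $f_n-f_m$ shows that $(f_n)$ is Cauchy in $C^{\lambda+2}$, hence converges to some $f\in C^{\lambda+2}(\Omega)$. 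Since each $f_n$ is smooth, the limit automatically lies in $C^{*,\lambda+2}(\Omega)$, and passing to the limit in $(\opId-\Delta)f_n=u_n$, $R_\varphi f_n=v_n$ (both operators being continuous between the relevant Hölder spaces) gives $(\opId-\Delta)f=u$ and $R_\varphi f = v$, as required.

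The main obstacle I anticipate is a clean justification of the Schauder estimate in the form above. Although Schauder theory for Robin (oblique derivative) boundary problems is standard and can be quoted from Lieberman or Gilbarg--Trudinger, one must verify that the complementing condition is satisfied by the conormal derivative $\partial_\nu$ and that the absence of a zero-order term in the estimate follows from the already-proved injectivity in the smooth category. Once these classical ingredients are in place, the density and elliptic regularity arguments above assemble into the proposition without further difficulty.
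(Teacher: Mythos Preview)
Your proposal is correct and follows essentially the same approach as the paper: Schauder estimates for the oblique derivative problem (Theorem 6.30 of \cite{GilbTrud}) together with the smooth bijection of Proposition \ref{PropBijectivityOverSmoothSpaces} and density of $C^\infty$ in $C^{*,\lambda}$. The only cosmetic differences are that the paper phrases surjectivity as ``closed image plus dense subset'' rather than your equivalent Cauchy-sequence argument, and it obtains injectivity by appealing to the Sobolev theory of Proposition \ref{PropBijectivityOverSmoothRobinSpaces} rather than H\"older elliptic regularity; your absorption of the $\|f\|_{L^\infty}$ term via compactness and injectivity is precisely what the paper means by ``deduce in the usual manner.''
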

\begin{proof} Denote $\lambda=k+\alpha$. Choose $f\in\opKer(\opId-\Delta,R_\varphi)$. As in the proof of Proposition \ref{PropBijectivityOverSmoothRobinSpaces}, since $f\in H^{k+2}_\oprob(\Omega)$, $f=0$, and it follows that $(\opId-\Delta,R_\varphi)$ is injective. By the global Schauder estimates for the oblique derivative problem (Theorem $6.30$ of \cite{GilbTrud}), there exists $C>0$ such that, for all $f\in C^{*,\lambda+2}(\Omega)$:
$$
\|f\|_{\lambda+2} \leqslant C(\|f\|_{L^\infty} + \|(\opId - \Delta)f\|_\lambda + \|R_\varphi (f)\|_{\lambda+1}),
$$
from which we deduce in the usual manner that the image of $(\opId-\Delta,R_\varphi)$ in $C^{*,\lambda}(\Omega)\times C^{*,\lambda+1}(\partial \Omega)$ is closed. However:
$$
C^\infty(\Omega)\times C^\infty(\partial \Omega) = (\opId - \Delta,R_\varphi)(C^\infty(\Omega))\subseteq (\opId - \Delta,R_\varphi)(C^{*,\lambda+2}(\Omega)),
$$
and since $C^\infty(\Omega)\times C^\infty(\partial \Omega)$ is a dense subset of $C^{*,\lambda}(\Omega)\times C^{*,\lambda+1}(\Omega)$, it follows that $\opId-\Delta$ is surjective. In particular, it is bijective, and the result now follows by the Closed Graph Theorem.\end{proof}

\subsection{The elliptic theory of Jacobi operators}\label{TheEllipticTheoryOfJacobiOperators} Fix $(x,[e])\in X\times\Cal{E}$. To simplify notation, we will drop the $(x,[e])$ dependence of the geometric quantities and operators for the remainder of this section.

We define $\textrm{L}:C^{*,\lambda+2}(\Sigma)\rightarrow C^{*,\lambda}(\Sigma)$ such that, for all $\varphi\in C^{*,\lambda+2}(\Sigma)$:
$$
\textrm{L}\varphi = -\varphi - (\opRic(N,N) + \|A\|^2)\varphi,
$$
so that, by Lemma \ref{PropFormulaForJacobiOperatorOfMeanCurvature}:
$$
\text{\rm J}^h = (\opId - \Delta) +\textrm{L}.
$$

\begin{proposition}\label{PropJacobiOperatorIsSelfAdjoint}
For all $\xi,\eta\in C^{*,\lambda+2}(\Sigma)$ such that $\text{\rm J}^\theta\xi=\text{\rm J}^\theta\eta=0$:
$$
\int_\Sigma \eta\text{\rm J}^h\xi\,\opdVol = \int_\Sigma \xi\text{\rm J}^h\eta\,\opdVol,
$$
\noindent where $\opdVol$ is the volume form of the metric $e^*g$.
\end{proposition}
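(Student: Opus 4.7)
The plan is to reduce the claim to an application of Green's (Stokes') identity on $\Sigma$, exploiting the particular form of the boundary condition $\textrm{J}^\theta\xi=0$.

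First, I would split $\textrm{J}^h$ into its principal and zeroth order parts, writing $\textrm{J}^h = -\Delta + V$ where $V:=-(\opRic(N,N)+\|A\|^2)$ is a smooth function on $\Sigma$ acting by multiplication. Since $V$ is a multiplication operator, the terms $\int_\Sigma \eta V\xi\,\opdVol$ and $\int_\Sigma \xi V\eta\,\opdVol$ are manifestly equal, so the entire content of the proposition reduces to showing
\[
\int_\Sigma \eta(-\Delta\xi)\,\opdVol = \int_\Sigma \xi(-\Delta\eta)\,\opdVol
\]
for $\xi,\eta\in C^{*,\lambda+2}(\Sigma)$ satisfying $\textrm{J}^\theta\xi=\textrm{J}^\theta\eta=0$.

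Next I would apply Green's identity on the compact manifold with boundary $\Sigma$ (just as was done in the proof of Proposition \ref{PropOperatorIsSelfAdjoint}) to get
\[
\int_\Sigma \bigl(\eta\Delta\xi - \xi\Delta\eta\bigr)\,\opdVol = \int_{\partial\Sigma}\bigl(\eta\,\partial_\nu\xi - \xi\,\partial_\nu\eta\bigr)\,\dsigma,
\]
where $\dsigma$ denotes the induced boundary measure. The key point is now to use Proposition \ref{PropFormulaForJacobiOperatorOfBoundaryAngle}: the boundary conditions $\textrm{J}^\theta\xi = \kappa\xi\circ\epsilon - \partial_\nu\xi = 0$ and $\textrm{J}^\theta\eta = \kappa\eta\circ\epsilon - \partial_\nu\eta = 0$ yield $\partial_\nu\xi = \kappa(\xi\circ\epsilon)$ and $\partial_\nu\eta = \kappa(\eta\circ\epsilon)$ along $\partial\Sigma$. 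Substituting gives
\[
\eta\,\partial_\nu\xi - \xi\,\partial_\nu\eta = \kappa(\eta\circ\epsilon)(\xi\circ\epsilon) - \kappa(\xi\circ\epsilon)(\eta\circ\epsilon) = 0
\]
pointwise on $\partial\Sigma$, so the boundary integral vanishes.

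There is essentially no obstacle here: this is the same two-line argument used for Proposition \ref{PropOperatorIsSelfAdjoint}, with the only difference being that the Robin data $\varphi$ is now the specific function $\kappa = II(N,N)$ determined by the geometry of $\partial M$. The mild regularity check that $\xi,\eta\in C^{*,\lambda+2}$ suffices to legitimately apply Stokes' theorem is immediate from $\lambda\geq 0$, and negating the identity gives the desired statement with $\textrm{J}^h$ in place of $-\Delta$.
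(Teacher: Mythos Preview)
Your proof is correct and is essentially the same as the paper's: the paper splits $\textrm{J}^h = (\opId - \Delta) + \textrm{L}$ with $\textrm{L}$ a multiplication operator (trivially symmetric), then invokes Proposition~\ref{PropOperatorIsSelfAdjoint}, whose proof is exactly the Robin-condition Stokes computation you carry out. The only difference is that you unpack that computation inline rather than citing the earlier proposition.
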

\begin{proof}Trivially, for all $\xi,\eta\in C^{*,\lambda+2}(\Sigma)$:
$$
\int_\Sigma \eta \textrm{L}\xi\,\opdVol = \int_\Sigma \xi\textrm{L}\eta\,\opdVol,
$$
and the result now follows by Proposition \ref{PropOperatorIsSelfAdjoint}.
\end{proof}

\begin{proposition}\label{PropEllipticRegularityOfJacobiOperatorI}
For all $(x,[e])\in X\times\mathcal{E}$, and for all $\lambda\in[0,\infty[\setminus\Bbb{N}$, if $\varphi\in C^{*,\lambda+2}(\Sigma)$ and $\text{\rm J}\varphi\in C^\infty(\Sigma)\times C^\infty(\Sigma)$, then $\varphi\in C^\infty(\Sigma)$.
\end{proposition}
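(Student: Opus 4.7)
The plan is a standard elliptic bootstrap using the invertibility established in Proposition \ref{PropOperatorIsInvertible}. First, one identifies $(\text{\rm J}^h,\text{\rm J}^\theta)$ as a lower-order perturbation of a bijective operator between the relevant H\"older scales. By the defining relation $\text{\rm J}^h = (\opId-\Delta)+\textrm{L}$, the operator $\textrm{L}$ is of order zero with smooth coefficients, hence preserves every space $C^{*,\mu}(\Sigma)$. By Proposition \ref{PropFormulaForJacobiOperatorOfBoundaryAngle},
$$
\text{\rm J}^\theta\varphi \;=\; \kappa\,(\varphi\circ\epsilon)-\partial_\nu\varphi \;=\; -R_{-\kappa}(\varphi),
$$
and since $\kappa\in C^\infty(\partial\Sigma)$, the pair $(\opId-\Delta,\,R_{-\kappa})$ falls within the scope of Proposition \ref{PropOperatorIsInvertible}.

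Given $\varphi\in C^{*,\lambda+2}(\Sigma)$ with $\text{\rm J}^h\varphi\in C^\infty(\Sigma)$ and $\text{\rm J}^\theta\varphi\in C^\infty(\partial\Sigma)$, one writes
$$
(\opId-\Delta)\varphi \;=\; \text{\rm J}^h\varphi-\textrm{L}\varphi \;\in\; C^{*,\lambda+2}(\Sigma),\qquad R_{-\kappa}\varphi \;=\; -\text{\rm J}^\theta\varphi \;\in\; C^\infty(\partial\Sigma).
$$
Since $\lambda\in[0,\infty)\setminus\Bbb{N}$, so is $\lambda+2$. Proposition \ref{PropOperatorIsInvertible} applied at exponent $\lambda+2$ then produces a unique $\psi\in C^{*,\lambda+4}(\Sigma)$ satisfying the same two equations. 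Because $\psi$ and $\varphi$ both lie in $C^{*,\lambda+2}(\Sigma)$ and solve the same system, the uniqueness clause of the same proposition at exponent $\lambda$ forces $\varphi=\psi$, whence $\varphi\in C^{*,\lambda+4}(\Sigma)$.

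Iterating this step, at each stage one gains two H\"older derivatives. Writing $\lambda=k_0+\alpha$ with $k_0\in\Bbb{N}$ and $\alpha\in(0,1)$, the shifted exponents $\lambda+2k=(k_0+2k)+\alpha$ never become integers, so Proposition \ref{PropOperatorIsInvertible} applies at every stage of the induction. We conclude that
$$
\varphi\;\in\;\bigcap_{k\geq 0} C^{*,\lambda+2k}(\Sigma)\;=\;C^\infty(\Sigma),
$$
as required. The only real subtlety is the sign and coefficient identification $\text{\rm J}^\theta=-R_{-\kappa}$ needed to invoke Proposition \ref{PropOperatorIsInvertible}, together with verifying that the non-integrality hypothesis is preserved along the bootstrap; the essential analytic regularity content is already encoded in that proposition.
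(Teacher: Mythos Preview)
Your proof is correct and follows essentially the same bootstrap as the paper: both decompose $\text{\rm J}^h=(\opId-\Delta)+\textrm{L}$, observe that $\textrm{L}\varphi\in C^{*,\lambda+2}(\Sigma)$ so that $\big((\opId-\Delta)\varphi,\text{\rm J}^\theta\varphi\big)\in C^{*,\lambda+2}(\Sigma)\times C^{*,\lambda+3}(\partial\Sigma)$, invoke Proposition~\ref{PropOperatorIsInvertible} at exponent $\lambda+2$ to produce a $C^{*,\lambda+4}$ solution, and then use uniqueness at exponent $\lambda$ to identify it with $\varphi$ and iterate. Your explicit identification $\text{\rm J}^\theta=-R_{-\kappa}$ and your remark that $\lambda+2k$ stays non-integral are welcome clarifications that the paper leaves implicit.
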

\begin{proof}Observe that:
$$
\left((\opId - \Delta)\varphi,\text{\rm J}^\theta\varphi\right) = \text{\rm J}\varphi - (\text{\rm L}\varphi,0) \in C^{*,\lambda+2}(\Sigma)\times C^{*,\lambda+3}(\partial\Sigma).
$$
Thus, by Proposition \ref{PropOperatorIsInvertible}, there exists $\varphi'\in C^{*,\lambda+4}(\Sigma)$ such that:
$$
\left((\opId - \Delta)\varphi',\text{\rm J}^\theta\varphi'\right) =\left((\opId - \Delta)\varphi,\text{\rm J}^\theta\varphi\right).
$$
By uniqueness, $\varphi=\varphi'$, and so $\varphi\in C^{*,\lambda+4}(\Sigma)$, and it follows by induction that $\varphi\in C^\infty(\Sigma)$, as desired.\end{proof}

\begin{proposition}\label{PropEllipticRegularityOfJacobiOperatorII}
For all $(x,[e])\in X\times\mathcal{E}$, the operator $\text{\rm J}$ defines a Fredholm map from $C^{*,\lambda+2}(\Sigma)$ to $C^{*,\lambda}(\Sigma)\times C^{*,\lambda+1}(\partial \Sigma)$ of Fredholm index zero. Moreover:
\begin{enumerate}
\item if we denote by $\opKer^{\lambda+2}(\text{\rm J})$ and $\opKer(\text{\rm J})$ the kernels of $\text{\rm J}$ in $C^{*,\lambda+2}(\Sigma)$ and $C^\infty(\Sigma)$ respectively, then:
$$
\opKer^{\lambda+2}(\text{\rm J}) = \opKer(\text{\rm J});\ \text{and}
$$
\item if we denote by $\opIm^{\lambda+2}(\text{\rm J})$ the image of $\text{\rm J}$ in $C^{*,\lambda}(\Sigma)\times C^{*,\lambda+1}(\partial\Sigma)$, then:
$$
\opIm^{\lambda+2}(\text{\rm J})^\perp = \left\{ (f,f\circ\epsilon)\ |\ f\in\opKer(\text{\rm J}) \right\},
$$
where the orthogonal complement is taken with respect to the $L^2$ inner-product of $e^*g$.
\end{enumerate}
\end{proposition}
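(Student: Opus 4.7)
The plan is to exhibit $\text{\rm J}$ as a compact perturbation of an invertible operator (from which Fredholmness of index zero is immediate), and to derive parts (1) and (2) from, respectively, elliptic regularity (Proposition \ref{PropEllipticRegularityOfJacobiOperatorI}) and a Green-type self-adjointness identity.

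For Fredholmness I would decompose $\text{\rm J}^h = (\opId - \Delta) + \text{\rm L}$, where $\text{\rm L}\varphi := -\varphi - (\opRic(N,N) + \|A\|^2)\varphi$ is of order zero, and observe from Proposition \ref{PropFormulaForJacobiOperatorOfBoundaryAngle} that $\text{\rm J}^\theta$ coincides up to sign with the Robin operator $R_{-\kappa}$ of Section \ref{SubHeadHoelderSpaces}. Writing $\text{\rm J} = T + K$ with $T := ((\opId - \Delta),\, -R_{-\kappa})$ and $K := (\text{\rm L}, 0)$, Proposition \ref{PropOperatorIsInvertible} (applied with boundary weight $-\kappa$) yields that $T$ is a topological isomorphism from $C^{*,\lambda+2}(\Sigma)$ onto $C^{*,\lambda}(\Sigma)\times C^{*,\lambda+1}(\partial\Sigma)$, while $K$ factors through the inclusion $C^{*,\lambda+2}(\Sigma) \hookrightarrow C^{*,\lambda}(\Sigma)$, which is compact on the compact manifold $\Sigma$ because $\lambda+2 > \lambda$. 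Hence $\text{\rm J}$ is Fredholm of index zero, and part (1) is then immediate: the inclusion $\opKer(\text{\rm J}) \subseteq \opKer^{\lambda+2}(\text{\rm J})$ is trivial, while any $\varphi \in \opKer^{\lambda+2}(\text{\rm J})$ satisfies $\text{\rm J}\varphi = 0 \in C^\infty(\Sigma) \times C^\infty(\partial\Sigma)$ and hence lies in $C^\infty(\Sigma)$ by Proposition \ref{PropEllipticRegularityOfJacobiOperatorI}.

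For part (2), substituting $\partial_\nu\varphi = \kappa(\varphi\circ\epsilon) - \text{\rm J}^\theta\varphi$ into Green's second identity for $\Delta$ yields the extended self-adjointness formula
$$
\int_\Sigma \eta\,\text{\rm J}^h\xi\,\opdVol - \int_{\partial\Sigma}(\eta\circ\epsilon)\,\text{\rm J}^\theta\xi\,\dsigma = \int_\Sigma \xi\,\text{\rm J}^h\eta\,\opdVol - \int_{\partial\Sigma}(\xi\circ\epsilon)\,\text{\rm J}^\theta\eta\,\dsigma,
$$
valid for all $\xi,\eta \in C^{*,\lambda+2}(\Sigma)$ and recovering Proposition \ref{PropJacobiOperatorIsSelfAdjoint} when the boundary terms vanish. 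Taking $\eta = f \in \opKer(\text{\rm J})$ and letting $\xi$ vary shows that $(f, \pm f\circ\epsilon)$ lies in $\opIm^{\lambda+2}(\text{\rm J})^\perp$, the precise sign being fixed by the sign conventions chosen in the $L^2$ pairing on the codomain pair. The reverse inclusion I would establish by a dimension count. Since $f \mapsto (f, f\circ\epsilon)$ is clearly injective on $\opKer(\text{\rm J})$, the right-hand side has dimension $\dim\opKer(\text{\rm J})$; on the other hand, Fredholm index zero together with part (1) forces $\opIm^{\lambda+2}(\text{\rm J})$ to have codimension $\dim\opKer(\text{\rm J})$ in the Banach space $C^{*,\lambda}(\Sigma)\times C^{*,\lambda+1}(\partial\Sigma)$, and density of $C^\infty$ in the H\"older spaces implies that any linearly independent family in the $L^2$ orthogonal complement yields the same number of linearly independent continuous functionals annihilating $\opIm^{\lambda+2}(\text{\rm J})$. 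Hence $\dim\opIm^{\lambda+2}(\text{\rm J})^\perp \leqslant \dim\opKer(\text{\rm J})$, and the two subspaces coincide.

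The main technical subtlety, and the step I would expect to require the most care, is precisely this bridge between codimension in the H\"older Banach space and dimension of the $L^2$ orthogonal complement: the two notions are not automatically equal because the $L^2$ pairing does not identify the dual of $C^{*,\lambda}$ with itself. Density of $C^\infty$ in $C^{*,\lambda}$ (noted in Section \ref{SubHeadHoelderSpaces}) circumvents the difficulty and ensures that the dimension counts agree.
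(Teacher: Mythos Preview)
Your proof is correct and follows essentially the same route as the paper: write $\text{\rm J}=(\opId-\Delta,\text{\rm J}^\theta)+(\text{\rm L},0)$ as invertible plus compact via Proposition~\ref{PropOperatorIsInvertible}, deduce (1) from Proposition~\ref{PropEllipticRegularityOfJacobiOperatorI}, and obtain (2) from the Green-type identity plus a dimension count. Your treatment is in fact more careful than the paper's on two points the paper leaves implicit: the sign in the boundary pairing (which the paper's statement fixes by convention), and the passage from Banach-space codimension of $\opIm^{\lambda+2}(\text{\rm J})$ to the dimension of its $L^2$-orthogonal complement, which you correctly justify via density of $C^\infty$ in $C^{*,\lambda}$.
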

\begin{proof}
Observe that $(\text{\rm L},0)$ maps $C^{*,\lambda+2}(\Sigma)$ into $C^{*,\lambda+2}(\Sigma)\times C^{*,\lambda+3}(\partial \Sigma)$. In particular, it defines a compact mapping from $C^{*,\lambda+2}(\Sigma)$ into $C^{*,\lambda}(\Sigma)\times C^{*,\lambda+1}(\partial \Sigma)$. However:
$$
\text{\rm J} = (\opId - \Delta,\text{\rm J}^\theta) + (\text{\rm L},0),
$$
Thus, by Proposition \ref{PropOperatorIsInvertible}, $\text{\rm J}$ defines a compact perturbation of an invertible mapping from $C^{*,\lambda+2}(\Sigma)$ to $C^{*,\lambda}(\Sigma)\times C^{*,\lambda+1}(\partial \Sigma)$ and is therefore Fredholm of index zero. Moreover, by Proposition \ref{PropEllipticRegularityOfJacobiOperatorI}:
$$
\opKer^{\lambda+2}(\text{\rm J}) \subseteq \opKer(\text{\rm J}).
$$
Since the reverse inclusion is trivial, these two spaces therefore coincide, and $(1)$ follows.

Denote by $\langle\cdot,\cdot\rangle$ the $L^2$ inner-product of $e^*g$. Bearing in mind Stokes' Theorem, for all $\varphi\in C^{*,\lambda+2}(\Sigma)$ and for all $\psi\in\opKer(\text{\rm J})$:
\begin{align*}
\langle \text{\rm J} \varphi,(\psi,\psi\circ\epsilon)\rangle
&= \int_\Sigma \psi\text{\rm J}^h\varphi\,\opdVol + \int_{\partial \Sigma} \psi\text{\rm J}^\theta\varphi\,\opdVol\\
&= \int_\Sigma \varphi \,\text{\rm J}^h\psi\,\opdVol + \int_{\partial \Sigma} \varphi \,\text{\rm J}^\theta\psi\,\opdVol\\
&= 0.
\end{align*}
It follows that $
\left\{(f,f\circ\epsilon)\ |\ f\in\opKer(\text{\rm J})\right\} \subseteq \opIm^{\lambda+2}(\text{\rm J})^\perp.$
However, since $\text{\rm J}$ is Fredholm of index zero, the dimension of the orthogonal complement of $\opIm^{\lambda+2}(\text{\rm J})$ cannot exceed that of $\opKer(\text{\rm J})$. Thus:
$$
\left\{(f,f\circ\epsilon)\ |\ f\in\opKer(\text{\rm J})\right\} = \opIm^{\lambda+2}(\text{\rm J})^\perp,
$$
and this completes the proof.\end{proof}

%%%%%%%%%%%%%%%%%%%%
% THE LOCAL THEORY %
%%%%%%%%%%%%%%%%%%%%

\section{The Local Theory}

\subsection{Local charts I: the smooth case}\label{TheLocalSolutionSpace} Let $Y$ be a compact neighbourhood in $X$. Let $e:Y\times \Sigma\rightarrow M$ be a smooth function such that, for all $y\in Y$, $e_y:=e(y,\cdot)$ is an element of $\hat{\mathcal{E}}$ with the property that $e_y(\Sigma)$ meets $\partial M$ orthogonally along $\partial \Sigma$ with respect to $g_y$. We refer to the triplet $(Y,g,e)$ simply by $Y$. The following result is useful for constructing local charts of the space of embeddings with boundary in $\partial M$:
\begin{theorem}
There exists a neighbourhood $U$ of the zero section in $TM$, and a smooth mapping $\tildeExp:U\rightarrow M$ with the following properties:
\begin{itemize}
\item[(1)] If $X_p$ is a vertical vector over the point $0_p\in TM$, then:
$$
D\tildeExp(0_p)(X_p) = X_p;
$$
\item[(2)] If $X_p\in U\minter T_p\partial M$, then:
$$
\tildeExp(X_p) \in \partial M.
$$
\end{itemize}
\end{theorem}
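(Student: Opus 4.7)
The plan is to take $\tildeExp$ to be the exponential map of an auxiliary Riemannian metric $\bar g$ on $M$ chosen so that $\partial M$ is totally geodesic with respect to $\bar g$. Once such a $\bar g$ is exhibited, property $(1)$ is immediate from the standard fact that the derivative of any exponential map at the origin of a tangent space equals the identity under the canonical identification of the vertical tangent space of $TM$ at $0_p$ with $T_pM$. Property $(2)$ will follow from the classical fact that a geodesic whose initial velocity is tangent to a totally geodesic submanifold remains in that submanifold for as long as it is defined.

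To produce $\bar g$, I fix an arbitrary smooth Riemannian metric $h$ on the closed manifold $\partial M$, and choose a collar, i.e.\ a diffeomorphism $\Phi:\partial M\times[0,\epsilon)\rightarrow V$ onto an open neighbourhood $V$ of $\partial M$ in $M$ satisfying $\Phi(q,0)=q$. On $V$, declare $\bar g$ to be the pullback under $\Phi^{-1}$ of the product metric $h+dt^2$. In these product coordinates the Christoffel symbols $\bar\Gamma^t_{ij}$ with $i,j$ tangential vanish identically along $t=0$, so the second fundamental form of $\partial M$ vanishes and $\partial M$ is totally geodesic in $(V,\bar g)$. Extend $\bar g$ to a smooth global metric on $M$ by patching with an arbitrary fixed background metric using a cutoff function equal to $1$ on a smaller collar $V'\subset V$ and supported in $V$; this keeps $\bar g$ exactly equal to the product metric on a neighbourhood of $\partial M$, so the totally geodesic property survives the extension.

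Set $\tildeExp:=\Exp^{\bar g}$. Since $M$ is compact, standard ODE theory gives an open neighbourhood $U$ of the zero section of $TM$ on which $\tildeExp$ is smoothly defined. Property $(1)$ is then the usual identity for the exponential map at the origin of a fibre. For property $(2)$, given $p\in\partial M$ and $X_p\in U\cap T_p\partial M$, the curve $t\mapsto\tildeExp(tX_p)$ is a $\bar g$-geodesic issuing from $p$ with initial velocity tangent to $\partial M$; because $\partial M$ is $\bar g$-totally geodesic, this curve lies entirely in $\partial M$, so in particular $\tildeExp(X_p)\in\partial M$.

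No step here presents a genuine obstacle: the only mild subtlety is arranging the partition-of-unity gluing so that $\bar g$ retains the exact product form on some neighbourhood of $\partial M$ (not merely up to first order), since otherwise the second fundamental form might fail to vanish; this is handled routinely by taking the cutoff to equal $1$ on $V'$ and to be supported in the larger collar $V$.
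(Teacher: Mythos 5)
Your proposal is correct and coincides with the paper's own (one-line) proof: the paper simply takes $\tildeExp$ to be the exponential map of a Riemannian metric for which $\partial M$ is totally geodesic, and you supply the standard collar/product-metric construction of such a metric together with the routine verification of properties $(1)$ and $(2)$.
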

\begin{remark}
We henceforth refer to $\tildeExp$ as the {\it modified exponential map}.
\end{remark}
\begin{proof}
It sufficies to let $\tildeExp:U\longrightarrow M$ be the exponential map of a Riemannian metric on $M$ with respect to which $\partial M$ is totally geodesic.
\end{proof}

Let $N:Y\times \Sigma\rightarrow M$ be such that, for all $y\in Y$, $N_y:=N(y,\cdot)$ is the unit, normal vector-field over $e_y$ with respect to $g_y$ which is compatible with the orientation. Define $\hat{\Phi}_Y:Y\times C^\infty(\Sigma)\rightarrow C^\infty(\Sigma,M)$ such that, for all $y\in Y$, for all $f\in C^\infty(\Sigma)$ and for all $p\in \Sigma$:
$$
\hat{\Phi}_Y(y,f)(p) = \tildeExp(f(p)N_y(p)).
$$
\begin{proposition}\label{PropImageIsEmbeddingForSmallNorm}
There exists $r>0$ such that for all $y\in Y$, if $\|f\|_{L^\infty}<r$, then $\hat{\Phi}_Y(y,f)$ is an element of $\hat{\mathcal{E}}$.
\end{proposition}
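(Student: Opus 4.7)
The plan is to factor $\hat{\Phi}_Y(y,f)$ through a tubular neighborhood parametrization, so that checking the embedding property reduces to checking that a graph map is an embedding. Concretely, I would introduce
\[
\psi_y : \Sigma \times (-\epsilon,\epsilon) \longrightarrow M, \qquad \psi_y(p,t) = \tildeExp(t\, N_y(p)).
\]
At $t=0$ we have $\psi_y(p,0) = e_y(p)$, and by property (1) of $\tildeExp$ together with $N_y(p)$ being linearly independent from $De_y(T_p\Sigma)$, the differential $D\psi_y(p,0)$ is an isomorphism. Hence $\psi_y$ is a local diffeomorphism near $\Sigma\times\{0\}$. Since $\Sigma$ is compact and $e_y,N_y$ depend smoothly on $y$, the inverse function theorem combined with a standard compactness-uniqueness argument yields some $\epsilon_0>0$, \emph{uniform in $y\in Y$}, such that $\psi_y$ is a diffeomorphism of $\Sigma\times(-\epsilon_0,\epsilon_0)$ onto an open tubular neighborhood of $e_y(\Sigma)$ in $M$.

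Next I would verify the boundary compatibility $\psi_y^{-1}(\partial M) = \partial\Sigma\times(-\epsilon_0,\epsilon_0)$, possibly after shrinking $\epsilon_0$. The inclusion $\supseteq$ is immediate from the orthogonality condition, which forces $N_y(p)\in T_{e_y(p)}\partial M$ for $p\in\partial\Sigma$, together with property (2) of $\tildeExp$. The reverse inclusion follows because $\partial\Sigma\times(-\epsilon_0,\epsilon_0)$ and $\psi_y^{-1}(\partial M)$ are both smooth codimension-one submanifolds of $\Sigma\times(-\epsilon_0,\epsilon_0)$ containing $\partial\Sigma\times\{0\}$ as a relative boundary; by the local uniqueness of integral submanifolds through a common submanifold of top dimension, they agree locally near $\partial\Sigma\times\{0\}$, and away from there the fact that $e_y(\mathrm{int}\,\Sigma)\subseteq\mathrm{int}\,M$ together with compactness forces (after possibly shrinking $\epsilon_0$) no other preimages.

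Granted this, set $r := \epsilon_0$. For any smooth $f$ with $\|f\|_{L^\infty}<r$, the graph map $g_f : \Sigma \to \Sigma\times(-\epsilon_0,\epsilon_0)$, $g_f(p)=(p,f(p))$, is tautologically a smooth embedding (regardless of the size of $df$, which is what makes the $L^\infty$ hypothesis sufficient). Since $\hat{\Phi}_Y(y,f) = \psi_y \circ g_f$, and the composition of an embedding with a diffeomorphism onto its image is an embedding, $\hat{\Phi}_Y(y,f)$ is a proper embedding of $\Sigma$ into $M$. Finally, $\hat{\Phi}_Y(y,f)(p)\in\partial M$ iff $g_f(p)\in\psi_y^{-1}(\partial M)=\partial\Sigma\times(-\epsilon_0,\epsilon_0)$, iff $p\in\partial\Sigma$. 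This yields both $\hat{\Phi}_Y(y,f)(\partial\Sigma)\subseteq\partial M$ and $\hat{\Phi}_Y(y,f)(\Sigma)\cap\partial M = \hat{\Phi}_Y(y,f)(\partial\Sigma)$, placing $\hat{\Phi}_Y(y,f)$ in $\hat{\mathcal{E}}$.

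The main technical step is the uniform choice of $\epsilon_0$ over $y\in Y$ and the precise identification $\psi_y^{-1}(\partial M)=\partial\Sigma\times(-\epsilon_0,\epsilon_0)$; the first relies on the smooth dependence $y\mapsto(e_y,N_y)$ and compactness of $Y\times\Sigma$, while the second uses the orthogonality hypothesis in an essential way. Everything else reduces to the trivial observation that a normal graph over a fixed embedded surface is automatically embedded whenever it stays inside a tubular neighborhood, which requires only $L^\infty$-smallness.
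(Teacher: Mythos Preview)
Your argument is correct and follows essentially the same route as the paper: both factor $\hat\Phi_Y(y,f)$ as $\psi_y\circ g_f$ (the paper writes $F_y\circ\hat f$) and observe that the graph map is automatically an embedding once $\|f\|_{L^\infty}$ is small enough for its image to lie in the tubular neighborhood. The uniform choice of $\epsilon_0$ over $y\in Y$ via compactness of $Y\times\Sigma$ is exactly what the paper uses.

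One point worth noting: you go further than the paper by explicitly verifying the second boundary requirement $\hat\Phi_Y(y,f)(\Sigma)\cap\partial M=\hat\Phi_Y(y,f)(\partial\Sigma)$ via the identification $\psi_y^{-1}(\partial M)=\partial\Sigma\times(-\epsilon_0,\epsilon_0)$; the paper's proof only checks the inclusion $\hat\Phi_Y(y,f)(\partial\Sigma)\subseteq\partial M$ and leaves the reverse implicit. Your justification for the reverse inclusion is morally right but the phrase ``local uniqueness of integral submanifolds'' is not quite the correct mechanism. A cleaner way: you have already shown $\partial\Sigma\times(-\epsilon_0,\epsilon_0)\subseteq\psi_y^{-1}(\partial M)$, and both are smooth hypersurfaces of the same dimension, so the former is open in the latter; combined with the fact that $e_y(\mathrm{int}\,\Sigma)\subseteq\mathrm{int}\,M$ and a compactness argument to exclude other preimages after shrinking $\epsilon_0$, equality follows.
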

\begin{proof}
By definition of $\tildeExp$, for all $y\in Y$, for all $f\in C^\infty(\Sigma)$ and for all $p\in \Sigma$, $\widehat{\Phi}_Y(y,f)(p)\in M$. For all $y\in Y$ and for all $p\in\partial \Sigma$, since $e_y(\Sigma)$ meets $\partial M$ orthogonally with respect to the metric $g_y$, $N_y(p)$ is tangent to $\partial M$ at $e_y(p)$. Therefore, for all $f\in C^\infty(\Sigma)$, the vector $f(p)N_y(p)$ is also tangent to $\partial M$ at $e_y(p)$, and so, by definition of $\tildeExp$, $\hat{\Phi}_Y(y,f)(p)\in\partial M$. We consider the mapping $F:Y\times \Sigma\times\Bbb{R}\rightarrow M$ given by:
$$
F(y,p,t) = \tildeExp(tN_y(p)).
$$
For all $y$, we denote $F_y:=F(y,\cdot,\cdot)$. By definition of $\tildeExp$, for all $y\in Y$ and for all $p\in \Sigma$, $DF_y$ is bijective at $(p,0)$. Since $e_y$ is an embedding for all $y\in Y$, there exists $r>0$ such that, for all $y\in Y$, the restriction of $F_y$ to $\Sigma\times(-r,r)$ is also an embedding. For $f\in C^\infty(\Sigma)$, we define $\hat{f}\in C^\infty(\Sigma,\Sigma\times\Bbb{R})$ by:
$$
\hat{f}(p) = (p,f(p)).
$$
If $\|f\|_{L^\infty}<r$, then $\hat{f}$ trivially defines an embedding of $\Sigma$ into $\Sigma\times(-r,r)$, and so, for all $y\in Y$, $\hat{\Phi}_Y(y,f)=F_y\circ\hat{f}$ defines an embedding of $\Sigma$ into $M$. We conclude that for all $y\in Y$ and for $\|f\|_{L^\infty}<r$, $\hat{\Phi}_Y(y,f)$ is an element of $\hat{\mathcal{E}}$, as desired.\end{proof}

We define $\mathcal{U}_Y\subseteq Y\times C^\infty(\Sigma)$ by:
$$
\mathcal{U}_Y = \left\{(y,f)\ |\ \|f\|_{L^\infty}<r\right\},
$$
where $r$ is as in Proposition \ref{PropImageIsEmbeddingForSmallNorm}. We define $\Phi_Y:\mathcal{U}_Y\rightarrow\mathcal{E}$ and $\Psi_Y:\mathcal{U}_Y\rightarrow Y\times\mathcal{E}$ such that for all $(y,f)\in\mathcal{U}_Y$:
$$
\Phi_Y(y,f) = [\hat{\Phi}_Y(y,f)],\qquad \Psi_Y(y,f) = (y,[\hat{\Phi}_Y(y,f)]).
$$

\begin{proposition}\label{PropGraphChartIsInjective}
$\Psi_Y$ is injective.
\end{proposition}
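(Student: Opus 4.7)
The plan is to suppose $\Psi_Y(y_1,f_1) = \Psi_Y(y_2,f_2)$ for two points in $\mathcal{U}_Y$ and then unravel the definitions in order to show $(y_1,f_1)=(y_2,f_2)$. Comparing the first factors immediately gives $y_1=y_2=:y$, so the content of the statement is that for a fixed $y\in Y$, the map $f\mapsto [\hat\Phi_Y(y,f)]$ is injective on the ball $\{\|f\|_{L^\infty}<r\}$. By definition of $\mathcal{E}$, the hypothesis then furnishes an orientation-preserving diffeomorphism $\alpha:\Sigma\to\Sigma$ with $\hat\Phi_Y(y,f_2)=\hat\Phi_Y(y,f_1)\circ\alpha$, and I must show $\alpha=\opId_\Sigma$ and $f_1=f_2$.

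The key observation is already contained in the proof of Proposition \ref{PropImageIsEmbeddingForSmallNorm}: the map $F_y:\Sigma\times\mathbb{R}\to M$ defined by $F_y(p,t)=\tildeExp(tN_y(p))$ is, for $|t|<r$, an embedding of $\Sigma\times(-r,r)$ into $M$. Since $(y,f_i)\in\mathcal{U}_Y$ gives $\|f_i\|_{L^\infty}<r$, both graph maps $p\mapsto(p,f_i(p))$ take values in $\Sigma\times(-r,r)$, and the relation
\[
F_y(p,f_2(p)) \;=\; \hat\Phi_Y(y,f_2)(p) \;=\; \hat\Phi_Y(y,f_1)(\alpha(p)) \;=\; F_y\bigl(\alpha(p),f_1(\alpha(p))\bigr)
\]
holds for every $p\in\Sigma$. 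Injectivity of $F_y$ on $\Sigma\times(-r,r)$ then forces $\alpha(p)=p$ and $f_2(p)=f_1(\alpha(p))=f_1(p)$ for every $p$, giving $\alpha=\opId_\Sigma$ and $f_1=f_2$ as required.

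I do not anticipate any serious obstacle: once one has the tubular-neighbourhood description from Proposition \ref{PropImageIsEmbeddingForSmallNorm}, the argument is essentially bookkeeping. The only mild care to take is to note that the $L^\infty$ bound $\|f_i\|_{L^\infty}<r$ is precisely what guarantees that $f_i(\alpha(p))$ lies in the injectivity range of $F_y$ for every $p$, so that the cancellation step is legitimate; this is exactly why $\mathcal{U}_Y$ was defined using the same constant $r$ as in Proposition \ref{PropImageIsEmbeddingForSmallNorm}.
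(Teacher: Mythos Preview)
Your proof is correct and essentially identical to the paper's own argument: both invoke the embedding $F_y:\Sigma\times(-r,r)\to M$ from Proposition \ref{PropImageIsEmbeddingForSmallNorm}, factor $\hat\Phi_Y(y,f_i)$ through $F_y$ via the graph maps, and use injectivity of $F_y$ to conclude $\alpha=\opId_\Sigma$ and $f_1=f_2$.
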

\begin{proof} Let $(y,f),(y',f')\in\mathcal{U}_Y$ be such that $\Psi_Y(y,f)=\Psi_Y(y',f')$. In particular, $y=y'$ and $\Phi_Y(y,f)=\Phi_Y(y',f')$. There therefore exists an orientation-preserving diffeomorphism $\alpha$ of $\Sigma$ such that $\hat{\Phi}_Y(y,f')=\hat{\Phi}_Y(y,f)\circ\alpha$. Let $r$ be as in Proposition \ref{PropImageIsEmbeddingForSmallNorm} and define $\hat{f},\hat{f}^\prime:\Sigma\longrightarrow \Sigma\times(-r,r)$ by $\hat{f}(p) = (p,f(p))$ and $\hat{f}^\prime(p) = (p,f'(p))$. Define $F_y:\Sigma\times(-r,r)\rightarrow M$ by $F_y(p,t) = \tildeExp(tN_y(p))$. By definition of $\hat{\Phi}_Y$:
$$
F_y\circ\hat{f}\circ\alpha = \hat{\Phi}_Y(y,f)\circ\alpha = \hat{\Phi}_Y(y,f^\prime) = F_y\circ\hat{f}^\prime.
$$
However, by definition of $r$, $F_y$ is an embedding, and composing the above relation with $F_y^{-1}$ yields, for all $p\in \Sigma$:
$$
(\alpha(p),(f\circ\alpha)(p)) = (\hat{f}\circ\alpha)(p) = \hat{f}^\prime(p) = (p,f'(p)).
$$
It follows that $\alpha$ coincides with the identity and $f'$ coincides with $f$, and $\Psi_Y$ is therefore injective as desired
\end{proof}

\begin{proposition}\label{PropGraphChartIsOpenMapping}
$\Psi_Y$ is an open mapping.
\end{proposition}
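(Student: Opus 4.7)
The plan is to construct, on a small $C^\infty$-neighbourhood of any given point $\Psi_Y(y_0,f_0)$ in $Y\times\mathcal{E}$, an explicit continuous local inverse to $\Psi_Y$; combined with the injectivity established in Proposition \ref{PropGraphChartIsInjective}, this will give openness directly.

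First, fix $(y_0,f_0)\in\mathcal{U}_Y$ and put $[e_0]=\Phi_Y(y_0,f_0)$. I would use compactness of $Y$ together with the argument of Proposition \ref{PropImageIsEmbeddingForSmallNorm} to shrink $r$ if necessary, so that the tubular map
$$
F_y:\Sigma\times(-r,r)\to M,\qquad F_y(p,t):=\tildeExp(tN_y(p))
$$
is a smooth embedding for every $y\in Y$. Since $e_y(\Sigma)$ meets $\partial M$ orthogonally along $\partial\Sigma$, property $(2)$ of the modified exponential map will also force $F_y(\partial\Sigma\times(-r,r))\subseteq\partial M$. Let $\pi_1,\pi_2$ denote the two projections of $\Sigma\times(-r,r)$.

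Next, given $(y,[e])$ in a suitable $C^\infty$-neighbourhood of $(y_0,[e_0])$, I would pick a representative $e\in\hat{\mathcal{E}}$ of $[e]$ that is $C^\infty$-close to $\hat\Phi_Y(y_0,f_0)$, which is possible by definition of the quotient topology on $\mathcal{E}$. For $y$ near $y_0$ and $e$ sufficiently close, the image of $e$ will lie inside $F_y(\Sigma\times(-r,r))$, so
$$
\beta := \pi_1\circ F_y^{-1}\circ e : \Sigma\to\Sigma
$$
will be $C^\infty$-close to $\opId_\Sigma$ (since at the reference pair this composition is exactly the identity), and hence an orientation- and boundary-preserving diffeomorphism of $\Sigma$. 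Setting $\alpha=\beta^{-1}$ and $\tilde f=\pi_2\circ F_y^{-1}\circ e\circ\alpha\in C^\infty(\Sigma)$, the composition $F_y^{-1}\circ e\circ\alpha$ will be the graph of $\tilde f$, and so $e\circ\alpha=\hat\Phi_Y(y,\tilde f)$, whence $\Psi_Y(y,\tilde f)=(y,[e])$. A short verification shows that $\tilde f$ depends only on $[e]$ and not on the chosen representative: replacing $e$ by $e\circ\gamma$ replaces $\alpha$ by $\gamma^{-1}\circ\alpha$, and the two factors of $\gamma$ cancel in the defining formula for $\tilde f$.

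Finally, all operations used (composition, inversion of diffeomorphisms near $\opId_\Sigma$, and the smooth dependence $y\mapsto F_y^{-1}$) are continuous in the $C^\infty$ topology, so $(y,[e])\mapsto(y,\tilde f)$ will define a continuous inverse with values in $Y\times C^\infty(\Sigma)$. Upon shrinking the starting neighbourhood further, $\tilde f$ will be $C^\infty$-close to $f_0$, placing $(y,\tilde f)$ inside $\mathcal{U}_Y$. Carried out around every point of an open set $U\subseteq\mathcal{U}_Y$, this will exhibit $\Psi_Y(U)$ as a neighbourhood of each of its points, hence as open. The main obstacle to be checked is the $C^\infty$-continuity of the inversion $\beta\mapsto\beta^{-1}$ in the diffeomorphism group; this is a standard fact for diffeomorphisms close to the identity in the $C^\infty$ Fr\'echet topology, and the remaining items are routine bookkeeping.
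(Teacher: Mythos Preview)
Your proposal is correct and follows essentially the same approach as the paper: both use the tubular map $F_y(p,t)=\tildeExp(tN_y(p))$, compose $F_y^{-1}$ with a representative embedding, and split via $\pi_1,\pi_2$ to obtain a reparametrising diffeomorphism $\beta$ close to $\opId_\Sigma$ and a graph function close to $f_0$. The only cosmetic difference is packaging: the paper argues sequentially (take $(y_m,[e_m])\to\Psi_Y(y_0,f_0)$, lift to representatives, build $\beta_m$ and $f_m=\tilde f_m\circ\beta_m^{-1}$, and show $(y_m,f_m)\to(y_0,f_0)$), whereas you phrase the same computation as the construction of a continuous local inverse; your extra verification that $\tilde f$ is independent of the representative is precisely what makes the two formulations equivalent.
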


\begin{proof} Choose $(y,f)\in\mathcal{U}_Y$ and let $\Omega$ be a neighbourhood of $(y,f)$ in $\mathcal{U}_Y$. Denote $(y,[e])=\Psi_Y(y,f)$ and let $(y_m,[e_m])_\minn\in Y\times\mathcal{E}$ be a sequence converging to $(y,[e])$. In particular, $(y_m)_\minn$ converges to $y$. Let $r$ be as in Proposition \ref{PropImageIsEmbeddingForSmallNorm}. We define $\hat{f}:\Sigma\rightarrow \Sigma\times(-r,r)$, by $\hat{f}(p) = (p,f(p)),$ and $F:Y\times \Sigma\times (-r,r)\rightarrow M$, by $F(y,p,t) = \tildeExp(tN_y(p))$. By definition of $r$, $F_y:=F(y,\cdot,\cdot)$ is an embedding for all $y\in Y$. By definition, $[e]=[F_y\circ\hat{f}]$. Since $([e_m])_\minn$ converges to $[e]$, there exists a sequence $(\alpha_m)_\minn$ of orientation-preserving diffeomorphisms of $\Sigma$ such that $(e_m\circ\alpha_m)_\minn$ converges to $F_y\circ\hat{f}$. Bearing in mind that, in addition, $(y_m)_\minn$ converges to $y$, there exists $K\in\Bbb{N}$ such that for all $m\geqslant K$, $(e_m\circ\alpha_m)$ takes values in $F_{y_m}(\Sigma\times(-r,r))$ and that $(F_{y_m}^{-1}\circ e_m\circ\alpha_m)_{m\geqslant K}$ converges to $\hat{f}$.

Let $\pi_1:\Sigma\times\Bbb{R}\rightarrow \Sigma$ and $\pi_2:\Sigma\times\Bbb{R}\rightarrow\Bbb{R}$ be the canonical projections onto the first and second factors respectively. For all $m\geqslant K$, we denote:
$$
\beta_m = \pi_1\circ F_{y_m}^{-1}\circ e_m\circ\alpha_m,\qquad \widetilde{f}_m = \pi_2\circ F_{y_m}^{-1}\circ i_m\circ\alpha_m.
$$
Observe that $(\beta_m)_{m\geqslant K}$ converges to the identity mapping. Thus, upon increasing $K$ if necessary, we may assume that $\beta_m$ is a diffeomorphism for all $m$ and that $(\beta_m)^{-1}_{m\geqslant K}$ also converges to the identity mapping. For all $m\geqslant K$, we denote:
$$
f_m=\tilde{f}_m\circ\beta_m^{-1}.
$$
Since $(\tilde{f}_m)_{m\geqslant M}$ converges to $f$, so too does $(f_m)_\minn$. In particular, upon increasing $K$ further if necessary, we may assume that $(y_m,f_m)\in\Omega$ for all $m$. However, for all $m$, $
e_m\circ\alpha_m\circ\beta_m^{-1} = \hat{\Phi}_Y(y_m,f_m)$. In other words:
$$
(y_m,[e_m]) = (y_m,\Phi(Y)(y_m,f_m)) = \Psi(Y)(y_m,f_m).
$$
It follows that $(y_m,[e_m])\in\Psi_Y(\Omega)$ for all $m\geqslant K$, and we conclude that $\Psi_Y$ is an open mapping as desired.
\end{proof}

We denote the image $\Psi_Y(\mathcal{U}_Y) $ in $Y\times\mathcal{E}$ by  $\mathcal{V}_Y$.
By Proposition \ref{PropGraphChartIsOpenMapping}, $\mathcal{V}_Y$ is an open subset of $Y\times\mathcal{E}$. By Proposition \ref{PropGraphChartIsInjective}, $\Psi_Y$ defines a bijective mapping from $\mathcal{U}_Y$ into $\mathcal{V}_Y$, and by Proposition \ref{PropGraphChartIsOpenMapping} again, this mapping is a homeomorphism. We thus refer to the triplet $(\Psi_Y,\mathcal{U}_Y,\mathcal{V}_Y)$ as the {\it graph chart} of $X\times\mathcal{E}$ {\it over} $Y$. When only $e_0:=e(x_0)$ is a-priori given, we refer to the triplet $(\Psi_Y,\mathcal{U}_Y,\mathcal{V}_Y)$ as a {\it graph chart} of $X\times\mathcal{E}$ {\it about} $(x_0,e_0)$.

We define the {\it mean curvature function} $H_Y:\mathcal{U}_Y\rightarrow C^\infty(\Sigma)$ and the {\it boundary angle function} $\Theta_Y:\mathcal{U}_Y\rightarrow C^\infty(\partial \Sigma)$ such that, for all $(y,f)\in\mathcal{U}_Y$:
$$
H_Y(y,f)= H_{f,\hat{\Phi}_Y(y,f)},\qquad \Theta_Y(y,f) = \Theta_{f,\hat{\Phi}_Y(y,f)}.
$$
We define $\mathcal{Z}_{Y,\loc}\subseteq\mathcal{U}_Y$ by:
$$
\mathcal{Z}_{Y,\loc} = \left\{ (y,f)\ |\ H_Y(y,f)=0,\Theta_Y(y,f)=0\right\},
$$
and we call $\mathcal{Z}_{Y,\loc}$ the {\it local solution space} in the graph chart. Observe in particular that:
$$
\mathcal{Z}_{Y,\loc} = \Psi_Y^{-1}(\mathcal{Z}(Y)\minter\mathcal{V}_Y).
$$
In later sections, where no ambiguity arises, we will often suppress $Y$ and simply write $\hat{\Phi}$, $\Phi$, $\Psi$ and so on respectively for $\hat{\Phi}_Y$, $\Phi_Y$ and $\Psi_Y$ and so on.

\subsection{Local charts II: The H\"older case}\label{BanachSpaces}We consider families of H\"older spaces parametrised by $\lambda\in[0,\infty)\setminus\Bbb{N}$ (c.f. Section \ref{SubHeadHoelderSpaces}).  Let $r>0$ be as in Proposition \ref{PropImageIsEmbeddingForSmallNorm} and define $\mathcal{U}^{\lambda+1}_Y\subseteq Y\times C^{*,\lambda+1}(\Sigma)$ by:
$$
\mathcal{U}^{\lambda+1}_Y = \left\{(y,f)\ |\ \|f\|_{L^\infty} < r\right\}.
$$
We denote by $\hat{\mathcal{E}}^{\lambda+1}$ the space of all $C^{*,\lambda+1}$ embeddings $e:\Sigma\rightarrow M$ with the properties that $e(\partial \Sigma)\subseteq\partial M$ and $e(\partial\Sigma)=e(\Sigma)\cap\partial M$, and we define $\hat{\Phi}^{\lambda+1}:\mathcal{U}^{\lambda+1}_Y\longrightarrow\hat{\mathcal{E}}^{\lambda+1}$ such that for all $(x,f)\in\mathcal{U}^{\lambda+1}_Y$ and for all $p\in \Sigma$:
$$
\hat{\Phi}^{\lambda+1}_Y(y,f)(p) = \tildeExp(f(p)N_y(p)).
$$
We define the {\it mean curvature function} $H^{\lambda+2}_Y:\mathcal{U}^{\lambda+2}_Y\rightarrow C^{*,\lambda}(\Sigma)$ such that, for all $(y,f)\in\mathcal{U}^{\lambda+2}_Y$:
$$
H^{\lambda+2}_Y(y,f) = H_{y,\hat{\Phi}^{\lambda+2}_Y(y,f)}.
$$

We recall that any function that may be constructed via a finite combination of addition, multiplication, differentiation and post-composition by smooth functions defines a smooth function of Banach spaces. It follows in particular that $H^{\lambda+2}_Y$ defines a smooth function between two Banach spaces. For each $k$, we denote by $D_kH^{\lambda+2}_Y$ the partial derivative of $H^{\lambda+2}_Y$ with respect to the $k$'th component in $\mathcal{U}^{\lambda+2}_Y\subseteq Y\times C^{*,\lambda+2}(\Sigma)$. Observe, in particular, that by definition of the Jacobi operator of mean curvature:
\begin{equation}
D_2H^{\lambda+2}_Y(x,0)=\text{\rm J}^h_{x,e}.
\end{equation}

We define the {\it boundary angle function} $\Theta^{\lambda+1}_Y:\mathcal{U}^{\lambda+1}_Y\rightarrow C^{*,\lambda}(\partial \Sigma)$ such that for all $(y,f)\in\mathcal{U}^{\lambda+1}_Y$:
$$
\Theta^{\lambda+1}_Y(y,f) = \Theta_{y,\hat{\Phi}^{\lambda+1}_Y(y,f)}.
$$
Observe that $\Theta^{\lambda+1}_Y$ also defines a smooth function between two Banach spaces. For each $k$, we denote by $D_k\Theta^{\lambda+1}_Y$ the partial derivative of $\Theta^{\lambda+1}_Y$ with respect to the $k$'th component in $\mathcal{U}^{\lambda+1}_Y\subseteq Y\times C^{*,\lambda+1}(\Sigma)$. Observe, in particular, that by definition of the Jacobi operator of the boundary angle:
\begin{equation}
D_2\Theta^{\lambda+1}_Y(x,0) = \text{\rm J}^\theta_{x,e}.
\end{equation}
Finally, we denote $\mathcal{Z}^{\lambda+2}_{Y,\loc}\subseteq\mathcal{U}^{\lambda+2}_Y$ by:
$$
\mathcal{Z}^{\lambda+2}_{Y,\loc} = \left\{ (y,f)\ |\ H^{\lambda+2}_Y(y,f)=0,\ \Theta^{\lambda+2}_Y(y,f)=0\right\}.
$$

We recall the following classical result concerning the regularity of embeddings of prescribed mean curvature:

\begin{theorem}\label{ThmEllipticRegularityOfPrescribedMeanCurvature}Let $g$ be a smooth metric over $M$, let $h:M\rightarrow\Bbb{R}$ be a smooth function, and let $\Sigma\subseteq M$ be an embedded compact submanifold of $M$ of class $C^{\lambda+2}$ such that $\partial\Sigma\subseteq\partial M$ and $\partial\Sigma=\Sigma\cap\partial M$. If $\Sigma$ meets $\partial M$ orthogonally along $\partial\Sigma$ with respect to the metric $g$ and if the mean curvature of $\Sigma$ is at every point $p\in\Sigma$ equal to $h(p)$, then $\Sigma$ is smooth.
\end{theorem}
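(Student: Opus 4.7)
The argument is entirely local and is a standard elliptic bootstrap. Fix a point $p\in\Sigma$ and work in a coordinate chart of $M$ around $p$; we must show that, in this chart, $\Sigma$ is smooth near $p$. There are two cases to consider, depending on whether $p$ is an interior point of $\Sigma$ or a point of $\partial\Sigma$.

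\textbf{Interior case.} If $p\in\Sigma\setminus\partial\Sigma$, choose coordinates in $M$ adapted to $T_p\Sigma$, so that $\Sigma$ is locally expressed as the graph of a $C^{\lambda+2}$ function $u$ over an open subset of $T_p\Sigma\cong\R^2$. The prescribed mean curvature condition $H=h$ translates into a quasilinear second-order elliptic equation $Q(u,Du,D^2u)=h$, where the coefficients are smooth functions of their arguments (they depend on the smooth metric $g$, not on derivatives of $u$ higher than first order). Freezing the coefficients at $u$, we regard this as a linear elliptic equation $a^{ij}(x)D_{ij}u + b(x)=0$ whose coefficients and right-hand side lie in $C^{\lambda+1}$. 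Interior Schauder estimates (Theorem 6.17 of \cite{GilbTrud}) then give $u\in C^{\lambda+3}$, so that the coefficients and right-hand side in fact lie in $C^{\lambda+2}$, which yields $u\in C^{\lambda+4}$. Iterating, $u\in C^\infty$.

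\textbf{Boundary case.} If $p\in\partial\Sigma$, choose coordinates of $M$ around $p$ in which $\partial M$ is the hyperplane $\{x^3=0\}$ and $T_p\Sigma$ is spanned by a horizontal vector tangent to $\partial M$ and a vector orthogonal to $\partial M$, i.e.\ pointing in the $x^3$ direction. In these coordinates $\Sigma$ is locally the graph of a $C^{\lambda+2}$ function $u$ defined on a relatively open half-disk $\Omega\subseteq\R^2_+:=\{(y^1,y^2)\ |\ y^2\geqslant 0\}$, where $\{y^2=0\}$ corresponds to $\partial M$. As before, the prescribed mean curvature condition gives a quasilinear elliptic equation $Q(u,Du,D^2u)=h$ in the interior of $\Omega$. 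The orthogonality of $\Sigma$ and $\partial M$ along $\partial\Sigma$ with respect to $g$ translates, via a direct calculation from the fact that the conormal to $\partial\Sigma$ inside $\Sigma$ lies in $T\partial M$, into a nonlinear oblique derivative boundary condition of the form $B(u,Du)=0$ on $\{y^2=0\}$, where the obliqueness comes from the fact that the boundary condition involves a nontrivial first-order combination of derivatives of $u$ whose normal component is bounded below in a neighbourhood of $p$. Freezing the coefficients at $u$, this system becomes a linear oblique derivative problem with $C^{\lambda+1}$ coefficients and data. The global Schauder estimates for the oblique derivative problem (Theorem 6.30 of \cite{GilbTrud}, already invoked in Proposition~\ref{PropOperatorIsInvertible}) then upgrade $u$ from $C^{\lambda+2}$ to $C^{\lambda+3}$ on a smaller half-disk. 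Iterating, $u\in C^\infty$ near $p$.

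\textbf{Main difficulty.} The only nonroutine step is the correct formulation and handling of the boundary condition, and in particular verifying that the linearised problem is genuinely of oblique type (i.e.\ the coefficient of the normal derivative of $u$ on $\{y^2=0\}$ is nonzero at $p$); given the choice of coordinates around $p$ in which $T_p\Sigma$ is spanned by a tangent vector to $\partial M$ together with the inward normal to $\partial M$, this obliqueness is an immediate consequence of the transversality of $\Sigma$ and $\partial M$ and of the orthogonality hypothesis. Once this is set up, the bootstrap follows the classical pattern, and the conclusion $\Sigma\in C^\infty$ is obtained by combining the interior and boundary cases via a covering of $\Sigma$.
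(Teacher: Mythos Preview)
Your proof is correct and follows exactly the approach the paper intends: the paper's own proof is the single line ``This follows by applying, for example, Schauder estimates \cite{GilbTrud},'' and your argument is precisely the standard bootstrap (interior Schauder plus the oblique-derivative Schauder estimate of Theorem~6.30 of \cite{GilbTrud}) that this sentence abbreviates. Nothing further is needed.
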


\begin{proof} This follows by applying, for example, Schauder estimates \cite{GilbTrud}.
\end{proof}

Expressed in terms of graph charts, this yields:

\begin{proposition}\label{PropEllipticRegularityOfPMCInGraphCharts}
If $(y,f)\in\mathcal{U}^{\lambda+2}_Y$ is such that $H^{\lambda+2}_Y(y,f)\in C^\infty(\Sigma)$, then $f\in C^\infty(\Sigma)$.
\end{proposition}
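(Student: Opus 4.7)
The plan is to reduce the statement to Theorem \ref{ThmEllipticRegularityOfPrescribedMeanCurvature} applied to the image submanifold, and then to extract smoothness of the defining function $f$ from smoothness of this submanifold using the smoothness of the underlying graph parametrisation.

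First, I would unpack what the hypothesis says geometrically. Given $(y,f)\in\mathcal{U}^{\lambda+2}_Y$, the map $\hat{\Phi}^{\lambda+2}_Y(y,f):\Sigma\to M$ is a $C^{\lambda+2}$ embedding (since $f\in C^{*,\lambda+2}(\Sigma)$, $N_y$ is smooth in $p\in\Sigma$, and $\tildeExp$ is smooth). Let $\Sigma':=\hat{\Phi}^{\lambda+2}_Y(y,f)(\Sigma)$. By construction of $\tildeExp$ and of the graph chart, $\Sigma'$ is a $C^{\lambda+2}$-embedded compact submanifold of $M$ with $\partial\Sigma'\subseteq\partial M$ and $\partial\Sigma'=\Sigma'\minter\partial M$. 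The mean curvature of $\Sigma'$, pulled back to $\Sigma$ via $\hat{\Phi}^{\lambda+2}_Y(y,f)$, is exactly $H^{\lambda+2}_Y(y,f)$, which by hypothesis lies in $C^\infty(\Sigma)$. In particular the mean curvature of $\Sigma'$ is the restriction to $\Sigma'$ of a smooth function defined in a neighbourhood of $\Sigma'$ in $M$ (e.g.\ the post-composition with $\hat{\Phi}^{\lambda+2}_Y(y,f)^{-1}$, smoothly extended).

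Next I would apply Theorem \ref{ThmEllipticRegularityOfPrescribedMeanCurvature} to conclude that $\Sigma'$ is a smooth submanifold of $M$. This is where the analytic content sits: in graph form, the prescribed mean curvature equation is a quasilinear second-order elliptic PDE with smooth coefficients and smooth right-hand side, and Schauder bootstrap takes a $C^{\lambda+2}$ solution to a $C^{\lambda+3}$ solution, and then iteratively to $C^\infty$, both in the interior and up to the boundary (the boundary regularity being handled by standard oblique-derivative Schauder estimates of the sort used in Proposition \ref{PropOperatorIsInvertible}).

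Finally, to pass from smoothness of the submanifold $\Sigma'$ back to smoothness of $f$, I would use the smooth tubular map $F_y:\Sigma\times(-r,r)\to M$, $F_y(p,t)=\tildeExp(tN_y(p))$. By the choice of $r$ in Proposition \ref{PropImageIsEmbeddingForSmallNorm}, $F_y$ is a smooth diffeomorphism onto its image, an open neighbourhood of $e_y(\Sigma)$ in $M$ which contains $\Sigma'$. Since $F_y$ is a smooth diffeomorphism, $F_y^{-1}(\Sigma')$ is a smooth submanifold of $\Sigma\times(-r,r)$; but by construction this preimage coincides with the graph $\{(p,f(p))\ |\ p\in\Sigma\}$ of $f$. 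A graph in $\Sigma\times(-r,r)$ is a smooth submanifold if and only if its defining function is smooth, so $f\in C^\infty(\Sigma)$, as required.

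The main obstacle is the elliptic regularity step packaged in Theorem \ref{ThmEllipticRegularityOfPrescribedMeanCurvature}; once that is granted, the rest is bookkeeping about graph charts. The only mildly subtle point is that the boundary bootstrap at $\partial\Sigma$ uses that $e_y(\Sigma)$ meets $\partial M$ orthogonally, which is part of the standing assumption on the family $e$ defining the chart $(Y,g,e)$ and which ensures the PMC system, rewritten for the graph function $f$, has a well-posed oblique boundary structure amenable to Schauder estimates.
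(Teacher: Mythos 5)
Your argument follows the paper's own proof in all essentials: both reduce the statement to Theorem \ref{ThmEllipticRegularityOfPrescribedMeanCurvature} and then recover smoothness of $f$ from smoothness of its graph by means of the tubular diffeomorphism $F_y(p,t)=\tildeExp(tN_y(p))$. The only difference is cosmetic: the paper transports everything to $(\Sigma\times(-r,r),F_y^*g_y)$ and applies the regularity theorem there to $\hat{f}(\Sigma)$, whereas you apply it upstairs in $(M,g_y)$ to $\Sigma'$ and pull back afterwards. Since $F_y$ is a smooth diffeomorphism onto its image, the two formulations are interchangeable.

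There is, however, one point you flag as the ``only mildly subtle point'' and then resolve incorrectly. Theorem \ref{ThmEllipticRegularityOfPrescribedMeanCurvature} hypothesises that the surface whose regularity is being improved --- your $\Sigma'$, not the reference surface $e_y(\Sigma)$ --- meets $\partial M$ orthogonally. Orthogonality of $e_y(\Sigma)$ is what makes $F_y$ a chart adapted to $\partial M$; it does not supply a boundary condition for the graph function $f$. Indeed, by construction of $\tildeExp$, the constraint $\hat{\Phi}^{\lambda+2}_Y(y,f)(\partial\Sigma)\subseteq\partial M$ is satisfied for \emph{every} $f$, so the prescribed mean curvature equation for $f$ comes with no boundary condition at all in this chart, and no Schauder bootstrap at $\partial\Sigma$ is available without one (already for the linearised model, a harmonic function of class $C^{2,\alpha}$ on a half-disk need not be smooth up to the boundary). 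The orthogonality of $\Sigma'$ itself is precisely the condition $\Theta^{\lambda+2}_Y(y,f)=0$, which is not among the stated hypotheses. To be fair, the paper's own proof asserts the corresponding orthogonality of $\hat{f}(\Sigma)$ with $\partial\Sigma\times(-r,r)$ equally without justification, and in every application of the proposition the extra hypothesis $\Theta^{\lambda+2}_Y(y,f)=0$ is in fact available (it holds on $\Cal{Z}^{\lambda+2}_{Y,\loc}$ and in Proposition \ref{PropLocalPerturbationSections}). So your write-up reproduces the paper's argument, gap included; the fix is to add the boundary-angle hypothesis and verify it where the proposition is invoked, rather than to appeal to the orthogonality of the reference surface.
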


\begin{proof} Denote $h=H^{\lambda+2}_Y(y,f)$. Let $r$ be as in Proposition \ref{PropImageIsEmbeddingForSmallNorm}. We define $\hat{f}:\Sigma\rightarrow \Sigma\times(-r,r)$ such that for all $p\in \Sigma$, $\hat{f}(p)=(p,f(p))$. We define the mapping $F_y:\Sigma\times(-r,r)\rightarrow M$ such that for all $p\in \Sigma$ and for all $t\in(-r,r)$:
$$
F_y(p,t) = \tildeExp(tN_y(p)).
$$
Recall that $F_y$ is a diffeomorphism onto its image. Observe that $\tilde{\Phi}_Y(y,f)=F_y\circ\hat{f}$. In particular, $\hat{f}(\Sigma)$ is a $C^{\lambda+2}$ embedded submanifold of $\Sigma\times(-r,r)$ such that $\hat{f}(\partial \Sigma)$ meets $\partial \Sigma\times(-r,r)$ orthogonally along $\partial \Sigma$ with respect to the metric $F_y^*g_y$. We define $\tilde{h}:\Sigma\times(-r,r)\rightarrow\Bbb{R}$ by $\tilde{h}(p,r) = h(p)$.
Observe that for all $p\in \Sigma$, the mean curvature of $\hat{f}(\Sigma)$ at $\hat{f}(p)$ is equal to $h(p)=(\tilde{h}\circ\hat{f})(p)$. It follows from Theorem \ref{ThmEllipticRegularityOfPrescribedMeanCurvature} that $\widehat{f}(\Sigma)$ is smooth and since $\hat{f}(\Sigma)$ is the graph of $f$, $f$ is therefore also smooth, as desired.
\end{proof}
\begin{proposition}
For all $\lambda\in[0,\infty[\setminus\Bbb{N}$:
$$
\mathcal{Z}^{\lambda+2}_{Y,\loc} = \mathcal{Z}_{Y,\loc}.
$$
\end{proposition}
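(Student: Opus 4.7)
The proof plan is short because the heavy lifting has already been done in Proposition \ref{PropEllipticRegularityOfPMCInGraphCharts}. I will establish the two inclusions separately.

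\textbf{Trivial inclusion.} For the inclusion $\mathcal{Z}_{Y,\loc}\subseteq\mathcal{Z}^{\lambda+2}_{Y,\loc}$, the plan is to observe that $C^\infty(\Sigma)\subseteq C^{*,\lambda+2}(\Sigma)$ and that, by construction, $H^{\lambda+2}_Y$ and $\Theta^{\lambda+1}_Y$ are defined by the same formulas as $H_Y$ and $\Theta_Y$, so their restrictions to $\mathcal{U}_Y$ agree with the smooth functionals. Hence if $(y,f)\in\mathcal{U}_Y$ satisfies $H_Y(y,f)=0$ and $\Theta_Y(y,f)=0$, then $(y,f)\in\mathcal{U}^{\lambda+2}_Y$ and $H^{\lambda+2}_Y(y,f)=0$, $\Theta^{\lambda+1}_Y(y,f)=0$.

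\textbf{Reverse inclusion.} For $\mathcal{Z}^{\lambda+2}_{Y,\loc}\subseteq\mathcal{Z}_{Y,\loc}$, suppose $(y,f)\in\mathcal{Z}^{\lambda+2}_{Y,\loc}$. Then $f\in C^{*,\lambda+2}(\Sigma)$ with $\|f\|_{L^\infty}<r$ and $H^{\lambda+2}_Y(y,f)=0\in C^\infty(\Sigma)$. By Proposition \ref{PropEllipticRegularityOfPMCInGraphCharts}, $f\in C^\infty(\Sigma)$, whence $(y,f)\in\mathcal{U}_Y$. The mean curvature and boundary angle functionals in the smooth setting coincide with their H\"older extensions on $\mathcal{U}_Y$, so $H_Y(y,f)=0$ and $\Theta_Y(y,f)=0$, that is, $(y,f)\in\mathcal{Z}_{Y,\loc}$, as required.

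\textbf{Main obstacle.} There is essentially no obstacle here: the entire content of the statement has been absorbed into Proposition \ref{PropEllipticRegularityOfPMCInGraphCharts}, which is itself a direct consequence of standard Schauder estimates for the prescribed-mean-curvature problem with oblique boundary conditions (Theorem \ref{ThmEllipticRegularityOfPrescribedMeanCurvature}). The only mild care required is to verify that the smooth functionals $H_Y$, $\Theta_Y$ and their H\"older counterparts $H^{\lambda+2}_Y$, $\Theta^{\lambda+1}_Y$ are genuinely compatible on the overlap $\mathcal{U}_Y\subseteq\mathcal{U}^{\lambda+2}_Y$, which is immediate from the explicit formulas given in Sections \ref{TheLocalSolutionSpace} and \ref{BanachSpaces} in terms of $\hat{\Phi}_Y$ and $\hat{\Phi}^{\lambda+2}_Y$.
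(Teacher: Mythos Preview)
Your proposal is correct and follows essentially the same approach as the paper: the non-trivial inclusion $\mathcal{Z}^{\lambda+2}_{Y,\loc}\subseteq\mathcal{Z}_{Y,\loc}$ is obtained by applying Proposition \ref{PropEllipticRegularityOfPMCInGraphCharts} to deduce that $f$ is smooth, and the reverse inclusion is declared trivial. Your write-up is in fact slightly more explicit than the paper's in spelling out the compatibility of the H\"older and smooth functionals on the overlap, but the argument is the same.
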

\begin{proof} Choose $\lambda\in[0,\infty[\setminus\Bbb{N}$. Choose $(y,f)\in\mathcal{Z}^{\lambda+2}_{Y,\loc}$ and denote $e'=\tilde{\Phi}^{\lambda+2}_Y(y,f)$. By definition of $H$ and $\Theta$, $e'$ is free boundary minimal with respect to $g_y$. By Proposition \ref{PropEllipticRegularityOfPMCInGraphCharts}, $f$ is smooth, and so $(y,f)\in\mathcal{Z}_{Y,\loc}$, from which it follows that:
$$
\mathcal{Z}^{\lambda+2}_{Y,\loc}\subseteq\mathcal{Z}_{Y,\loc}.
$$
The converse inclusion is trivial, and the result follows.\end{proof}

\subsection{Conjugations}\label{Conjugations} We finish this section by describing the relationship between functionals $H$ and $\Theta$ and the perturbation and Jacobi operators introduced in Sections \ref{JacobiOperators} and \ref{PerturbationOperators}. To this end, let $(y,f)\in\mathcal{Z}_{Y,\loc}$ and denote $e'=\hat{\Phi}_Y(y,f)$. We define the vector field $V_{y,f}$ over $e'$ such that, for all $p\in \Sigma$:
$$
V_{y,f}(p) = \partial_t\hat{\Phi}_Y(y,f+t)(p)|_{t=0}.
$$
We define the function $\lambda_{y,f}:\Sigma\rightarrow\Bbb{R}$ by:
$$
\lambda_{y,f}=g_y(N_{y,e'},V_{y,f}).
$$
Observe that for all $(y,f)\in\mathcal{Z}_{Y,\loc}$, both $V_{y,f}$ and $\lambda_{y,f}$ are smooth and, moreover, $V_{y,f}$ is at no point tangent to $e'(\Sigma)$, from which it follows that $\lambda_{y,f}$ never vanishes. The next proposition follows immediately from the definition of $\text{\rm P}$:

\begin{proposition}\label{PropPertMCIsConjugate}
For all $(y,f)\in\mathcal{Z}_{Y,\loc}=\mathcal{Z}^{\lambda+2}_{Y,\loc}$, and all $\xi_y\in T_yY$:
$$D_1H^{\lambda+2}_Y(y,f)(\xi_y) = \text{\rm P}^h_{y,e'}(\xi_y),\,\,\,\text{and}\,\,\,
D_1\Theta^{\lambda+1}_Y(y,f)(\xi_y)= \text{\rm P}^\theta_{y,e'}(\xi_y).
$$\end{proposition}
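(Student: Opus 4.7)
The plan is to unfold both sides and recognise them as the same object via the chain rule. Pick a smooth curve $y_t$ in $Y$ with $y_0=y$ and $\dot y_0 = \xi_y$, and write $e'_t := \hat{\Phi}_Y(y_t, f)$ so that $e'_0=e'$. By the definition of $H^{\lambda+2}_Y$,
$$D_1 H^{\lambda+2}_Y(y,f)(\xi_y) = \left.\frac{d}{dt}\right|_{t=0} H_{y_t, e'_t}.$$
Applying the chain rule separates this into (a) the variation with respect to the metric, with the embedding frozen at $e'$, which by the definition in Section \ref{PerturbationOperators} equals $\text{\rm P}^h_{y,e'}(\xi_y)$; and (b) the variation with respect to the embedding, with the metric frozen at $g_y$. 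The analogous decomposition applies to $D_1 \Theta^{\lambda+1}_Y(y,f)(\xi_y)$, whose metric piece is by definition $\text{\rm P}^\theta_{y,e'}(\xi_y)$.

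The entire content of the proposition is therefore that contribution (b) vanishes at every $(y,f)\in\mathcal{Z}_{Y,\loc}$. This is precisely where the solution condition enters. Splitting the embedding-variation vector field $W := \tfrac{d}{dt}\big|_{t=0} e'_t$ along $e'$ into a tangential part $W^T$ and a normal part $\psi\cdot N_{y,e'}$, the tangential component contributes only through the directional derivative of $H_{y,e'}$ along $W^T$, which vanishes since $H_{y,e'}\equiv 0$ on $\Sigma$ by hypothesis. The same argument applied to $\Theta_{y,e'}\equiv 0$ along $\partial\Sigma$ kills the corresponding tangential piece in the boundary-angle computation, together with the requirement that $N_{y,e'}$ is tangent to $\partial M$ along $\partial\Sigma$, which is the free boundary condition.

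The main obstacle is handling the normal component of the embedding variation in (b), whose a priori contribution is $\text{\rm J}^h_{y,e'}(\psi)$ for $\psi=g_y(N_{y,e'},W)$, and analogously $\text{\rm J}^\theta_{y,e'}(\psi)$ for the boundary term. To see why this vanishes (as the paper's remark that the proposition follows ``immediately from the definition of $\text{\rm P}$'' suggests), the argument should exploit the explicit form of $\hat{\Phi}_Y$: because $\hat{\Phi}_Y(\cdot,f)$ is built from the modified exponential map $\tildeExp$ applied to $f\cdot N_{y}$, and because $N_y$ is the $g_y$-unit normal to the fixed base family $e_y$, the $y$-dependence of $e'_t$ is driven by the metric perturbation in a way that is, up to a tangential reparameterisation of $\Sigma$ (under which both $H$ and $\Theta$ are covariant), captured entirely by $\text{\rm P}^h_{y,e'}$ and $\text{\rm P}^\theta_{y,e'}$. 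Carefully bookkeeping this reparameterisation invariance collapses (b) to zero and completes the proof.
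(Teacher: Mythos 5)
Your reduction is the right way to organise the problem, and it makes the actual content of the proposition visible: after the chain rule, everything hinges on showing that the embedding--variation contribution (b) vanishes, and you correctly dispose of its tangential part using $H_{y,e'}\equiv 0$ and $\Theta_{y,e'}\equiv 0$. (For comparison, the paper supplies no argument at all here; it simply asserts that the statement follows from the definition of $\text{\rm P}$.) The gap is in your last paragraph. Writing $W:=\frac{d}{dt}\big|_{t=0}\hat{\Phi}_Y(y_t,f)$ and $\psi:=g_y(N_{y,e'},W)$, what remains to be proved is that $\text{\rm J}_{y,e'}(\psi)=0$, and the appeal to ``tangential reparameterisation invariance'' does not deliver this: a reparametrisation of $\Sigma$ produces a purely \emph{tangential} variation field, whereas $W$ has in general a nontrivial normal component. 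Indeed, $\hat{\Phi}_Y(y_t,f)(p)=\tildeExp(f(p)N_{y_t}(p))$ moves with $t$ both through the base family $e_{y_t}$ and through the $g_{y_t}$-unit normal $N_{y_t}$, which genuinely rotates as the metric is perturbed. Already in the model case $e_y\equiv e_0$ (the equatorial disk), $g_y=\delta+y\,h$, one finds $\dot N=-h_{31}e_1-h_{32}e_2-\tfrac12h_{33}e_3$, so for $f\neq 0$ the field $W(p)=D\tildeExp(f(p)N_y(p))\bigl(f(p)\dot N(p)\bigr)$ is not tangent to $e'(\Sigma)$, and nothing in your argument shows that $\text{\rm J}_{y,e'}$ annihilates its normal component $\psi$. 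The sentence ``collapses (b) to zero'' is therefore exactly the unproved claim. (Note also that only in the degenerate case $f=0$ with $e_y$ independent of $y$ does the embedding not move at all; the proposition is invoked at arbitrary points of $\mathcal{Z}_{Y,\loc}$.)

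What one can honestly prove by your method is the identity
$$
D_1H^{\lambda+2}_Y(y,f)(\xi_y)=\text{\rm P}^h_{y,e'}(\xi_y)+\text{\rm J}^h_{y,e'}\bigl(\psi(\xi_y)\bigr),\qquad \psi(\xi_y):=g_y\bigl(N_{y,e'},D_1\hat{\Phi}_Y(y,f)(\xi_y)\bigr),
$$
together with its analogue for $\Theta$ with $\text{\rm J}^\theta$ in place of $\text{\rm J}^h$. This weaker statement is all that the rest of the paper actually needs: the correction lies in $\opIm(\text{\rm J}_{y,e'})$, and since $(\xi,\varphi)\mapsto(\xi,\psi(\xi)+\lambda_{y,f}\varphi)$ is a linear isomorphism, $D(H,\Theta)(y,f)$ remains conjugate to $\text{\rm P}_{y,e'}+\text{\rm J}_{y,e'}$, which is the form in which Proposition \ref{PropPertMCIsConjugate} is used in Proposition \ref{PropLocalSolutionSpacesAreSmooth} and thereafter (images, kernels, and the surjectivity and orientation arguments are unaffected). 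If you want the proposition literally as stated, you must either exhibit a reason why $\text{\rm J}_{y,e'}(\psi(\xi_y))=0$ --- which I do not see, and which your proposal does not supply --- or settle for the corrected identity above.
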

%\begin{proof} This follows from the definition of $\text{\rm P}^h_{y,e'}$.
%\end{proof}
%\begin{proposition}\label{PropPertBAIsConjugate}For all $(y,f)\in\mathcal{Z}_{Y,\loc}=\mathcal{Z}^{\lambda+1}_{Y,\loc}$, and all $\xi_y\in T_yY$:
%$$
%D_1\Theta^{\lambda+1}_Y(y,f)(\xi_y) = \text{\rm P}^\theta_{y,e'}(\xi_y).
%$$
%\end{proposition}
%\begin{proof} This follows from the definition of $\textrm{P}^\theta_{y,e'}$. \end{proof}

For the partial derivatives with respect to the second component:

\begin{proposition}\label{PropInfMCIsConjugate}
\noindent For all $(y,f)\in\mathcal{Z}_{Y,\loc}=\mathcal{Z}^{\lambda+2}_{Y,\loc}$, and all $\varphi\in C^{*,\lambda+2}(\Sigma)$:
$$
D_2H^{\lambda+2}_Y(y,f)(\varphi) = \text{\rm J}^h_{y,e'}(\lambda_{y,f}\varphi).
$$
\end{proposition}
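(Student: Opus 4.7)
The plan is to differentiate $H^{\lambda+2}_Y$ along the ray $t\mapsto(y,f+t\varphi)$ in $\Cal{U}^{\lambda+2}_Y$ and then decompose the resulting infinitesimal variation of the embedding $e':=\hat{\Phi}^{\lambda+2}_Y(y,f)$ into its normal and tangential parts with respect to $e'$. Setting $e_t:=\hat{\Phi}^{\lambda+2}_Y(y,f+t\varphi)$, the chain rule applied to the explicit formula $e_t(p)=\tildeExp((f(p)+t\varphi(p))N_y(p))$, combined with the definition of $V_{y,f}$, immediately gives
$$
\left.\frac{\partial e_t}{\partial t}\right|_{t=0}=\varphi\, V_{y,f}.
$$
Since $N_{y,e'}$ is a unit normal to $e'$ with respect to $g_y$, the definition $\lambda_{y,f}=g_y(N_{y,e'},V_{y,f})$ furnishes the orthogonal decomposition $\varphi V_{y,f}=\varphi\lambda_{y,f}N_{y,e'}+W$, in which $W$ is everywhere tangent to $e'(\Sigma)$. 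Writing $W=e'_*Z$ defines a smooth vector field $Z$ on $\Sigma$; a brief verification using the free boundary condition on both $e_y$ and $e'$ shows that $Z$ is tangent to $\partial\Sigma$, so that its flow $(\phi_t)$ is a well-defined one-parameter family of diffeomorphisms of $\Sigma$.

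The key step is then to discard the tangential part using the fact that tangential perturbations correspond to reparametrisations. I would let $\tilde{e}_t$ be any smooth family of elements of $\hat{\Cal{E}}$ with $\tilde{e}_0=e'$ and $\partial_t\tilde{e}_t|_{t=0}=\varphi\lambda_{y,f}N_{y,e'}$; such a family is readily constructed by exponentiating along $\tildeExp$, using again that $N_{y,e'}$ is tangent to $\partial M$ along $\partial\Sigma$. A direct Taylor expansion at $t=0$ gives $e_t=\tilde{e}_t\circ\phi_t+O(t^2)$, and combining this with the naturality identity $H_{y,\tilde{e}_t\circ\phi_t}(p)=H_{y,\tilde{e}_t}(\phi_t(p))$ and differentiating in $t$ produces
$$
\left.\frac{d}{dt}\right|_{t=0}H_{y,e_t}=\left.\frac{d}{dt}\right|_{t=0}H_{y,\tilde{e}_t}+dH_{y,e'}(Z).
$$

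The crucial input is finally that $(y,f)\in\Cal{Z}_{Y,\loc}$ forces $H_{y,e'}\equiv 0$ on $\Sigma$, so $dH_{y,e'}$ vanishes identically and the tangential correction above drops out. By the very definition of $\text{\rm J}^h_{y,e'}$ as the linearisation of mean curvature along a normal perturbation with initial velocity $\varphi\lambda_{y,f}N_{y,e'}$, what remains equals $\text{\rm J}^h_{y,e'}(\lambda_{y,f}\varphi)$, giving the claimed identity. I expect the only mildly delicate point to be the verification of the Taylor expansion $e_t=\tilde{e}_t\circ\phi_t+O(t^2)$ in the H\"older category; this is however not a serious obstacle, since Proposition \ref{PropEllipticRegularityOfPMCInGraphCharts} together with $\Cal{Z}^{\lambda+2}_{Y,\loc}=\Cal{Z}_{Y,\loc}$ ensures that $e'$ and all the vector fields in play are actually smooth, so the calculation reduces to the classical smooth framework before being re-expressed as a statement in the H\"older spaces.
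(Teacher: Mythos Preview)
Your approach is correct and is essentially the same as the paper's: both split the variation $\varphi\,V_{y,f}$ into its normal component $\lambda_{y,f}\varphi\,N_{y,e'}$ and a tangential remainder, and both use $H_{y,e'}\equiv 0$ to discard the tangential contribution at first order. The paper packages this more cleanly by introducing a second graph chart $(\Psi_{Y'},\Cal{U}_{Y'},\Cal{V}_{Y'})$ centred at $e'$ itself, so that one obtains an \emph{exact} factorisation $\hat{\Phi}_Y(y,f+t\varphi)=\hat{\Phi}_{Y'}(y,\psi_t)\circ\alpha_t$ rather than your first-order Taylor identity $e_t=\tilde{e}_t\circ\phi_t+O(t^2)$; this sidesteps the analytic bookkeeping around the $O(t^2)$ term and the smoothness of the composition map.

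One small correction to your final paragraph: the objects $e'$, $N_{y,e'}$, $V_{y,f}$, $\lambda_{y,f}$ are indeed smooth by elliptic regularity, but $\varphi$ itself is only assumed to lie in $C^{*,\lambda+2}(\Sigma)$, so the tangential field $Z$ and its flow $\phi_t$ inherit only this regularity, and your reduction to ``the classical smooth framework'' does not quite apply as stated. The clean fix---which is exactly what the paper does---is to prove the identity first for $\varphi\in C^\infty(\Sigma)$ and then extend to $C^{*,\lambda+2}(\Sigma)$ by density, since both sides are continuous and linear in $\varphi$.
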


\begin{proof} Denote $e'=\hat{\Phi}_Y(y,f)$. Let $Y'$ be a compact neighbourhood of $y$ in $Y$ and let $(\Psi_{Y^\prime},\mathcal{U}_{Y^\prime},\mathcal{V}_{Y^\prime})$ be a graph chart of $X\times \mathcal{E}$ about $(y,e')$ over $Y'$. Choose $\varphi\in C^\infty(\Sigma)$. There exists $\delta>0$ and smooth mappings $\alpha:(-\delta,\delta)\times \Sigma\rightarrow \Sigma$ and $\psi:(-\delta,\delta)\times \Sigma\rightarrow\Bbb{R}$ such that $\alpha(0,\cdot)$ coincides with the identity mapping, for all $t\in(-\delta,\delta)$, $\alpha_t:=\alpha(t,\cdot)$ is a smooth diffeomorphism of $\Sigma$ and
$\hat{\Phi}_{Y^\prime}(y,\psi_t)\circ\alpha_t = \hat{\Phi}_Y(y,f+t\varphi)$,
where $\psi_t:=\psi(t,\cdot)$. Observe that, by injectivity of $\Psi_{Y^\prime}$, $\psi_0=0$. Bearing in mind the definition of $V_{y,f}$, differentiating with respect to $t$ yields
$D_2\hat{\Phi}_Y(y,f)(\varphi) = \varphi V_{y,f}$. Likewise:
$$
D_2\tilde{\Phi}_{Y^\prime}(y,0)((\partial_t\psi)_0) = (\partial_t\psi)_0N_{y,e'}.
$$
By the chain rule, this yields $\varphi V_{y,f} = (\partial_t\psi)_0N_{y,e'} +W$,
where $W$ is tangent to $e(\Sigma)$. Taking the inner product with $N_{y,e'}$ therefore yields
$(\partial_t\psi)_0 = \varphi g_y(N_{y,e'},V_{y,f}) = \lambda_{y,f}\varphi$.
Let $H_{Y^\prime}$ be the mean curvature function in the chart $(\Psi_{Y^\prime},\mathcal{U}_{Y^\prime},\mathcal{V}_{Y^\prime})$. Observe that, for all $t$:
$$
H_{Y^\prime}(y,\psi_t)\circ\alpha_t = H_Y(x,f+t\varphi).
$$
Observe, moreover, that since $(y,f)\in\mathcal{Z}(Y)$, $H_{Y^\prime}(y,0)=H_Y(y,f)=0$. Differentiating the above relation at $t=0$ therefore yields:
\begin{eqnarray*}
D_2H_Y(x,f)(\varphi) & =&  D_2H_{Y^\prime}(y,0)((\partial_t\psi)_0)\\
&=& D_2H_{Y^\prime}(y,0)(\lambda_{y,f}\varphi)\\
&=& \text{\rm J}^h_{y,e'}(\lambda_{y,f}\varphi).
\end{eqnarray*}
Since $C^\infty(\Sigma)$ is a dense subset of $C^{*,\lambda+2}(\Sigma)$, %it follows by continuity that for all $\varphi\in C^{*,\lambda+2}(\Sigma)$:$$D_2H^{\lambda+2}_Y(x,f)(\varphi) = \text{\rm J}^h_{y,e'}(\lambda_{y,f}\varphi),$$
the result follows by continuity.
\end{proof}
\begin{proposition}\label{InfBCIsConjugate}
For all $(y,f)\in\mathcal{Z}_{Y,\loc}=\mathcal{Z}^{\lambda+1}_{Y,\loc}$, and for all $\varphi\in C^{*,\lambda+1}(\Sigma)$:
$$
D_2\Theta^{\lambda+1}_Y(y,f)(\varphi) = \text{\rm J}^\theta_{y,e'}(\lambda_{y,f}\varphi).
$$
\end{proposition}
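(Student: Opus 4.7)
The plan is to mirror the argument given for Proposition \ref{PropInfMCIsConjugate}, replacing the mean curvature function $H$ by the boundary angle function $\Theta$ throughout. The key point is that all the geometric/reparametrisation machinery set up there -- the alternative graph chart about $(y,e')$, the reparametrising family $\alpha_t$ of diffeomorphisms of $\Sigma$, and the associated normal graph functions $\psi_t$ -- depends only on the embedding data, not on the particular scalar functional being differentiated, so it can be reused verbatim.

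First I would fix $(y,f)\in\mathcal{Z}_{Y,\loc}$, set $e'=\hat{\Phi}_Y(y,f)$, pick a compact neighbourhood $Y'$ of $y$ in $Y$, and form a graph chart $(\Psi_{Y'},\mathcal{U}_{Y'},\mathcal{V}_{Y'})$ about $(y,e')$. By the construction carried out in the proof of Proposition \ref{PropInfMCIsConjugate}, for $\varphi\in C^\infty(\Sigma)$ there exist $\delta>0$ and smooth families $\alpha:(-\delta,\delta)\times\Sigma\to\Sigma$ (with $\alpha_0=\opId$ and each $\alpha_t$ a diffeomorphism) and $\psi:(-\delta,\delta)\times\Sigma\to\mathbb{R}$ (with $\psi_0=0$) such that
$$
\hat{\Phi}_{Y'}(y,\psi_t)\circ\alpha_t \;=\; \hat{\Phi}_Y(y,f+t\varphi),
$$
and the same chain rule computation as before gives $(\partial_t\psi)_0 = \lambda_{y,f}\varphi$. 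Because each $\alpha_t$ is a diffeomorphism of $\Sigma$, it restricts to a diffeomorphism of $\partial\Sigma$, so all relations below extend naturally to the boundary angle functional.

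Next I would apply $\Theta_{Y'}$ to both sides. By construction, the boundary angle is equivariant under reparametrisation, so
$$
\Theta_{Y'}(y,\psi_t)\circ\bigl(\alpha_t|_{\partial\Sigma}\bigr) \;=\; \Theta_Y(y,f+t\varphi).
$$
Since $(y,f)\in\mathcal{Z}_{Y,\loc}$, both sides vanish at $t=0$, in particular $\Theta_{Y'}(y,0)\equiv 0$ on $\partial\Sigma$. Differentiating at $t=0$ and using this vanishing to kill the term coming from the derivative of $\alpha_t|_{\partial\Sigma}$ yields
$$
D_2\Theta_Y(y,f)(\varphi) \;=\; D_2\Theta_{Y'}(y,0)\bigl((\partial_t\psi)_0\bigr) \;=\; D_2\Theta_{Y'}(y,0)(\lambda_{y,f}\varphi) \;=\; \text{\rm J}^\theta_{y,e'}(\lambda_{y,f}\varphi),
$$
where the last equality uses the identification $D_2\Theta^{\lambda+1}_{Y'}(y,0)=\text{\rm J}^\theta_{y,e'}$ established in Section \ref{BanachSpaces}. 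Finally, since $C^\infty(\Sigma)$ is dense in $C^{*,\lambda+1}(\Sigma)$ and both sides of the claimed identity depend continuously on $\varphi$ in the $C^{*,\lambda+1}$ topology, the identity extends to all $\varphi\in C^{*,\lambda+1}(\Sigma)$.

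The main point requiring care -- and the only real difference from the mean curvature case -- is the handling of the pullback by $\alpha_t|_{\partial\Sigma}$: one needs to observe that the $t=0$ vanishing of $\Theta_{Y'}(y,0)$ is precisely what ensures that the contribution of the boundary reparametrisation to the derivative drops out, leaving only the $\psi$-variation. Everything else is a direct transcription of the preceding proof.
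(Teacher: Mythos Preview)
Your proof is correct and follows essentially the same approach as the paper's own proof, which simply says ``use the same construction as in the proof of Proposition \ref{PropInfMCIsConjugate}'' and then writes out the analogous chain of equalities for $\Theta$. Your version is slightly more explicit about the boundary restriction of $\alpha_t$ and why the reparametrisation term drops out, but the argument is identical in substance.
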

\begin{proof} Choose $\varphi\in C^\infty(\Sigma)$. We use the same construction as in the proof of Proposition \ref{PropInfMCIsConjugate}. Let $\Theta_{Y^\prime}$ be the boundary angle function in the chart generated by $Y'$. Observe that, for all $t$, $\Theta_{Y^\prime}(y,\psi_t)\circ\alpha_t = \Theta_Y(x,f+t\varphi)$.
Observe, moreover, that since $(y,f)\in\mathcal{Z}(\Sigma)$, $\Theta_{Y^\prime}(y,0)=\Theta_Y(y,f)=0$. Differentiating this relation at $t=0$ therefore yields:
\begin{eqnarray*}
D_2\Theta_Y(x,f)(\varphi) & =&  D_2\Theta_{Y^\prime}(y,0)((\partial_t\psi)_0)\\ &=& D_2\Theta_{Y^\prime}(y,0)(\lambda_{y,f}\varphi) \\
&=& \text{\rm J}^\theta_{y,e'}(\lambda_{y,f}\varphi).
\end{eqnarray*}
Once again, since $C^\infty(\Sigma)$ is a dense subset of $C^{*,\lambda+1}(\Sigma)$, %it follows by continuity that for all $\varphi\in C^{*,\lambda+1}(\Sigma)$:$$D_2\Theta^{\lambda+1}_Y(x,f)(\varphi) = \text{\rm J}^\theta_{y,e'}(\lambda_{y,f}\varphi),$$as desired.
the result follows by continuity.
\end{proof}

%%%%%%%%%%%%%%%%%%%%%%%%%%%%%%%%%%%%%%%%%%%%%%%%%%%%
% THE DIFFERENTIAL STRUCTURE OF THE SOLUTION SPACE %
%%%%%%%%%%%%%%%%%%%%%%%%%%%%%%%%%%%%%%%%%%%%%%%%%%%%

\section{The Differential Structure of the Solution Space}
\subsection{Extensions and surjectivity}\label{SubheadExtensionsAndSurjectivity}
Let $\widetilde{X}$ be another smooth, compact, finite-dimensional manifold. Let $\widetilde{g}:\widetilde{X}\times M\rightarrow\opSym^+(TM)$ be a smooth function such that for all $x\in\widetilde{X}$, the metric $\widetilde{g}_x:=\widetilde{g}(x,\cdot)$ is admissable.

We say that $\widetilde{X}$ is an {\it extension} of $X$ whenever $X\subseteq\widetilde{X}$, and the restriction of $\widetilde{g}$ to $X$ coincides with $g$. In this section, we show the smoothness of the solution space $\Cal{Z}(\widetilde{X})$ for a suitable extension $\widetilde{X}$ of $X$. Upon furnishing $\widetilde{X}$ with a canonical orientation, we then define a canonical orientation of $\Cal{Z}(\widetilde{X})$. In particular, this yields a canonical ${\mathbb Z}$-valued mapping degree of $\Pi:\Cal{Z}(\widetilde{X})\rightarrow\tilde{X}$ which we denote by $\opDeg(\Pi)$. We will see in Sections \ref{NonDegenerateFamilies} and \ref{CalculatingTheDegree} that it is also useful to define a local degree. We therefore denote for any open subset $\Omega\subseteq\Cal{E}$:
$$
\Cal{Z}(X|\Omega) := \Cal{Z}(X)\minter(X\times\Omega),\qquad \partial_\omega\Cal{Z}(X|\Omega) :=\Cal{Z}(X)\minter(X\times\partial\Omega),
$$
Since $\Cal{Z}(X|\Omega)$ is an open subset of $\Cal{Z}(X)$, we see that $\Cal{Z}(\widetilde{X}|\Omega)$ is also smooth for a suitable extension $\widetilde{X}$ of $X$. If, in addition, $\partial_\omega\Cal{Z}(X|\Omega)=\emptyset$, then we may suppose also that $\partial_\omega\Cal{Z}(\widetilde{X}|\Omega)=\emptyset$, and, upon furnishing $\tilde{X}$ with an orientation form, we obtain as before a ${\mathbb Z}$-valued mapping degree of $\Pi:\Cal{Z}(\widetilde{X}|\Omega)\rightarrow\widetilde{X}$, which we denote by $\opDeg(\Pi|\Omega)$. We recall from Section \ref{Conjugations} that $P_{x,e}+J_{x,e}$ is conjugate to the derivative of $(H,\Theta)$ in any graph chart about $(x,e)$.

\begin{proposition}\label{PropSurjectivityIsAnOpenProperty}
If $\text{\rm P}_{x,e}+\text{\rm J}_{x,e}$ is surjective at $(x,[e])\in\Cal{Z}(X)$, then there exists a neighbourhood $W_{x,e}$ of $(x,[e])$ in $\Cal{Z}(X)$ such that $\text{\rm P}_{x',e'}+\text{\rm J}_{x',e'}$ is surjective for all $(x',[e'])\in W$.
\end{proposition}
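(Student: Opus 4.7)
The plan is to reformulate the claim inside a H\"older graph chart and appeal to the standard fact that surjectivity is an open condition within a norm-continuous family of Fredholm operators of fixed index.

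Fix $\lambda\in[0,\infty)\setminus\mathbb{N}$ and choose a graph chart $(\Psi_Y,\mathcal{U}_Y,\mathcal{V}_Y)$ of $X\times\mathcal{E}$ about $(x,[e])$, where $Y$ is a compact neighbourhood of $x$ in $X$. Let $(y_0,f_0)\in\mathcal{Z}_{Y,\loc}$ correspond to $(x,[e])$ under $\Psi_Y$, and consider the map of Banach spaces
$$F_Y:=(H_Y^{\lambda+2},\Theta_Y^{\lambda+1}):\mathcal{U}^{\lambda+2}_Y\longrightarrow C^{*,\lambda}(\Sigma)\times C^{*,\lambda+1}(\partial\Sigma),$$
which is smooth by Section \ref{BanachSpaces}. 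Consequently, its derivative $DF_Y(y,f)$ depends continuously on $(y,f)$ in the operator norm. Moreover, at any $(y,f)\in\mathcal{Z}_{Y,\loc}$, setting $e':=\hat{\Phi}_Y(y,f)$, Propositions \ref{PropPertMCIsConjugate}, \ref{PropInfMCIsConjugate} and \ref{InfBCIsConjugate} combine to give
$$DF_Y(y,f)(\xi,\varphi)=\text{\rm P}_{y,e'}(\xi)+\text{\rm J}_{y,e'}(\lambda_{y,f}\,\varphi).$$
Since $\lambda_{y,f}$ is smooth and nowhere vanishing, multiplication by $\lambda_{y,f}$ is a Banach space automorphism of $C^{*,\lambda+2}(\Sigma)$, so H\"older surjectivity of $DF_Y(y,f)$ coincides with that of $\text{\rm P}_{y,e'}\oplus\text{\rm J}_{y,e'}$. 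By Proposition \ref{PropEllipticRegularityOfJacobiOperatorI} and the density of $C^\infty$ in $C^{*,\lambda}$, this in turn is equivalent to the smooth surjectivity of $\text{\rm P}_{y,e'}+\text{\rm J}_{y,e'}$.

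By Proposition \ref{PropEllipticRegularityOfJacobiOperatorII}, $\text{\rm J}_{y,e'}$ is Fredholm of index zero, and since $T_yY$ is finite dimensional, $DF_Y(y,f)$ is Fredholm of index $\dim Y$ at every $(y,f)\in\mathcal{Z}^{\lambda+2}_{Y,\loc}$. The set of surjective Fredholm operators of a given index between two Banach spaces is open in the operator norm topology (as follows from upper semicontinuity of the cokernel dimension together with local constancy of the Fredholm index). Combining this openness with the norm continuity of $DF_Y$ and the hypothesis at $(y_0,f_0)$ produces an open neighbourhood $\widetilde{W}\subseteq\mathcal{U}^{\lambda+2}_Y$ of $(y_0,f_0)$ on which $DF_Y$ is everywhere surjective; pushing forward by $\Psi_Y$ and intersecting with $\mathcal{Z}(X)$ then yields the required neighbourhood $W_{x,e}$. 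The main subtlety is keeping track of the conjugation by the multiplication operator associated with $\lambda_{y,f}$, but this is harmless since $\lambda_{y,f}$ is nowhere zero, and so the conjugating operator is a Banach space isomorphism that does not affect surjectivity.
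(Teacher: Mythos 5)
Your proof is correct and follows essentially the same route as the paper: both arguments reduce to the openness, in the operator norm, of the set of surjective Fredholm maps of fixed index between H\"older spaces, followed by elliptic regularity (Propositions \ref{PropEllipticRegularityOfJacobiOperatorI} and \ref{PropEllipticRegularityOfJacobiOperatorII}) to pass back to the $C^\infty$ category. The only difference is presentational: the paper argues by contradiction along a sequence and simply asserts operator-norm convergence of $\text{\rm P}_{x_m,e_m}+\text{\rm J}_{x_m,e_m}$, whereas your chart-based setup, via the smoothness of $(H^{\lambda+2}_Y,\Theta^{\lambda+1}_Y)$ and the conjugation by the nowhere-vanishing $\lambda_{y,f}$, makes that convergence precise.
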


\begin{proof}Suppose the contrary. There exists $(x,[e])\in\Cal{Z}(X)$ such that $\text{\rm P}_{x,e}+\text{\rm J}_{x,e}$ is surjective and a sequence $(x_m,[e_m])_\minn\in\Cal{Z}(X)$ which converges to $(x,[e])$ such that $\text{\rm P}_{x_m,e_m}+\text{\rm J}_{x_m,e_m}$ is not surjective. Choose $\lambda\in[0,\infty[\setminus\Bbb{N}$. By Proposition \ref{PropEllipticRegularityOfJacobiOperatorII}, $\text{\rm P}_{x,e}+\text{\rm J}_{x,e}$ defines a surjective, Fredholm map from $T_xX\times C^{*,\lambda+2}(\Sigma)$ into $C^{*,\lambda}(\Sigma)\times C^{*,\lambda+1}(\partial \Sigma)$. Observe that $(\text{\rm P}_{x_m,e_m},\text{\rm J}_{x_m,e_m})_\minn$ converges to $\text{\rm P}_{x,e}+\text{\rm J}_{x,e}$ in the operator norm. Since the property of being a surjective, Fredholm map is open, there exists $M\in\Bbb{N}$ such that for all $m\geqslant M$, $\text{\rm P}_{x_m,e_m}+\text{\rm J}_{x_m,e_m}$ also defines a surjective map from $T_{x_m}X\times C^{*,\lambda+2}(\Sigma)$ into $C^{*,\lambda}(\Sigma)\times C^{*,\lambda+1}(\partial \Sigma)$. By Propositions \ref{PropEllipticRegularityOfJacobiOperatorI} and \ref{PropEllipticRegularityOfJacobiOperatorII}, it follows that for all $m\geqslant M$, $\text{\rm P}_{x_m,e_m}+\text{\rm J}_{x_m,e_m}$ defines a surjective map from $T_{x_m}X\times C^\infty(\Sigma)$ into $C^\infty(\Sigma)\minter C^\infty(\partial \Sigma)$, and this completes the proof.
\end{proof}
\begin{theorem}\label{PropSurjectivityForExtensions}
For every open set $\Omega\subseteq\Cal{E}$ such that $\partial_\omega\Cal{Z}(X|\Omega)=\emptyset$, there exists an extension $\widetilde{X}$ of $X$ such that $\partial_\omega\Cal{Z}(\tilde{X}|\Omega)=\emptyset$ and, for all $(x,[e])\in\Cal{Z}(\tilde{X}|\Omega)$, the operator $\text{\rm P}_{x,e}+\text{\rm J}_{x,e}$ defines a surjective mapping from $T_x\widetilde{X}\times C^\infty(\Sigma)$ into $C^\infty(\Sigma)\times C^\infty(\partial \Sigma)$.
\end{theorem}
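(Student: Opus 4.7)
The strategy is to augment the metric family by finitely many conformal perturbations, chosen so that at each solution point they span a complement of the cokernel of the Jacobi operator. The openness of surjectivity plus the compactness of $\Cal{Z}(X|\Omega)$ then reduce the global construction to a finite cover, and all remaining conditions are preserved by taking the perturbation parameter small.

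First I would fix $(x_0,[e_0])\in\Cal{Z}(X|\Omega)$. By Proposition \ref{PropEllipticRegularityOfJacobiOperatorII}, $\text{\rm J}_{x_0,e_0}$ is Fredholm of index zero; its kernel is finite-dimensional of some dimension $m_0$, and a complement of $\opIm(\text{\rm J}_{x_0,e_0})$ in $C^{*,\lambda}(\Sigma)\times C^{*,\lambda+1}(\partial\Sigma)$ is spanned by the diagonal pairs $(f_i,f_i\circ\epsilon)$, where $f_1,\dots,f_{m_0}$ is a basis of $\opKer(\text{\rm J}_{x_0,e_0})$. Proposition \ref{PropFindingTheRightPerturbationSpace} then furnishes smooth conformal directions $\varphi_1,\dots,\varphi_{m_0}\in C^\infty(M)$ whose infinitesimal variations of mean curvature satisfy $\langle \partial_t H_{g_t,e_0}|_{t=0},f_j\rangle=\delta_{ij}$ when $\dot g(0)=\varphi_i g_{x_0}$, while Proposition \ref{PropVariationOfBoundaryAngle} guarantees that the corresponding boundary-angle variations vanish. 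Thus each pair $(\partial_t H_{g_t,e_0}|_{t=0},0)$ projects isomorphically onto the cokernel, so adjoining the directions $\varphi_i$ to $T_{x_0}X$ produces a surjective $\text{\rm P}+\text{\rm J}$ at $(x_0,[e_0])$.

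By Proposition \ref{ThmCompactness} applied to $X$ together with the hypothesis $\partial_\omega\Cal{Z}(X|\Omega)=\emptyset$, the set $\Cal{Z}(X|\Omega)$ is compact. Using Proposition \ref{PropSurjectivityIsAnOpenProperty}, I would cover it by finitely many open sets on which the above construction yields surjectivity, and combine the corresponding conformal functions into a single list $\varphi_1,\dots,\varphi_K$. I then define
$$
\widetilde{X}:=X\times[-\varepsilon,\varepsilon]^K,\qquad \tilde g_{(x,s)}:=e^{s_1\varphi_1+\cdots+s_K\varphi_K}\,g_x,
$$
which contains $X$ as $X\times\{0\}$ with $\tilde g|_{X\times\{0\}}=g$. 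At each $(x,0,[e])\in\Cal{Z}(X|\Omega)$, the directions $\partial_{s_j}|_{s=0}$ realise precisely the conformal perturbations $\varphi_j g_x$, so the augmented operator is surjective by construction, and by Proposition \ref{PropSurjectivityIsAnOpenProperty} this surjectivity persists on a neighbourhood of $\Cal{Z}(X|\Omega)$ in $\widetilde{X}\times\Cal{E}$.

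The final step, which I expect to be the main obstacle, is choosing $\varepsilon>0$ small enough that (a) every $\tilde g_{(x,s)}$ remains admissable, (b) $\Cal{Z}(\widetilde{X}|\Omega)$ lies entirely inside the surjectivity neighbourhood, and (c) $\partial_\omega\Cal{Z}(\widetilde{X}|\Omega)=\emptyset$. Condition (a) follows because positive Ricci curvature is a $C^2$-open condition on the metric and because the same defining function $f$ witnessing strict convexity of $g_x$ remains strictly convex with respect to all sufficiently small perturbations. Conditions (b) and (c) would both fail for every $\varepsilon>0$ only if one could extract, along $\varepsilon_k\to 0$, solutions $(x_k,s_k,[e_k])$ with $s_k\to 0$ either escaping the surjectivity neighbourhood or having $[e_k]\in\partial\Omega$. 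Applying Proposition \ref{ThmCompactness} to the sequence of admissable metrics $\tilde g_{(x_k,s_k)}$, which converges in $C^\infty$ to some $g_{x_\infty}$ by compactness of $X$, a subsequence of the $e_k$ would subconverge up to reparametrisation to a free boundary minimal embedding $e_\infty$ with $(x_\infty,[e_\infty])\in\Cal{Z}(X|\Omega)$ inheriting the forbidden property, contradicting either $\partial_\omega\Cal{Z}(X|\Omega)=\emptyset$ or the openness of the surjectivity neighbourhood. This concludes the construction.
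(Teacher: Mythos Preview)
Your proposal is correct and follows essentially the same approach as the paper: augment $X$ by finitely many conformal directions $\varphi_1,\dots,\varphi_K$ chosen via Proposition~\ref{PropFindingTheRightPerturbationSpace} so that the resulting perturbation operator hits a complement of $\opIm(\text{\rm J})$ (using Proposition~\ref{PropVariationOfBoundaryAngle} for the boundary component), reduce to finitely many such directions by the compactness of $\Cal{Z}(X|\Omega)$ and the openness Proposition~\ref{PropSurjectivityIsAnOpenProperty}, and then shrink the parameter domain so that admissability, surjectivity, and $\partial_\omega\Cal{Z}(\widetilde{X}|\Omega)=\emptyset$ all persist. The only cosmetic difference is that the paper packages the extension as $E_r\times X$ for a ball $E_r$ in the span of the $\varphi_i$, whereas you use a cube $X\times[-\varepsilon,\varepsilon]^K$; and your contradiction argument for steps (b) and (c) via Fraser--Li is a slightly more explicit rendering of what the paper abbreviates as ``by compactness again''.
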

\begin{proof}
We define the mapping $\widetilde{g}:C^\infty(M)\times X\times M\rightarrow\opSym^+(TM)$ such that, for all $f\in C^\infty(M)$ and for all $x\in X$:
$$
\widetilde{g}_{f,x}:=\widetilde{g}(f,x,\cdot)=e^f g_x.
$$
Let $E$ be a finite-dimensional, linear subspace of $C^\infty(M)$ and for $r>0$, let $E_r$ be the closed ball of radius $r$ about $0$ in $E$ with respect to some metric. Observe that for sufficiently small $r$, and for all $(f,x)\in E_r\times X$, the metric $\tilde{g}_{f,x}$ is also admissable. We denote $\widetilde{X}:=E_r\times X$, and we will show that $\widetilde{X}$ has the desired properties for suitable choices of $E$ and $r$.

Choose $(x,[e])\in\Cal{Z}(X|\Omega)$. We claim that there exists a finite-dimensional subspace $E_{x,e}\subseteq C^\infty(M)$ with the property that if $E$ contains $E_{x,e}$, then:
$$
C^{\infty}(\Sigma)\times C^{\infty}(\partial \Sigma) = \opIm(\text{\rm P}_{(0,x),e}) + \opIm(\text{\rm J}_{(0,x),e}).
$$
Indeed, let $f_1,...,f_m$ be a basis of $\opKer(\text{\rm J}_{(0,x),e})$. Let $U$ be an open subset of $M$ intersecting $e(\Sigma)$ non-trivially, let $\varphi_1,...,\varphi_m$ be as in Proposition \ref{PropFindingTheRightPerturbationSpace}, and let $E_{x,e}\subseteq C^\infty(M)$ be the linear span of these functions. For $1\leqslant k\leqslant m$, we think of $\varphi_k$ as a tangent vector to $E_{x,e}$ at $0$ and we denote $\psi_k=\textrm{P}^h_{(0,x),e}(\varphi_k)$. For all $1\leqslant k\leqslant m$, by Proposition \ref{PropVariationOfBoundaryAngle}, $\textrm{P}^\theta_{(0,x),e}(\varphi_k)=0$ and so $\text{\rm P}_{(0,x),e}(\varphi_k)=(\psi_k,0)$. We denote by $F_{x,e}$ the linear span of $(\psi_1,0),...,(\psi_m,0)$ in $C^{\infty}(\Sigma)\times C^{\infty}(\partial \Sigma)$, and we claim that:
$$
C^{\infty}(\Sigma)\times C^{\infty}(\partial \Sigma) \subseteq F_{x,e} + \opIm(\text{\rm J}_{(0,x),e}).
$$
Indeed, let $\pi$ be the orthogonal projection from $C^{\infty}(\Sigma)\times C^{\infty}(\partial \Sigma)$ onto $\opIm(\text{\rm J}_{(0,x),e})$ with respect to the $L^2$ inner-product of $e^*g_x$ and denote $\pi^\perp = \opId - \pi$. By Proposition \ref{PropEllipticRegularityOfJacobiOperatorII}, $\opIm(\pi^\perp)$ is spanned by $(f_q,f_q\circ\epsilon)_{1\leqslant q\leqslant m}$, where $\epsilon:\partial \Sigma\rightarrow \Sigma$ is the canonical embedding. However, denoting by $\opdVol_{x,e}$ the volume form of $e^*g_x$, and bearing in mind the definition of $\psi_p$, for all $1\leqslant p,q\leqslant m$:
$$
\langle \pi^\perp(\psi_p,0),(f_q,f_q\circ\epsilon)\rangle = \langle (\psi_p,0),(f_q,f_q\circ\epsilon)\rangle = \int_\Sigma \psi_p f_q\,\opdVol_{x,e} = \delta_{pq}.
$$
The restriction of $\pi^\perp$ to $F_{x,e}$ therefore defines a linear isomorphism onto $\opIm(\pi^\perp)$, and so:
$$
F_{x,e}\minter \opIm(\text{\rm J}_{(0,x),e}) = F_{x,e}\minter\opKer(\pi^\perp) = \left\{0\right\}.
$$
Since the dimension of $F_{x,e}$ is equal to the codimension of $\opIm(\text{\rm J}_{(0,x),e})$ in $C^\infty(\Sigma)\times C^\infty(\partial \Sigma)$, it follows that $F_{x,e}$ and $\opIm(\text{\rm J}_{(0,x),e})$ are complementary subspaces so that:
$$
C^\infty(\Sigma)\times C^\infty(\partial \Sigma) \subseteq F_{x,e} \oplus \opIm(\text{\rm J}_{(0,x),e}),
$$
as asserted. In particular, if $E$ contains $E_{x,e}$, then $\text{\rm J}_{(0,x),e}+\text{\rm P}_{(0,x),e}$ is surjective.

We now conclude using compactness. By Proposition \ref{PropSurjectivityIsAnOpenProperty}, there exists a neighbourhood $\tilde{W}_{x,e}$ of $((0,x),[e])$ in $\Cal{Z}(E_{x,e,r}\times X|\Omega)$ such that for all $((f,x),[e'])\in\tilde{W}_{x,e}$, $\text{\rm P}_{(f,x'),e'}+\text{\rm J}_{(f,x'),e'}$ defines a surjective map from $T_{(f,x')}(E_{x,e,r}\times X)\times C^\infty(\Sigma)$ onto $C^\infty(\Sigma)\times C^\infty(\partial \Sigma)$. We consider $\Cal{Z}(X|\Omega)$ as a subset of $\Cal{Z}(E_{x,e,r}\times X)$ and we denote $W_{x,e}=\tilde{W}_{x,e}\minter\Cal{Z}(X|\Omega)$. Thus, if $E$ contains $E_{x,e}$, then for all $(x',[e'])\in W_{x,e}$, $\text{\rm P}_{(0,x'),e'}+\text{\rm J}_{(0,x'),e'}$ defines a surjective mapping from $T_{(0,x')}(E_{r}\times X)\times C^\infty(\Sigma)$ into $C^\infty(\Sigma)\times C^\infty(\partial \Sigma)$.

Since $\partial_\omega\Cal{Z}(X|\Omega)=\emptyset$, $\Cal{Z}(X|\Omega)$ is a closed subset of $\Cal{Z}(X)$. By Proposition \ref{ThmCompactness}, $\Cal{Z}(X)$ is compact and therefore so too is $\Cal{Z}(X|\Omega)$. There therefore exist finitely many points $(x_k,[e_k])_{1\leqslant k\leqslant m}$ such that:
$$
\Cal{Z}(X|\Omega)\subseteq\munion_{k=1}^m W_{x_k,e_k}.
$$
We define $E=E_{x_1,e_1}+...+E_{x_m,e_m}$ and we see that for all $(x,[e])\in\Cal{Z}(X|\Omega)$, $\text{\rm P}_{(0,x),e}+\text{\rm J}_{(0,x),e}$ defines a surjective mapping from $T_{(0,x)}(E\times X)\times C^\infty(\Sigma)$ into $C^\infty(\Sigma)\times C^\infty(\partial \Sigma)$. Finally, by compactness again, for sufficiently small $r$ we have
$\partial_\omega\Cal{Z}(\widetilde{X}|\Omega) = \Cal{Z}(E_r\times X)\minter(E_r\times X\times\partial\Omega) = \emptyset,$
and since being a surjective Fredholm map is an open property, we may also suppose that $\text{\rm P}_{x,e}+\text{\rm J}_{x,e}$ defines a surjective map from $T_x\tilde{X}\times C^\infty(\Sigma)$ into $C^\infty(\Sigma)\times C^\infty(\partial\Sigma)$ for all $(x,[e])\in\Cal{Z}(\widetilde{X}|\Omega)$, and this completes the proof.\end{proof}

\subsection{Surjectivity and smoothness}\label{Smoothness}
\begin{proposition}\label{PropLocalSolutionSpacesAreSmooth}
Let $\Omega\subseteq\Cal{E}$ be such that $\partial_\omega\Cal{Z}(X|\Omega) = \emptyset$. If $\text{\rm P}_{x,e}+\text{\rm J}_{x,e}$ is surjective for all $(x,[e])\in\Cal{Z}(X|\Omega)$, then for every compact neighbourhood $Y$ of $X$ and for every graph chart $(\Psi,\Cal{U},\Cal{V})$ of $X\times\Cal{E}$ over $Y$, $\Cal{Z}_{\oploc}\minter\Psi^{-1}(X\times\Omega)=\Cal{Z}_\oploc^{\lambda+2}\minter\Psi^{-1}(X\times\Omega)$ is a smooth, embedded submanifold of $\Cal{U}^{\lambda+2}$ with smooth boundary and of finite dimension equal to $\opDim(X)$. Moreover:
\begin{enumerate}
\item the differential structure induced over $\Cal{Z}_\oploc\minter\Psi^{-1}(X\times\Omega)$ by the canonical embedding into $\Cal{U}^{\lambda+2}$ is independent of $\lambda$; and
\item $\Pi$ defines a smooth mapping from $\Cal{Z}_\oploc\minter\Psi^{-1}(X\times\Omega)$ into $Y$ with the property that $\Pi(\partial\Cal{Z}_\oploc)\subseteq\partial Y$.
\end{enumerate}
\end{proposition}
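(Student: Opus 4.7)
The plan is to apply the implicit function theorem for Banach spaces to the smooth map $(H^{\lambda+2}_Y, \Theta^{\lambda+1}_Y) \colon \Cal{U}^{\lambda+2}_Y \to C^{*,\lambda}(\Sigma) \times C^{*,\lambda+1}(\partial\Sigma)$ whose zero set is precisely $\Cal{Z}^{\lambda+2}_\oploc$. Fix $(y_0,f_0) \in \Cal{Z}_\oploc \minter \Psi^{-1}(X\times\Omega)$ and set $e_0 = \hat\Phi_Y(y_0,f_0)$. By Propositions \ref{PropPertMCIsConjugate}, \ref{PropInfMCIsConjugate} and \ref{InfBCIsConjugate}, the total Fr\'echet derivative of $(H^{\lambda+2}_Y,\Theta^{\lambda+1}_Y)$ at $(y_0,f_0)$ is the composition of $\text{\rm P}_{y_0,e_0} + \text{\rm J}_{y_0,e_0}$ with the Banach-space isomorphism $(\xi,\varphi) \mapsto (\xi, \lambda_{y_0,f_0}\varphi)$ on $T_{y_0}Y \times C^{*,\lambda+2}(\Sigma)$, where $\lambda_{y_0,f_0}$ is the nowhere-vanishing smooth function introduced in Section \ref{Conjugations}.

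By Proposition \ref{PropEllipticRegularityOfJacobiOperatorII}, $\text{\rm J}_{y_0,e_0}$ is Fredholm of index zero, so adding the finite-rank factor $\text{\rm P}_{y_0,e_0}$ yields a Fredholm operator of index $\opDim(Y) = \opDim(X)$. The hypothesis gives surjectivity of this operator. Consequently the derivative of $(H^{\lambda+2}_Y,\Theta^{\lambda+1}_Y)$ at $(y_0,f_0)$ is a surjective Fredholm map whose kernel is a closed, complemented subspace of dimension $\opDim(X)$. The implicit function theorem (in its manifold-with-boundary version, applied according to whether $y_0$ lies in $\partial Y$ or not) then produces a neighbourhood of $(y_0,f_0)$ in $\Cal{Z}^{\lambda+2}_\oploc$ which is the graph of a smooth map over this kernel, exhibiting $\Cal{Z}^{\lambda+2}_\oploc \minter \Psi^{-1}(X\times\Omega)$ as a smooth embedded $\opDim(X)$-dimensional submanifold of $\Cal{U}^{\lambda+2}$ with smooth boundary $\Pi^{-1}(\partial Y)\minter\Cal{Z}^{\lambda+2}_\oploc$. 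Since $\Pi$ is the restriction to this submanifold of the bounded linear projection $Y \times C^{*,\lambda+2}(\Sigma) \to Y$, assertion (2) is immediate.

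For the equality $\Cal{Z}_\oploc = \Cal{Z}^{\lambda+2}_\oploc$ at the level of sets I would invoke the elliptic regularity statement already established via Proposition \ref{PropEllipticRegularityOfPMCInGraphCharts}. To prove that the induced differential structures agree for different values of $\lambda$, I would argue as follows. For $\mu > \lambda$, the continuous linear inclusion $C^{*,\mu+2}(\Sigma) \hookrightarrow C^{*,\lambda+2}(\Sigma)$ restricts to a continuous bijection $\iota \colon \Cal{Z}^{\mu+2}_\oploc \to \Cal{Z}^{\lambda+2}_\oploc$. The tangent space to either submanifold at $(y_0,f_0)$ coincides with the kernel of $\text{\rm P}_{y_0,e_0} + \text{\rm J}_{y_0,e_0}$ acting on the corresponding H\"older space, and Proposition \ref{PropEllipticRegularityOfJacobiOperatorII}(1) shows that this kernel sits inside $T_{y_0}Y \times C^\infty(\Sigma)$ and is therefore $\lambda$-independent. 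Hence the differential of $\iota$ is an isomorphism at every point, so $\iota$ is a local, and therefore global, diffeomorphism, yielding assertion (1).

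The main obstacle here is essentially pushed back to the preceding Theorem \ref{PropSurjectivityForExtensions}: once surjectivity of $\text{\rm P}_{x,e}+\text{\rm J}_{x,e}$ is granted, the core analytic input is just the Fredholm index-zero property of $\text{\rm J}$ together with elliptic regularity, and everything reduces to bookkeeping with the implicit function theorem. The only subtlety to treat carefully is the identification of tangent spaces across different H\"older regularities, which is exactly where elliptic regularity enters through Proposition \ref{PropEllipticRegularityOfJacobiOperatorII}(1).
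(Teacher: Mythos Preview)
Your proposal is correct and follows essentially the same route as the paper's own proof: identify the derivative of $(H^{\lambda+2},\Theta^{\lambda+1})$ with $\text{\rm P}_{y_0,e_0}+\text{\rm J}_{y_0,e_0}$ via the conjugation Propositions \ref{PropPertMCIsConjugate}--\ref{InfBCIsConjugate}, compute the Fredholm index as $\opDim(X)$ by treating $\text{\rm P}$ as a finite-rank (hence compact) perturbation, apply the submersion/implicit function theorem, and then compare the structures for different $\lambda$ via the canonical inclusion $C^{*,\mu+2}(\Sigma)\hookrightarrow C^{*,\lambda+2}(\Sigma)$. The only point you leave slightly implicit is that surjectivity of $\text{\rm P}+\text{\rm J}$, which is hypothesised at the level of smooth functions, transfers to the H\"older setting; the paper also glosses this, invoking Proposition \ref{PropEllipticRegularityOfJacobiOperatorII}, and it follows since the H\"older image is closed (Fredholm) and contains the dense subspace $C^\infty(\Sigma)\times C^\infty(\partial\Sigma)$.
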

\begin{proof}Choose $(x,[e])\in\Cal{Z}(X|\Omega)$. By hypothesis, $\text{\rm P}_{x,e}+\text{\rm J}_{x,e}$ defines a surjective map from $T_x X\times C^\infty(\Sigma)$ into $C^\infty(\Sigma)\times C^\infty(\partial \Sigma)$. Choose $\lambda\in[0,\infty[\setminus\Bbb{N}$. By Proposition \ref{PropEllipticRegularityOfJacobiOperatorII}, $\text{\rm P}_{x,e}+\text{\rm J}_{x,e}$ defines a surjective map from $T_x X\times C^{*,\lambda+2}(\Sigma)$ into $C^{*,\lambda}(\Sigma)\times C^{*,\lambda+1}(\partial \Sigma)$. We now claim that $\text{\rm P}_{x,e}+\text{\rm J}_{x,e}$ is Fredholm of index $\opDim(X)$. Indeed, let $\pi_1:T_xX\times C^{*,\lambda+2}(\Sigma)\rightarrow T_xX$ and $\pi_2:T_xX\times C^{*,\lambda+2}(\Sigma)\rightarrow C^{*,\lambda+2}(\Sigma)$ be the projections onto the first and second factors respectively. Observe that $\pi_2$ is Fredholm of index $\opDim(X)$. Since the composite of two Fredholm maps is Fredholm of index equal to the sum of the indices of each component, it follows that $\text{\rm J}_{x,e}\circ\pi_2$ is Fredholm of index $\opDim(X)$. Observe that $\pi_1$ is compact. Since the composite of a compact mapping and any other mapping is also compact, it follows that $\text{\rm P}_{x,e}\circ\pi_1$ is compact. Since a compact perturbation of a Fredholm mapping is also Fredholm of the same index, it follows that $\text{\rm P}_{x,e}+\text{\rm J}_{x,e}$ is Fredholm of index $\opDim(X)$ as asserted.

Now let $Y$ be a compact neighbourhood of $x$ in $X$, let $(\Psi,\Cal{U},\Cal{V})$ be a graph chart of $X\times\Cal{E}$ over $Y$ and let $H^{\lambda+2}:\Cal{U}^{\lambda+2}\longrightarrow C^{*,\lambda}(\Sigma)$ and $\Theta^{\lambda+1}:\Cal{U}^{\lambda+1}\longrightarrow C^{*,\lambda}(\partial \Sigma)$ be respectively the mean curvature function and the boundary angle function in this chart (c.f. Section \ref{TheLocalSolutionSpace}). By Propositions \ref{PropPertMCIsConjugate}, \ref{PropInfMCIsConjugate} and \ref{InfBCIsConjugate}, for all $(y,f)\in\Cal{Z}_\oploc^{\lambda+2}\minter\Psi^{-1}(X\times\Omega)$, the mapping $D(H^{\lambda+2},\Theta^{\lambda+2})(y,f)$ is conjugate to $\text{\rm P}_{y,e}+\text{\rm J}_{y,e}$, where $e=\hat{\Phi}(y,f)$, and therefore defines a surjective, Fredholm map of index equal to $\opDim(X)$ from $T_xX\times C^{*,\lambda+2}(\Sigma)$ into $C^{*,\lambda}(\Sigma)\times C^{*,\lambda+1}(\partial \Sigma)$. It therefore follows from the Submersion Theorem for Banach manifolds that $\Cal{Z}^{\lambda+2}_\oploc\minter\Psi^{-1}(X\times\Omega)$ is a smooth, embedded submanifold of $\Cal{U}^{\lambda+2}$ of finite dimension equal to $\opDim(X)$ and, moreover, that $\Pi(\partial\Cal{Z}^{\lambda+2}_\oploc)\subseteq\partial Y$.

It remains to show independence. However, by the preceeding discussion, for all $\mu\geqslant\lambda$, $\Cal{Z}_\oploc^{\mu+2}\minter\Psi^{-1}(X\times\Omega)$ and $\Cal{Z}_{\oploc}^{\lambda+2}\minter\Psi^{-1}(X\times\Omega)$ are smooth, embedded, submanifolds of $\Cal{U}^{\mu+2}$ and $\Cal{U}^{\lambda+2}$ respectively, both of finite dimension equal to $\opDim(X)$. Let $i_{\mu,\lambda}:Y\times C^{*,\mu+2}(\Sigma)\longrightarrow Y\times C^{*,\lambda+2}(\Sigma)$ be the canonical embeddings. The mapping $i_{\mu,\lambda}$ is smooth and injective with injective derivative at every point, and therefore restricts to a diffeomorphism from $\Cal{Z}_{\oploc}^{\mu+2}\minter\Psi^{-1}(X\times\Omega)$ to $\Cal{Z}_\oploc^{\lambda+2}\minter\Psi^{-1}(X\times\Omega)$. It follows that the differential structure induced over $\Cal{Z}_\oploc\minter\Psi^{-1}(X\times\Omega)$ by the canonical embedding into $\Cal{U}^{\lambda+2}$ is independent of $\lambda$, and this completes the proof.\end{proof}

We recall the following technical result:
\begin{proposition}\label{PropSmoothnessToSmoothness}
Let $N_1,N_2$ be smooth, finite-dimensional manifolds and suppose that $N_2$ is compact. Let $\Phi$ be a mapping from $N_1$ into $C^\infty(N_2)$, and define the function $\varphi:N_1\times N_2\rightarrow\Bbb{R}$ such that for all $(p,q)\in N_1\times N_2$:
$$
\varphi(p,q) = \Phi(p)(q).
$$
$\Phi$ defines a smooth mapping from $N_1$ into $C^{*,\lambda}(N_2)$ for all $\lambda\in[0,\infty[\setminus\Bbb{N}$ if and only if $\varphi$ is smooth.
\end{proposition}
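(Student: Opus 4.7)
The plan is to prove both implications separately. Working locally, I may assume that $N_1$ is an open subset of $\mathbb{R}^n$ near the origin $p_0$, and I fix a compact neighbourhood $K$ of $p_0$. The key fact used throughout is that, for $\lambda=k+\alpha$ with $k\in\Bbb{N}$ and $\alpha\in(0,1)$, the norm $\|\cdot\|_\lambda$ dominates the $C^k$-norm, and every smooth function on $N_2$ lies in $C^{*,\lambda}(N_2)$.

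For the implication that Hölder smoothness of $\Phi$ implies joint smoothness of $\varphi$, I would factor $\varphi=\mathrm{ev}\circ(\Phi\times\opId_{N_2})$, where $\mathrm{ev}:C^{*,\lambda}(N_2)\times N_2\to\mathbb{R}$ is the evaluation map $(f,q)\mapsto f(q)$. I first show that $\mathrm{ev}$ is of class $C^k$ whenever $\lambda>k$: for any multi-index $\beta$ of order at most $k$, the map $f\mapsto D^\beta f(q)$ is a bounded linear functional on $C^{*,\lambda}(N_2)$ depending continuously on $q$, which yields the continuous mixed partials of $\mathrm{ev}$ up to total order $k$. Composing with $\Phi\times\opId_{N_2}$, which is $C^k$ by hypothesis, shows that $\varphi$ is $C^k$. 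Since $k$ is arbitrary, $\varphi$ is smooth.

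For the reverse implication, fix $\lambda=k+\alpha$. I would define the candidate $j$-th derivative $D^j\Phi(p):(\mathbb{R}^n)^j\to C^{*,\lambda}(N_2)$ pointwise by
$$
\bigl(D^j\Phi(p)(h_1,\dots,h_j)\bigr)(q):=D^j_p\varphi(p,q)(h_1,\dots,h_j),
$$
which is smooth in $q$ by smoothness of $\varphi$, and hence lies in $C^{*,\lambda}(N_2)$. The uniform continuity of the mixed partials $D^j_p D^\beta_q\varphi$ on $K\times N_2$ for $|\beta|\leq k+1$ then implies that $p\mapsto D^j\Phi(p)$ is continuous into the $j$-multilinear operator norm with values in $C^{*,\lambda}(N_2)$. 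The integral form of Taylor's theorem applied pointwise in $q$ gives
$$
\Phi(p+h)-\sum_{i=0}^{j}\frac{1}{i!}D^i\Phi(p)(h,\dots,h)=\int_0^1\frac{(1-t)^{j-1}}{(j-1)!}\bigl[D^j\Phi(p+th)-D^j\Phi(p)\bigr](h,\dots,h)\,dt,
$$
and the continuity of $p\mapsto D^j\Phi(p)$ forces this to be $o(|h|^j)$ in $C^{*,\lambda}(N_2)$-norm. Inductively this yields that $\Phi$ is $C^j$ for every $j$.

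The main obstacle is the Hölder estimate on the Taylor remainder: for $|\beta|=k$ one must control the $\alpha$-Hölder seminorm of $D^\beta_q$ applied to the integrand. The argument is that, by a mean-value estimate in $q$, this Hölder seminorm is dominated by the $C^{k+1}(N_2)$-norm, which in turn is controlled by $\sup_{|\gamma|\leq k+1}\sup_{q\in N_2}|D^j_p D^\gamma_q\varphi(p+th,q)-D^j_p D^\gamma_q\varphi(p,q)|$; this goes to $0$ as $h\to 0$ uniformly in $t\in[0,1]$ by the uniform continuity of these partials on the compact set $K\times N_2$. Compactness of $N_2$ is thus essential, and is the only non-trivial technical ingredient. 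The fact that the candidate derivatives land in the closure $C^{*,\lambda}(N_2)$ rather than merely in $C^\lambda(N_2)$ is immediate, since they are smooth functions of $q$.
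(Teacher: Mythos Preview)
Your proposal is correct and follows essentially the same route as the paper for the direction it actually proves: the paper argues directly that if $\Phi$ is smooth into $C^{*,\lambda}(N_2)$ with $\lambda>m$, then all mixed partials $D_1^pD_2^q\varphi$ with $q\leqslant m$ exist and are continuous, which is exactly your evaluation-map argument unwound. The only difference is that the paper dismisses the reverse implication ($\varphi$ smooth $\Rightarrow$ $\Phi$ smooth into every $C^{*,\lambda}$) in one word as ``trivial,'' whereas you supply the full Taylor-remainder argument with the $C^{k+1}$ control of the H\"older seminorm; your version is more complete, but there is no genuine divergence in method.
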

\begin{proof}For $k\in\left\{1,2\right\}$, denote by $D_k$ the partial derivative with respect to the $k$'th component. Choose $m\in\Bbb{N}$ and $\lambda>m$. If $\Phi$ defines a smooth mapping from $N_1$ into $C^{*,\lambda}(N_2)$, then $D_1^pD_2^q\varphi$ exists and is continuous for all $p,q\in\Bbb{N}\times\left\{0,...,m\right\}$. It follows that if $\Phi$ defines a smooth mapping from $N_1$ into $C^{*,\lambda}(N_2)$ for all $\lambda\in[0,\infty[\setminus\Bbb{N}$, then $\varphi$ is smooth. The reverse implication is trivial, and this completes the proof.
\end{proof}

\begin{theorem}\label{ThmSolutionSpaceIsSmooth}
Let $\Omega\subseteq\Cal{E}$ be such that $\partial_\omega\Cal{Z}(X|\Omega)=\emptyset$. If $\text{\rm P}_{x,e}+\text{\rm J}_{x,e}$ is surjective for all $(x,[e])\in\Cal{Z}(X|\Omega)$, then $\Cal{Z}(X|\Omega)$ carries the canonical structure of a smooth, compact manifold with boundary of finite dimension equal to $\opDim(X)$. Moreover, $\Pi$ defines a smooth map from $\Cal{Z}(X|\Omega)$ to $X$ such that:
$$
\Pi(\partial\Cal{Z}(X|\Omega))\subseteq\partial X,
$$
where $\partial\Cal{Z}(X|\Omega)$ here denotes the manifold boundary of $\Cal{Z}(X|\Omega)$.
\end{theorem}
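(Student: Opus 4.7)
The plan is to assemble the local results of Proposition \ref{PropLocalSolutionSpacesAreSmooth} into a global differential structure on $\Cal{Z}(X|\Omega)$. Since $\partial_\omega\Cal{Z}(X|\Omega)=\emptyset$, the set $\Cal{Z}(X|\Omega)$ is closed inside the compact space $\Cal{Z}(X)$ (Proposition \ref{ThmCompactness}), hence compact. I would cover it by finitely many graph charts $(\Psi_i,\Cal{U}_i,\Cal{V}_i)$ of $X\times\Cal{E}$, each defined over a compact neighbourhood $Y_i\subseteq X$. By Proposition \ref{PropLocalSolutionSpacesAreSmooth}, each $\Cal{Z}_{\oploc,i}\minter\Psi_i^{-1}(X\times\Omega)$ is a smooth, $\opDim(X)$-dimensional embedded submanifold of $\Cal{U}_i^{\lambda+2}$ with $\lambda$-independent differential structure, smooth boundary, and first-factor projection mapping its boundary into $\partial Y_i\subseteq\partial X$.

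The main obstacle is showing that the transition maps $\tau_{ji}=\Psi_j^{-1}\circ\Psi_i$ between overlapping graph charts are smooth when restricted to the solution spaces, even though, as stressed in the introduction, they fail to be smooth between the ambient H\"older-space charts. If $\Psi_i(y,f)=\Psi_j(y,f')$, then by construction there is an orientation-preserving diffeomorphism $\alpha$ of $\Sigma$ with $\hat{\Phi}_i(y,f)=\hat{\Phi}_j(y,f')\circ\alpha$. To prove smoothness of $(y,f)\mapsto (y,f')$ along the solution space, I would first invoke elliptic regularity in graph charts (Proposition \ref{PropEllipticRegularityOfPMCInGraphCharts}) to conclude that both $f$ and $f'$ are in fact $C^\infty$ on $\Sigma$, so that the evaluation $(y,f,p)\mapsto\hat{\Phi}_i(y,f)(p)$ is jointly smooth on the finite-dimensional manifold $(\Cal{Z}_{\oploc,i}\minter\Psi_i^{-1}(X\times\Omega))\times\Sigma$. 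By Proposition \ref{PropSmoothnessToSmoothness}, this translates into $(y,f)\mapsto\hat{\Phi}_i(y,f)$ being smooth into $C^{*,\mu}(\Sigma,M)$ for every $\mu\in[0,\infty)\setminus\Bbb{N}$. The chart $\Psi_j$ is built from the modified exponential $\tildeExp$ along the normal field of $e_j$, and the closest-point projection used to define it is a smooth local inversion in any fixed $C^{*,\mu+2}$ class provided $\mu$ is large enough; solving for $\alpha$ and $f'$ via this inversion (by the implicit function theorem in that Banach space) shows that $\tau_{ji}$ is smooth as a map into $\Cal{U}_j^{\mu+2}$, and hence smooth as a map into the finite-dimensional submanifold $\Cal{Z}_{\oploc,j}\minter\Psi_j^{-1}(X\times\Omega)$ thanks to the $\lambda$-independence of its differential structure.

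With smooth transition maps in hand, the finitely many local manifolds $\Cal{Z}_{\oploc,i}\minter\Psi_i^{-1}(X\times\Omega)$ glue canonically to endow $\Cal{Z}(X|\Omega)$ with the structure of a smooth, compact manifold with boundary of finite dimension equal to $\opDim(X)$. The projection $\Pi$ is, in each graph chart, the restriction of projection onto the first factor of $Y_i\times C^\infty(\Sigma)$ and is therefore smooth; the inclusion $\Pi(\partial\Cal{Z}(X|\Omega))\subseteq\partial X$ follows chart-by-chart from the corresponding local statement $\Pi(\partial\Cal{Z}_{\oploc,i})\subseteq\partial Y_i$ already established in Proposition \ref{PropLocalSolutionSpacesAreSmooth}. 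The only delicate point in the whole argument is the smoothness of transition maps, for which the combination of elliptic regularity and Proposition \ref{PropSmoothnessToSmoothness} is the essential device.
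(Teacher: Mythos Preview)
Your proposal is correct and follows essentially the same approach as the paper: compactness from Proposition \ref{ThmCompactness}, local smooth charts from Proposition \ref{PropLocalSolutionSpacesAreSmooth}, and the key step of proving smoothness of transition maps via elliptic regularity combined with Proposition \ref{PropSmoothnessToSmoothness}. The only minor difference is that the paper handles the transition maps entirely at the pointwise level---writing $\alpha(z,p)$ and $\psi(z,p)$ as explicit smooth maps of the finite-dimensional variables $(z,p)$ obtained by composing with the inverse of the tubular-neighbourhood diffeomorphism $F'_{\eta(z)}$, then invoking Proposition \ref{PropSmoothnessToSmoothness} once at the end---rather than passing through a Banach implicit function theorem as you suggest; this makes the argument slightly more direct, but the underlying idea is the same.
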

\begin{proof}Since $\partial_\omega\Cal{Z}(X|\Omega)=\emptyset$, $\Cal{Z}(X|\Omega)$ is a closed subset of $\Cal{Z}(X)$. Since $\Cal{Z}(X)$ is compact, by Proposition \ref{ThmCompactness}, so too is $\Cal{Z}(X|\Omega)$. In addition, Proposition \ref{PropLocalSolutionSpacesAreSmooth} yields an atlas of smooth charts of $\Cal{Z}(X|\Omega)$, and it thus remains to prove that the transition maps are also smooth. Choose $(x,[e])\in\Cal{Z}(X|\Omega)$. Let $Y$ be a compact neighbourhood of $x$ in $X$ and let $\tilde{e}:Y\times \Sigma\longrightarrow M$ be such that $\widetilde{e}(x)=e$ and, for all $y\in Y$, $\widetilde{e}_y:=\widetilde{e}(y,\cdot)$ is an embedding such that $\tilde{e}_y(\Sigma)$ meets $\partial M$ orthogonally along $\partial \Sigma$ with respect to $g_y$. Let $N:Y\times \Sigma\longrightarrow TM$ be such that, for all $y\in Y$, $N_y:=N(y,\cdot)$ is the unit, normal vector field over $e_y$ with respect to $g_y$ which is compatible with the orientation. We define the mapping $F:Y\times \Sigma\times \Bbb{R}\longrightarrow M$ by:
$$
F(y,p,t) =\text{\rm E}(tN_y(p)),
$$
where $\text{\rm E}$ is the modified exponential map. Let $Y'$ be another compact neighbourhood of $x$ in $X$ and define $\tilde{e}'$, $N'$ and $F'$ in the same manner. For all $y$, we denote $F_y:=F(y,\cdot,\cdot)$ and $F'_y:=F'(y,\cdot,\cdot)$.

Let $(\Psi,\Cal{U},\Cal{V})$ and $(\Psi',\Cal{U}',\Cal{V}')$ be the graph charts of $X\times\Cal{E}$ generated by $(Y,\tilde{e})$ and $(Y',\widetilde{e}^\prime)$ respectively. Denote $Z_0=\Cal{Z}_{Y,\loc}\minter\Psi^{-1}(X\times\Omega)$ and let $B:=(\eta,\varphi):Z_0\longrightarrow Y\times C^\infty(\Sigma)$ be the canonical embedding. By definition $(\eta,\varphi)$ defines a smooth mapping from $Z_0$ into $Y\times C^{*,\lambda+2}(\Sigma)$ for all $\lambda$. It follows that $\eta$ is smooth and, by Proposition \ref{PropSmoothnessToSmoothness}, the function $\tilde{\varphi}:Z_0\times \Sigma\rightarrow\Bbb{R}$ given by:
$$
\tilde{\varphi}(z,p) := \varphi(z)(p)
$$
is smooth. Observe that, for all $(z,p)\in Z_0\times \Sigma$:
$$
(\hat{\Phi}\circ B)(z)(p) = F_{\eta(z)}(p,\tilde{\varphi}(z,p)).
$$

Let $\pi_1:\Sigma\times\Bbb{R}\longrightarrow S$ and $\pi_2:\Sigma\times\Bbb{R}\rightarrow\Bbb{R}$ be the projections onto the first and second factors respectively. We define $\alpha:Z_0\times \Sigma\rightarrow S$ and $\psi:Z_0\times \Sigma\rightarrow\Bbb{R}$ such that for all $(z,p)\in Z_0\times \Sigma$:
\begin{align*}
\alpha(z,p) &= (\pi_1\circ (F'_{\eta(z)})^{-1}\circ F_{\eta(z)})(p,\tilde{\varphi}(z,p)),\\
\psi(z,p) &= (\pi_1\circ (F'_{\eta(z)})^{-1}\circ F_{\eta(z)})(p,\tilde{\varphi}(z,p)).
\end{align*}
Observe that both $\alpha$ and $\psi$ are smooth mappings. Moreover, for all $z$ sufficiently close to $z_0:=(x,0)$, $\alpha_z:=\alpha(z,\cdot)$ is a diffeomorphism. We therefore define $\beta:Z_0\times \Sigma\rightarrow \Sigma$ such that for all $z\in Z_0$, $\beta_z:=\beta(z,\cdot)=\alpha_z^{-1}$, and we see that $\beta$ is also a smooth mapping. However, for all $z\in Z_0$:
$$
((\Psi')^{-1}\circ\Psi\circ B)(z) = (\eta(z),\psi_z\circ\beta_z).
$$
Since the mapping $(z,p)\mapsto(\psi_z\circ\beta_z)(p)$ is smooth, it follows from Proposition \ref{PropSmoothnessToSmoothness} again that $((\Psi')^{-1}\circ\Psi\circ B)$ is also a smooth mapping, and the transition maps are therefore smooth as desired.\end{proof}

\subsection{Surjectivity and Orientation}\label{TheOrientationOfTheSolutionSpace}
In order to define the orientation form over $\Cal{Z}(X|\Omega)$, we briefly review some basic spectral theory. Although we restrict attention here to self-adjoint operators, the results of this section extend to the more general framework of operators of compact resolvent (c.f. \cite{Kato} and \cite{SmiRos}).

Let $E$ and $F$ be Hilbert spaces. Let $i:E\rightarrow F$ be a compact, injective mapping with dense image. Let $A:E\rightarrow F$ be a Fredholm mapping. We say that $A$ is {\it self-adjoint} whenever it has the property that for all $u,v\in E$:
$$
\langle A(u),i(v)\rangle = \langle i(u),A(v)\rangle.
$$
Observe, in particular, that $A$ has Fredholm index zero. We henceforth identify $E$ with its image $i(E)$. Let $K\subseteq E\subseteq F$ be the kernel of $A$, let $R_f\subseteq F$ be its orthogonal complement and denote $R_e:=R_f\minter E$. Observe that $R_e$ and $R_f$ are closed subspaces of $E$ and $F$ respectively. Moreover:
$$
E = K\oplus R_e,\qquad F=K\oplus R_f.
$$
By the Closed Graph Theorem, $A$ restricts to an invertible, linear mapping from $R_e$ to $R_f$. We define $B:R_f\rightarrow R_e$ to be the inverse of this restriction. We extend $B$ to an operator from $F$ into $E$ by composing with the orthogonal projection of $F$ onto $R_f$, so that $B$ then defines a self-adjoint, compact operator from $F$ to itself. By the Sturm-Liouville Theorem, the (non-zero) spectrum of $B$, which we denote by $\opSpec(B)$ is a discrete subset of $\Bbb{R}\setminus\left\{0\right\}$ and every eigenvalue has finite multiplicity. We recall that the {\it spectrum} of $A$, which we denote by $\opSpec(A)$, is defined to be the set of all $\lambda\in\Bbb{R}$ such that $A-\lambda$ is not invertible, and we see that:
$$
\opSpec(A)\setminus\left\{0\right\} = \left\{\lambda\in\Bbb{R}\setminus\left\{0\right\}\ |\ \lambda^{-1}\in\opSpec(B)\right\},
$$
from which it follows, in particular, that $\opSpec(A)$ is a discrete subset of $\Bbb{R}$, and every eigenvalue has finite multiplicity.

We define the {\it nullity} of $A$ to be the dimension of the kernel of $A$, and we denote it by $\opNull(A)$. Since $A$ is Fredholm, $\opNull(A)$ is finite. We define the {\it index} of $A$ (not to be confused with its Fredholm index) to be the sum of the multiplicities of the negative eigenvalues of $A$, and we denote it by $\opInd(A)$. That is:
$$
\opInd(A) = \sum_{\lambda\in\opSpec(A)\minter(-\infty,0)}\opMult(\lambda).
$$
When $\opInd(A)$ is finite, we define the {\it signature} of $A$, which we denote by $\opSig(A)$ by:
$$
\opSig(A) = (-1)^{\opInd(A)}.
$$

We define $\Cal{F}^+(E,F)$ to be the set of all self-adjoint, Fredholm maps $A:E\longrightarrow F$ such that, for all non-zero $v\in E$:
\begin{equation}\label{EqnOperatorBoundedBelow}
\frac{\langle Av, v\rangle}{\langle v,v\rangle} \geqslant K,
\end{equation}
for some $K\in\Bbb{R}$, where $\langle\cdot,\cdot\rangle$ is the inner-product of $F$. Observe that $\opInd(A)<\infty$ for all $A\in\Cal{F}^+(E,F)$ and $\opSig(A)$ is therefore well defined for all such $A$. Observe, moreover, that since \ref{EqnOperatorBoundedBelow} is a convex condition, $\Cal{F}^+(E,F)$ is a convex subset of the set of self-adjoint, Fredholm mappings and is therefore, in particular, locally connected.
\begin{proposition}\label{NullityIndexConstant}
Let $C\subseteq\Cal{F}^+(E,F)$ be connected. If $\opNull$ is constant over $C$, then so too is $\opInd$.
\end{proposition}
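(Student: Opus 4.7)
The plan is to prove that $\opInd$ is locally constant on the level set $\{A\in\Cal{F}^+(E,F):\opNull(A)=n\}$ for each $n\in\Bbb{N}$; since $\opInd$ is integer-valued and $C$ is connected, the constancy of $\opInd$ on $C$ will then be immediate. The key tool is the continuity of the individual eigenvalues of $A$ under perturbation of $A$.

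To make this continuity precise, I would first reduce to the classical case of compact self-adjoint operators on a single Hilbert space. Since $\langle Av,v\rangle\geqslant K\langle v,v\rangle$, the shifted operator $\widetilde{A}:=A+(|K|+1)i:E\rightarrow F$ defines an isomorphism, and its inverse $R_A:=\widetilde{A}^{-1}:F\rightarrow E$ depends continuously on $A$ in the operator norm, as follows from the resolvent identity $R_A-R_B=R_B(B-A)R_A$. Composing with the compact inclusion, $i\circ R_A:F\rightarrow F$ is a positive, compact, self-adjoint operator depending continuously on $A$, whose non-zero eigenvalues are precisely $(\lambda+|K|+1)^{-1}$ for $\lambda\in\opSpec(A)$. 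By standard perturbation theory for compact self-adjoint operators (via the min-max characterisation, or via continuity of the spectral projector associated to a contour enclosing any isolated piece of the spectrum; c.f.~\cite{Kato}), it follows that the eigenvalues of $A$, listed in increasing order $\lambda_1(A)\leqslant\lambda_2(A)\leqslant\cdots$ with multiplicity, depend continuously on $A\in\Cal{F}^+(E,F)$. Note in particular that since $(\lambda+|K|+1)^{-1}\rightarrow 0$ for the compact operator, the $\lambda_k(A)$ accumulate only at $+\infty$, so the negative eigenvalues of any given $A_0$ are bounded away from $0$.

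Fix $A_0\in C$, denote $n=\opNull(A_0)$ and $m=\opInd(A_0)$, so that
$$\lambda_m(A_0)<0=\lambda_{m+1}(A_0)=\cdots=\lambda_{m+n}(A_0)<\lambda_{m+n+1}(A_0).$$
By the continuity just established, for every $A\in C$ sufficiently close to $A_0$, $\lambda_m(A)<0$, $\lambda_{m+n+1}(A)>0$, the intermediate eigenvalues $\lambda_{m+1}(A),\ldots,\lambda_{m+n}(A)$ lie close to $0$, and all other eigenvalues remain bounded away from $0$. Since by hypothesis $\opNull(A)=n$, exactly $n$ eigenvalues of $A$ vanish, and the only candidates are $\lambda_{m+1}(A),\ldots,\lambda_{m+n}(A)$; hence all of these must equal zero, and $\opInd(A)=m$. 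This proves the local constancy of $\opInd$ and completes the proof.

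The main technical subtlety lies in making the continuity-of-spectrum step rigorous through the resolvent construction, since $\Cal{F}^+(E,F)$ consists of operators between two distinct Hilbert spaces linked by a compact inclusion rather than of bounded self-adjoint operators on a single space; one must verify that norm continuity in $\Cal{L}(E,F)$ does translate into norm continuity of the compact resolvents $i\circ R_A$ on $F$. Once this reduction is carried out, however, the remaining counting argument is elementary.
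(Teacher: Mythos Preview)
Your proof is correct and rests on the same underlying spectral perturbation theory as the paper's, but the packaging differs. The paper dispatches the result in two lines by citing from \cite{Kato} that $\opInd$ is lower semicontinuous on $\Cal{F}^+(E,F)$ while $\opInd+\opNull$ is upper semicontinuous; hence if $\opNull$ is locally constant, $\opInd$ is both lower and upper semicontinuous, so continuous, so (being integer-valued) locally constant. Your argument instead unpacks the mechanism behind those semicontinuity statements: you build the compact self-adjoint resolvent $i\circ(A+c\,i)^{-1}$, invoke continuity of its ordered eigenvalues, and then count. The paper's route is slicker; yours is more self-contained and makes explicit exactly why the nullity hypothesis pins down the $n$ eigenvalues near zero. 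One small point to tidy: the constant $K$ in \eqref{EqnOperatorBoundedBelow} depends on $A$, so for the resolvent identity $R_A-R_B=R_B(B-A)R_A$ you should fix a single shift $c$ valid uniformly on a neighbourhood of $A_0$ (which is possible since the lower bound is stable under small perturbations in operator norm).
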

\begin{proof}By classical spectral theory (c.f. \cite{Kato}), $\opInd$ defines a lower semi-continuous function over $\Cal{F}^+(E,F)$, whilst $(\opInd + \opNull)$ defines an upper-semicontinuous function over this set. Consequently, if $\opNull$ is continuous (i.e. locally constant), then so too is $\opInd$, and the result follows.\end{proof}

Let $X$ be a vector space with orientation form $\tau$ and finite dimension equal to $n$. Let $\Cal{M}:=\Cal{M}(X,E,F)$ be the space of all pairs $(M,A)$ with the properties that:
\begin{enumerate}
\item $M:X\rightarrow F$ is a linear mapping;
\item $A:E\rightarrow F$ is an element of $\Cal{F}^+(E,F)$; and
\item $M+A$ is surjective.
\end{enumerate}
Observe that $\opKer(M+A)$ defines a continuous mapping from $\Cal{M}$ into the Grassmannian of $n$-dimensional subspaces of $X\times E$.
\begin{proposition}\label{AInvertibleGivesPiInvertible}
If $\pi:X\times E\rightarrow X$ is the projection onto the first component, then $\pi$ restricts to a linear isomorphism from $\opKer(M+A)$ into $X$ if and only if $A$ is bijective.
\end{proposition}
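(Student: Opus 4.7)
The plan is to reduce the claim to an elementary dimension count that pivots on the fact that $A$ has Fredholm index zero (because self-adjointness forces this, as noted right after the definition of self-adjointness). First I would observe that since $X$ is finite-dimensional of dimension $n$ and $A\colon E\to F$ is Fredholm of index zero, the operator $(M+A)\colon X\times E\to F$ is Fredholm of index $n$. Since, by the third defining property of $\Cal{M}$, $M+A$ is surjective, it follows that $\opKer(M+A)$ has dimension exactly $n=\opDim(X)$.

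Next I would note that $\pi$ restricts to a linear map between two vector spaces of the same finite dimension $n$, so it is a linear isomorphism if and only if it is injective. A pair $(x,e)\in\opKer(M+A)$ lies in $\opKer(\pi|_{\opKer(M+A)})$ precisely when $x=0$ and $A(e)=0$, i.e.\ when $e\in\opKer(A)$. Hence $\pi|_{\opKer(M+A)}$ is injective if and only if $\opKer(A)=\{0\}$, that is, if and only if $A$ is injective.

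Finally, because $A$ is Fredholm of index zero, $A$ is injective if and only if it is surjective if and only if it is bijective. Combining the two equivalences gives the conclusion: $\pi|_{\opKer(M+A)}$ is a linear isomorphism onto $X$ if and only if $A$ is bijective.

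There is really no serious obstacle here; the only subtlety worth underlining is the use of the Fredholm-index-zero property of $A$, both to compute $\opDim\opKer(M+A)=n$ and to upgrade injectivity of $A$ to bijectivity. Everything else is pure linear algebra once the surjectivity hypothesis on $M+A$ is invoked.
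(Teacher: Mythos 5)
Your argument is correct and follows essentially the same route as the paper: both reduce to the observation that $\opDim(\opKer(M+A))=\opDim(X)$ (so injectivity of the restriction suffices) and then identify $\opKer(M+A)\minter\opKer(\pi)$ with $\left\{0\right\}\times\opKer(A)$, invoking the index-zero property of $A$ to pass from injectivity to bijectivity. Your write-up merely makes explicit the dimension count that the paper takes for granted.
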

\begin{proof}Since $\opDim(\opKer(M+A))=\opDim(X)$, this restriction is bijective if and only if it is injective. However:
$$
\opKer(M+A)\minter\opKer(\pi) = \left\{0\right\}\times\opKer(A),
$$
from which the result follows.\end{proof}
When $A$ is invertible, we therefore define the {\it orientation form} $\sigma(M,A)$ over $\opKer(M+A)$ by:
$$
\sigma(M,A) = \opSig(A)(\pi^*\tau).
$$
We identify orientation forms that differ only by a positive factor and we obtain (c.f. Proposition $4$ of \cite{WhiteI}):
\begin{proposition}\label{PropDefinitionOfOrientationForm}
$\sigma(M,A)$ extends continuously to define an orientation form over $\opKer(M+A)$ for all $(M,A)\in\Cal{M}$.
\end{proposition}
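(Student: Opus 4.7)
The plan is to reduce the proposition to its finite-dimensional analogue by a local Lyapunov--Schmidt construction and then invoke the known finite-dimensional case. Uniqueness of a continuous extension is automatic: the spectrum of any $A\in\Cal{F}^+(E,F)$ is discrete, so invertible $A$ are dense in $\Cal{F}^+(E,F)$ and the locus $\{(M,A)\in\Cal{M}\mid A\text{ is bijective}\}$, on which $\sigma(M,A)$ is already defined, is dense in $\Cal{M}$.

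I would fix $(M_0,A_0)\in\Cal{M}$ and set $K:=\opKer A_0$, which is finite-dimensional by Fredholmness. Let $R_f:=K^{\perp}\subseteq F$ and $R_e:=R_f\cap E$, so that $E=K\oplus R_e$ and $F=K\oplus R_f$; self-adjointness makes $A_0|_{R_e}\colon R_e\to R_f$ a bijection. In a small convex neighbourhood $U$ of $(M_0,A_0)$ the corresponding $(2,2)$-block $A_{22}$ of each $A$ remains a bijection lying in $\Cal{F}^{+}(R_e,R_f)$, so by Proposition \ref{NullityIndexConstant} the signature $\opSig(A_{22})$ is locally constant on $U$. Writing $M=(M_1,M_2)$ in the $K\oplus R_f$ decomposition and using the invertibility of $A_{22}$, I would solve the $R_f$-component of the kernel equation $(M+A)(x,k+r)=0$ for $r\in R_e$ as a continuous linear function of $(x,k)\in X\oplus K$. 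This yields a continuous family of linear isomorphisms
$$
\iota_{M,A}\colon\opKer\tilde L_{M,A}\longrightarrow\opKer(M+A),\qquad(x,k)\longmapsto(x,k+r(x,k)),
$$
intertwining the projections to $X$, where $\tilde L_{M,A}\colon X\oplus K\to K$ is the induced surjective linear map and depends continuously on $(M,A)\in U$.

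Next I would establish the Schur-complement identity
$$
\opSig(A)=\opSig(A_{22})\cdot\opSig(\tilde A_{11}),\qquad\tilde A_{11}:=A_{11}-A_{12}A_{22}^{-1}A_{21}\colon K\to K,
$$
whenever $A$ is bijective, by homotoping $A$ through $\Cal{F}^{+}$ to the block-diagonal operator with entries $\tilde A_{11}$ and $A_{22}$; explicit block manipulations show that the nullity is preserved along the homotopy (via the correspondence $\opKer A_{s}\ni k\mapsto(k,-sA_{22}^{-1}A_{21}k)$), so Proposition \ref{NullityIndexConstant} applies and $\opSig$ is constant along this path. Combined with $\iota_{M,A}$, this identifies $\sigma(M,A)$ on the invertible locus with the pullback $\iota_{M,A}^{\ast}\bigl(\opSig(A_{22})\cdot\opSig(\tilde A_{11})\cdot\pi_X^{\ast}\tau\bigr)$ on $\opKer\tilde L_{M,A}$, where $\pi_X\colon X\oplus K\to X$ is the projection. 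Since $\opSig(A_{22})$ is locally constant on $U$ and the finite-dimensional data $(\tilde L_{M,A},\tilde A_{11})$ depend continuously on $(M,A)$, the finite-dimensional version of the proposition (Proposition 4 of \cite{WhiteI}) produces a continuous orientation form on $\opKer\tilde L_{M,A}$ over all of $U$; transporting it back via $\iota_{M,A}$ gives the desired continuous extension of $\sigma$.

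The principal obstacle is the Schur-complement identity for $\opSig$: self-adjointness in $\Cal{F}^{+}$ is relative to the compact inclusion $i\colon E\to F$ rather than to a single inner product, so the block decomposition must be set up carefully to ensure that every intermediate operator along the homotopy remains in $\Cal{F}^{+}$ with the lower bound \eqref{EqnOperatorBoundedBelow} controlled uniformly in the deformation parameter. Once this identity is secured, the remaining ingredients are a continuous finite-dimensional pullback together with the finite-dimensional case already in the literature.
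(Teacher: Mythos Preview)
Your approach is correct in outline and genuinely different from the paper's. Both proofs begin with the same splitting $E=K\oplus R_e$, $F=K\oplus R_f$ at a fixed $(M_0,A_0)$, but thereafter they diverge. The paper introduces an additional splitting $X=L\oplus S$ determined by $\opKer(p_1\circ M_0)$, identifies $S$ with $K$ via $M_{12}$, and then invokes Kato's analytic perturbation theory to find conjugating isometries $Q_e,Q_f$ that force every nearby $A'$ to preserve $K$ and $R$; with $A'$ block-diagonal the authors compute $\pi^*\tau$ explicitly on $\opKer(M'+A')$ and read off the continuous extension directly. Your route is a Lyapunov--Schmidt reduction: you solve away the $R_f$-component, push the problem down to the finite-dimensional map $\tilde L_{M,A}\colon X\oplus K\to K$, and appeal to the finite-dimensional case (White) once you have the Schur-complement identity $\opSig(A)=\opSig(A_{22})\opSig(\tilde A_{11})$.

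The trade-off is that the paper's argument is self-contained and avoids your Schur identity altogether, at the cost of importing the Kato machinery; your argument is more modular and conceptually cleaner, but the Schur identity in this setting (self-adjointness relative to the compact embedding $i\colon E\to F$, not to a single inner product) is exactly the point you flag. Your proposed homotopy does work: conjugating by $P_s=\begin{pmatrix}I&0\\-sA_{22}^{-1}A_{21}&I\end{pmatrix}$ preserves the quadratic form $u\mapsto\langle Au,iu\rangle_F$ up to pullback, and since $P_s$ is a bounded isomorphism of $E$ depending continuously on $s$, the lower bound \eqref{EqnOperatorBoundedBelow} and the nullity are preserved along the path, so Proposition~\ref{NullityIndexConstant} applies. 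Once that is checked, your reduction is complete.
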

\begin{remark}
In other words, for all $(M,A)\in\Cal{M}$, the subspace $\Cal{K}(M,A)$ carries a canonical orientation.
\end{remark}
\begin{proof}Let $K$ be the kernel of $A$. Let $R_f$ be its orthogonal complement in $F$ and denote $R_e:=R_f\minter E$. We denote each of $R_e$ and $R_f$ simply by $R$ when no ambiguity arises. Let $p_1:F\rightarrow K$ and $p_2:F\rightarrow R$ be the orthogonal projections. For convenience, we furnish $X$ with a positive-definite inner product. Let $L$ be the kernel of $p_1\circ M$ and let $S$ be its orthogonal complement. Let $q_1:X\rightarrow L$ and $q_2:X\rightarrow S$ be the orthogonal projections. With respect to the decompositions $E=K\oplus R$, $F=K\oplus R$ and $X=L\oplus S$, we denote:
$$
M = \begin{pmatrix} M_{11}& M_{12}\\ M_{21}& M_{22}\end{pmatrix},\qquad A = \begin{pmatrix} A_{11}& A_{12}\\  A_{21}& A_{22}\end{pmatrix}.
$$
By definition $M_{11}=0$ and $A_{11},A_{12},A_{21}=0$. The mapping $M_{12}$ coincides with the restriction of $p_1\circ M$ to $S$. We claim that this mapping is a linear isomorphism. Indeed, by definition of $L$ and $S$, $M_{12}$ is injective. To see that it is surjective, observe that, for all $w\in K$, by surjectivity of $M+A$, there exists $(u,v)\in X\times E$ such that $M(u) + A(v) = w$. In particular:
$$
w = p_1(w) = (p_1\circ M)(u) + (p_1\circ A)(w) = (p_1\circ M)(u),
$$
and surjectivity follows. Consequently, we identify $S$ with $K$ and assume that $M_{12}=\opId$.

Let $\pi$ and $\tilde{\pi}$ denote the canonical projections from $X\times E$ onto $X$ and $E$ respectively. We now claim that $(q_1\circ\pi,p_1\circ\tilde{\pi})$ defines a linear isomorphism from $\opKer(M+A)$ onto $L\oplus K$. Indeed, since:
$$
\opDim(L\oplus K) = \opDim(L\oplus S) = \opDim(X) = \opDim(\opKer(M+A)),
$$
it suffices to show that this mapping is injective. However, let $(u,v)\in\opKer(M+A)$ be such that $q_1(u),p_1(v)=0$. By definition, $M(u)=-A(v)$. Thus, bearing in mind that $M_{12}=\opId$, $q_2(u)=(p_1\circ M)(u) = -(p_1\circ A)(v) = 0$, from which it follows that $u=0$. Moreover, $A(v)=-M(u)=0$, and since the restriction of $A$ to $R$ is invertible, $p_2(v)=0$. Consequently, $v=0$, and $(q_1\circ\pi,p_1\circ\tilde{\pi})$ therefore defines a linear isomorphism from $\opKer(M+A)$ onto $L\oplus K$ as asserted.

By classical perturbation theory (c.f. \cite{Kato}), there exists a neighbourhood $U$ of $(M,A)$ in $\Cal{M}$ and smooth mappings $Q_e:U\rightarrow\opLin(E)$ and $Q_f:U\rightarrow\opLin(F)$ such that $Q_e(M,A),Q_f(M,A)=\opId$, and for all $(M',A')\in U$, $Q_f(M',A')$ is an isometry of $F$ whose restriction to $E$ coincides with $Q_e(M',A')$ and $Q_f(M',A')^*A'Q_e(M',A')$ preserves both $K$ and $R$. Conjugating with $Q$, we may therefore assume that for a given element $(M',A')\in U$, $A'$ preserves both $K$ and $R$.

Let $\tau_1$ and $\tau_2$ be non-zero volume forms over $L$ and $S$ respectively such that $\tau=\tau_1\wedge\tau_2$. Since we identify $S$ with $K$, we may also consider $\tau_2$ as a volume form over $K$. Observe that, over $\opKer(M',A')$, $M'\circ\pi$ coincides with $-A\circ\tilde{\pi}$. In particular, observing that $A'$ commutes with $p_1$:
$$
p_1\circ M'\circ \pi=-p_1\circ A'\circ\tilde{\pi}=-A'\circ p_1\circ\tilde{\pi}.
$$
Thus, denoting the dimension of $K$ by $k$:
\begin{align*}
\pi^*\tau &= (\pi^* q_1^*\tau_1)\wedge(\pi^* q_2^*\tau_2)\\
&=(-1)^k(\pi^* q_1^*\tau_1)\wedge ((p_1\circ M'\circ q_2)^{-1}\circ A'\circ (p_1\circ \tilde{\pi}))^*\tau_2\\
&=(-1)^k(\pi^* q_1^*\tau_1)\wedge ((M'_{12})^{-1}\circ A'_{11}\circ(p_1\circ\tilde{\pi}))^*\tau_2\\
&=(-1)^k\opDet(A'_{11})\opDet(M'_{12})^{-1}(\pi^* q_1^*\tau_1)\wedge (p_1\circ\tilde{\pi})^*\tau_2.
\end{align*}
We may suppose that the restriction of $(q_1\circ\pi,p_1\circ\tilde{\pi})$ to $\opKer(M'+A')$ is a linear isomorphism so that $\tilde{\sigma}(M',A'):=(\pi^* q_1^*\tau_1)\wedge (p_1\circ\tilde{\pi})^*\tau_2$ defines a non-zero volume form over $\opKer(M'+A')$. In addition, since $M_{12}=\opId$, we may suppose that $\opDet(M'_{12})$ is always positive, and so, over $\opKer(M',A')$:
$$
\opSig(A'_{11})\pi^*\tau \sim (-1)^k\tilde{\sigma}(M',A'),
$$
where $\sim$ denotes equivalence of volume forms up to a positive factor. Finally, we may assume that $\opSig(A'_{22})=\opSig(A_{22})$, and since:
$$
\opSig(A') = \opSig(A'_{11}) + \opSig(A'_{22}) = \opSig(A'_{11}) + \opSig(A_{22}),
$$
we conclude that over $\opKer(M'+A')$:
$$
\sigma(M,A) = \opSig(A')\pi^*\tau \sim (-1)^k\opSig(A_{22})\tilde{\sigma}(M',A').
$$
Since the right-hand side defines a continuous family of non-vanishing volume forms, we see that $\sigma$ extends continuously over a neighbourhood of every point of $\Cal{M}$, and therefore extends continuously over the whole of $\Cal{M}$, as desired.\end{proof}
\begin{proposition}\label{PropRegularValuesHaveInvertibleJacobiOperator}
Let $\Omega\subseteq\Cal{E}$ be such that $\partial_\omega\Cal{Z}(X|\Omega)=\emptyset$. If $\text{\rm P}_{x,e}+\text{\rm J}_{x,e}$ is surjective for all $(x,[e])\in\Cal{Z}(X|\Omega)$ then $(x,[e])\in\Cal{Z}(X|\Omega)$ is a regular point of the restriction of $\Pi$ to $\Cal{Z}(X|\Omega)$ if and only if $\text{\rm J}_{x,e}$ is invertible.
\end{proposition}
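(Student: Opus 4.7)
The plan is to reduce the question to finite-dimensional linear algebra by passing to a graph chart about $(x,[e])$. Specifically, I would take a compact neighbourhood $Y\ni x$ in $X$ and a smooth family $\widetilde{e}:Y\times\Sigma\to M$ with $\widetilde{e}_x=e$, obtaining a graph chart $(\Psi_Y,\mathcal{U}_Y,\mathcal{V}_Y)$ in which $(x,[e])$ corresponds to $(x,0)$. Fix some $\lambda\in[0,\infty[\setminus\Bbb{N}$. By Proposition \ref{PropLocalSolutionSpacesAreSmooth}, the set $\mathcal{Z}_{Y,\loc}\cap\Psi_Y^{-1}(X\times\Omega)$ is a smooth embedded submanifold of $\mathcal{U}_Y^{\lambda+2}$ of dimension $\opDim(X)$, and its tangent space at $(x,0)$ is $\opKer D(H^{\lambda+2}_Y,\Theta^{\lambda+1}_Y)(x,0)$ inside $T_xX\times C^{*,\lambda+2}(\Sigma)$.

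Next I would compute this differential at $(x,0)$. By property (1) of the modified exponential map, $V_{x,0}=N_{x,e}$, so $\lambda_{x,0}\equiv 1$. Combining Propositions \ref{PropPertMCIsConjugate}, \ref{PropInfMCIsConjugate} and \ref{InfBCIsConjugate} then gives
$$
D(H^{\lambda+2}_Y,\Theta^{\lambda+1}_Y)(x,0)=\text{\rm P}_{x,e}+\text{\rm J}_{x,e},
$$
and therefore in the chart
$$
T_{(x,[e])}\mathcal{Z}(X|\Omega)=\opKer\bigl(\text{\rm P}_{x,e}+\text{\rm J}_{x,e}\bigr)\subseteq T_xX\times C^{*,\lambda+2}(\Sigma).
$$
Moreover, $\Pi$ in the chart is simply the projection onto the first factor, so $d\Pi_{(x,[e])}$ is the restriction $\pi_1|_{\opKer(\text{\rm P}+\text{\rm J})}$ of the first-factor projection $\pi_1:T_xX\times C^{*,\lambda+2}(\Sigma)\to T_xX$.

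Finally I would apply a dimension count. Since $\opDim\mathcal{Z}(X|\Omega)=\opDim(X)$, the point $(x,[e])$ is regular for $\Pi$ iff $\pi_1|_{\opKer(\text{\rm P}+\text{\rm J})}$ is a linear isomorphism, equivalently iff it is injective. Any element of its kernel is of the form $(0,v)$ with $\text{\rm J}_{x,e}v=0$, so injectivity is equivalent to $\opKer(\text{\rm J}_{x,e})=0$. By Proposition \ref{PropEllipticRegularityOfJacobiOperatorII} the operator $\text{\rm J}_{x,e}$ is Fredholm of index zero, so triviality of the kernel is equivalent to invertibility, which is exactly the desired conclusion. (This is also a direct application of Proposition \ref{AInvertibleGivesPiInvertible}, which abstracts the same linear-algebra observation.)

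The only non-routine input is the identity $\lambda_{x,0}\equiv 1$, which is where property (1) of $\widetilde{\text{\rm Exp}}$ is used; everything else follows mechanically from the conjugation identities of Section \ref{Conjugations} and the Fredholm-index-zero statement for $\text{\rm J}$.
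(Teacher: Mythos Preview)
Your proof is correct and follows essentially the same route as the paper: pass to a graph chart about $(x,[e])$, identify $T_{(x,[e])}\mathcal{Z}(X|\Omega)$ with $\opKer(\text{\rm P}_{x,e}+\text{\rm J}_{x,e})$, and observe that the kernel of the first-factor projection on this space is $\{0\}\times\opKer(\text{\rm J}_{x,e})$. The paper's argument is slightly terser (it uses directly that $D_2(H,\Theta)(x,0)=\text{\rm J}_{x,e}$ rather than going through $\lambda_{x,0}\equiv 1$ and the conjugation identities), but the content is the same.
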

\begin{proof}
Choose $(x,[e])\in\Cal{Z}(X|\Omega)$. Let $Y$ be a compact neighbourhood of $x$ in $X$ and let $(\Psi,\Cal{U},\Cal{V})$ be a graph chart of $X\times\Cal{E}$ about $(x,[e])$ over $Y$. Let $H$ and $\Theta$ be the mean curvature function and the boundary angle function in this chart. Let $\Pi':Y\times C^\infty(\Sigma)\rightarrow Y$ be the projection onto the first factor. The point $(x,[e])$ is a regular point of $\Pi$ if and only if it is a regular point of $\Pi'$. However:
\begin{align*}
T_{(x,0)}\Cal{Z}_\oploc\minter\opKer(D_{(x,0)}\Pi')&=\opKer(D_{(x,0)}(H,\Theta))\minter(\left\{0\right\}\times C^\infty(\Sigma))\\
&=\opKer(\text{\rm P}_{x,e} + \text{\rm J}_{x,e})\minter(\left\{0\right\}\times C^\infty(\Sigma))\\
&=\opKer(\text{\rm J}_{x,e}).
\end{align*}
We conclude that $(x,[e])$ is a regular value of $\Pi$ if and only if $\text{\rm J}_{x,e}$ is invertible, as desired.
\end{proof}

Combining these results yields:
\begin{theorem}\label{ThmSolutionSpaceIsOriented}
Let $\Omega\subseteq\Cal{E}$ be such that $\partial_\omega\Cal{Z}(X|\Omega)=\emptyset$. If $X$ is orientable with orientation form $\tau$, and if $\text{\rm P}_{x,e}+\text{\rm J}_{x,e}$ is surjective for all $(x,[e])\in\Cal{Z}(X|\Omega)$, then $\Cal{Z}(X|\Omega)$ carries a canonical orientation $\sigma$. Moreover, $(x,[e])$ is a regular point of the restriction of $\Pi$ to $\Cal{Z}(X|\Omega)$ if and only if $\text{\rm J}_{x,e}$ is non-degenerate, and in this case:
$$
\sigma(x,[e]) \sim \opSig(\text{\rm J}_{x,e})\Pi^*\tau,
$$
\noindent where $\opSig(\text{\rm J}^h_{e})$ is defined to be the signature of the restriction of $\text{\rm J}^h_{x,e}$ to the kernel of $\text{\rm J}^\theta_{x,e}$.
\end{theorem}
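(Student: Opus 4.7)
My plan is to combine Theorem \ref{ThmSolutionSpaceIsSmooth}, which endows $\Cal{Z}(X|\Omega)$ with the structure of a smooth compact manifold of dimension $\opDim(X)$, with Proposition \ref{PropDefinitionOfOrientationForm}, which furnishes a canonical orientation on $\opKer(M+A)$ for any triple $(M,A) \in \Cal{M}(T_xX, E, F)$. The global orientation $\sigma$ of $\Cal{Z}(X|\Omega)$ will be built chart by chart via this abstract recipe, with the identifications supplied by the conjugation results of Section \ref{Conjugations}.

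Concretely, I would fix $(x,[e]) \in \Cal{Z}(X|\Omega)$, choose a graph chart $(\Psi,\Cal{U},\Cal{V})$ about $(x,e)$ with mean curvature and boundary angle functions $H,\Theta$, and invoke Propositions \ref{PropPertMCIsConjugate}, \ref{PropInfMCIsConjugate} and \ref{InfBCIsConjugate} to express the derivative $D(H,\Theta)(x,0)$ as the composition of $(\text{\rm P}_{x,e} + \text{\rm J}_{x,e})$ with the isomorphism $(\xi,\varphi)\mapsto(\xi,\lambda_{x,0}\varphi)$, where $\lambda_{x,0}$ is a smooth, nowhere-vanishing function. Since multiplication by $\lambda_{x,0}$ is a topological linear automorphism of the ambient H\"older space, this identifies the chart-tangent space $T_{(x,[e])}\Cal{Z}(X|\Omega) = \opKer D(H,\Theta)(x,0)$ with $\opKer(\text{\rm P}_{x,e}+\text{\rm J}_{x,e})$. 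I would then verify the abstract hypotheses needed to apply Proposition \ref{PropDefinitionOfOrientationForm}: Proposition \ref{PropEllipticRegularityOfJacobiOperatorII} gives that $\text{\rm J}_{x,e}$ is Fredholm of index zero; Proposition \ref{PropJacobiOperatorIsSelfAdjoint} gives that $\text{\rm J}^h_{x,e}$ restricted to $\opKer(\text{\rm J}^\theta_{x,e})$ is symmetric; and Lemma \ref{PropFormulaForJacobiOperatorOfMeanCurvature} shows that $\text{\rm J}^h_{x,e}$ is a zero-order perturbation of $-\Delta$, whose Rayleigh quotient is bounded below by standard elliptic theory. Working in the Hilbert realisation $E = \opKer(\text{\rm J}^\theta_{x,e})\cap H^2(\Sigma)$, $F = L^2(\Sigma)$, this yields $\text{\rm J}_{x,e}\in\Cal{F}^+(E,F)$. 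Combining this with the surjectivity of $\text{\rm P}_{x,e}+\text{\rm J}_{x,e}$ assumed in the theorem gives $(\text{\rm P}_{x,e},\text{\rm J}_{x,e})\in\Cal{M}(T_xX,E,F)$, and Proposition \ref{PropDefinitionOfOrientationForm} produces a canonical orientation on $\opKer(\text{\rm P}_{x,e}+\text{\rm J}_{x,e})$, which I transport across the identification to obtain an orientation on $T_{(x,[e])}\Cal{Z}(X|\Omega)$.

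To globalise, I would invoke the continuity clause of Proposition \ref{PropDefinitionOfOrientationForm} together with the smoothness of transition maps from Theorem \ref{ThmSolutionSpaceIsSmooth}; the rescaling by the \emph{positive} factor $\lambda_{x,0}$ is orientation-preserving, and the operators $\text{\rm P}_{x,e}$ and $\text{\rm J}_{x,e}$ are intrinsic to $(x,[e])$, so the pointwise orientations assemble into a continuous (hence smooth) orientation form $\sigma$ on $\Cal{Z}(X|\Omega)$. For the regular-point criterion, Proposition \ref{PropRegularValuesHaveInvertibleJacobiOperator} gives directly that $(x,[e])$ is a regular point of $\Pi$ if and only if $\text{\rm J}_{x,e}$ is invertible, and in that case the explicit formula $\sigma(M,A)=\opSig(A)\pi^*\tau$ from the paragraph preceding Proposition \ref{PropDefinitionOfOrientationForm} applies. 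Because the first-factor projection $\pi\colon T_xX\times E\to T_xX$ restricts on $\opKer(\text{\rm P}_{x,e}+\text{\rm J}_{x,e})$ to the differential of $\Pi$, and because the signature is computed on the Robin-type subspace $\opKer(\text{\rm J}^\theta_{x,e})$ by construction, we conclude $\sigma(x,[e])\sim\opSig(\text{\rm J}_{x,e})\Pi^*\tau$, as required. The main obstacle I anticipate is purely bookkeeping: tracing through how the choice of normal field $N_y$ and the rescaling by $\lambda_{x,0}$ interact across overlapping charts to confirm orientation-compatibility. Since $N_y$ is fixed by the orientations of $\Sigma$ and $M$ and varies smoothly, and since $\lambda_{x,0}>0$, this should pose no serious difficulty, but it must be checked explicitly before the global orientation $\sigma$ can be declared canonical.
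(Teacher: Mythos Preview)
Your proposal is correct and follows essentially the same route as the paper: invoke Theorem~\ref{ThmSolutionSpaceIsSmooth} for the smooth structure, realise $\text{\rm J}^h_{x,e}$ on the Hilbert pair $E=H^2_\oprob(\Sigma)=\opKer(\text{\rm J}^\theta_{x,e})\cap H^2(\Sigma)$, $F=L^2(\Sigma)$ as a self-adjoint Fredholm operator bounded below, apply Proposition~\ref{PropDefinitionOfOrientationForm} to $(\text{\rm P}_{x,e},\text{\rm J}_{x,e})$, and globalise by continuity; the regular-point statement is Proposition~\ref{PropRegularValuesHaveInvertibleJacobiOperator} combined with the explicit formula preceding Proposition~\ref{PropDefinitionOfOrientationForm}. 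One minor simplification over your write-up: at the base point $(x,0)$ of a graph chart about $(x,e)$ one has $\lambda_{x,0}\equiv 1$ (since $V_{x,0}=N_{x,e}$ by property~(1) of the modified exponential map), so the conjugation is trivial there and your anticipated bookkeeping obstacle with the rescaling does not in fact arise.
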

\begin{proof}
By Theorem \ref{ThmSolutionSpaceIsSmooth}, $\Cal{Z}(X|\Omega)$ is a smooth manifold of finite dimension equal to $\opDim(X)$. Choose $(x,[e])\in\Cal{Z}(X|\Omega)$. Observe that $T_{(x,[e])}\Cal{Z}(X|\Omega)$ identifies canonically with $\opKer(\text{\rm P}_{x,e}+\text{\rm J}_{x,e})$. Let $H^2(\Sigma)$ be the Sobolev space of $L^2$ functions over $\Sigma$ whose distributional derivatives up to order $2$ are also of class $L^2$. By the Sobolev Trace Formula (c.f. Proposition $4.5$ of Section $4$ of \cite{TaylorI}), $\text{\rm J}^\theta_{x,e}$ maps $H^{2}(\Sigma)$ into $H^{1/2}(\partial \Sigma)$. We denote by $H^2_\oprob(\Sigma)$ the kernel of $\text{\rm J}^\theta_{x,e}$ in this space. Observe that $H^2_\oprob(\Sigma)$ embeds canonically into $L^2(\Sigma)$, and that this embedding is compact with dense image. By Proposition \ref{PropJacobiOperatorIsSelfAdjoint}, the restriction of $\text{\rm J}^h_{x,e}$ to $H^2_\oprob(\Sigma)$ is self-adjoint. The preceeding discussion therefore applies to this restriction of $\text{\rm J}^h_{x,e}$, and we define the orientation form $\sigma$ over $\opKer(\text{\rm P}_{x,e}+\text{\rm J}_{x,e})$ as in Proposition \ref{PropDefinitionOfOrientationForm}. Since this kernel is canonically identified with $T_{(x,[e])}\Cal{Z}(X|\Omega)$, $\sigma$ also defines an orientation form over this space. It follows from the definition that this construction yields a continuous family of orientation forms over $\Cal{Z}(X|\Omega)$, so that the manifold carries a canonical orientation, as desired. Finally, by Proposition \ref{PropRegularValuesHaveInvertibleJacobiOperator}, $(x,[e])\in\Cal{Z}(X|\Omega)$ is a regular value of $\Pi$ if and only if $\text{\rm J}_{x,e}$ is invertible, and so, by definition, and bearing in mind the definition following Proposition \ref{AInvertibleGivesPiInvertible}, we have
$
\sigma(x,[e]) \sim \opSig(\text{\rm J}_{x,e})\Pi^*\tau$,
as desired.
\end{proof}

The results of this section may be summarised as follows:
\begin{theorem}\label{ThmIntegerValuedDegree}
Let $\Omega\subseteq\Cal{E}$ be such that $\partial_\omega\Cal{Z}(X|\Omega)=\emptyset$. There exists an extension $\tilde{X}$ of $X$, which we may take to be orientable, such that $\partial_\omega\Cal{Z}(\tilde{X}|\Omega)=\emptyset$, $\Cal{Z}(\tilde{X}|\Omega)$ carries canonically the structure of a smooth orientable manifold of finite dimension equal to that of $\tilde{X}$, and $\Pi(\partial\Cal{Z}(\tilde{X}|\Omega))\subseteq\partial\tilde{X}$. In particular, the restriction of $\Pi$ to $\Cal{Z}(X|\Omega)$ has a well-defined ${\mathbb Z}$-valued degree. Moreover, a point $x\in\tilde{X}$ is a regular value of this restriction if and only if $\text{\rm J}_{x,e}$ is non-degenerate for all $(x,[e])\in\Cal{Z}(\left\{x\right\}|\Omega)$, and in this case:
$$
\opDeg(\Pi|\Omega) = \sum_{(x,[e])\in\Cal{Z}(\left\{x\right\}|\Omega)}\opSig(\text{\rm J}_{x,e}),
$$
where $\opSig(\text{\rm J}_{x,e})$ is defined to be the signature of the restriction of $\text{\rm J}^h_{x,e}$ to the kernel of $\text{\rm J}^\theta_{x,e}$.
\end{theorem}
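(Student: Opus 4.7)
The plan is to synthesize Theorems \ref{PropSurjectivityForExtensions}, \ref{ThmSolutionSpaceIsSmooth} and \ref{ThmSolutionSpaceIsOriented} with classical degree theory; essentially all of the substantive work has already been carried out in the preceding subsections, and what remains is to assemble the pieces and invoke the standard machinery.

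First I would apply Theorem \ref{PropSurjectivityForExtensions} to the open set $\Omega$ to produce an extension $\tilde{X}$ with $\partial_\omega \Cal{Z}(\tilde{X}|\Omega) = \emptyset$ such that $\text{P}_{x,e} + \text{J}_{x,e}$ is surjective at every $(x,[e]) \in \Cal{Z}(\tilde{X}|\Omega)$. Recalling that the extension is constructed as $\tilde{X} = E_r \times X$, where $E_r$ is a closed ball inside a finite-dimensional subspace $E \subseteq C^\infty(M)$, and invoking the convention of Section \ref{TheGlobalAndInfinitesimalTheory} that all manifolds in sight are already oriented, I would fix an orientation form $\tau$ on $\tilde{X}$ coming from the product of the standard orientation of $E_r$ and that of $X$; this is what lets us take $\tilde{X}$ to be orientable.

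Next I would apply Theorem \ref{ThmSolutionSpaceIsSmooth} to conclude that $\Cal{Z}(\tilde{X}|\Omega)$ carries the canonical structure of a smooth compact manifold with boundary of dimension equal to $\opDim(\tilde{X})$, and that $\Pi$ is smooth with $\Pi(\partial \Cal{Z}(\tilde{X}|\Omega)) \subseteq \partial \tilde{X}$; compactness follows from Proposition \ref{ThmCompactness} combined with $\partial_\omega \Cal{Z}(\tilde{X}|\Omega) = \emptyset$. Then Theorem \ref{ThmSolutionSpaceIsOriented} endows $\Cal{Z}(\tilde{X}|\Omega)$ with a canonical orientation $\sigma$. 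At this point the classical theory of mapping degrees (as in Guillemin--Pollack \cite{GuillemanPollack}) applies directly: a smooth map between compact oriented manifolds of the same dimension that sends boundary to boundary has a well-defined $\mathbb{Z}$-valued degree $\opDeg(\Pi|\Omega)$, computable at any regular value as the signed count of preimages.

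Finally, for the characterization of regular values and the explicit degree formula, I would invoke Proposition \ref{PropRegularValuesHaveInvertibleJacobiOperator}, according to which $x \in \tilde{X}$ is a regular value of $\Pi$ precisely when $\text{J}_{x,e}$ is invertible for every $(x,[e]) \in \Cal{Z}(\{x\}|\Omega)$. Combining this with the pointwise identity $\sigma(x,[e]) \sim \opSig(\text{J}_{x,e}) \Pi^*\tau$ from Theorem \ref{ThmSolutionSpaceIsOriented} and the standard definition of degree at a regular value yields
\[
\opDeg(\Pi|\Omega) = \sum_{(x,[e]) \in \Cal{Z}(\{x\}|\Omega)} \opSig(\text{J}_{x,e}),
\]
as desired. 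There is no substantive obstacle at this stage; the main conceptual point to verify is that the hypotheses of the classical degree theorem are truly all in place (compact oriented domain, orientable target of the same dimension, smooth map, boundary sent to boundary), and each of these has been supplied by one of the preceding results.
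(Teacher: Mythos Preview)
Your proposal is correct and follows essentially the same route as the paper: invoke Theorem~\ref{PropSurjectivityForExtensions} for the extension with surjectivity, Theorem~\ref{ThmSolutionSpaceIsSmooth} for the smooth compact manifold structure with boundary mapped into $\partial\tilde{X}$, Theorem~\ref{ThmSolutionSpaceIsOriented} for the canonical orientation and the regular-point characterization, and then classical degree theory from \cite{GuillemanPollack}. The only cosmetic difference is that the paper phrases orientability as ``upon extending $\tilde{X}$ further if necessary'' rather than reading it off from the product $E_r\times X$ as you do.
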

\begin{proof} By Theorem \ref{PropSurjectivityForExtensions}, there exists an extension $\tilde{X}$ of $X$ with $\partial_\omega\Cal{Z}(\tilde{X}|\Omega) = \emptyset$ and such that the operator $\text{\rm P}_{x,e}+\text{\rm J}_{x,e}$ defines a surjective mapping from $T_x\tilde{X}\times C^\infty(\Sigma)$ into $C^\infty(\Sigma)\times C^\infty(\partial\Sigma)$ for all $(x,[e])\in\Cal{Z}(\tilde{X}|\Omega)$. Upon extending $\tilde{X}$ further if necessary, we may assume that $\tilde{X}$ is orientable with orientation form, $\tau$, say. By Theorem \ref{ThmSolutionSpaceIsSmooth}, $\Cal{Z}(\tilde{X}|\Omega)$ carries the structure of a smooth, compact manifold with boundary, of finite dimension equal to that of $\tilde{X}$ and moreover $\Pi(\partial\Cal{Z}(\tilde{X}|\Omega)\subseteq\partial\tilde{X}$. By Theorem \ref{ThmSolutionSpaceIsOriented}, $\Cal{Z}(\tilde{X}|\Omega)$ carries a canonical orientation form $\sigma$. Moreover, $(x,[e])\in\Cal{Z}(\tilde{X}|\Omega)$ is a regular point of the restriction of $\Pi$ to $\Cal{Z}(\tilde{X}|\Omega)$ if and only if $\text{\rm J}_{x,e}$ is non-degenerate, and in this case $\sigma\sim\opSig(\text{\rm J}_{x,e})\Pi^*\tau$, where $\sim$ here denotes equivalence of volume forms up to a positive factor. By Proposition \ref{ThmCompactness}, $\Pi$ defines a proper map from $\Cal{Z}(\tilde{X}|\Omega)$ into $\tilde{X}$, and so, by classical differential topology (c.f. \cite{GuillemanPollack}), its restriction to $\Cal{Z}(\tilde{X}|\Omega)$ has a well-defined ${\mathbb Z}$-valued degree. Moreover $x\in\tilde{X}$ is a regular value if and only if $\text{\rm J}_{x,e}$ is non-degenerate for all $(x,[e])\in\Cal{Z}(\left\{x\right\}|\Omega)$, and in this case, by definition of the degree:
$$
\opDeg(\Pi|\Omega) = \sum_{(x,[e])\in\Cal{Z}(\left\{x\right\}|\Omega)}\opSig(\text{\rm J}_{x,e}),
$$
as desired.\end{proof}

%%%%%%%%%%%%%%%%%%%%%%%%%%%
% NON-DEGENERATE FAMILIES %
%%%%%%%%%%%%%%%%%%%%%%%%%%%

\section{Non-Degenerate Families}
\subsection{Non-degenerate families}\label{NonDegenerateFamilies}

Let $Z$ be a closed, finite-dimensional manifold. Let $\Cal{F}:Z\rightarrow\Cal{E}$ be a continuous mapping. We say that $\Cal{F}$ is {\it smooth} whenever it has the property that for all $z\in\Cal{Z}$, there exists a compact neighbourhood $Z_0$ of $z$ in $Z$ and a smooth function $e:Z_0\times \Sigma\rightarrow M$ such that for all $w\in Z_0$, $e_w:=e(w,\cdot)$ is an element of $\hat{\Cal{E}}$ and $\Cal{F}(w)=[e_w]$. We refer to the pair $(Z_0,e)$ as a {\it local parametrisation} of $(Z,\Cal{F})$ about $z$. We say that $\Cal{F}$ is an {\it immersion} whenever it has the property that for all $z\in\Cal{Z}$, for every local parametrisation $(Z_0,e)$ of $(Z,\Cal{F})$ about $z$, for all $w\in Z_0$ and for all non-zero $\xi_w\in T_wZ_0$, the vector field $(D_1e)_w(\xi_w)$ is not tangent to $e_w(\Sigma)$ at at least one point, where $D_1e$ is the partial derivative of $e$ with respect to the first component in $Z_0\times \Sigma$. We say that $\Cal{F}$ is an {\it embedding} whenever it is, in addition, injective.
\begin{proposition}\label{lowerboundondimensionofkernel}
Let $g_0$ be an admissable metric over $M$ and let $\Cal{F}:Z\rightarrow\Cal{E}$ be a smooth embedding. If $\Cal{F}(z)$ is free boundary minimal with respect to $g_0$ for all $z\in Z$, then for all $z\in Z$:
$$
\opNull(\text{\rm J}_{g_0,\Cal{F}(z)}) = \opDim(\opKer(\text{\rm J}_{g_0,\Cal{F}(z)})) \geqslant \opDim(Z).
$$
\end{proposition}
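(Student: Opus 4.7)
The plan is to produce, from each tangent vector to $Z$ at $z$, an element of $\opKer(\text{\rm J}_{g_0,\Cal{F}(z)})$ in a linear and injective fashion. Once this is set up, the bound $\opDim(\opKer(\text{\rm J}_{g_0,\Cal{F}(z)})) \geqslant \opDim(T_zZ) = \opDim(Z)$ is immediate, and the equality $\opNull = \opDim(\opKer)$ is by definition.

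Concretely, I would fix a local parametrisation $(Z_0, e)$ of $(Z,\Cal{F})$ about $z$, so that for every $w\in Z_0$ the embedding $e_w:=e(w,\cdot)$ represents $\Cal{F}(w)$ and is free boundary minimal with respect to $g_0$. For $\xi\in T_zZ$, set
\[
V_\xi := (D_1 e)_z(\xi), \qquad \varphi_\xi := g_0(V_\xi, N_{g_0,e_z}) \in C^\infty(\Sigma),
\]
where $N_{g_0,e_z}$ is the oriented unit normal. The variation $V_\xi$ splits as $\varphi_\xi N_{g_0,e_z} + W_\xi$ with $W_\xi$ tangent to $e_z(\Sigma)$. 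Differentiating the identities $H_{g_0,e_w}\equiv 0$ and $\Theta_{g_0,e_w}\equiv 0$ at $w=z$ in the direction $\xi$, and using that $H$ and $\Theta$ are reparametrisation-invariant so that the tangential component $W_\xi$ contributes nothing at the minimal surface $e_z$, one obtains
\[
\text{\rm J}^h_{g_0,e_z}\varphi_\xi = 0, \qquad \text{\rm J}^\theta_{g_0,e_z}\varphi_\xi = 0,
\]
that is, $\varphi_\xi \in \opKer(\text{\rm J}_{g_0,e_z})$. The map $\xi\mapsto \varphi_\xi$ is manifestly linear. Note that this construction is independent of the chosen local parametrisation, since a change of parametrisation alters $V_\xi$ only by a vector field tangent to $e_z(\Sigma)$, leaving $\varphi_\xi$ unchanged.

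For injectivity, suppose $\varphi_\xi \equiv 0$. Then $V_\xi = W_\xi$ is everywhere tangent to $e_z(\Sigma)$, which by the definition of immersion at the level of $\Cal{E}$ (applied to $(Z_0,e)$ at $w=z$) forces $\xi = 0$. Hence $\xi\mapsto \varphi_\xi$ embeds $T_zZ$ linearly into $\opKer(\text{\rm J}_{g_0,e_z})$, giving the claimed inequality.

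The only non-routine point is the vanishing of the tangential contribution when differentiating $H$ and $\Theta$ along the family; this is standard for the mean curvature (the tangential part of the variation acts as an infinitesimal diffeomorphism, and since $H\equiv 0$ on $e_z$ its Lie derivative along a tangent field also vanishes), and the analogous statement for $\Theta$ follows identically from its reparametrisation invariance, since $\Theta_{g_0,e_z}\equiv 0$ on $\partial\Sigma$ as well. Everything else is linear algebra.
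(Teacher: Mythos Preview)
Your proof is correct and follows essentially the same approach as the paper: produce a linear injection $T_zZ\hookrightarrow\opKer(\text{\rm J}_{g_0,e_z})$ by taking the normal component $\varphi_\xi=g_0((D_1e)_z(\xi),N_{g_0,e_z})$ of the variation field, then invoke the immersion hypothesis for injectivity. The paper executes the step ``$\varphi_\xi\in\opKer(\text{\rm J})$'' by passing to a graph chart (see the proof of Proposition~\ref{PropLambdaLiesInKernel}) rather than arguing directly with reparametrisation invariance as you do, but these are equivalent formulations of the same idea.
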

\begin{proof}Let $n$ be the dimension of $Z$. Choose $z\in Z$. Observe that $\Cal{F}$ defines an $n$-dimensional family of non-trivial, free boundary minimal perturbations of $\Cal{F}(z)$, from which it follows that the derivative of $\Cal{F}$ defines an injective mapping from $T_zZ$ into $\opKer(\text{\rm J}_{g_0,\Cal{F}(z)})$. More formally, this injection is explicitely described in the proof of Proposition \ref{PropLambdaLiesInKernel} (below). In particular, $\opNull(\text{\rm J}_{g_0,\Cal{F}(z)})\geqslant n$, and the result follows.
\end{proof}
Proposition \ref{lowerboundondimensionofkernel} motivates the following definition: if $g_0$ is an admissable metric over $M$, and if $\Cal{F}(z)$ is free boundary minimal with respect to $g_0$ for all $z\in Z$, then $(Z,\Cal{F})$ is said to be a {\it non-degenerate family} whenever it has in addition the property that for all $z\in Z$:
$$
\opNull(\text{\rm J}_{g_0,\Cal{F}(z)}) = \opDim(\opKer(\text{\rm J}_{g_0,\Cal{F}(z)})) = \opDim(Z).
$$
We recall from Proposition \ref{NullityIndexConstant} that if $\opNull(\text{\rm J}_{g_0,\Cal{F}(z)})$ is constant, then so too is $\opInd(\text{\rm J}_{g_0,\Cal{F}(z)})$, and we therefore define the {\it index} of the family $Z$, which we denote by $\opInd(Z)$ to be equal to $\opInd(\text{\rm J}_{g_0,\Cal{F}(z)})$ for all $z\in Z$.

Let $(Z_0,e)$ be a local parametrisation of $(Z,\Cal{F})$. Let $X$ be another smooth, compact, finite-dimensional manifold. Let $x_0$ be an element of $X$ and let $g:X\times M\rightarrow\opSym^+(TM)$ be a smooth function such that $g_x:=g(x,\cdot)$ is admissable for all $x\in X$ and $g(x_0,\cdot)=g_0$. We extend $e$ and $g$ to functions defined over $X\times Z_0$ by setting $e$ to be constant in the $X$ direction and by setting $g$ to be constant in the $Z_0$ direction. Let $(\Psi,\Cal{U},\Cal{V})$ be the graph chart of $X\times Z_0\times\Cal{E}$ generated by $(X\times Z_0,e)$ and let $H$ and $\Theta$ be respectively the mean curvature function and the boundary angle function in this chart. We define $\Cal{K}\subseteq X\times Z_0\times C^\infty(\Sigma)$ by:
$$
\Cal{K} = \left\{(x,z,f)\ |\ f\in\opKer(\text{\rm J}_{g_0,e_z})\right\},
$$
and for all $(x,z)\in X\times Z_0$, we denote the fibre over $(x,z)$ by $\Cal{K}_{x,z}$. Observe that $\Cal{K}$ is a finite-dimensional vector bundle over $X\times Z_0$ of constant dimension equal to $\opDim(Z)$. We shall see presently that $\Cal{K}$ is smooth, and is in fact canonically isomorphic to $TZ_0$. We also define $\Cal{K}^\perp\subseteq X\times Z_0\times C^\infty(\Sigma)$ such that for all $(x,z)\in X\times Z_0$ the fibre $\Cal{K}^\perp_{x,z}$ of $\Cal{K}^\perp$ is the orthogonal complement of $\Cal{K}_{x,z}$ in $C^\infty(\Sigma)$ with respect to the $L^2$-inner-product of $e_z^*g_0$.

\begin{proposition}\label{PropLocalPerturbationSections}
There exists a compact neighbourhood $Y$ of $x_0$ in $X$ and a continuous function $F:Y\times Z_0\rightarrow C^\infty(\Sigma)$ such that $F(0,z)=0$ for all $z$ and, for all $(x,z)\in Y\times Z_0$:
\begin{enumerate}
\item $F_{x,z}:=F(x,z)$ is an element of $\Cal{K}^\perp_{x,z}$;
\item $\Theta(x,z,F_{x,z})=0$; and
\item $H(x,z,F_{x,z})$ is an element of $\Cal{K}_{x,z}=\opKer(\text{\rm J}_{g_0,e_z})$.
\end{enumerate}
Moreover:
\begin{enumerate}
\item the function $F$ is unique in the sense that if $Y'\subseteq Y$ is another compact neighbourhood of $x_0$ and if $F':Y'\times Z_0\rightarrow C^\infty(\Sigma)$ is another continuous function with the same properties, then $F'=F$; and
\item the function $f:Y\times Z_0\times S\rightarrow\Bbb{R}$ given by $f(x,z,p)=F(x,z)(p)$ is smooth.
\end{enumerate}
\end{proposition}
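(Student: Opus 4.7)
The plan is to perform a Lyapunov--Schmidt reduction: although $\text{\rm J}_{g_0,e_z}$ has a nontrivial $\opDim(Z)$-dimensional kernel $\mathcal{K}_z$, we can exactly solve $\Theta=0$ and solve $H=0$ modulo $\mathcal{K}_z$ via the implicit function theorem, after restricting the $F$-variable to $\mathcal{K}_z^\perp$ and orthogonally projecting $H$ off $\mathcal{K}_z$.

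Fix $\lambda \in [0,\infty[\setminus\mathbb{N}$. For each $z \in Z_0$, let $P_z$ denote the $L^2(e_z^*g_0)$-orthogonal projection of $C^{*,\lambda+2}(\Sigma)$ onto the finite-dimensional subspace $\mathcal{K}_z \subseteq C^\infty(\Sigma)$, and let $\mathcal{K}_z^\perp \subseteq C^{*,\lambda+2}(\Sigma)$ denote its $L^2$-orthogonal complement. Working in the graph chart, define
$$
G(x,z,F) := \bigl((\opId - P_z)\,H^{\lambda+2}(x,z,F),\ \Theta^{\lambda+1}(x,z,F)\bigr)
$$
on an open neighbourhood of $(x_0,z,0)$ in $X \times Z_0 \times \mathcal{K}_z^\perp$, with values in $\mathcal{K}_z^\perp \times C^{*,\lambda+1}(\partial\Sigma)$. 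Since $e_z$ is free boundary minimal with respect to $g_{x_0}=g_0$, we have $G(x_0,z,0)=0$; and by Propositions \ref{PropInfMCIsConjugate} and \ref{InfBCIsConjugate}, together with the observation that the conjugation factor $\lambda_{y,0}$ appearing in those propositions equals $1$ at $F=0$, the derivative $D_F G(x_0,z,0)$ is the restriction to $\mathcal{K}_z^\perp$ of $\bigl((\opId - P_z)\text{\rm J}^h_{g_0,e_z},\,\text{\rm J}^\theta_{g_0,e_z}\bigr)$.

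The central step, which I expect to be the main obstacle, is verifying that $D_F G(x_0,z,0)$ is a linear isomorphism of Banach spaces; here the characterisation $\opIm(\text{\rm J})^\perp = \{(f,f\circ\epsilon):f\in\mathcal{K}_z\}$ from Proposition \ref{PropEllipticRegularityOfJacobiOperatorII} plays the decisive role. For injectivity, any $\varphi \in \mathcal{K}_z^\perp$ with $\text{\rm J}^\theta\varphi=0$ and $\text{\rm J}^h\varphi = f \in \mathcal{K}_z$ gives $\text{\rm J}\varphi = (f,0) \in \opIm(\text{\rm J})$; pairing against $(f,f\circ\epsilon)\in\opIm(\text{\rm J})^\perp$ forces $\|f\|_{L^2}^2=0$, so $\text{\rm J}\varphi=0$ and $\varphi\in\mathcal{K}_z\cap\mathcal{K}_z^\perp = \{0\}$. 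The same pairing shows $(\mathcal{K}_z\times\{0\}) \cap \opIm(\text{\rm J}) = \{0\}$; as $\mathcal{K}_z\times\{0\}$ has dimension equal to the codimension of $\opIm(\text{\rm J})$, these subspaces are complementary in $C^{*,\lambda}(\Sigma)\times C^{*,\lambda+1}(\partial\Sigma)$. Given $(u,v) \in \mathcal{K}_z^\perp \times C^{*,\lambda+1}(\partial\Sigma)$, non-degeneracy of the $L^2$-pairing on $\mathcal{K}_z$ yields a unique $h\in\mathcal{K}_z$ with $(u+h,v) \in \opIm(\text{\rm J})$, and the preimage of $(u+h,v)$ under $\text{\rm J}|_{\mathcal{K}_z^\perp}$ is then sent to $(u,v)$ by $D_F G(x_0,z,0)$, establishing surjectivity.

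With invertibility in hand, the implicit function theorem produces a unique smooth solution $F$ to $G=0$ on a neighbourhood of $(x_0,z,0)$; compactness of $Z_0$ together with the continuous dependence of $D_F G(x_0,z,0)$ on $z$ yields a single compact neighbourhood $Y$ of $x_0$ in $X$ on which $F : Y \times Z_0 \to \mathcal{K}_z^\perp$ is defined and smooth, and uniqueness applied to the trivial solution at $x=x_0$ gives $F(x_0,\cdot) \equiv 0$. Comparing the solutions obtained for different values of $\lambda$, uniqueness forces $F$ to be independent of $\lambda$ and therefore to take values in $C^\infty(\Sigma)$; the smoothness of $f(x,z,p) = F(x,z)(p)$ on $Y\times Z_0 \times \Sigma$ then follows from Proposition \ref{PropSmoothnessToSmoothness} applied to the smoothness of $F$ as a map into $C^{*,\lambda+2}(\Sigma)$ for every $\lambda$.
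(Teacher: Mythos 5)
Your Lyapunov--Schmidt reduction is exactly the paper's strategy, and your treatment of the key point --- the invertibility of $\bigl((\opId-P_z)\text{\rm J}^h,\text{\rm J}^\theta\bigr)$ restricted to $\Cal{K}_z^\perp$ --- is correct and equivalent to the paper's: you argue via the characterisation of $\opIm(\text{\rm J})^\perp$ from Proposition \ref{PropEllipticRegularityOfJacobiOperatorII}, the paper argues via self-adjointness (Proposition \ref{PropJacobiOperatorIsSelfAdjoint}); these are the same computation. Two small points you elide: to apply the implicit function theorem with $z$ as a parameter you need $\Cal{K}$ (hence $P_z$) to depend smoothly on $z$, which the paper establishes by exhibiting $\Cal{K}$ as a smooth Banach sub-bundle via the Submersion Theorem; and your domain and codomain copies of $\Cal{K}_z^\perp$ live in different H\"older classes ($C^{*,\lambda+2}$ versus $C^{*,\lambda}$), which is harmless but worth keeping straight.

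The genuine gap is in your final paragraph. The claim that ``uniqueness forces $F$ to be independent of $\lambda$ and therefore to take values in $C^\infty(\Sigma)$'' does not work as stated: for each H\"older exponent $\mu$ the implicit function theorem produces a solution only on some neighbourhood $Y^\mu$ of $x_0$, and these may shrink as $\mu\rightarrow\infty$, so uniqueness only tells you that $F(x,z)\in C^{*,\mu+2}(\Sigma)$ for $(x,z)$ in a $\mu$-dependent set; you cannot conclude $C^\infty$-valuedness, let alone smoothness of $F$ into $C^{*,\mu+2}$, on a fixed $Y\times Z_0$. The paper closes this by elliptic regularity rather than by comparing $\lambda$'s at the base point: since $H^{\lambda+2}(x,z,F_{x,z})\in\Cal{K}_{x,z}\subseteq C^\infty(\Sigma)$ by construction, Proposition \ref{PropEllipticRegularityOfPMCInGraphCharts} (regularity for the prescribed mean curvature problem with orthogonal boundary contact) gives $F(x,z)\in C^\infty(\Sigma)$ pointwise on all of $Y\times Z_0$. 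One then re-runs the implicit function theorem at level $\mu$ \emph{around each point} $(x,z,F_{x,z})$ of the solution set, using that invertibility of $D_FG$ is an open condition and that, by Propositions \ref{PropEllipticRegularityOfJacobiOperatorI} and \ref{PropEllipticRegularityOfJacobiOperatorII}, invertibility at level $\lambda$ promotes to invertibility at level $\mu$; uniqueness then identifies the local level-$\mu$ solution with $F$, showing $F$ is smooth into $C^{*,\mu+2}(\Sigma)$ for every $\mu$, after which Proposition \ref{PropSmoothnessToSmoothness} yields the smoothness of $f(x,z,p)$. Without some version of this bootstrap your conclusion (2) of the proposition is unproved.
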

\begin{remark}
It is, in fact, sufficient for the proof of this result to assume that $\opNull(\text{\rm J}_{g_0,\Cal{F}(z)})$ has constant dimension.
\end{remark}
\begin{remark}
Recall that if $e:\Sigma\longrightarrow M$ is a free boundary minimal embedding which is non-degenerate in the sense that $\text{\rm J}_{g_0,e}$ is invertible, then for any infinitesimal perturbation $\delta g_0$ of $g_0$, there exists a unique infinitesimal perturbation $\delta e$ of $e$ with the property that $e+\delta e$ is free boundary minimal with respect to $g+\delta g_0$. Proposition \ref{PropLocalPerturbationSections} constitutes a generalisation of this result to the case where $\text{\rm J}_{g_0,e}$ has non-trivial kernel.
\end{remark}
\begin{proof}
Denote $n=\opDim(\opKer(\text{\rm J}_{g_0,e_z}))=\opDim(\Cal{K})$. Choose $\lambda\in[0,\infty[\setminus\Bbb{N}$. Observe that for all $z\in Z_0$, $D(H^\lambda,\Theta^\lambda)(x_0,z,0)=\text{\rm J}_{g_0,e_z}$. By Proposition \ref{PropEllipticRegularityOfJacobiOperatorII}, for all $z\in Z_0$:
$$
\opDim(\opKer(D(H^\lambda,\Theta^\lambda))) = \opDim(\opKer(\text{\rm J}_{g_0,e_z})) = n,
$$
and since $(H^\lambda,\Theta^\lambda)$ is a smooth, Fredholm mapping, it follows from the Submersion Theorem for Banach manifolds that $\Cal{K}$ defines a smooth Banach sub-bundle of $X\times Z_0\times C^{*,\lambda}(\Sigma)$ with typical fibre of dimension equal to $n$.

Let $\Cal{K}^{\lambda,\perp}\subseteq X\times Z_0\times C^{*,\lambda}(\Sigma)$ be the Banach sub-bundle whose fibre over any point $(x,z)\in X\times Z_0$ coincides with the orthogonal complement of $\Cal{K}_{x,z}$ in $C^{*,\lambda}(\Sigma)$ with respect to the $L^2$ inner-product of $e_z^*g_0$. We define $\Pi^\lambda:X\times Z_0\times C^{*,\lambda}(\Sigma)\longrightarrow\Cal{K}^{\lambda,\perp}$ such that for all $(x,z)\in X\times Z_0$, $\Pi_{x,z}^\lambda:=\Pi^\lambda(x,z,\cdot)$ is the orthogonal projection of $C^{*,\lambda}(\Sigma)$ onto $\Cal{K}^{\lambda,\perp}_{x,z}$ with respect to the $L^2$-inner-product of $e_z^*g_0$. Observe that $\Pi^\lambda$ is a smooth Banach bundle mapping.

We define $\overline{H}^{\lambda+2}:\Cal{U}^{\lambda+2}\rightarrow\Cal{K}^{\lambda,\perp}$ such that for all $(x,z,f)\in\Cal{U}^{\lambda+2}$:
$$
\overline{H}^{\lambda+2}(x,z,f) = (x,z,(\Pi_z^\lambda\circ H^{\lambda+2})(x,z,f))).
$$
Let $D_3\overline{H}^{\lambda+2}$ be the partial derivative of $\overline{H}^{\lambda+2}$ with respect to the third component in $X\times Z_0\times C^{*,\lambda+2}(\Sigma)$. Choose $z\in Z_0$. We claim that the restriction of  $D_3(\overline{H}^{\lambda+2},\Theta^{\lambda+2})(x_0,z,0)=(\Pi_z^\lambda\circ \text{\rm J}^h_{g_0,e_z},\text{\rm J}^\theta_{g_0,e_z})$ to $\Cal{K}^{\lambda+2,\perp}_{x_0,z}$ defines a linear isomorphism onto $\Cal{K}^{\lambda,\perp}_{x_0,z}\times C^{*,\lambda+1}(\partial \Sigma)$. Indeed, by definition of $\Cal{K}$, $\text{\rm J}_{g_0,e_z}$ restricts to a linear isomorphism from $\Cal{K}_{x_0,z}^{\lambda+2,\perp}$ to $\opIm^{\lambda+2}(\text{\rm J}_{g_0,e_z})$, and it thus suffices show that the restriction of $(\Pi^\lambda_{x_0,z},\opId)$ to $\opIm^{\lambda+2}(\text{\rm J}_{g,e_z})$ defines a linear isomorphism onto $\Cal{K}^{\lambda,\perp}_{x_0,z}\times C^{*,\lambda+1}(\partial \Sigma)$. However, by definition of $\Pi^\lambda$, and bearing in mind that $\text{\rm J}_{g_0,e_z}$ is Fredholm of index zero:
$$
\opDim(\opKer(\Pi_{x_0,z}^\lambda,\opId)) = n = \opCodim(\opIm^{\lambda+2}(\text{\rm J}_{g_0,e_z})).
$$
Consequently, if $\opKer(\Pi_{x_0,z}^\lambda,\opId)\minter \opIm^{\lambda+2}(\text{\rm J}_{g_0,e_z}) = \left\{0\right\}$, then:
$$
C^\lambda(\Sigma)\times C^{\lambda+1}(\partial \Sigma) = \opKer(\Pi_{x_0,z}^\lambda,\opId)\oplus\opIm^{\lambda+2}(\text{\rm J}_{g_0,e_z}),
$$
and since $(\Pi_{x_0,z}^\lambda,\opId)$ is surjective, it would follow that its restriction to $\opIm^{\lambda+2}(\text{\rm J}_{g_0,e_z})$ defines a linear isomorphism onto $\Cal{K}^{\lambda,\perp}_{x_0,z}$. It thus suffices to show that this intersection is trivial. However, let $(\psi,0)$ be an element of the intersection $\opKer(\Pi_{x_0,z}^\lambda,\opId)\minter\opIm^{\lambda+2}(\text{\rm J}_{g_0,e_z})$. In particular, there exists $\varphi\in C^{*,\lambda+2}(\Sigma)$ such that $\text{\rm J}^h_{g_0,e_z}(\varphi)=\psi$ and $\text{\rm J}^\theta_{g_0,e_z}(\varphi)=0$. Moreover, by definition of $\Pi^\lambda$, $\psi\in\opKer(\text{\rm J}_{g_0,e_z})$ and so $\text{\rm J}^\theta_{g_0,e_z}(\psi)=0$. Thus, bearing in mind Proposition \ref{PropJacobiOperatorIsSelfAdjoint}, and denoting by $\opdVol$ the volume form of $e_z^*g_0$, we have:
$$
\int_\Sigma\psi^2\,\opdVol = \int_\Sigma (\text{\rm J}^h_{g_0,e_z}\varphi)\psi\,\opdVol = \int_\Sigma \varphi(\text{\rm J}^h_{g_0,e_z}\psi)\,\opdVol = 0,
$$
and the intersection is therefore trivial, and the restriction of $D_3(\overline{H}^{\lambda+2},\Theta^{\lambda+2})$ to $\Cal{K}_{x_0,z}^{\lambda+2,\perp}$ at $(x_0,z,0)$ defines a linear isomorphism onto $\Cal{K}_{x_0,z}^{\lambda,\perp}\times C^{*,\lambda+1}(\partial \Sigma)$, as asserted.

Since $Z_0$ is compact, it follows from the implicit function theorem for Banach manifolds that there exists a compact neighbourhood $Y$ of $x_0$ and a continuous mapping $F:Y\times Z_0\rightarrow\Cal{K}^{\lambda+2,\perp}$ such that for all $z\in Z_0$, $F(x_0,z)=0$ and for all $(x,z)\in Y\times Z_0$:
$$
(\overline{H}^{\lambda+2},\Theta^{\lambda+2})(x,z,F(x,z)) =(0,0).
$$
Moreover, we may assume that $F$ is unique in the sense described above, and since any continuous mapping from $Y\times Z_0$ into $C^\infty(\Sigma)$ which satisfies $(1)$ is in particular a continuous mapping from $Y\times Z_0$ into $\Cal{K}^{\lambda+2,\perp}$ satisfying the above relation, uniqueness follows.

We now prove that $f:Y\times Z_0\times \Sigma\longrightarrow\Bbb{R}$ is smooth. We claim that $F$ defines a smooth mapping into $\Cal{K}^{\mu+2,\perp}$ for all $\mu\in[0,\infty[\setminus\Bbb{N}$. Indeed, choose $\mu\in[0,\infty[\setminus\Bbb{N}$ such that $\mu>\lambda$. By Proposition \ref{PropEllipticRegularityOfJacobiOperatorII}, $\opKer(\text{\rm J}_{g_0,e_z})\subseteq C^\infty(\Sigma)$. Thus, for all $(x,z)\in Y\times Z$:
$$
H^{\lambda+2}(x,z,F(x,z))\in C^\infty(\Sigma),
$$
and it follows by Proposition \ref{PropEllipticRegularityOfPMCInGraphCharts} that for all $(x,z)\in Y\times Z_0$:
$$
F(x,z)\in C^\infty(\Sigma) \subseteq C^{*,\mu+2}(\Sigma).
$$
Since invertibility is an open property and since $Z_0$ is compact, upon reducing $Y$ if necessary, we may suppose that $D_3\overline{H}^{\lambda+2}(x,z,f(x,z))$ defines an invertible map from $\Cal{K}^{\lambda+2,\perp}_{x,z}$ into $\Cal{K}^{\lambda,\perp}_{x,z}\times C^{*,\lambda+1}(\partial \Sigma)$ for all $(x,z)\in Y\times Z$. Then, by Propositions \ref{PropEllipticRegularityOfJacobiOperatorI} and \ref{PropEllipticRegularityOfJacobiOperatorII} that for all $\mu>\lambda$, $D_3\overline{H}^{\mu+2}(x,z,F(x,z))$ also defines an invertible map from $\Cal{K}_{x,z}^{\mu+2,\perp}$ into $\Cal{K}_{x,z}^{\mu,\perp}\times C^{*,\mu+1}(\partial \Sigma)$. Thus, by the implicit function theorem for Banach manifolds, for all $(x,z)\in Y\times Z_0$, there exists a neighbourhood, $\Omega$ of $(x,z)\in Y\times Z$ and a continuous mapping $F':\Omega\rightarrow\Cal{K}^{\mu+2,\perp}\subseteq\Cal{K}^{\lambda+2,\perp}$ such that $F'(x,z)=F(x,z)$ and for all $(x',z')\in\Omega$:
$$
(\tilde{H}^{\mu+2},\Theta^{\mu+2})(x',z',F'(x',z')) = (0,0).
$$
Since $F'$ is also unique in the sense described above, it coincides with the restriction of $F$ to $\Omega$, from which it follows that $F$ defines a smooth mapping from $\Omega$ into $C^{*,\mu+2}(\Sigma)$ as asserted. Thus, by Proposition \ref{PropSmoothnessToSmoothness}, the function $f:Y\times Z\times \Sigma\rightarrow\Bbb{R}$ given by:
$$
f(y,z,p) = F(y,z)(p)
$$
is smooth, as desired. In particular, $F$ defines a continuous mapping from $Y\times Z$ into $C^\infty(\Sigma)$. Finally, observe that for all $(x,z)\in Y\times Z_0$:
\begin{align*}
&F(x,z) \in \Cal{K}_{x,z}^\perp,\ \text{and}\\
&H^{\lambda+2}(x,z,F(x,z)) \in \Cal{K}_{x,z},
\end{align*}
and this completes the proof.\end{proof}

\subsection{Global sections over non-degenerate families}\label{GlobalSectionsOverNonDegenerateFamilies}Let $Y\subseteq X$ and $F:Y\times Z_0\longrightarrow C^\infty(\Sigma)$ be as in Proposition \ref{PropLocalPerturbationSections}. We define $\widetilde{h}:Y\times Z_0\times \Sigma\rightarrow\Bbb{R}$ such that for all $(x,z)\in Y\times Z_0$:
$$
\widetilde{h}_{x,z} := \widetilde{h}(x,z,\cdot) = H(x,z,F_{x,z}).
$$
We consider $\widetilde{h}$ as a smooth family of sections of $\Cal{K}$ over $Z_0$ parametrised by $Y$. We now show how this family is canonically identified with a family of sections of $T^*Z_0$ parametrised by $Y$, and moreover, upon reducing $Y$ if necessary, that these sections can be combined to yield a family of sections over the whole of $T^*Z$.

Define $\widetilde{e}:Y\times Z_0\times \Sigma\rightarrow M$ such that for all $(x,z)\in Y\times Z_0$:
$$
\widetilde{e}_{x,z} := \widetilde{e}(x,z,\cdot) = \Psi(x,z,F_{x,z}).
$$
Define $\widetilde{N}=Y\times Z_0\times \Sigma\longrightarrow TM$ such that for all $(x,z)\in Y\times Z_0$, $\widetilde{N}_{x,z}:=\widetilde{N}(x,z,\cdot)$ is the unit, normal vector field over $\widetilde{e}_{x,z}$ with respect to $\widetilde{g}_{x,z}$ which is compatible with the orientation. Recalling Section \ref{Conjugations}, we define $\widetilde{\lambda}:Y\times TZ_0\times \Sigma\rightarrow\Bbb{R}$ such that for all $(x,z)\in Y\times Z_0$ and for all $\xi_z\in T_zZ_0$:
$$
\widetilde{\lambda}_{x,z}(\xi_z):=\widetilde{\lambda}(x,z,\xi_z,\cdot) = \widetilde{g}_z((D_2\widetilde{e})_{x,z}(\xi_z),\widetilde{N}_{x,z}),
$$
where $D_2\widetilde{e}$ is the partial derivative of $\widetilde{e}$ with respect to the second component in $Y\times Z_0\times \Sigma$. Observe that $\widetilde{\lambda}_{x,z}$ defines a linear mapping from $T_zZ_0$ to $C^\infty(\Sigma)$. We define $\Cal{A}:Y\times Z_0\rightarrow\Bbb{R}$ such that, for all $(x,z)\in Y\times Z_0$:
$$
\Cal{A}(x,z) = \opVol(\widetilde{e}_{x,z}) = \int_\Sigma \opdVol_{x,z},
$$
where $\opdVol_{x,z}$ is the volume form of $\widetilde{e}_{x,z}^*\widetilde{g}_{x,z}$. For all $x\in Y$, we denote $\Cal{A}_x:=\Cal{A}(x,\cdot)$.
\begin{proposition}\label{PropFormulaForDerivativeOfArea}
For all $(x,z)\in Y\times Z_0$ and for all $\xi_z\in T_zZ$:
$$
d\Cal{A}_x(\xi_z) = \int_\Sigma \widetilde{h}_{x,z}\widetilde{\lambda}_{x,z}(\xi_z)\opdVol_{x,z}.
$$
\end{proposition}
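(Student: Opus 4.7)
The plan is to apply the classical first variation of area formula to the family $z \mapsto \widetilde{e}_{x,z}$ with $x$ held fixed, and to verify that the boundary term in that formula vanishes as a consequence of the free boundary condition built into $F$.

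First, I would fix $(x,z) \in Y \times Z_0$ and $\xi_z \in T_zZ_0$, choose a smooth curve $z(t)$ in $Z_0$ with $z(0) = z$ and $\dot z(0) = \xi_z$, and set $e_t := \widetilde{e}_{x,z(t)}$. By definition of $\Cal{A}$ and the chain rule, $d\Cal{A}_x(\xi_z) = \frac{d}{dt}\big|_{t=0}\opVol_{\widetilde{g}_{x,z}}(e_t)$. Let $V := (D_2\widetilde{e})_{x,z}(\xi_z)$ be the resulting variation vector field along $e_0 = \widetilde{e}_{x,z}$; I would write it as $V = V^{\top} + \widetilde{\lambda}_{x,z}(\xi_z)\,\widetilde{N}_{x,z}$, where $V^{\top}$ is tangent to the image of $e_0$ and the normal component is $\widetilde{\lambda}_{x,z}(\xi_z)$ by the very definition of $\widetilde{\lambda}$.

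Next, I would invoke the standard first variation formula for the area functional, which with our sign conventions reads
$$
\frac{d}{dt}\Big|_{t=0}\opVol_{\widetilde{g}_{x,z}}(e_t) = \int_\Sigma H_{\widetilde{g}_{x,z},e_0}\,\widetilde{g}_{x,z}(V,\widetilde{N}_{x,z})\,\opdVol_{x,z} + \int_{\partial\Sigma}\widetilde{g}_{x,z}(V,\nu_\Sigma)\,\text{d}\sigma,
$$
where $\nu_\Sigma$ denotes the outward conormal of $\partial\Sigma$ in $\Sigma$ with respect to $\widetilde{e}_{x,z}^{*}\widetilde{g}_{x,z}$. The interior term is already $\int_\Sigma \widetilde{h}_{x,z}\widetilde{\lambda}_{x,z}(\xi_z)\,\opdVol_{x,z}$ by the definitions of $\widetilde{h}$ and $\widetilde{\lambda}$, so it suffices to show that the boundary term vanishes identically.

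For the boundary term I would argue as follows. Since $\widetilde{e}_{x,z(t)}(\partial\Sigma) \subseteq \partial M$ for every $t$, differentiating at $t = 0$ shows that $V$ is tangent to $\partial M$ along $\partial\Sigma$. On the other hand, property (2) of Proposition \ref{PropLocalPerturbationSections} gives $\Theta(x,z,F_{x,z}) = 0$, i.e.~$\widetilde{e}_{x,z}(\Sigma)$ meets $\partial M$ orthogonally along its boundary with respect to $\widetilde{g}_{x,z}$; consequently the conormal $\nu_\Sigma$ coincides along $\partial\Sigma$ with the pullback of the outward normal $\nu$ to $\partial M$. Thus $\widetilde{g}_{x,z}(V,\nu_\Sigma) = 0$ pointwise on $\partial\Sigma$, and the boundary integral drops out, yielding the desired formula. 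The main (small) point to watch is simply to fix the sign convention for $H$ so that the interior term comes out with the correct sign; once that is aligned with the convention $H = \tr A$ used earlier in the paper, there is no further difficulty.
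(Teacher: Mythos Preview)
Your proposal is correct and follows the same approach as the paper, which simply cites the first variation formula for area together with the definitions of $\widetilde{h}$ and $\widetilde{\lambda}$. You have usefully made explicit the point the paper leaves implicit: that the boundary term in the first variation vanishes because the variation field $V$ is tangent to $\partial M$ (as $\widetilde{e}_{x,z(t)}(\partial\Sigma)\subseteq\partial M$) while the conormal $\nu_\Sigma$ coincides with the normal to $\partial M$ thanks to the orthogonality condition $\Theta(x,z,F_{x,z})=0$ from Proposition~\ref{PropLocalPerturbationSections}.
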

\begin{proof}
This follows from the definitions of $\widetilde{h}$, $\widetilde{\lambda}$ and the first variation formula for area (c.f. Section \ref{Conjugations} and Section $1.1$ of \cite{ColdingMinicozzi}).
\end{proof}

\begin{proposition}\label{PropLambdaLiesInKernel}
Upon reducing $Y$ if necessary, for all $(x,z)\in Y\times Z_0$, the pairing:
$$
T_zZ_0\times\opKer(\text{\rm J}_{g_0,e_z})\longrightarrow\Bbb{R};(\xi_z,\varphi)\mapsto \int_\Sigma\varphi\widetilde{\lambda}_{x,z}(\xi_z)\opdVol_{x,z},
$$
is non-degenerate.
\end{proposition}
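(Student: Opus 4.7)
The plan is to exploit the fact that non-degeneracy of a bilinear pairing between finite-dimensional vector spaces of the same dimension is an open condition, and to reduce the problem to verifying it pointwise along the slice $\{x_0\}\times Z_0$. Since $\opDim(T_zZ_0)=\opDim(Z)=\opDim(\opKer(\text{\rm J}_{g_0,e_z}))$ for every $z\in Z_0$ by the non-degenerate family hypothesis, and since the pairing depends continuously on $(x,z)$, compactness of $Z_0$ will deliver a compact neighbourhood $Y^\prime\subseteq Y$ of $x_0$ on which the pairing remains non-degenerate, upon choosing smooth local frames of $TZ_0$ and of the smooth sub-bundle $\Cal{K}$ (whose smoothness is established in the proof of Proposition \ref{PropLocalPerturbationSections}) and examining the determinant of the matrix representing the pairing.

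For the pointwise computation at $x=x_0$, the key observation is that $F(x_0,z)=0$ identically on $Z_0$, so $\widetilde{e}_{x_0,z}=e_z$ and $\widetilde{N}_{x_0,z}=N_{e_z}$. Differentiating $\widetilde{e}(x_0,z,p)=\tildeExp(F(x_0,z)(p)\,N_{x_0,z}(p))$ in the $z$-direction, using property $(1)$ of the modified exponential together with the vanishing of $F(x_0,z)$ (which suppresses the variation of $N_{x_0,z}$), I obtain
$$
(D_2\widetilde{e})_{x_0,z}(\xi_z) \,=\, (D_1e)_z(\xi_z) \,+\, \delta F_{x_0,z}(\xi_z)\cdot N_{e_z},
$$
where $\delta F_{x_0,z}:=D_2F(x_0,z)$. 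Taking the $g_0$-inner product with $N_{e_z}$ yields the decomposition
$$
\widetilde{\lambda}_{x_0,z}(\xi_z) \,=\, \lambda_z(\xi_z) \,+\, \delta F_{x_0,z}(\xi_z),
$$
where $\lambda_z(\xi_z):=g_0((D_1e)_z(\xi_z),N_{e_z})$. The crucial point is that $\delta F_{x_0,z}(\xi_z)\in\Cal{K}^\perp_{x_0,z}=\opKer(\text{\rm J}_{g_0,e_z})^\perp$, which follows by differentiating the identity $\pi_{x_0,z^\prime}(F(x_0,z^\prime))=F(x_0,z^\prime)$ at $z^\prime=z$, where $\pi_{x_0,z^\prime}$ denotes orthogonal projection onto $\Cal{K}^\perp_{x_0,z^\prime}$, and using $F(x_0,z)=0$ to kill the derivative of $\pi$. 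Consequently, integrating against any $\varphi\in\opKer(\text{\rm J}_{g_0,e_z})$ reduces the pairing at $x_0$ to
$$
\int_\Sigma \varphi\,\widetilde{\lambda}_{x_0,z}(\xi_z)\,\opdVol_{x_0,z} \,=\, \int_\Sigma \varphi\,\lambda_z(\xi_z)\,\opdVol_z.
$$

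It remains to show that $\lambda_z$ defines a linear isomorphism from $T_zZ_0$ onto $\opKer(\text{\rm J}_{g_0,e_z})$. For a curve $z_t$ with $\dot z_0=\xi_z$, the variation field $(D_1e)_z(\xi_z)$ splits as $\lambda_z(\xi_z)N_{e_z}+T$ with $T$ tangent to $e_z(\Sigma)$; since reparametrisation leaves both mean curvature and boundary angle invariant, differentiating $H_{g_0,e_{z_t}}\equiv 0$ and $\Theta_{g_0,e_{z_t}}\equiv 0$ at $t=0$ gives $\text{\rm J}_{g_0,e_z}(\lambda_z(\xi_z))=0$, so $\lambda_z$ lands in $\opKer(\text{\rm J}_{g_0,e_z})$. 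If $\lambda_z(\xi_z)=0$, then $(D_1e)_z(\xi_z)$ is everywhere tangent to $e_z(\Sigma)$, contradicting the immersion hypothesis on $\Cal{F}$ unless $\xi_z=0$; combined with the dimension equality, $\lambda_z$ is therefore bijective. The pairing at $(x_0,z)$ is then the pullback along $\lambda_z$ of the $L^2$ inner product of $e_z^*g_0$ restricted to $\opKer(\text{\rm J}_{g_0,e_z})$, which is positive-definite and hence non-degenerate. The main obstacle is the verification that $\delta F_{x_0,z}(\xi_z)$ lies in $\Cal{K}^\perp_{x_0,z}$: this is the geometric content that decouples the $\delta F$ term from $\opKer(\text{\rm J}_{g_0,e_z})$ and allows the reduction to an intrinsic $L^2$-pairing.
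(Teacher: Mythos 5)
Your proof is correct and follows essentially the same route as the paper's: reduce to the slice $x=x_0$ by continuity, compactness of $Z_0$ and openness of non-degeneracy, then identify $\widetilde{\lambda}_{x_0,z}$ with the normal component of the variation field of $\Cal{F}$, which is an isomorphism onto $\opKer(\text{\rm J}_{g_0,e_z})$ by the Jacobi-field computation, the immersion hypothesis and the dimension count from non-degeneracy. The only (harmless) redundancy is your treatment of the $\delta F_{x_0,z}$ term: since $F(x_0,z')=0$ for \emph{every} $z'\in Z_0$, its $z$-derivative vanishes identically, so the orthogonality argument via the projection identity is not needed.
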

\begin{proof}
Choose $z\in Z_0$. There exists a neighbourhood $\Omega$ of $z$ in $Z_0$ and smooth mappings $\alpha:\Omega\times \Sigma\longrightarrow \Sigma$ and $\psi:\Omega\times \Sigma\rightarrow\Bbb{R}$ such that $\alpha(z,\cdot)$ coincides with the identity mapping, $\psi(z,\cdot)=0$, and for all $w\in\Omega$, $\alpha_w:=\alpha(w,\cdot)$ is a smooth diffeomorphism of $\Sigma$ and $\Psi(0,z,\psi_w)\circ\alpha_w = \tilde{e}_{x_0,w}$, where $\psi_w:=\psi(w,\cdot)$. In particular, for all $w\in\Omega$, $(H,\Theta)(x_0,z,\psi_w)=0$, and so, for all $\xi_z\in T_zZ$:
\begin{equation}\label{EqnNonDegeneratePairing}
\text{\rm J}_{g,e_z}((D_1\psi)_z(\xi_z))=D_3(H,\Theta)(x_0,z,0)((D_1\psi)_z(\xi_z))= 0.
\end{equation}
However, as in the proof of Proposition \ref{PropInfMCIsConjugate}, $(D_1\psi)_z(\xi_z)=\widetilde{\lambda}_{x_0,z}(\xi_z)$, from which it follows that $\widetilde{\lambda}_{x_0,z}$ maps $T_zZ_0$ into $\opKer(\text{\rm J}_{g_0,e_z})$. Moreover, since $\Cal{F}$ is an immersion, $\widetilde{\lambda}_{x_0,z}$ is injective for all $z\in Z_0$, and since $\Cal{F}$ is non-degenerate, $\opDim(TZ_0)=\opDim(\opKer(\text{\rm J}_{g_0,e_z}))$. It follows that this mapping is a linear isomorphism and the pairing \eqref{EqnNonDegeneratePairing} is therefore non-degenerate. Finally, since $Z_0$ is compact, upon reducing $Y$ if necessary, the pairing \eqref{EqnNonDegeneratePairing} is also non-degenerate for all $(x,z)\in Y\times Z_0$, and this completes the proof.
\end{proof}
\begin{proposition}\label{PropExtremalWheneverDAVanishes}
For all $(x,z)\in Y\times Z_0$, $\widetilde{h}_{x,z}=0$ if and only if $d\Cal{A}_x(z)=0$.
\end{proposition}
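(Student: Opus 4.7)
The plan is to derive this proposition as an essentially immediate consequence of the two preceding propositions, namely the variational formula of Proposition \ref{PropFormulaForDerivativeOfArea} and the non-degeneracy of the pairing established in Proposition \ref{PropLambdaLiesInKernel}. The key structural fact to invoke is that, by item (3) of Proposition \ref{PropLocalPerturbationSections}, the function $\widetilde{h}_{x,z}$ already lies inside $\Cal{K}_{x,z}=\opKer(\text{\rm J}_{g_0,e_z})$, so it is a legitimate ``second slot'' of the non-degenerate pairing given by Proposition \ref{PropLambdaLiesInKernel}.

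First I would handle the easy implication: if $\widetilde{h}_{x,z}=0$, then Proposition \ref{PropFormulaForDerivativeOfArea} yields $d\Cal{A}_x(\xi_z)=0$ for every $\xi_z\in T_zZ_0$, hence $d\Cal{A}_x(z)=0$. For the converse implication, I would assume $d\Cal{A}_x(z)=0$ and use Proposition \ref{PropFormulaForDerivativeOfArea} in the reverse direction to conclude that
\[
\int_\Sigma \widetilde{h}_{x,z}\,\widetilde{\lambda}_{x,z}(\xi_z)\,\opdVol_{x,z} = 0
\]
for every $\xi_z\in T_zZ_0$. Since $\widetilde{h}_{x,z}\in\opKer(\text{\rm J}_{g_0,e_z})$ and the bilinear pairing between $T_zZ_0$ and $\opKer(\text{\rm J}_{g_0,e_z})$ supplied by $(\xi_z,\varphi)\mapsto\int_\Sigma\varphi\,\widetilde{\lambda}_{x,z}(\xi_z)\,\opdVol_{x,z}$ is non-degenerate by Proposition \ref{PropLambdaLiesInKernel} (after perhaps shrinking $Y$, which we may assume done), it follows that $\widetilde{h}_{x,z}=0$.

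There is no real obstacle here beyond verifying that the hypotheses of Proposition \ref{PropLambdaLiesInKernel} are in force; in particular, one should remark that the reduction of $Y$ made in that proposition was performed once and for all in the construction preceding the statement, so the non-degeneracy holds uniformly over $Y\times Z_0$. The only subtle point to flag is that the pairing has equal dimensions on both sides (both are equal to $\opDim(Z)$ by the non-degeneracy assumption on the family $(Z,\Cal F)$), so non-degeneracy on one side automatically gives non-degeneracy on the other, which is what licenses the final conclusion $\widetilde{h}_{x,z}=0$ from the vanishing of the pairing against arbitrary $\xi_z$.
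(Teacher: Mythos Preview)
Your proof is correct and follows essentially the same approach as the paper's own proof: both directions are handled exactly as you describe, using Proposition \ref{PropFormulaForDerivativeOfArea} for the forward implication and combining it with the non-degenerate pairing of Proposition \ref{PropLambdaLiesInKernel} for the converse. Your write-up is in fact slightly more explicit than the paper's, since you spell out why $\widetilde{h}_{x,z}\in\opKer(\text{\rm J}_{g_0,e_z})$ (via item (3) of Proposition \ref{PropLocalPerturbationSections}) and why non-degeneracy in the second slot follows from the equal dimensions.
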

\begin{proof}
By Proposition \ref{PropFormulaForDerivativeOfArea}, if $\tilde{h}_{x,z}=0$, then $d\Cal{A}_x(\xi_z)=0$. Conversely, if $d\Cal{A}_x(z)=0$, then, for all $\xi_z\in T_zZ_0$:
$$
\int_\Sigma\widetilde{h}_{x,z}\widetilde{\lambda}_{x,z}(\xi_z)\,\opdVol_{x,z} = 0,
$$
and it follows from Proposition \ref{PropLambdaLiesInKernel} that $\widetilde{h}_{y,z}=0$, as desired.
\end{proof}
\begin{proposition}\label{PropExtensionYieldsNonDegenerateSectionII}
There exists a compact neighbourhood $Y$ of $x_0$ in $X$, a smooth mapping $\widetilde{\Cal{F}}:Y\times Z\longrightarrow\Cal{E}$ and smooth family of sections $\sigma:Y\times Z\rightarrow T^*Z$ such that:
\begin{enumerate}
\item the restriction of $\tilde{\Cal{F}}$ to $\left\{x_0\right\}\times Z$ coincides with $\Cal{F}$; and
\item for all $(y,z)\in Y\times Z$, $(y,\tilde{\Cal{F}}(y,z))$ is an element of $\Cal{Z}(Y\times Z)$ if and only if $\sigma(y,z)=0$.
\end{enumerate}
\end{proposition}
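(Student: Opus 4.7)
The plan is to construct $\widetilde{\Cal{F}}$ by gluing together the local families produced by Proposition \ref{PropLocalPerturbationSections} over a finite cover of $Z$, and then to recognise $\sigma$ as the differential of a global area function on $Z$. Since $\Cal{F}: Z \to \Cal{E}$ is a smooth embedding and $Z$ is compact, I would first cover $Z$ by finitely many open sets whose closures $Z_{0,1},\ldots,Z_{0,N}$ each carry a smooth local parametrisation $e_\alpha : Z_{0,\alpha} \times \Sigma \to M$ of $(Z, \Cal{F})$. Applying Proposition \ref{PropLocalPerturbationSections} to each $(Z_{0,\alpha}, e_\alpha)$, with $g$ and $e_\alpha$ extended constantly to $X \times Z_{0,\alpha}$ as at the start of this section, produces compact neighbourhoods $Y_\alpha$ of $x_0$ in $X$ and smooth functions $F_\alpha : Y_\alpha \times Z_{0,\alpha} \to C^\infty(\Sigma)$. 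Setting $Y := \minter_\alpha Y_\alpha$, I would define on each $Y \times Z_{0,\alpha}$ the candidate
$$
\widetilde{\Cal{F}}_\alpha(y,z) := [\hat{\Phi}_\alpha(y,z,F_\alpha(y,z))],
$$
which is manifestly smooth in the sense of Section \ref{NonDegenerateFamilies}.

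The heart of the argument is to verify that these local candidates agree on overlaps. For $z \in Z_{0,\alpha} \minter Z_{0,\beta}$, the embeddings $e_\alpha(z)$ and $e_\beta(z)$ represent the same class $\Cal{F}(z)$, so they are related by a smooth family of orientation-preserving diffeomorphisms $\phi_z : \Sigma \to \Sigma$ with $e_\beta(z) = e_\alpha(z) \circ \phi_z$. I would check that the transported function $(y,z) \mapsto F_\beta(y,z) \circ \phi_z^{-1}$ satisfies the three characterising properties of Proposition \ref{PropLocalPerturbationSections} in the $\alpha$-chart: the orthogonality condition, because the $L^2$-pairing induced by $e_\alpha(z)^\ast g_0$ and the kernel of $\text{\rm J}_{g_0,e_\alpha(z)}$ are both equivariant under pullback by $\phi_z$; the vanishing of the boundary angle, because $\Theta$ is a reparametrisation-invariant geometric quantity; and the membership of the mean curvature in the kernel, because the mean curvature and the Jacobi operator transform equivariantly. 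The uniqueness clause of Proposition \ref{PropLocalPerturbationSections}, applied to the local parametrisation $(Z_{0,\alpha} \minter Z_{0,\beta}, e_\alpha|_{Z_{0,\alpha} \minter Z_{0,\beta}})$, then forces $F_\beta(y,z) \circ \phi_z^{-1} = F_\alpha(y,z)$ on the overlap, and hence $\widetilde{\Cal{F}}_\alpha = \widetilde{\Cal{F}}_\beta$ there. Consequently the local candidates glue to a globally defined smooth map $\widetilde{\Cal{F}} : Y \times Z \to \Cal{E}$ satisfying $\widetilde{\Cal{F}}(x_0,z) = \Cal{F}(z)$, since $F_\alpha(x_0,\cdot) \equiv 0$.

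Next, I would define the global area function $\Cal{A} : Y \times Z \to \Bbb{R}$ by $\Cal{A}(y,z) := \opVol(\widetilde{\Cal{F}}(y,z))$; this is well-defined because volume is a reparametrisation invariant, and it agrees on each $Y \times Z_{0,\alpha}$ with the smooth area function $\Cal{A}$ of Section \ref{GlobalSectionsOverNonDegenerateFamilies}, so $\Cal{A}$ is itself smooth. For fixed $y \in Y$, set $\Cal{A}_y := \Cal{A}(y,\cdot)$ and define
$$
\sigma(y,z) := (d\Cal{A}_y)(z) \in T_z^\ast Z,
$$
which is manifestly a smooth family of sections of $T^\ast Z$ parametrised by $Y$. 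By Proposition \ref{PropExtremalWheneverDAVanishes}, applied in any chart $\alpha$ containing $z$, the vanishing $\sigma(y,z) = 0$ is equivalent to $\widetilde{h}_\alpha(y,z) = H(y,z,F_\alpha(y,z)) = 0$, which together with the identity $\Theta(y,z,F_\alpha(y,z)) = 0$ built into the construction of $F_\alpha$ is precisely the statement that $(y,\widetilde{\Cal{F}}(y,z)) \in \Cal{Z}(Y \times Z)$.

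The main obstacle is the gluing step: one must verify carefully that each of the three characterising properties of $F$ in Proposition \ref{PropLocalPerturbationSections} transforms equivariantly under the reparametrisations $\phi_z$, so that the uniqueness clause can legitimately be invoked on overlaps. Once this is done, and hence $\widetilde{\Cal{F}}$ is globally defined, the construction of $\sigma$ and the final equivalence are direct consequences of the local theory developed in Section \ref{GlobalSectionsOverNonDegenerateFamilies}.
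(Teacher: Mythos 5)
Your proposal is correct and follows essentially the same route as the paper's proof: cover $Z$ by finitely many local parametrisations, apply Proposition \ref{PropLocalPerturbationSections} on each, use its uniqueness clause to identify the local perturbations on overlaps (up to the reparametrisation relating the two local parametrisations), and then define $\sigma$ as the differential of the reparametrisation-invariant area functional, concluding via Proposition \ref{PropExtremalWheneverDAVanishes}. Your explicit verification that the three characterising properties of $F$ are equivariant under the overlap diffeomorphisms is exactly the content the paper compresses into ``by uniqueness''.
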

\begin{remark}Importantly, in the variational context studied here, for all $y\in Y$, $\sigma_y:=\sigma(y,\cdot)$ is the derivative of the area functional.
\end{remark}
\begin{proof}
Since $Z$ is compact, there exists a finite family $(Z_i,e_i)_{1\leqslant i\leqslant m}$ of local parametrisations of $(Z,\Cal{F})$ which covers $Z$. Choose $1\leqslant i\leqslant m$. Let $Y_i\subseteq X$ and $F_i:Y_i\times Z_i\longrightarrow C^\infty(\Sigma)$ be as in Propositions \ref{PropLocalPerturbationSections} and \ref{PropLambdaLiesInKernel}. Define $\widetilde{e}_i:Y_i\times Z_i\times \Sigma\longrightarrow M$ and $\widetilde{h}_i:Y_i\times Z_i\times \Sigma\rightarrow\Bbb{R}$ as above. Define $\widetilde{\Cal{F}}_i:Y_i\times Z_i\rightarrow\Cal{E}$ by $\widetilde{\Cal{F}}_i(x,z) = [\widetilde{e}_{i,x,z}]$, define $\Cal{A}_i:Y_i\times Z_i\longrightarrow\Bbb{R}$ by $\Cal{A}_i(x,z)=\opVol(\widetilde{e}_{x,z})$ and define $\sigma_i:Y_i\times Z_i\rightarrow T^*Z_i$ by $\sigma_i(x,z)=d\Cal{A}_{i,x}(z)$, where $\Cal{A}_{i,x}=\Cal{A}_i(x,\cdot)$.

Denote $Y=Y_1\minter...\minter Y_m$. Choose $z\in Z_i\minter Z_j$. Since $[e_{i,z}]=\Cal{F}(z)=[e_{j,z}]$, there exists a smooth, orientation-preserving diffeomorphism $\alpha:\Sigma\longrightarrow \Sigma$ such that $e_{i,z}\circ\alpha = e_{j,z}$. By uniqueness, it follows that for all $x\in Y$, $F_{i,x,z}\circ\alpha = F_{j,x,z}$. In particular $\widetilde{e}_{i,x,z}\circ\alpha =\widetilde{e}_{j,x,z}$, from which it follows that $\widetilde{\Cal{F}}_i(x,z)=\widetilde{\Cal{F}}_j(x,z)$. We thus define $\widetilde{\Cal{F}}:Y\times Z\rightarrow\Cal{E}$ such that, for all $i$ and for all $(x,z)\in Y\times Z_i$, $\widetilde{\Cal{F}}(x,z)=\widetilde{\Cal{F}}_i(x,z)$, and it follows from the above discussion that $\widetilde{\Cal{F}}$ is smooth. We define $\sigma:Y\times Z\longrightarrow T^*Z$ such that for all $i$ and for all $(x,z)\in Y\times Z_i$, $\sigma(x,z)=\sigma_i(x,z)$, and we show in a similar manner that $\sigma$ is also smooth.

Finally, choose $1\leqslant i\leqslant m$ and choose $(x,z)\in Y\times Z_i$. By definition, the mean curvature of $\widetilde{e}_{i,x,z}$ is equal to $\widetilde{h}_{i,x,z}$. In particular, $\widetilde{e}_{i,x,z}$ is free boundary minimal if and only if $\widetilde{h}_{i,x,z}$ vanishes. However, by Proposition \ref{PropExtremalWheneverDAVanishes}, $\widetilde{h}_{i,x,z}$ vanishes if and only if $d\Cal{A}_{i,x}(z)=\sigma_i(x,z)$ vanishes. $\Cal{F}(x,z)$ is therefore free boundary minimal if and only if $\sigma(x,z)$ vanishes, and this completes the proof.
\end{proof}

\subsection{Non-degenerate sections}\label{NonDegenerateSections}We briefly consider the following general result for sections of bundles over finite-dimensional manifolds. Let $N_1$ and $N_2$ be two Riemannian manifolds, let $V$ be a smooth vector bundle over $N_2$ and let $\sigma:N_1\times N_2\longrightarrow V$ be a smooth family of sections of $V$ parametrised by $N_1$. We say that $\sigma$ is {\it non-degenerate} whenever $D_1\sigma(p,q)$ defines a surjective map from $T_pN_1$ onto $V_qN_2$ for all $(p,q)\in\sigma^{-1}(\left\{0\right\})$. Non-degenerate families of sections are of interest due to the following result:
\begin{proposition}\label{PropNonDegenerateSectionsOfVectorBundles}
If $\sigma:N_1\times N_2\longrightarrow V$ is a non-degenerate family of sections, then $W:=\sigma^{-1}(\left\{0\right\})$ is a smooth, embedded submanifold of $N_1\times N_2$ of dimension equal to $\opDim(N_1)+\opDim(N_2)-\opDim(V)$. Moreover, if $N_2$ is compact, then there exists an open, dense subset $N_1^0\subseteq N_1$ such that for all $p\in N_1^0$, every zero of the section $\sigma_p:=\sigma(p,\cdot)$ is non-degenerate.
\end{proposition}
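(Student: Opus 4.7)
The plan is to treat the two assertions separately, recognising the proposition as a finite-dimensional instance of the classical parametric transversality (Sard--Smale) paradigm. The whole argument is a combination of the Submersion Theorem with Sard's Theorem applied to a projection map; no serious new idea is required.

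For the first assertion, I would work in a local trivialisation of $V$ around $q\in N_2$, so that $\sigma$ becomes a smooth map from an open subset of $N_1\times N_2$ to $\mathbb{R}^{r}$, where $r$ denotes the rank of $V$. The non-degeneracy hypothesis says that for every $(p,q)\in W$, the partial derivative $D_1\sigma(p,q):T_pN_1\to V_q$ is already surjective, so a fortiori the total derivative $D\sigma(p,q):T_pN_1\oplus T_qN_2\to V_q$ is surjective. The Submersion Theorem then yields that $W=\sigma^{-1}(\{0\})$ is a smoothly embedded submanifold of $N_1\times N_2$ of codimension $r$, hence of dimension $\opDim(N_1)+\opDim(N_2)-\opDim(V)$, as claimed. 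Since trivialisations cover $N_1\times N_2$, this is a local statement and needs no further globalisation.

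For the second assertion, I would consider the smooth projection $\pi:W\to N_1$ onto the first factor, and define $N_1^0$ to be the set of regular values of $\pi$. By Sard's Theorem, $N_1^0$ has full measure in $N_1$, hence is dense. To identify $N_1^0$ with the desired set of parameters, fix $(p,q)\in W$ and observe that
\[
T_{(p,q)}W=\opKer\,D\sigma(p,q)=\bigl\{(u,v)\in T_pN_1\oplus T_qN_2\ \bigl|\ D_1\sigma(p,q)(u)+D_2\sigma(p,q)(v)=0\bigr\},
\]
while $\opKer\,D\pi(p,q)=\{0\}\times\opKer\,D_2\sigma(p,q)$. A dimension count using the rank--nullity theorem applied to $D_2\sigma(p,q)$ and the surjectivity of $D_1\sigma(p,q)$ shows that $D\pi(p,q)$ is surjective if and only if $D_2\sigma(p,q):T_qN_2\to V_q$ is surjective. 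Thus $p$ is a regular value of $\pi$ if and only if every zero of $\sigma_p:=\sigma(p,\cdot)$ is non-degenerate, which is precisely the claimed characterisation of $N_1^0$.

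It remains to verify that $N_1^0$ is open, and this is where the compactness hypothesis on $N_2$ enters: the map $\pi$ is proper, since $W$ is closed in $N_1\times N_2$ and $N_2$ is compact. The critical set of $\pi$ is a closed subset of $W$, so its image under the proper map $\pi$ is a closed subset of $N_1$; its complement $N_1^0$ is therefore open. Combined with density from Sard's Theorem, this completes the proof. The only step where one must be mildly careful is the identification of $D\pi$ regularity with non-degeneracy of $\sigma_p$, but this is purely a linear-algebraic dimension count; no real analytic or geometric obstacle arises.
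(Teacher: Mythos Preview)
Your proof is correct and follows essentially the same route as the paper's: Submersion Theorem for the first assertion, then Sard's Theorem applied to the projection $\pi:W\to N_1$ for density, and compactness of $N_2$ for openness. The only cosmetic difference is that, for the equivalence between regularity of $\pi$ at $(p,q)$ and surjectivity of $D_2\sigma(p,q)$, you use a rank--nullity dimension count whereas the paper does a direct element chase (given $\xi\in V_q$, pick $\alpha$ with $D_1\sigma(\alpha)=\xi$ by non-degeneracy, then use regularity of $\pi$ to find $\beta$ with $(-\alpha,\beta)\in T_{(p,q)}W$, whence $D_2\sigma(\beta)=\xi$); these are the same linear algebra in different packaging.
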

\begin{proof}
The first assertion follows from the implicit function theorem. Let $\pi:N_1\times N_2\rightarrow N_1$ be the canonical projection onto the first factor. Let $N_1^0\subseteq N_1$ be the set of regular values of the restriction of $\pi$ to $W$. By Sard's Theorem, $N_1^0$ is a dense subset of $N_1$, and by compactness of $N_2$ it is open. Choose $p\in N_1^0$. We claim that every zero of the section $\sigma_p$ is non-degenerate. Indeed, let $q$ be a zero of $\sigma_p$. Upon trivialising $V$, we consider $\sigma$ as a smooth mapping from $N_1\times N_2$ into $\Bbb{R}^m$, where $m=\opDim(V)$. We claim that $D\sigma_p(q)$ is surjective. Indeed, choose $\xi\in\Bbb{R}^m$. Since $\sigma$ is non-degenerate, there exists $\alpha\in T_pN_1$ such that $(D_1\sigma)(p,q)(\alpha)=\xi$. Since $p$ is a regular value of the restriction of $\pi$ to $W$, there exists $\beta\in T_qN_2$ such that $(-\alpha,\beta)\in T_{(p,q)}W$. In particular, $D\sigma(p,q)(-\alpha,\beta)=0$. Taking the sum of these two relations yields:
$$
D\sigma_p(q)(\beta) = (D_2\sigma)(p,q)(\beta) = (D\sigma)(p,q)((\alpha,0)+(-\alpha,\beta)) = \xi,
$$
from which it follows that $D\sigma_p(q)$ is surjective, as asserted, and we conclude that every zero of the section $\sigma_p:=\sigma(p,\cdot)$ is non-degenerate, as desired.
\end{proof}

In the present framework, we have the following result:
\begin{proposition}\label{PropExtensionYieldsNonDegenerateSection}
There exists an extension $\widetilde{X}$ of $X$ and a compact neighbourhood $Y$ of $x_0$ in $\widetilde{X}$ with the property that if $\widetilde{h}:Y\times Z_0\times S\rightarrow\Bbb{R}$ is defined as in the preceeding section, then $\widetilde{h}$ defines a non-degenerate family of sections of $\Cal{K}$ over $Z_0$ parametrised by $Y$.
\end{proposition}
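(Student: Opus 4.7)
The plan is to enlarge $X$ by adjoining a finite-dimensional space $E$ of conformal perturbations of the metric, chosen so that the vertical derivative of $\widetilde{h}$ surjects onto the fibre of $\mathcal{K}$ at every point of $\{(0, x_0)\} \times Z$. Because $\mathcal{K}_{x, z} = \opKer(\text{\rm J}_{g_0, e_z})$ is independent of $x$, at a point where $\widetilde{h}$ vanishes the derivative $D_1 \widetilde{h}((0, x_0), z)(\xi)$ automatically lies in $\mathcal{K}_{x_0, z}$. Differentiating $\widetilde{h}(x, z) = H(x, z, F(x, z))$ by the chain rule, and invoking Propositions \ref{PropPertMCIsConjugate} and \ref{PropInfMCIsConjugate} together with $\lambda_{x_0, z, 0} = 1$, gives
$$
D_1 \widetilde{h}((0, x_0), z)(\xi) = \text{\rm P}^h_{(0, x_0), e_z}(\xi) + \text{\rm J}^h_{g_0, e_z}\bigl(D_1 F((0, x_0), z)(\xi)\bigr).
$$

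The key cancellation is this. For $\xi$ pointing in a conformal direction in $E$, Proposition \ref{PropVariationOfBoundaryAngle} yields $\text{\rm P}^\theta_{(0, x_0), e_z}(\xi) = 0$, so differentiating the defining constraint $\Theta(x, z, F(x, z)) = 0$ and applying Proposition \ref{InfBCIsConjugate} forces $D_1 F((0, x_0), z)(\xi) \in \opKer(\text{\rm J}^\theta_{g_0, e_z})$. Since $D_1 F((0, x_0), z)(\xi)$ and any $\psi \in \mathcal{K}_{x_0, z}$ both lie in $\opKer(\text{\rm J}^\theta_{g_0, e_z})$, and $\text{\rm J}^h_{g_0, e_z} \psi = 0$, Proposition \ref{PropJacobiOperatorIsSelfAdjoint} shows that $\text{\rm J}^h_{g_0, e_z}\bigl(D_1 F((0, x_0), z)(\xi)\bigr)$ is $L^2$-orthogonal to $\mathcal{K}_{x_0, z}$. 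Combining this with the fact that the full derivative already lies in $\mathcal{K}_{x_0, z}$ collapses the expression to
$$
D_1 \widetilde{h}((0, x_0), z)(\xi) = \pi_{\mathcal{K}_{x_0, z}}\bigl(\text{\rm P}^h_{(0, x_0), e_z}(\xi)\bigr),
$$
where $\pi_{\mathcal{K}_{x_0, z}}$ denotes the $L^2$-projection onto $\mathcal{K}_{x_0, z}$ with respect to $e_z^* g_0$.

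The construction of $E$ then proceeds by a covering argument in the spirit of Theorem \ref{PropSurjectivityForExtensions}. For each $z \in Z$, Proposition \ref{PropFindingTheRightPerturbationSpace} supplies smooth functions $\varphi^z_1, \dots, \varphi^z_{m_z} \in C^\infty(M)$ biorthogonal to a basis of $\mathcal{K}_{x_0, z}$, so that $\pi_{\mathcal{K}_{x_0, z}} \circ \text{\rm P}^h_{x_0, e_z}$ is a linear isomorphism from their span onto $\mathcal{K}_{x_0, z}$. Since surjectivity is an open property and $Z$ is compact, finitely many such collections suffice to secure surjectivity simultaneously for every $z \in Z$; let $E \subset C^\infty(M)$ be their total linear span. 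For $r > 0$ sufficiently small, $\widetilde{g}_{f, x} := e^f g_x$ is admissible for all $(f, x) \in E_r \times X$, and we set $\widetilde{X} = E_r \times X$, identifying $X$ with $\{0\} \times X$.

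By the previous paragraphs, $\widetilde{h}$ is non-degenerate along $\{(0, x_0)\} \times Z_0$. Since surjectivity is open and $Z_0$ is compact, there exists a compact neighbourhood $Y$ of $(0, x_0)$ in $\widetilde{X}$ such that $\widetilde{h}$ defines a non-degenerate family of sections of $\mathcal{K}$ over $Z_0$ parametrised by $Y$, as required. The main obstacle in the argument is the identification of $D_1 \widetilde{h}$ with $\pi_{\mathcal{K}} \circ \text{\rm P}^h$ along conformal directions: the conformal invariance of the boundary angle together with the self-adjointness of $\text{\rm J}^h$ on $\opKer(\text{\rm J}^\theta)$ conspire to render the $\text{\rm P}^\theta$ term and the projected $\text{\rm J}^h$ term harmless, which is precisely what allows conformal perturbations alone to suffice.
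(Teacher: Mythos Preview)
Your argument is correct and follows the same overall architecture as the paper: extend $X$ by a finite-dimensional space of conformal factors, establish surjectivity of $D_1\widetilde{h}$ onto $\mathcal{K}_{x_0,z}$ along $\{(0,x_0)\}\times Z_0$, and then use openness and compactness of $Z_0$ to obtain the neighbourhood $Y$.

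The difference lies in how surjectivity is obtained. The paper uses Proposition~\ref{PropMetricsWhichYieldDesiredFunctions} to choose $\varphi_k$ so that $\text{\rm P}^h_{(0,x_0),e_z}(\varphi_k)=\psi_k$ lands \emph{exactly} in $\mathcal{K}_{x_0,z}$; then both perturbation terms in the differentiated constraints $(\overline{H},\Theta)=0$ vanish, forcing $(D_1F)_{(0,x_0),z}(\varphi_k)=0$ by injectivity of $(\Pi_z\circ\text{\rm J}^h,\text{\rm J}^\theta)$ on $\mathcal{K}^\perp_z$, and hence $D_1\widetilde{h}(\varphi_k)=\psi_k$ directly. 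You instead allow an arbitrary conformal direction $\xi$ and, rather than showing $D_1F(\xi)=0$, use the self-adjointness of $\text{\rm J}^h$ on $\opKer(\text{\rm J}^\theta)$ (Proposition~\ref{PropJacobiOperatorIsSelfAdjoint}) to see that $\text{\rm J}^h(D_1F(\xi))\perp\mathcal{K}_{x_0,z}$; projecting the chain-rule identity onto $\mathcal{K}_{x_0,z}$ then yields the clean formula $D_1\widetilde{h}((0,x_0),z)(\xi)=\pi_{\mathcal{K}_{x_0,z}}\bigl(\text{\rm P}^h_{(0,x_0),e_z}(\xi)\bigr)$, after which Proposition~\ref{PropFindingTheRightPerturbationSpace} supplies surjectivity. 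Your route is slightly more conceptual in that it identifies $D_1\widetilde{h}$ with $\pi_{\mathcal{K}}\circ\text{\rm P}^h$ for all conformal directions at once; the paper's route is more hands-on but avoids invoking self-adjointness here. Both are equally valid.
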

\begin{proof}
We define the mapping $\widetilde{g}:C^\infty(M)\times X\times M\rightarrow\opSym^+(TM)$ such that, for all $\varphi\in C^\infty(M)$ and for all $x\in X$:
$$
\widetilde{g}_{\varphi,x} := \widetilde{g}(\varphi,x,\cdot) = e^\varphi g_x.
$$
Let $E$ be a finite-dimensional, linear subspace of $C^\infty(M)$, and for $r>0$, let $E_r$ be the closed ball of radius $r$ about $0$ in $E$ with respect to some metric. Since $X$ is compact, for sufficiently small $r$, and for all $(\varphi,x)\in E_r\times X$, the metric $g_{\varphi,x}$ is admissable. We denote $\widetilde{X}=E_r\times X$ and we will show that $\widetilde{X}$ has the desired properties for suitable choices of $E$ and $r$.

Choose $z\in Z_0$. Let $\psi_1,...,\psi_m$ be a basis of $\opKer(\text{\rm J}_{g_0,e_z})$. Let $\varphi_1,...,\varphi_m\in C^\infty(M)$ be as in Proposition \ref{PropMetricsWhichYieldDesiredFunctions} with $U=M$ and let $E_z$ be the linear span of $\varphi_1,...,\varphi_m$ in $C^\infty(M)$.

Let $Y_z\subseteq E_{z,r}\times X$ and $f_z:Y_z\times Z_0\times \Sigma\longrightarrow\Bbb{R}$ be respectively a compact neighbourhood of $(0,x_0)$ and a smooth function as in Proposition \ref{PropLocalPerturbationSections}. We define $\tilde{h}_z:Y_z\times Z_0\rightarrow C^\infty(\Sigma)$ by:
$$
\widetilde{h}_{z,(\varphi,x),w}:=\widetilde{h}_z((\varphi,x),w) = H((\varphi,x),w,f_{z,\varphi,x,w}).
$$

Let $D_1\widetilde{h}_z$ be the partial derivative of $\widetilde{h}_z$ with respect to the first component in $E_{z,r}\times X\times Z_0$. We claim that $(D_1\widetilde{h}_z)_{(0,x_0),z}$ defines a surjective mapping from $E_z$ onto $\Cal{K}_{(0,x_0),z}$. We first show that for all $1\leqslant k\leqslant m$, $(D_1f_z)_{(0,x_0),z}(\varphi_k)=0$. Indeed, by definition, $f_{z,(0,x_0),z}=0$, and for all $\varphi\in E_r$, $f_{z,\varphi,x_0,z}\in\Cal{K}^\perp_z$. It thus follows upon differentiating that for all $1\leqslant k\leqslant m$, $(D_1f_z)_{(0,x_0),z}(\varphi_k)\in\Cal{K}^\perp_z=\opKer(\text{\rm J}_{g_0,e_z})^\perp$. However, differentiating the definition of $f_z$ yields:
$$
(\Pi_z\circ \text{\rm J}^h_{g_0,e_z},\text{\rm J}^\theta_{g_0,e_z})((D_1f_z)_{(0,x_0),z}(\varphi_k)) = 0,
$$
where $\Pi_z:C^\infty(\Sigma)\longrightarrow\Cal{K}_z^\perp$ is the orthogonal projection with respect to the $L^2$-inner-product of $e_z^*g_0$. As in the proof of Proposition \ref{PropLocalPerturbationSections}, the restriction of $(\Pi_z\circ \text{\rm J}^h_{g_0,e_z},\text{\rm J}^\theta_{g_0,e_z})$ to $\Cal{K}_z^\perp$ is injective, and so, for all $1\leqslant k\leqslant m$, $(D_1f_z)_{(0,x_0),z}(\varphi_k)=0$ as asserted. It now follows from the chain rule and by definition of $(\psi_k)_{1\leqslant k\leqslant m}$ and $(\varphi_k)_{1\leqslant k\leqslant m}$, that for all $1\leqslant k\leqslant m$:
$$
(D_1\widetilde{h}_z)_{(0,x_0),z}(\varphi_k) =\psi_k.
$$
Consequently, $\opIm((D_1\tilde{h}_z)_{(0,x_0),z})=\opKer(\text{\rm J}_{g_0,e_z})=\Cal{K}_{(0,x_0),z}$ and therefore $(D_1\tilde{h}_z)_{(0,x_0),z}$ defines a surjective mapping from $E_z$ onto $\Cal{K}_{(0,x_0),z}$ as asserted. Since surjectivity is an open property, there exists a neighbourhood $W$ of $z$ in $Z_0$ such that $(D_1\tilde{h}_z)_{(0,x_0),z}$ defines a surjective mapping from $E_z$ onto $\Cal{K}_{(0,x_0),w}$ for all $w\in W$. Observe that if $E$ contains $E_z$ then, by uniqueness, the restrictions of $f$ and $\tilde{h}$ to $E_z\times X\times Z\times S$ coincide with $f_z$ and $\tilde{h}_z$ respectively, and so $(D_1\tilde{h})_{(0,x_0),z}$ therefore also defines a surjective mapping from $E_z$ onto $\Cal{K}_{(0,x_0),w}$ for all $w\in W$. By compactness of $Z_0$, there exists a finite collection $z_1,...,z_m$ of points in $Z_0$ such that:
$$
Z_0 = \munion_{k=1}^m W_{z_k}.
$$
We denote $E=E_{z_1}+...+E_{z_k}$, and we see that $(D_1\widetilde{h})_{x_0,z}$ defines a surjective mapping from $T_{x_0}\widetilde{X}$ onto $\opKer(\text{\rm J}_{g_0,e_z})$ for all $z\in Z_0$. Since surjectivity is an open property, and since $Z_0$ is compact, there exists a compact neighbourhood $Y$ of $x_0$ in $\widetilde{X}$ such that $(D_1\widetilde{h})_{x,z}$ defines a surjective mapping from $T_x\widetilde{X}$ onto $\Cal{K}_{x,z}$ for all $(x,z)\in Y\times Z_0$ and $\widetilde{h}$ therefore defines a non-degenerate family of sections of $\Cal{K}$ over $Z_0$ parametrised by $Y$, as desired.
\end{proof}

\begin{proposition}\label{}
There exists an extension $\widetilde{X}$ of $X$ and a compact neighbourhood $Y$ of $x_0$ in $\widetilde{X}$ such that if $\sigma:Y\times Z\rightarrow T^*Z$ is defined as in Proposition \ref{PropExtensionYieldsNonDegenerateSectionII}, then $\sigma$ defines a non-degenerate family of sections of $T^*Z$ over $Z$ parametrised by $Y$.
\end{proposition}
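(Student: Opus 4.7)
The plan is to upgrade Proposition \ref{PropExtensionYieldsNonDegenerateSection}, which furnishes local non-degeneracy of $\widetilde{h}$ over a single local parametrisation, to a global statement on all of $Z$, and then to transfer the result to $\sigma$ via the identification supplied by Proposition \ref{PropLambdaLiesInKernel}.

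Since $Z$ is compact, first choose a finite family of local parametrisations $(Z_i, e_i)_{1 \leqslant i \leqslant m}$ covering $Z$. For each $i$, apply Proposition \ref{PropExtensionYieldsNonDegenerateSection} to obtain a finite-dimensional subspace $E_i \subseteq C^\infty(M)$ such that, for suitable $r_i > 0$, the extension $E_{i,r_i} \times X$ equips the local section $\widetilde{h}_i$ with non-degeneracy over $Z_i$. Set $E = E_1 + \ldots + E_m$, choose $r > 0$ small enough that the conformal metric $\widetilde{g}_{\varphi,x} := e^\varphi g_x$ is admissable for all $(\varphi,x) \in E_r \times X$, and define $\widetilde{X} := E_r \times X$, viewed as an extension of $X$ via the embedding $x \mapsto (0,x)$. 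Let $\widetilde{h}$ and $\sigma$ denote the global objects built from $\widetilde{X}$ by the constructions of Propositions \ref{PropLocalPerturbationSections} and \ref{PropExtensionYieldsNonDegenerateSectionII}.

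Next, verify the non-degeneracy of $\widetilde{h}$. For each $z \in Z$, pick an index $i$ with $z \in Z_i$. The calculation at the heart of the proof of Proposition \ref{PropExtensionYieldsNonDegenerateSection}, applied to the generating functions of $E_i \subseteq E$, shows that the restriction of $(D_1 \widetilde{h})_{(0,x_0),z}$ to $E_i$ already surjects onto $\mathcal{K}_{(0,x_0),z}$; hence so does $(D_1 \widetilde{h})_{(0,x_0),z}$ itself. Since surjectivity of a continuous family of linear maps is an open condition and $Z$ is compact, there exists a compact neighbourhood $Y$ of $(0,x_0)$ in $\widetilde{X}$ such that $(D_1 \widetilde{h})_{y,z}$ surjects onto $\mathcal{K}_{y,z}$ for every $(y,z) \in Y \times Z$, and in particular at every zero of $\widetilde{h}$.

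Finally, transport the non-degeneracy to $\sigma$. Proposition \ref{PropFormulaForDerivativeOfArea} and the construction of $\sigma$ yield the formula
$$
\sigma(y,z)(\xi_z) = \int_\Sigma \widetilde{h}(y,z)\,\widetilde{\lambda}_{y,z}(\xi_z)\,\opdVol_{y,z}.
$$
Since $\widetilde{h}(y,z) \in \mathcal{K}_{y,z} = \opKer(\text{\rm J}_{g_0,e_z})$ by Proposition \ref{PropLocalPerturbationSections}, Proposition \ref{PropLambdaLiesInKernel}, upon further reducing $Y$ if necessary, ensures that this pairing defines a smooth family of linear isomorphisms $\Phi_{y,z}: \mathcal{K}_{y,z} \to T^*_z Z$, so that $\sigma(y,z) = \Phi_{y,z}(\widetilde{h}(y,z))$. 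At any zero of $\sigma$, $\widetilde{h}$ also vanishes, so by the Leibniz rule $D_1 \sigma(y,z) = \Phi_{y,z} \circ D_1 \widetilde{h}(y,z)$, which is therefore surjective onto $T^*_z Z$. This establishes the non-degeneracy of $\sigma$ as a family of sections of $T^*Z$ over $Z$ parametrised by $Y$. The main obstacle is the passage from local to global non-degeneracy; it is resolved by the observation that adjoining conformal parameters is additive, so the sum $E = E_1 + \ldots + E_m$ preserves the surjectivity contributed by each $E_i$ on its chart $Z_i$.
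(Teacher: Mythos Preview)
Your proof is correct and follows essentially the same route as the paper's. The only cosmetic difference is in how the extension is assembled: the paper builds $\widetilde{X}$ by iterating Proposition \ref{PropExtensionYieldsNonDegenerateSection} over the charts ($\widetilde{X}_0\to\widetilde{X}_1\to\cdots\to\widetilde{X}_m$), whereas you extract the subspaces $E_i$ from that proposition's proof and take their sum in one step; and where you package the pairing of Proposition \ref{PropLambdaLiesInKernel} as an explicit isomorphism $\Phi_{y,z}$ and invoke the Leibniz rule, the paper writes out the same integral identity directly.
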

\begin{proof}
We use the notation of the proof of Proposition \ref{PropExtensionYieldsNonDegenerateSectionII}. We denote $\widetilde{X}_0=X$. For $1\leqslant i\leqslant m$, having defined $\widetilde{X}_{i-1}$, we extend it to $\widetilde{X}_i$ so that it satisfies the conclusion of Proposition \ref{PropExtensionYieldsNonDegenerateSection} with $Z_0=Z_i$. We denote $\widetilde{X}=\widetilde{X}_m$. By compactness, for $1\leqslant i\leqslant m$, there exists a compact neighbourhood $Y_i$ of $x_0$ in $\widetilde{X}$ such that $\widetilde{X}$ satisfies the conclusion of Proposition \ref{PropExtensionYieldsNonDegenerateSection} with $Y=Y_i$ and $Z_0=Z_i$. We denote $Y=Y_1\minter ...\minter Y_m$.

Choose $1\leqslant i\leqslant m$. Choose $(x,z)\in Y\times Z_i$ such that $\sigma_i(x,z)=0$. By Proposition \ref{PropExtremalWheneverDAVanishes}, $\widetilde{h}_{i,x,z}=0$. Choose $\alpha\in T^*Z_i$. By Proposition \ref{PropLambdaLiesInKernel}, there exists $\psi\in\Cal{K}_{i,x,z}$ such that for all $\xi_z\in T_zZ_i$:
$$
\alpha(\xi_z) = \int_\Sigma\psi\widetilde{\lambda}_{i,x,z}(\xi_z)\,\opdVol_{i,x,z}.
$$
However, by definition of $\tilde{X}$, there exists $\eta_x\in T_x\tilde{X}$ such that $(D_1\tilde{h}_{i,x,z})(\eta_x)=\psi$. Thus, for all $\xi_z\in TZ_i$:
\begin{align*}
D_1\sigma_{i,x,z}(\eta_x)(\xi_z) &= \int_\Sigma D_1\widetilde{h}_{i,x,z}(\eta_x)\widetilde{\lambda}_{i,x,z}(\xi_z)\opdVol_{x,z}\\
&= \int_\Sigma\psi\widetilde{\lambda}_{i,x,z}(\xi_z)\,\opdVol_{x,z}\\
&=\alpha(\xi_z),
\end{align*}
and it follows that $D_1\sigma_{i,x,z}$ is surjective. $\sigma_i$ therefore defines a non-degenerate family of sections of $T^*Z_i$ over $Z_i$ parametrised by $Y$. Since $i$ is arbitrary, it follows that $\sigma$ defines a non-degenerate family of sections of $T^*Z$ over $Z$ parametrised by $Y$, and this completes the proof.
\end{proof}

\subsection{Determining the index}\label{DeterminingTheIndex}
The following result is proven in \cite{WhiteI}: %but we include a (different) proof for the reader's convenience:
\begin{lemma}\label{LemmaWhitesLemmaForNullities}
Let $A$ be an element of $\Cal{F}^+(E,F)$. Let $K\subseteq E$ be the kernel of $A$. There exists a neighbourhood $U$ of $A$ in $\Cal{F}^+(E,F)$ such that if $A'\in U$ and if $A'$ maps $K'$ into $K$ for some $K'\subseteq E$ of dimension equal to that of $K$, then:
$$
\opNull(A') = \opNull(A'|_{K'}),\qquad \opInd(A') = \opInd(A) + \opInd(A'|_{K'}),
$$
where $A'|_{K'}$ denotes the restriction of the bilinear form $\langle A'\cdot,\cdot\rangle$ to $K'$.
\end{lemma}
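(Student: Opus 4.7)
The plan is to diagonalise the quadratic form $\langle A'\cdot,\cdot\rangle$ via a non-orthogonal change of coordinates adapted to $K=\opKer(A)$, and then recognise $K'$ as the natural ``graph'' subspace that isolates the small-eigenvalue contribution. The hypothesis $A'(K')\subseteq K$ is exactly what pins down this subspace uniquely.

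First I would set up block coordinates. Let $R_f\subseteq F$ denote the orthogonal complement of $K$ in $F$ and set $R_e:=R_f\cap E$, so that $E=K\oplus R_e$ and $F=K\oplus R_f$. With respect to these decompositions write
$$A'=\begin{pmatrix}A'_{11} & A'_{12}\\ A'_{21} & A'_{22}\end{pmatrix}.$$
Self-adjointness of $A'$ as a bilinear form yields self-adjointness of $A'_{11}$ and $A'_{22}$ together with the compatibility relation $\langle A'_{12}v,u\rangle=\langle v,A'_{21}u\rangle$ for $u\in K$, $v\in R_e$. Since $A_{22}:R_e\to R_f$ is invertible, being the restriction of $A$ to the orthogonal complement of its kernel, by shrinking $U$ I may assume $A'_{22}$ is invertible for every $A'\in U$ and, moreover, that $A'_{22}$ remains in $\mathcal{F}^+(R_e,R_f)$.

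Next I would define the Schur complement isomorphism $\Phi:E\to E$ by
$$\Phi(u,v)=\bigl(u,\, v-(A'_{22})^{-1}A'_{21}u\bigr).$$
A direct calculation, in which the self-adjointness relations are used precisely to cancel the cross terms $\langle A'_{12}v,u\rangle$ and $\langle A'_{22}v,(A'_{22})^{-1}A'_{21}u\rangle$, yields
$$\langle A'\Phi(u,v),\Phi(u,v)\rangle=\langle \tilde A'_{11}u,u\rangle+\langle A'_{22}v,v\rangle,$$
where $\tilde A'_{11}:=A'_{11}-A'_{12}(A'_{22})^{-1}A'_{21}$ is the (self-adjoint) Schur complement on the finite-dimensional space $K$. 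Because $\Phi$ is an isomorphism, the nullity and index of the quadratic form $\langle A'\cdot,\cdot\rangle$ agree with those of the diagonalised form. The second summand contributes $\opInd(A'_{22})=\opInd(A_{22})=\opInd(A)$, by Proposition \ref{NullityIndexConstant} applied to the invertible family $A'_{22}$, and it contributes zero nullity. Hence
$$\opInd(A')=\opInd(A)+\opInd(\tilde A'_{11}),\qquad \opNull(A')=\opNull(\tilde A'_{11}).$$

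Finally I would identify $\tilde A'_{11}$ with the restriction $A'|_{K'}$. Given $(u,w)\in K'$, the hypothesis $A'(K')\subseteq K$ forces the $R_f$-component of $A'(u,w)$ to vanish, that is $A'_{21}u+A'_{22}w=0$, whence $w=-(A'_{22})^{-1}A'_{21}u$. Because $\dim K'=\dim K$, this shows $K'=\Phi(K\oplus\{0\})$, and the induced isomorphism $K\to K'$, $u\mapsto\Phi(u,0)$, pulls the bilinear form $\langle A'\cdot,\cdot\rangle|_{K'}$ back to $\langle \tilde A'_{11}\cdot,\cdot\rangle$ on $K$. Therefore $\opInd(A'|_{K'})=\opInd(\tilde A'_{11})$ and $\opNull(A'|_{K'})=\opNull(\tilde A'_{11})$, yielding both identities.

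The main obstacle I expect is the algebraic verification of the diagonalisation, where self-adjointness must be invoked delicately to cancel the cross terms; everything else is fairly routine once the correct change of coordinates is in hand. A secondary point requiring care is that the invariance of $\opInd(A'_{22})$ and triviality of $\opNull(A'_{22})$ must be established in the infinite-dimensional spectral framework of Section \ref{TheOrientationOfTheSolutionSpace}, but this is precisely the content of Proposition \ref{NullityIndexConstant} combined with the fact that membership in $\mathcal{F}^+$ is stable under small perturbations.
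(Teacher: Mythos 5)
Your argument is correct, and it is worth noting that the paper itself gives no proof of this lemma at all --- it simply cites \cite{WhiteI} --- so there is nothing internal to compare against; your Schur-complement reduction is essentially the standard (and, as far as I recall, essentially White's own) route. The key steps all check out: $A_{22}$ is invertible, so $A'_{22}$ is invertible and lies in $\Cal{F}^+(R_e,R_f)$ for $A'$ in a convex neighbourhood of $A$, whence Proposition \ref{NullityIndexConstant} gives $\opInd(A'_{22})=\opInd(A_{22})=\opInd(A)$; the cross-term cancellation in the diagonalisation is exactly self-adjointness of $A'$ together with $(A'_{22})^{-1}A'_{21}(K)\subseteq R_e$; and the hypothesis $A'(K')\subseteq K$ together with $\opDim K'=\opDim K$ does force $K'=\Phi(K\oplus\{0\})$, so that $A'|_{K'}$ is congruent to the Schur complement $\tilde A'_{11}$ and Sylvester's law finishes the identification. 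Two points are used implicitly and deserve a sentence each in a written-up version. First, $\opInd(A')$ and $\opNull(A')$ are defined spectrally in Section \ref{TheOrientationOfTheSolutionSpace}, whereas you compute the index and nullity of the \emph{quadratic form} on $E$; for operators in $\Cal{F}^+$ these agree (the kernel equals the radical of the form by density of $i(E)$ in $F$, and the number of negative eigenvalues equals the maximal dimension of a negative-definite subspace by the min--max principle), but this identification, together with the additivity of index and nullity over the block-diagonal splitting $q(u,v)=\langle\tilde A'_{11}u,u\rangle+\langle A'_{22}v,v\rangle$ in the infinite-dimensional setting, should be stated. Second, for the nullity one can avoid the form altogether: $A'\Phi(u,v)=(\tilde A'_{11}u+A'_{12}v,\,A'_{22}v)$ shows directly that $\opKer(A')=\Phi(\opKer(\tilde A'_{11})\oplus\{0\})$, which is slightly cleaner than passing through the radical.
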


\begin{proposition}\label{PropJacobiOperatorsOfVanishingPoints}
For all $(x,z)\in Y\times Z_0$ such that $\sigma(x,z)=0$ and for all $\xi_z\in T_zZ$:
$$
(\text{\rm J}^h_{(x,z),\widetilde{e}_{x,z}}\circ\widetilde{\lambda}_{x,z})(\xi_z)\in\opKer(\text{\rm J}_{g_0,e_z}),
$$
\noindent and, for all $\xi_z,\eta_z\in T_zZ_0$:
$$
D\sigma_x(z)(\xi_z,\eta_z) = \int_\Sigma(\text{\rm J}^h_{(x,z),\widetilde{e}_{x,z}}\circ\widetilde{\lambda}_{x,z})(\xi_z)\widetilde{\lambda}_{x,z}(\eta_z)\,\opdVol_{x,z},
$$
where $\opdVol_{x,z}$ is the volume form of $\widetilde{e}^*_{x,z}\widetilde{g}_{x,z}$.
\end{proposition}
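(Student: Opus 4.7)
The plan is to establish the identity
\begin{equation*}
D_z\widetilde{h}(x,z_0)(\xi_z) \;=\; \text{\rm J}^h_{(x,z_0),\widetilde{e}_{x,z_0}}\bigl(\widetilde{\lambda}_{x,z_0}(\xi_z)\bigr)
\end{equation*}
at any $(x,z_0)\in Y\times Z_0$ with $\sigma(x,z_0)=0$, and then to derive both conclusions from it. By Proposition \ref{PropExtremalWheneverDAVanishes}, $\sigma(x,z_0)=0$ forces $\widetilde{h}_{x,z_0}=0$, so the embedding $\widetilde{e}_{x,z_0}$ is free boundary minimal with respect to $\widetilde{g}_{x,z_0}=g_x$. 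Given $\xi_z\in T_{z_0}Z_0$, pick a smooth curve $z(t)$ in $Z_0$ with $z(0)=z_0$ and $\dot{z}(0)=\xi_z$, and consider the smooth one-parameter family of embeddings $t\mapsto\widetilde{e}_{x,z(t)}$, all taken with respect to the \emph{common} metric $g_x$ (recall that $\widetilde{g}$ was extended to be constant in the $z$-direction). By definition of $\widetilde{\lambda}$, the normal component of $\partial_t\widetilde{e}_{x,z(t)}|_{t=0}$ is $\widetilde{\lambda}_{x,z_0}(\xi_z)$. Since $\widetilde{e}_{x,z_0}$ is minimal, the standard first-variation formula for mean curvature gives
\begin{equation*}
\partial_t H_{g_x,\widetilde{e}_{x,z(t)}}\bigr|_{t=0} \;=\; \text{\rm J}^h_{(x,z_0),\widetilde{e}_{x,z_0}}\bigl(\widetilde{\lambda}_{x,z_0}(\xi_z)\bigr),
\end{equation*}
and the left-hand side equals $D_z\widetilde{h}(x,z_0)(\xi_z)$ by definition of $\widetilde{h}$.

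For Part~$1$, recall from property $(3)$ of Proposition \ref{PropLocalPerturbationSections} that $\widetilde{h}(x,z)\in\mathcal{K}_{x,z}=\opKer(\text{\rm J}_{g_0,e_z})$ for every $(x,z)\in Y\times Z_0$, so $\widetilde{h}$ is a smooth section of the finite-dimensional vector bundle $\mathcal{K}\to Y\times Z_0$ of rank $\opDim(Z)$. At a zero $(x,z_0)$ of such a section, the derivative automatically takes values in the fibre $\mathcal{K}_{x,z_0}=\opKer(\text{\rm J}_{g_0,e_{z_0}})$, which combined with the identity above yields Part~$1$.

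For Part~$2$, start from Proposition \ref{PropFormulaForDerivativeOfArea}:
\begin{equation*}
\sigma_x(z)(\eta_z) \;=\; d\Cal{A}_x(\eta_z) \;=\; \int_\Sigma \widetilde{h}_{x,z}\,\widetilde{\lambda}_{x,z}(\eta_z)\,\opdVol_{x,z}.
\end{equation*}
Differentiating this expression along the curve $z(t)$ via the product rule produces three terms, corresponding to the derivatives of $\widetilde{h}$, of $\widetilde{\lambda}(\eta)$, and of $\opdVol$. The latter two carry the factor $\widetilde{h}_{x,z_0}=0$ and therefore vanish, leaving
\begin{equation*}
D\sigma_x(z_0)(\xi_z,\eta_z) \;=\; \int_\Sigma D_z\widetilde{h}(x,z_0)(\xi_z)\,\widetilde{\lambda}_{x,z_0}(\eta_z)\,\opdVol_{x,z_0};
\end{equation*}
substituting the identity established in the first paragraph then produces the claimed formula. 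No choice of local vector field extending $\eta_z$ need be specified, because at a zero of the $1$-form $\sigma_x$ the value of $D\sigma_x(z_0)$ is independent of such an extension.

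The main technical point is the first-variation identity for mean curvature under the family $t\mapsto\widetilde{e}_{x,z(t)}$, whose variation field has both a normal part $\widetilde{\lambda}_{x,z_0}(\xi_z)\widetilde{N}_{x,z_0}$ and a tangential part $V^T$, whereas $\text{\rm J}^h$ is defined via purely normal variations. This is handled by reparametrising $\widetilde{e}_{x,z(t)}=\widehat{e}_t\circ\alpha_t$ so that $\widehat{e}_t$ has purely normal initial velocity and $\alpha_t$ is a time-dependent diffeomorphism of $\Sigma$ absorbing $V^T$. Since $H_{g_x,\widetilde{e}_{x,z(t)}}=H_{g_x,\widehat{e}_t}\circ\alpha_t$, differentiating at $t=0$ yields $\partial_t H_t|_{t=0}=\text{\rm J}^h(\widetilde{\lambda}_{x,z_0}(\xi_z))+\langle dH_0,\dot\alpha_0\rangle$, and the second term vanishes by the minimality condition $H_0=\widetilde{h}_{x,z_0}=0$.
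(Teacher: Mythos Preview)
Your proof is correct and follows essentially the same approach as the paper: both establish the key identity $D_z\widetilde{h}(x,z_0)(\xi_z)=\text{\rm J}^h_{(x,z_0),\widetilde{e}_{x,z_0}}(\widetilde{\lambda}_{x,z_0}(\xi_z))$ via the reparametrisation argument (the paper simply cites ``as in the proof of Proposition~\ref{PropInfMCIsConjugate}'' where you spell it out), then use that $\widetilde{h}$ is a section of $\mathcal{K}$ to get Part~1 and differentiate Proposition~\ref{PropFormulaForDerivativeOfArea} with $\widetilde{h}_{x,z_0}=0$ to get Part~2.
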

\begin{proof}
Since $\sigma(x,z)=0$, by Proposition \ref{PropExtremalWheneverDAVanishes}, $\widetilde{h}_{x,z}=0$. Thus, for all $\xi_z\in T_zZ$, as in the proof of Proposition \ref{PropInfMCIsConjugate}:
$$
(D_2\widetilde{h})_{x,z}(\xi_z) = (\text{\rm J}^h_{(x,z),\widetilde{e}_{x,z}}\circ\widetilde{\lambda}_{x,z})(\xi_z),
$$
from which it follows that for all $\xi_z,\eta_z\in T_zZ_0$:
\begin{align*}
D\sigma_x(z)(\xi_z,\eta_z) &=\int_\Sigma(D_2\widetilde{h})_{x,z}(\xi_z)\widetilde{\lambda}_{x,z}(\eta_z)\,\opdVol_{x,z}\\
&= \int_\Sigma(\text{\rm J}^h_{(x,z),\widetilde{e}_{x,z}}\circ\widetilde{\lambda}_{x,z})(\xi_z)\widetilde{\lambda}_{x,z}(\eta_z)\,\opdVol_{x,z},
\end{align*}
and the second result follows. Moreover, by definition, for all $(x,z)\in Y\times Z$, $\widetilde{h}_{x,z}$ is an element of $\opKer(\text{\rm J}_{g_0,e_z})$. Thus, when $\widetilde{h}_{x,z}=0$:
$$
(\text{\rm J}^h_{(x,z),\widetilde{e}_{x,z}}\circ\lambda_{x,z})(\xi_z) = (D_2\widetilde{h})_{x,z}(\xi_z)\in\opKer(\text{\rm J}_{g_0,e_z}).
$$
The first result follows, and this completes the proof.
\end{proof}

Combining the above results yields:
\begin{theorem}\label{ThmNonDegenerateFamilies}
If $\Cal{Z}(\left\{x_0\right\})$ contains a closed, non-degenerate family $Z$, then there exists a neighbourhood $\Omega$ of $Z$ in $\Cal{E}$ such that:
$$
\Cal{Z}(\left\{x_0\right\})\minter\overline{\Omega} = Z.
$$
Moreover, for any such neighbourhood $\Omega$, there exists a compact neighbourhood $Y$ of $x_0$ in $X$ such that $\partial_\omega\Cal{Z}(Y|\Omega) = \emptyset$ and the local mapping degree of the restriction of $\Pi$ to $\Cal{Z}(Y|\Omega)$ is given by:
$$
\opDeg(\Pi|\Omega)=(-1)^{\opInd(Z_0)}\chi(Z_0),
$$
\noindent where $\opInd(Z_0)$ and $\chi(Z_0)$ are respectively the index and Euler characteristic of $Z_0$.
\end{theorem}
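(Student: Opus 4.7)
The plan is to first isolate the family $Z$ as a clopen subset of $\Cal{Z}(\{x_0\})$, then apply the extension machinery of Section 5 to reduce the calculation to counting signed zeros of a non-degenerate section of $T^*Z$, and finally combine White's Lemma \ref{LemmaWhitesLemmaForNullities} with Poincaré--Hopf to conclude.

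\emph{Step 1 (Isolation and choice of $Y$).} Non-degeneracy says $\opDim\opKer(\text{\rm J}_{g_{x_0},\Cal{F}(z)}) = \opDim Z$ for every $z\in Z$. Applying Proposition \ref{PropLocalPerturbationSections} with $X=\{x_0\}$, the solution space $\Cal{Z}(\{x_0\})$ is locally modeled near each $z\in Z$ on a chart of dimension $\opDim Z$, so it coincides locally with the image of $\Cal{F}$. Thus $Z$ is open in $\Cal{Z}(\{x_0\})$; being compact in a Hausdorff space it is also closed, so it is a clopen component, and any small enough open tubular neighbourhood $\Omega$ of $Z$ in $\Cal{E}$ satisfies $\overline{\Omega}\cap\Cal{Z}(\{x_0\}) = Z$. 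For such $\Omega$ the set $\Pi(\Cal{Z}(X)\cap(X\times\partial\Omega))$ is closed in $X$ (by the properness in Proposition \ref{ThmCompactness}) and misses $x_0$; its complement contains a compact neighbourhood $Y$ of $x_0$ with $\partial_\omega\Cal{Z}(Y|\Omega)=\emptyset$.

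\emph{Step 2 (Reduction to a non-degenerate section).} Apply Proposition \ref{PropExtensionYieldsNonDegenerateSectionII} together with the subsequent extension proposition to enlarge $X$ to some $\widetilde{X}$, shrinking $Y$ if needed, so as to obtain a smooth map $\widetilde{\Cal{F}}:Y\times Z\to\Cal{E}$ and a smooth family $\sigma:Y\times Z\to T^*Z$ of sections with $(y,\widetilde{\Cal{F}}(y,z))\in\Cal{Z}(Y)$ iff $\sigma(y,z)=0$, and such that $\sigma$ is a non-degenerate family of sections. After shrinking $\Omega$ and $Y$ further, one may assume $\widetilde{\Cal{F}}(Y\times Z)\subseteq\Omega$ and that every point of $\Cal{Z}(\{y\}|\Omega)$ is of the form $(y,\widetilde{\Cal{F}}(y,z))$ for some zero $z$ of $\sigma_y$. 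By Proposition \ref{PropNonDegenerateSectionsOfVectorBundles}, we can pick $y\in Y$ in the open dense set of regular values of the projection, so that $\sigma_y$ has only non-degenerate zeros.

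\emph{Step 3 (Spectral decomposition and Poincaré--Hopf).} Fix a non-degenerate zero $z$ of $\sigma_y$ and set $A=\text{\rm J}^h_{g_{x_0},e_z}$, $A'=\text{\rm J}^h_{y,\widetilde{e}_{y,z}}$, $K=\opKer A$, and $K'=\opIm\widetilde{\lambda}_{y,z}\subseteq\opKer\text{\rm J}^\theta_{y,\widetilde{e}_{y,z}}$. Conjugating the varying Hilbert spaces via the graph chart and Proposition \ref{PropInfMCIsConjugate}, Proposition \ref{PropJacobiOperatorsOfVanishingPoints} says $A'(K')\subseteq K$ and identifies the bilinear form $A'|_{K'}$, through $\widetilde{\lambda}_{y,z}$, with $D\sigma_y(z)$. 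Lemma \ref{LemmaWhitesLemmaForNullities} then yields $\opNull(A')=\opNull(D\sigma_y(z))=0$ (so by Proposition \ref{PropRegularValuesHaveInvertibleJacobiOperator}, $y$ is a regular value of $\Pi$) and
$$
\opInd(A')=\opInd(Z)+\opInd(D\sigma_y(z)),\qquad \opSig(\text{\rm J}_{y,\widetilde{e}_{y,z}})=(-1)^{\opInd(Z)}\opSig(D\sigma_y(z)).
$$
Theorem \ref{ThmIntegerValuedDegree} now gives
$$
\opDeg(\Pi|\Omega)=\sum_{z:\,\sigma_y(z)=0}\opSig(\text{\rm J}_{y,\widetilde{e}_{y,z}}) = (-1)^{\opInd(Z)}\sum_{z}\opSig(D\sigma_y(z)),
$$
and since by Proposition \ref{PropFormulaForDerivativeOfArea} one has $\sigma_y=d\Cal{A}_y$, the remaining sum is the Poincaré--Hopf count of non-degenerate critical points of the Morse function $\Cal{A}_y$ on the closed manifold $Z$, which equals $\chi(Z)$.

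\emph{Main obstacle.} The delicate point is Step 3: the self-adjoint operators $\text{\rm J}^h_{y,\widetilde{e}_{y,z}}$ act on distinct Sobolev spaces with $(y,z)$-dependent Robin boundary subspaces, so applying Lemma \ref{LemmaWhitesLemmaForNullities}, whose statement requires fixed Hilbert spaces $E\subseteq F$, needs a careful unitarization. The graph-chart conjugation combined with the multiplication-by-$\lambda_{y,z}$ identification of Proposition \ref{PropInfMCIsConjugate} provides this, but one must check that under this identification the restricted bilinear form $A'|_{K'}$ coincides exactly (up to a positive factor absorbed into orientation) with $D\sigma_y(z)$, which is the content of Proposition \ref{PropJacobiOperatorsOfVanishingPoints}.
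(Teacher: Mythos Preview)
Your proposal is correct and follows essentially the same route as the paper: isolate $Z$ via the local implicit-function structure, extend $X$ so that $\sigma:Y\times Z\to T^*Z$ is a non-degenerate family whose zeros parametrise $\Cal{Z}(\{y\}|\Omega)$, then combine Lemma \ref{LemmaWhitesLemmaForNullities} with Proposition \ref{PropJacobiOperatorsOfVanishingPoints} and Morse theory. The paper carries out Step 1 and the bijection in Step 2 slightly differently, by first making $\Cal{Z}(\tilde Y)$ a smooth manifold and then exhibiting $\tilde G:W\to\Cal{Z}(\tilde Y)$ as a local diffeomorphism, but the substance is the same.
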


\begin{proof}
Let $\Cal{F}:Z\rightarrow\Cal{E}$ be the canonical embedding. By Theorem \ref{PropSurjectivityForExtensions}, there exists an extension $\widetilde{X}_1$ of $X$ such that, for all $(x,[e])\in\Cal{Z}(X)$, $\text{\rm P}_{x,e}+\text{\rm J}_{x,e}$ defines a surjective map from $T_x\widetilde{X}_1\times C^\infty(\Sigma)$ onto $C^\infty(\Sigma)\times C^\infty(\partial \Sigma)$. Let $\widetilde{X}$ be a further extension of $\widetilde{X}_1$ satisfying the conclusion of Proposition \ref{PropExtensionYieldsNonDegenerateSectionII}. By Proposition \ref{ThmCompactness}, $\Cal{Z}(X)$ is compact and so by Proposition \ref{PropSurjectivityIsAnOpenProperty}, there exists a compact neighbourhood, $\widetilde{Y}$ of $X$ in $\widetilde{X}$ such that $\text{\rm P}_{x,e}+\text{\rm J}_{x,e}$ defines a surjective mapping from $T_x\widetilde{X}\times C^\infty(\Sigma)$ onto $C^\infty(\Sigma)\times C^\infty(\partial \Sigma)$ for all $(x,[e])\in\Cal{Z}(\widetilde{Y})$. Thus, by Theorem \ref{ThmSolutionSpaceIsSmooth}, $\Cal{Z}(\widetilde{Y})$ is a smooth, compact, finite-dimensional manifold of dimension equal to $\opDim(\widetilde{X})$. Observe, in particular, that by Proposition \ref{PropSmoothnessToSmoothness}, $\Cal{F}$ defines a smooth map from $Z_0$ into $\Cal{Z}(\widetilde{Y})$ and since $(Z,\Cal{F})$ is non-degenerate, this mapping is an embedding.

Upon reducing $\tilde{Y}$ if necessary, there exists a smooth mapping $\widetilde{\Cal{F}}:\widetilde{Y}\times Z\longrightarrow\Cal{E}$ and a smooth, non-degenerate family of sections $\sigma:\widetilde{Y}\times Z\longrightarrow T^*Z$ satisfying the conclusion of Proposition \ref{PropExtensionYieldsNonDegenerateSectionII}. We define $W\subseteq \widetilde{Y}\times Z$ by $W=\sigma^{-1}(\left\{0\right\})$. Since $\sigma$ is a non-degenerate family, by Proposition \ref{PropNonDegenerateSectionsOfVectorBundles}, $W$ is a smooth, embedded submanifold of $Y\times Z$ of dimension equal to $\opDim(\widetilde{Y})=\opDim(\widetilde{X})$. We define $\tilde{G}:W\longrightarrow\tilde{Y}\times\Cal{E}$ such that for all $(y,z)\in W$:
$$
\tilde{G}(y,z) = (y,\widetilde{\Cal{F}}(y,z)).
$$
By definition, $\widetilde{G}(W)\subseteq\Cal{Z}(\widetilde{Y})$. Moreover, $\widetilde{G}$ defines a smooth mapping from $W$ into $\Cal{Z}(Y)$.

Choose $z\in Z$. We claim that $D\tilde{G}(x_0,z)$ is a linear isomorphism. Indeed, choose $(\xi_{x_0},\eta_z)\in T_{x_0}\tilde{X}\times T_zZ$ such that $D\tilde{G}(x_0,z)(\xi_{x_0},\eta_z)=0$. Let $\pi_1:\widetilde{X}\times\Cal{E}\longrightarrow\widetilde{X}$ be the projection onto the first factor. Then, bearing in mind the chain rule:
$$
\xi_{x_0} = D(\pi_1\circ\widetilde{G})(x_0,z)(\xi_{x_0},\eta_z) = 0.
$$
In particular, since the restriction of $\widetilde{\Cal{F}}$ to $\left\{x_0\right\}\times Z$ coincides with $\Cal{F}$:
$$
(0,D\Cal{F}(z)(\eta_z)) = D\widetilde{G}(x_0,z)(0,\eta_z) =0,
$$
and since $\Cal{F}$ is an embedding, it follows that $\eta_z=0$ and $D\tilde{G}(x_0,z)$ is therefore a linear isomorphism as asserted. Upon reducing $\tilde{Y}$ if necessary, we may assume that $D\tilde{G}$ is a linear isomorphism for all $(x,z)\in\tilde{Y}\times Z$. In particular, $\tilde{G}$ is an open mapping.

Observe that $\tilde{G}(W)$ is an open subset of $\Cal{Z}(\tilde{Y})$. Thus, since:
$$
Z = (\left\{x_0\right\}\times\Cal{E})\minter\tilde{G}(W),
$$
it follows that $Z$ is an isolated subset of $\Cal{Z}(\left\{x_0\right\})=(\left\{x_0\right\}\times\Cal{E})\minter\Cal{Z}(\tilde{Y})$. In particular, there exists a neighbourhood $\Omega$ of $Z$ in $\Cal{E}$ such that:
$$
Z = \Cal{Z}(\left\{x_0\right\})\minter\overline{\Omega},
$$
and the first assertion follows.

Since $\tilde{G}$ is a local diffeomorphism, since its restriction to $\left\{x_0\right\}\times Z$ coincides with $\Cal{F}$, which is a diffeomorphism, and since $Z$ is compact, upon reducing $\tilde{Y}$ further if necessary, we may assume that $\tilde{G}$ is also a diffeomorphism onto its image. By continuity, we may suppose furthermore that $G(\tilde{Y})\subseteq \tilde{Y}\times\Omega$. In particular:
$$
G(\tilde{Y})\subseteq\Cal{Z}(\tilde{Y}|\Omega).
$$
Conversely, by Proposition \ref{ThmCompactness}, upon reducing $\tilde{Y}$ yet further if necessary, we may suppose that:
$$
\Cal{Z}(\tilde{Y}|\Omega) = \Cal{Z}(\tilde{Y})\minter(\tilde{Y}\times\Omega)\subseteq\tilde{G}(W),
$$
and so:
$$
\tilde{G}(W) = \Cal{Z}(\tilde{Y}|\Omega).
$$
For all $y\in\tilde{Y}$:
$$
\Cal{Z}(\left\{y\right\}|\Omega) = \tilde{G}(\left\{(y,z)\ |\ z\in\sigma^{-1}_y(\left\{0\right\})\right\}).
$$
Since $\sigma$ is a non-degenerate family, it follows from Proposition \ref{PropNonDegenerateSectionsOfVectorBundles} that there exists an open, dense subset $\widetilde{Y}_0\subseteq \widetilde{Y}$ such that for all $y\in \tilde{Y}_0$, the zeroes of the section $d\Cal{A}_y=\sigma_y$ are non-degenerate. Choose such a $y$. We claim that $y$ is also a regular value of the restriction of $\Pi$ to $\Cal{Z}(\widetilde{Y}|\Omega)$. By Proposition \ref{PropRegularValuesHaveInvertibleJacobiOperator} it suffices to show that $\text{\rm J}_{(y,z),\tilde{e}_{y,z}}$ is invertible for all $z\in\sigma_y^{-1}(\left\{0\right\})$. However, choose $z\in\sigma_y^{-1}(\left\{0\right\})$. By Lemma \ref{LemmaWhitesLemmaForNullities} and Proposition \ref{PropJacobiOperatorsOfVanishingPoints}, upon reducing $\widetilde{Y}$ if necessary:
$$
\opNull(\text{\rm J}_{(y,z),\widetilde{e}_{y,z}}) = \opNull(\text{\rm J}_{(y,z),\widetilde{e}_{y,z}}|_{E_{y,z}}),
$$
where $E_{y,z}=\left\{\widetilde{\lambda}_{y,z}(\xi_z)\ |\ \xi_z\in T_zZ_0\right\}$. However, by Proposition \ref{PropJacobiOperatorsOfVanishingPoints}, bearing in mind that the critical points of $\Cal{A}_y$ are non-degenerate:
\begin{equation}
\opNull(\text{\rm J}_{\widetilde{g}_{y,z},\widetilde{e}_{y,z}}|_{E_{y,z}}) = \opNull(\opHess(\Cal{A}_y)(z)) = 0,
\end{equation}
and $y$ is therefore a regular value of the restriction of $\Pi$ to $\Cal{Z}(\widetilde{Y}|\Omega)$, as asserted.

By Lemma \ref{LemmaWhitesLemmaForNullities} and Proposition \ref{PropJacobiOperatorsOfVanishingPoints} again:
$$
\opInd(\text{\rm J}_{(y,z),\widetilde{e}_{y,z}}) = \opInd(\text{\rm J}_{g_0,e_z}) + \opInd(\text{\rm J}_{(y,z),\widetilde{e}_{y,z}}|_{E_{y,z}}).
$$
Thus, bearing in mind the definition of $\opInd(Z_0)$:
$$
\opInd(\text{\rm J}_{(y,z),\widetilde{e}_{y,z}}) = \opInd(Z_0) + \opInd(\opHess(\Cal{A}_y)(z)).
$$
Thus by Theorem \ref{ThmIntegerValuedDegree}, the mapping degree of the restriction of $\Pi$ to $\Cal{Z}(\tilde{Y}|\Omega)$ is given by:
\begin{align*}
\opDeg(\Pi|\Omega) & =\sum_{(y,[e])\in\Cal{Z}(\left\{y\right\}|\Omega)}\opSig(\text{\rm J}_{y,e})\\
&= \sum_{z\in\sigma_y^{-1}(\left\{0\right\})}\opSig(\text{\rm J}_{(y,z),\widetilde{e}_{y,z}})\\
&=(-1)^{\opInd(Z_0)}\sum_{z\in\sigma_y^{-1}(\left\{0\right\})}\opSig(\opHess(\Cal{A}_y)(z))\\
&=(-1)^{\opInd(Z_0)}\chi(Z_0),
\end{align*}
where $\chi(Z_0)$ is the Euler characteristic of $Z_0$ and the last equality follows from classical Morse theory.
\end{proof}

\section{Free boundary minimal surfaces inside convex domains}
%\label{CalculatingTheDegree}

\subsection{Rotationally invariant free boundary minimal surfaces}\label{RotationallyInvariantExtremalSurfaces}
Let $\delta$ be the Euclidean metric over $\Bbb{R}^3$ and let $B:=B^3\subset\mathbb{R}^3$ be the unit Euclidean three-ball. In order to apply degree theoretic techniques, it is preferrable to work with metrics of strictly positive curvature. For $-1<t<1$ and $t\neq 0$, let $\mathbb{S}^3(t)\subset \mathbb{R}^4$ be the sphere of radius $r(t)=1/\left|t\right|$ centered at $c(t)=(0,0,0,-1/t)$. For $t\neq 0$, we define $\varphi_t:B\rightarrow\mathbb{S}^3(t)$ by:
\begin{equation}\label{eq1}
\varphi_t(x) = (x,-1/t + \opsgn(t)\sqrt{t^{-2} - \|x\|^2}),
\end{equation}
where $\opsgn(t)$ is the sign of $t$. Observe that $\varphi$ extends to a smooth mapping from $]-1,1[\times B$ into $\mathbb{R}^4$ with $\varphi_0(x):=\varphi(0,x)=(x,0)$. For all $t$, denoting by $\delta$ the Euclidean metric, the induced metric $g_t=\varphi_t^*\delta$ on $B$ at the point $x\in B$ is given by:
\begin{equation}\label{eq2}
g_t(x) = \delta + \frac{t^2}{1 - t^2\|x\|^2}x\otimes x,
\end{equation}
so that, for all $-1<t<1$ and $t\neq 0$, $g_t$ is the metric of a spherical cap of radius $1/\left|t\right|$ and $g_0$ is the Euclidean metric. In particular, for all $t\in]-1,1[$, $g_t$ has positive constant sectional curvature equal to $t^2$. Observe, moreover, that $(B,g_t)$ is functionally strictly convex for all $t\in]-1,1[$.

\begin{remark}\label{foliation}
Given a unit vector $v\in\mathbb{R}^3$, we define the standard foliation $\{\mathcal{C}_s\}_{s\in(-1,1)}$ of $\partial B\setminus\{v,-v\}$ by $\mathcal{C}_s=\{w\in\partial B:\langle v,w\rangle_\delta=s\}$. For all $t\in]-1,1[$, we define the standard foliation $\{\mathcal{D}_{s,t}\}_{s\in(-1,1)}$ of $B\setminus\{v,-v\}$ so that for all $s$, $\mathcal{D}_{s,t}\subset B$ is the properly embedded disk which is totally geodesic with respect to  $g_t$ such that $\partial \mathcal{D}_{s,t}=\mathcal{C}_s$. Observe that, for all $s$, $\mathcal{D}_{s,0}=\{w\in B:\langle v,w\rangle_\delta=s\}$.
\end{remark}

For every unit vector $v$ in $\Bbb{R}^3$ and for all $\theta\in\Bbb{R}$, we define $R_{v,\theta}\in \opSO(3)$ to be the rotation about $v$ by $\theta$ radians in the positive direction (with respect to the canonical orientation of $\Bbb{R}^3$). In this section, we consider embedded surfaces in $B$ mainly as subsets of $B$ (rather than as equivalence classes of embeddings). We recall that an embedded surface $\Sigma\subseteq B$ is said to be {\it invariant by rotation} about $v$ whenever:
$$
R_{v,\theta}(\Sigma) = \Sigma,
$$
for all $\theta\in\Bbb{R}$. For $f:\Bbb{R}\rightarrow]0,\infty[$ be a positive function, recall that the surface of revolution of $f$ about $v$ is defined by:
$$
\Sigma_{v,f} = \left\{ R_{v,\theta}(tv + f(t)w)\ |\ \theta,t\in\Bbb{R}\right\},
$$
where $w\in\Bbb{R}^3$ is any unit vector orthogonal to $v$.

\begin{proposition}\label{PropUniquenessOfRotationalSurfaces}
For every unit vector $v\in\mathbb{R}^3$, the unique (unoriented) properly embedded free boundary minimal surfaces in $(B,\delta)$ which are  invariant under rotation about $v$ are:
\begin{itemize}
\item[(1)] the disk obtained by intersecting $B$ with the equatorial plane normal to $v$; and
\item[(2)] the annulus obtained by intersecting $B$ with the catenoid $\Sigma_{v,f}$, where $f(t)=r_0^{-1}\cosh(r_0t)$, $r_0=t_0\cosh(t_0)$ and $t_0>0$ is the unique positive solution of $t_0=\coth(t_0)$.
\end{itemize}
\end{proposition}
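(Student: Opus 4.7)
The plan is to reduce to a one-dimensional ODE analysis by exploiting rotational symmetry, then impose the sphere and free-boundary conditions explicitly on the resulting profile curve. Choose coordinates so that $v = e_3$ and view any connected, properly embedded, rotationally invariant surface $\Sigma \subset B$ as a surface of revolution generated by a curve in the half-plane $\{(r,z):r\geq 0\}$. By the classical theorem of Euler on minimal surfaces of revolution, the generating curve is either a horizontal segment (giving a planar disk in a plane perpendicular to $v$) or a catenary $r(z) = a\cosh((z-b)/a)$ with $a>0,\,b\in\mathbb{R}$ (giving a piece of a catenoid coaxial with $v$). In the planar case $\Sigma = \{z=z_0\}\cap B$, I would evaluate the free-boundary condition at an arbitrary boundary point $p\in\partial\Sigma$: the outer conormal of $\partial\Sigma$ in $\Sigma$ is horizontal, while the outer unit normal of $\partial B$ at $p$ is $p$ itself, so parallelism forces $z_0 = 0$, yielding the equatorial disk of~(1).

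In the catenoid case, strict convexity of $f(z)^2+z^2$ in $z$ shows that $\Sigma$ is an annulus bounded by circles at heights $T_- < b < T_+$. The free-boundary condition at each boundary reduces to $Tf'(T) = f(T)$, i.e.\ $T\tanh((T-b)/a) = a$, and combining with $f(T)^2+T^2 = 1$ via $\cosh^2 = 1/(1-\tanh^2)$ yields the clean identity $a^2 = T^2(1-T^2)$. Thus $T_+^2$ and $T_-^2$ are both roots of $x(1-x) = a^2$. Since $T\tanh((T-b)/a) = a > 0$ forces $T$ and $T-b$ to share sign, one has $T_- < 0 < T_+$. Consequently, either $T_+ = -T_-$ — in which case substitution and injectivity of $\tanh$ immediately give $b = 0$ — or $T_+^2+T_-^2=1$ with $T_+^2\neq T_-^2$.

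The main obstacle is excluding this asymmetric subcase, since reflecting across the equator merely produces a second admissible free-boundary catenoid rather than a contradiction. I would handle it analytically: set $p = T_+,\, q = -T_- \in (0,1)$ so that $a = pq$, and introduce $\alpha = \cosh^{-1}(1/p),\ \beta = \cosh^{-1}(1/q) > 0$; a direct computation gives $\tanh\alpha = q,\ \tanh\beta = p$ and $\cosh\alpha\cosh\beta = 1/(pq)$. The two free-boundary equations combine to give $pq(\alpha+\beta) = p+q$, while the hyperbolic addition formula yields $p+q = \tanh\alpha+\tanh\beta = pq\sinh(\alpha+\beta)$. Comparing these, $\sinh(\alpha+\beta) = \alpha+\beta$, whose only real root is $\alpha+\beta=0$ — contradiction. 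With the asymmetric case excluded one has $b=0$, and introducing $t_0 := T_+/a$ rewrites the free-boundary equation as $t_0 = \coth t_0$, with unique positive solution. A direct computation using $t_0\sinh t_0 = \cosh t_0$ gives $r_0 := 1/a = t_0\cosh t_0$, so $f(z) = r_0^{-1}\cosh(r_0 z)$, matching~(2).
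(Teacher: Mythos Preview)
Your approach is essentially the paper's—reduce via Euler's classification of minimal surfaces of revolution to a plane or a catenoid, then impose the orthogonality condition on the profile curve—but where the paper simply asserts that ``an elementary calculation shows $a=r_0$ and $b=0$,'' you actually carry out that calculation, and your elimination of the asymmetric boundary configuration via the identity $\sinh(\alpha+\beta)=\alpha+\beta$ is clean and correct. One small point: strict convexity of $r(z)^2+z^2$ does not by itself yield $T_-<b<T_+$; the fact you actually use, namely $T_-<0<T_+$, follows more directly by noting that $g(z):=zr'(z)-r(z)$ satisfies $g'(z)=zr''(z)$, hence has its unique (negative) minimum at $z=0$ and therefore exactly one positive and one negative zero.
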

\begin{remark} An elementary calculation shows that $r_0>t_0>1$.
\end{remark}
\begin{proof} Consider the foliation of $\Bbb{R}^3$ by lines parallel to $v$. Let $\Sigma\subseteq (B,\delta)$ be a properly embedded free boundary minimal surface. If $\Sigma$ is normal to this foliation at every point, then $\Sigma$ is the intersection of $B$ with a plane normal to $v$. Since $\Sigma$ meets $\partial B$ orthogonally along $\partial\Sigma$, it follows that $\Sigma$ coincides with the intersection of the equatorial plane normal to $v$ with $B$, which yields Case $(1)$. Otherwise, it follows by Example $5$ of Section $3.5$ of \cite{doCarmo} that $\Sigma$ is the surface of revolution about $v$ of the function $f(t)=a^{-1}\cosh(at + b)$ for some $a>0$ and for some $b\in\Bbb{R}$. Since $\Sigma$ meets $\partial B$ orthogonally along $\partial\Sigma$,  an elementary calculation shows that $\alpha=r_0$ and $b=0$, as desired.
\end{proof}

\begin{proposition}\label{PropExtremalDisk}
\noindent For all $t\neq 0$ and for every vector $v\in\Bbb{R}^3$, the unique (unoriented) properly embedded free boundary minimal disk in $(B,g_t)$ which is invariant under rotation about $v$ is the disk obtained by intersecting with the equatorial Euclidean plane normal to $v$.
\end{proposition}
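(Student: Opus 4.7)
The plan is to reduce the problem to an ODE uniqueness statement identifying $\Sigma$ with one of the totally geodesic disks $\mathcal{D}_{s_{*},t}$ from Remark \ref{foliation}, and then to show that the free boundary condition forces $s_{*}=0$.

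First I would establish that $\Sigma$ meets the rotation axis $\{sv:s\in(-1,1)\}$ in a unique interior point with horizontal tangent plane. Since $\Sigma$ is a disk and hence contractible, every $R_{v,\theta}\in S^{1}$ fixes a point of $\Sigma$ by Brouwer, and any such fixed point lies on the axis. The smooth faithful $S^{1}$-action on $\Sigma$ must therefore have a single fixed point $p_{0}=s_{0}v$; at $p_{0}$, the linearised rotation on $T_{p_{0}}B\cong\Bbb{R}^{3}$ has as its unique invariant $2$-plane the space $\langle v\rangle^{\perp}$, so $T_{p_{0}}\Sigma=\langle v\rangle^{\perp}$. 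Moreover $p_{0}$ is interior to $\Sigma$, for $\partial\Sigma$ is a rotationally symmetric circle in $\partial B$, hence equal to some $\mathcal{C}_{s_{\partial}}$, which avoids the poles $\pm v$ where the axis meets $\partial B$.

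Next, in cylindrical coordinates $(r,\theta,z)$ with $v=e_{3}$, I would express $\Sigma$ locally near $p_{0}$ as a normal graph $z=u(r)$ with $u(0)=s_{0}$, $u'(0)=0$; smoothness of $\Sigma$ across the axis forces $u$ to be a smooth even function of $r$, and the minimal surface equation for $g_{t}$ becomes a regular quasilinear ODE for $u$ to which classical uniqueness theory applies. Each $\mathcal{D}_{s,t}$ is itself a rotationally invariant minimal disk, being the intersection with $B$ of a totally geodesic $2$-sphere of $\mathbb{S}^{3}(t)$ bounded by the $S^{1}$-invariant circle $\mathcal{C}_{s}$, and an explicit calculation based on (\ref{eq1}) shows the axis-intersection map $s\mapsto\mathcal{D}_{s,t}\cap\{r=0\}$ is a homeomorphism of $(-1,1)$ onto itself. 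Hence there is a unique $s_{*}$ with $p_{0}\in\mathcal{D}_{s_{*},t}$, and by ODE uniqueness applied to $u$, $\Sigma=\mathcal{D}_{s_{*},t}$.

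Finally I would verify that $\mathcal{D}_{s,t}$ meets $\partial B$ orthogonally in $(B,g_{t})$ if and only if $s=0$. The ``if'' direction is immediate from the Euclidean reflection $\rho:z\mapsto -z$: this is a $g_{t}$-isometry of $B$ preserving $\partial B$, fixing $\mathcal{D}_{0,t}$ pointwise, and acting by $-1$ on the normal bundle to $\mathcal{D}_{0,t}$, so the $g_{t}$-conormal of $\mathcal{D}_{0,t}$ along $\mathcal{C}_{0}$ is $\rho$-anti-invariant and must lie in the $\rho$-invariant subspace $T\partial B$. Conversely, a direct computation using (\ref{eq2}) at a boundary point of $\mathcal{D}_{s,t}$ shows that the angle between $T\mathcal{D}_{s,t}$ and $T\partial B$ with respect to $g_{t}$ is nonzero whenever $s\neq 0$; alternatively, one may appeal to Proposition \ref{PropUniquenessOfRotationalSurfaces} together with continuous dependence of $\mathcal{D}_{s,t}$ on $t$ to rule out extra free boundary totally geodesic disks near $t=0$, and then propagate the conclusion to all $t$ by analyticity. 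The main technical obstacle is the regular singularity of the rotationally symmetric minimal surface ODE at the axis $r=0$, which precludes direct uniqueness for the generating curve in arclength parametrisation; the workaround above -- passing to the smooth even normal graph $u(r)$ -- renders the equation regular and restores standard uniqueness.
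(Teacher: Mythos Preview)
Your argument is correct and reaches the same conclusion, but by a genuinely different route from the paper. The paper's proof is a two-line maximum-principle argument: since $\partial\Sigma$ must be one of the circles $\mathcal{C}_s$, one slides the totally geodesic foliation $\{\mathcal{D}_{s,t}\}$ of $B\setminus\{\pm v\}$ until a leaf first touches $\Sigma$; at that tangency the geometric maximum principle forces $\Sigma$ to coincide with the leaf, and the free boundary condition then singles out $\mathcal{D}_{0,t}$. You instead locate the unique axis intersection via the fixed-point theory of the $S^1$-action, write $\Sigma$ locally as a rotationally symmetric graph $z=u(r)$, desingularise the generating ODE at $r=0$ by the even-extension trick, and invoke ODE uniqueness to identify $\Sigma$ with some $\mathcal{D}_{s_*,t}$.

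Both approaches are valid, but the paper's is considerably shorter and sidesteps the two technicalities you yourself flag (the regular singular point at the axis and the structure of smooth circle actions on the disk). One point in your write-up deserves tightening: your ODE uniqueness only identifies $\Sigma$ with $\mathcal{D}_{s_*,t}$ \emph{locally} near $p_0$, since the graph representation $z=u(r)$ is a priori only valid on a neighbourhood of the axis. To upgrade this to $\Sigma=\mathcal{D}_{s_*,t}$ globally you should either invoke unique continuation (both surfaces are minimal in the real-analytic metric $g_t$, hence real-analytic, and they agree on an open set) or continue the identification along the generating curve using the regular arc-length ODE away from $r=0$. The final step---that among the $\mathcal{D}_{s,t}$ only $s=0$ is free boundary---is asserted without detail in the paper as well; your reflection argument for the ``if'' direction and sketched computation for ``only if'' actually supply more than the paper does here.
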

\begin{proof} Choose $t\neq 0$ and let $\Sigma$ be an properly embedded free boundary minimal disk in $(B,g_t)$. Suppose that $\Sigma$ is invariant under rotation about $v$. It follows from this that $
\partial\Sigma$ is equal to $\mathcal{C}_s$ for some $s\in(-1,1)$, where $\{\mathcal{C}_s\}_{s\in(-1,1)}$ is the standard  foliation of $\partial B\setminus\{v,-v\}$ by spherical geodesic circles (c.f. Remark \ref{foliation}). Now consider the standard foliation $\{\mathcal{D}_{s,t}\}_{s\in(-1,1)}$ of $B\setminus\{v,-v\}$ by totally geodesic disk with respect to metric $g_t$ (c.f. Remark \ref{foliation}). There exists a leaf of this foliation which is an exterior tangent to $\Sigma$ at some point. By the geometric maximum principle, $\Sigma$ coincides with this leaf and since $\Sigma$ meets $\partial B$ orthogonally along $\partial\Sigma$ we conclude that $\Sigma=\mathcal{D}_{0,t}$, which is precisely the disk obtained by intersecting $B$ with the equatorial Euclidean plane normal to $v$.
\end{proof}

\begin{proposition}\label{PropExtremalCatenoid}
\noindent There exists $\delta>0$ such that, for all $t\in(-\delta,\delta)$ and for every vector $v\in\Bbb{R}^3$, there exists a unique (unoriented) properly embedded free boundary minimal surface in $(B,g_t)$ which is diffeomorphic to the annulus \linebreak$\Sph^1\times[0,1]$ and invariant under rotation about $v$.
\end{proposition}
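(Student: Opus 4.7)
The plan is to establish existence via an implicit function theorem argument in the restricted class of rotationally invariant surfaces, and uniqueness via a compactness argument based on Proposition \ref{PropUniquenessOfRotationalSurfaces} together with Fraser--Li compactness.

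Without loss of generality, by composing with an ambient rotation of $\Bbb{R}^3$, I may take $v$ to be the $z$-axis. A connected rotationally invariant annulus embedded in $B$ is a surface of revolution $\Sigma_f$ determined by a positive smooth profile function $f:[s_-,s_+]\to (0,\infty)$ along the axis. Writing the free boundary minimal condition with respect to $g_t$ in terms of $(f,s_-,s_+)$ produces a system $\Phi(t,f,s_-,s_+)=0$ consisting of a second-order ODE (rotationally invariant vanishing of mean curvature) and two boundary equations (orthogonal intersection of the two boundary circles with $\partial B$). I would formalize this as a smooth map between appropriate Banach spaces depending smoothly on $t$. At $t=0$, by Proposition \ref{PropUniquenessOfRotationalSurfaces}, the unique solution is the Euclidean catenoid, corresponding to $f_0(s)=r_0^{-1}\cosh(r_0 s)$ on $[-t_0/r_0,t_0/r_0]$ with $t_0=\coth(t_0)$.

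I would then apply the implicit function theorem at $(0,f_0,-t_0/r_0,t_0/r_0)$. The linearization of $\Phi$ in $(f,s_-,s_+)$ at the Euclidean solution is, up to identifications, the rotationally invariant component of the Jacobi operator $\text{J}_{g_0,e}$ of Section \ref{JacobiOperators}, together with the two boundary conditions; by Proposition \ref{PropEllipticRegularityOfJacobiOperatorII}, restricted to the finite-dimensional subspace of rotationally invariant deformations, this is Fredholm of index zero, so injectivity suffices. The main step, and the principal obstacle, is to check this injectivity. Any kernel element corresponds to a nontrivial infinitesimal deformation of the Euclidean catenoid through rotationally invariant free boundary minimal annuli; since the space of catenoidal profiles $f_{a,b}(s)=a^{-1}\cosh(as+b)$ is two-dimensional, it is enough to verify that the $2\times 2$ matrix of derivatives (in $(a,b)$) of the two free-boundary equations at $(a,b)=(r_0,0)$ is non-singular. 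This reduces to a direct transcendental calculation using $t_0=\coth(t_0)$. Granting this, the implicit function theorem produces, for all sufficiently small $|t|$, a unique nearby smooth family of rotationally invariant free boundary minimal annuli.

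For global uniqueness in this class, suppose by contradiction that there exist $t_n\to 0$ and distinct rotationally invariant properly embedded free boundary minimal annuli $\Sigma_{1,t_n}$, $\Sigma_{2,t_n}$ in $(B,g_{t_n})$. Since $(B,g_{t_n})$ converges in $C^\infty$ to $(B,g_0)$ and $\partial B$ is strictly convex with respect to $g_0$, Theorem \ref{ThmFraserLi} (Fraser--Li) yields, along subsequences and after reparametrization, $C^\infty$ limits $\Sigma_{1,\infty},\Sigma_{2,\infty}\subseteq (B,g_0)$ which are properly embedded free boundary minimal annuli. Rotational invariance is preserved under $C^\infty$ limits, so by Proposition \ref{PropUniquenessOfRotationalSurfaces} both limits equal the Euclidean catenoid. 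For large $n$ both surfaces therefore lie in an arbitrarily small $C^\infty$ neighbourhood of the Euclidean catenoid, and by the local uniqueness provided by the implicit function theorem they must coincide, a contradiction. This yields the desired $\delta>0$ and completes the argument.
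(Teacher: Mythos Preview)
Your approach is essentially the same as the paper's. Both arguments hinge on the identical key step: verifying that the $2\times 2$ derivative matrix of the two free-boundary angle conditions with respect to the catenoid parameters $(a,b)$ is nonsingular at $(r_0,0)$, and both prove global uniqueness by the same Fraser--Li compactness contradiction argument.

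The only difference is cosmetic. Rather than going through an infinite-dimensional Banach space of profiles $(f,s_-,s_+)$ and then observing that the kernel of the linearization reduces to the two-dimensional catenoid family, the paper works directly in finite dimensions from the outset: for each $t$ it parametrizes rotationally invariant minimal surfaces by the two initial conditions $(a,b)$ at the neck (solving the profile ODE), then defines boundary-angle functions $\Theta_\pm(a,b)$ and checks that the curves $\Theta_+^{-1}(0)$ and $\Theta_-^{-1}(0)$ meet transversally at $(r_0,0)$. This is the same $2\times 2$ nondegeneracy you identify, phrased geometrically; the paper notes (without full computation, as you do) that $\partial_a\Theta_+$ and $\partial_a\Theta_-$ have the same sign while $\partial_b\Theta_+$ and $\partial_b\Theta_-$ have opposite signs, which forces the determinant to be nonzero. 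The finite-dimensional setup is slightly cleaner since it avoids having to specify the Banach space with moving endpoints, but your route is perfectly sound.
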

\begin{proof} We first study the transversality properties of rotationally symmetric minimal surfaces in Euclidean space. We define $F:]0,\infty[\times\Bbb{R}\times\Bbb{R}\rightarrow\Bbb{R}$ by:
$$
F(a,b,s) = a^{-1}\opCosh(as + b).
$$
\noindent We note that for all $a,b$, the surface of revolution of $F_{a,b}:=F(a,b,\cdot)$ about $v$ is a catenoid which is a properly embedded minimal surface. We denote $\widehat{F}(a,b,s):=(s,F(a,b,s))$ and $\widehat{F}_{a,b}:=\widehat{F}(a,b,\cdot)$. We verify that $\widehat{F}$ is a submersion into $\Bbb{R}^2$. Let $\Sph^1$ be the unit circle in $\Bbb{R}^2$. Observe that $\hat{F}(r_0,0,\pm r_0t_0)\in \Sph^1$. By definition of $r_0$, the curve $\widehat{F}(r_0,0,\cdot)$ meets $\Sph^1$ orthogonally. In particular, it is transverse to $\Sph^1$. Let $\Omega$ be a neighbourhood of $(r_0,0)$ in $]0,\infty[\times\Bbb{R}$. By the implicit function theorem, for $\Omega$ sufficiently small, there exist smooth functions $G_\pm:\Omega\rightarrow\Bbb{R}$ such that for all $(a,b)\in\Omega$, $\widehat{F}(a,b,G_\pm(a,b))$ is an element of $C$.

Let $\nu$ be the outward-pointing unit normal vector field over $\Sph^1$. Let $N:\Omega\times\Bbb{R}\rightarrow\Bbb{R}^2$ be such that, for all $(a,b)\in\Omega$, $N_{a,b}:=N(a,b,\cdot)$ is a unit, normal vector field over the curve $\widehat{F}_{a,b}(\Bbb{R})$. We define $\Theta_\pm:\Omega\rightarrow\Bbb{R}$ such that, for all $(a,b)\in\Omega$, $\Theta_\pm$ is the angle that $\nu$ makes with $N_{a,b}$ at the point $F(a,b,G_\pm(a,b))$. Observe that $\partial_a\Theta_-(r_0,0)$ and $\partial_a\Theta_+(r_0,0)$ are both non-zero with the same sign, but that $\partial_b\Theta_-(r_0,0)$ and $\partial_b\Theta_+(r_0,0)$ are both non-zero with opposite signs. In particular, $\nabla\Theta_\pm(r_0,0)\neq 0$ and $\nabla\Theta_-(r_0,0)\neq \nabla\Theta_+(r_0,0)$. Thus, upon reducing $\Omega$ if necessary, $\Theta^{-1}_+(\left\{0\right\})$ and $\Theta^{-1}_-(\{0\})$ define smooth embedded  curves in $\Omega$ which intersect transversally at $(r_0,0)$.

We now return to metrics of non-zero curvature. Choose $\delta>0$ small and define $\widetilde{F}:\Omega\times(-\delta,\delta)\times\Bbb{R}\rightarrow\Bbb{R}$ such that, for all $(a,b,t)$, the surface of revolution of $\widetilde{F}_{a,b,t}:=\widetilde{F}(a,b,t,\cdot)$ about $v$ is minimal with respect to the metric $g_t$, and, moreover, $\widetilde{F}_{a,b,t}(-b/a)=a^{-1}$, $\widetilde{F}'_{a,b,t}(-b/a)=0$. Observe that, for all $(a,b,t)$, $\widetilde{F}_{a,b,t}$ is uniquely defined by a second-order nonlinear ODE. In particular, $\widetilde{F}_{a,b,0}=F_{a,b}$ for all $(a,b)\in\Omega$. It now follows by transversality that, upon reducing $\Omega$ and $\delta$ if necessary, for all $t\in(-\delta,\delta)$, there exists a unique point $(a(t),b(t))\in\Omega$ such that the curve $\widetilde{F}_{a(t),b(t),t}$ intersects $\Sph^1$ orthogonally with respect to the metric $g_t$. In particular, the surface of revolution of $\tilde{F}_{a(t),b(t),t}$ about $v$ is a properly embedded free boundary minimal annulus with respect to this metric, thus proving existence for sufficiently small $\delta$.

We now prove uniqueness. Indeed, suppose the contrary. Observe first that, by the uniqueness part of above discussion, if $\Sigma$ is a properly embedded minimal annulus in $(B,g_t)$ which is  invariant under rotation about $v$, and if $\Sigma$ is sufficiently close to the surface of revolution of $F_{r_0,0}$ about $v$ in the $C^1$ sense, then $\Sigma$ coincides with the surface of revolution of $\widetilde{F}_{a(t),b(t),t}$ about $v$. Now suppose there exists a sequence $(t_m)_\minn$ converging to $0$, and, for all $m$, two distinct (unoriented) properly embedded free boundary minimal annuli $\Sigma_m$ and $\Sigma'_m$ in $(B,g_{t_m})$ which are invariant under rotations about $v$. By Theorem \ref{ThmFraserLi}, we may suppose that $(\Sigma_m)_\minn$ and $(\Sigma'_m)_\minn$ both converge to $\Sigma_\infty$ and $\Sigma'_\infty$ respectively. By Proposition \ref{PropUniquenessOfRotationalSurfaces}, $\Sigma_\infty=\Sigma'_\infty$ is the surface of revolution of $F_{r_0,0,0}$ about $v$, and so, by the preceeding observation, for sufficiently large $m$, $\Sigma_m$ and $\Sigma'_m$ both coincide with the surface of revolution of $F_{a(t_m),b(t_m),t_m}$ about $v$. This is absurd, and uniqueness follows.
\end{proof}

\begin{proposition}\label{PropNoOtherNonDegenerateFamilies}
\noindent If $\Sigma$ is neither diffeomorphic to the disk $D$ nor to the annulus $\Sph^1\times[0,1]$ then there exists $\delta>0$ such that for all $t\in(-\delta,\delta)$, there exists no properly embedded free boundary minimal surface $(B,g_t)$ which is diffeomorphic to $\Sigma$ and invariant under rotation about $v$.
\end{proposition}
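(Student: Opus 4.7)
My plan is to argue by contradiction, combining the Fraser--Li compactness theorem (Theorem \ref{ThmFraserLi}) with the classification of rotationally invariant free boundary minimal surfaces in the Euclidean unit ball given by Proposition \ref{PropUniquenessOfRotationalSurfaces}. Assume toward contradiction that no such $\delta > 0$ exists. Then I obtain a sequence $t_m \to 0$ and properly embedded free boundary minimal embeddings $e_m : \Sigma \to B$ with respect to $g_{t_m}$ whose images $\Sigma_m := e_m(\Sigma)$ are invariant under rotation about $v$. Since each $g_{t_m}$ has constant sectional curvature $t_m^2 \geqslant 0$, these metrics have nonnegative Ricci curvature, and by the explicit formula for $g_t$ they converge to $g_0 = \delta$ in $C^\infty$. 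Moreover, $\partial B$ is manifestly strictly convex with respect to $g_0$, so the hypotheses of Theorem \ref{ThmFraserLi} are in place.

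Next, I apply Theorem \ref{ThmFraserLi} to produce, after extracting a subsequence and reparametrising by suitable diffeomorphisms $\alpha_m : \Sigma \to \Sigma$, a $C^\infty$-limit embedding $e_\infty : \Sigma \to B$ which is free boundary minimal with respect to $g_0 = \delta$. I then observe that rotational invariance passes to the limit: for each fixed $\theta \in \mathbb{R}$ and each $p \in \Sigma$, the relation $R_{v,\theta}(e_m \circ \alpha_m(p)) \in \Sigma_m$ is preserved under $C^\infty$ convergence and yields $R_{v,\theta}(\Sigma_\infty) \subseteq \Sigma_\infty$, and compactness of $\Sigma_\infty$ gives equality. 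Thus $\Sigma_\infty := e_\infty(\Sigma)$ is a properly embedded free boundary minimal surface in $(B, \delta)$ invariant under rotation about $v$.

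Finally, I invoke Proposition \ref{PropUniquenessOfRotationalSurfaces} to conclude that $\Sigma_\infty$ is either the equatorial disk or the catenoidal annulus, and in particular is diffeomorphic to $D$ or to $\mathbb{S}^1 \times [0,1]$. Since $e_\infty$ is an embedding of the fixed surface $\Sigma$, we have $\Sigma$ diffeomorphic to $\Sigma_\infty$, so $\Sigma$ is itself diffeomorphic to one of these model surfaces, contradicting the hypothesis. The only mildly delicate point is the passage of rotational invariance to the limit, but this follows immediately from the smoothness of the $\mathrm{SO}(3)$-action and the smooth convergence of the reparametrised embeddings; everything else is a direct application of the cited results, so I do not anticipate any substantial obstacle in executing this plan.
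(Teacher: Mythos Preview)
Your proposal is correct and follows essentially the same approach as the paper: argue by contradiction, extract a sequence $t_m \to 0$ with rotationally invariant free boundary minimal surfaces, apply Fraser--Li compactness to pass to a Euclidean limit, and invoke the classification of Proposition~\ref{PropUniquenessOfRotationalSurfaces} to reach a contradiction. The only cosmetic difference is that the paper allows the rotation axis to vary along the sequence (extracting a convergent subsequence of axes), whereas you keep $v$ fixed as in the statement; your version is entirely adequate for what is claimed.
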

\begin{proof} Indeed, suppose the contrary. There exists a sequence $(t_m)_\minn$ converging to $0$, and, for all $m$, a properly embedded free boundary minimal surface $\Sigma_m$ in $(B,g_t)$ which is diffeomorphic to $S$ and invariant under rotation about some unit vector, $v_m$, say. Upon extracting a subsequence, we may suppose that $(v_m)_\minn$ converges to $v_\infty\in \Sph^2$, say. By Theorem \ref{ThmFraserLi}, upon extracting a further subsequence, we may suppose that $(\Sigma_m)_\minn$ converges to an embedded surface $\Sigma_\infty$ say. $\Sigma_\infty$ is a properly embedded free boundary minimal surface in $(B,\delta)$ which is diffeomorphic to $\Sigma$ and invariant under rotation about $v_\infty$. It thus follows from Proposition \ref{PropUniquenessOfRotationalSurfaces} that $\Sigma$ is diffeomorphic either to the disk $D$ or to the annulus $\Sph^1\times[0,1]$. This is absurd and the result follows.
\end{proof}

We henceforth refer to the embeddings constructed in Propositions \ref{PropExtremalDisk} and \ref{PropExtremalCatenoid} respectively as the {\it critical disk} and the {\it critical catenoid} of the metric $g_t$ with axis $v$.

\subsection{Non-degenerate families of disks}\label{SectionNonDegenerateFamiliesOfDisks} Let $e_1,e_2,e_3$ be the canonical basis of $\Bbb{R}^3$. We parametrise the critical disk of the Euclidean metric by the mapping $e_\opdisk:D\longrightarrow B$ given by:
$$
e_\opdisk(x,y) = (x,y,0).
$$
Let $\text{\rm J}_\opdisk:=(\text{\rm J}^h_{\opdisk},\text{\rm J}^\theta_{\opdisk})$ be the Jacobi operator of $e_\opdisk$ with respect to this metric.

\begin{proposition}
For all $\varphi\in C^\infty(D)$:
$$
\text{\rm J}^h_{\opdisk}\varphi = -\Delta\varphi,
$$
where $\Delta$ is the standard Laplacian of $\Bbb{R}^2$, and:
$$
\text{\rm J}^\theta_{\opdisk}\varphi = \varphi\circ\epsilon - \partial_\nu\varphi,
$$
where $\epsilon:\partial D\rightarrow D$ is the canonical embedding, and $\partial_\nu$ is the partial derivative in the outward-pointing, normal direction over $\partial D$.
\end{proposition}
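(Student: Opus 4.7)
The plan is to apply the general formulas for the Jacobi operators established in Lemma \ref{PropFormulaForJacobiOperatorOfMeanCurvature} and Proposition \ref{PropFormulaForJacobiOperatorOfBoundaryAngle}, and then compute the four geometric quantities that appear in these formulas in the particular case at hand, namely $\opRic$, the shape operator $A$ of $e_{\opdisk}$, the shape operator $II$ of $\partial B$, and the induced Laplacian on $D$.

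First I would observe that the ambient metric is the flat Euclidean metric $\delta$, so $\opRic\equiv 0$ identically on $B$. Next, since $e_{\opdisk}(D)$ is contained in the equatorial plane $\{z=0\}$, which is an affine subspace of $\Bbb R^3$, the image is totally geodesic with respect to $\delta$; in particular the shape operator $A$ vanishes identically and hence $\|A\|^2=0$. Substituting these vanishing terms into the formula of Lemma \ref{PropFormulaForJacobiOperatorOfMeanCurvature} and noting that the induced metric $e_{\opdisk}^*\delta$ is the standard flat metric on $D\subseteq\Bbb R^2$, so that its Laplace--Beltrami operator coincides with the standard Laplacian $\Delta$, yields the first formula $\text{\rm J}^h_{\opdisk}\varphi=-\Delta\varphi$.

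For the boundary angle operator, I would compute $\kappa = II(N,N)$ where $N=e_3$ is the upward unit normal to $e_{\opdisk}(D)$. Since $\partial B$ is the unit sphere $\Sph^2$ with outward unit normal $\nu(p)=p$, an elementary direct computation gives $II(X,Y)=\langle\D_X\nu,Y\rangle=\langle X,Y\rangle$ for all $X,Y$ tangent to $\Sph^2$; that is, $II$ is the identity endomorphism. At any point of $\partial D$ the vector $N=e_3$ is tangent to $\Sph^2$ and is a unit vector, so $\kappa=II(N,N)=\langle N,N\rangle=1$. Substituting $\kappa=1$ into the formula of Proposition \ref{PropFormulaForJacobiOperatorOfBoundaryAngle} yields the second formula $\text{\rm J}^\theta_{\opdisk}\varphi=\varphi\circ\epsilon-\partial_\nu\varphi$.

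This argument is essentially a direct verification; there is no genuine obstacle, but a small point worth being careful about is the sign convention for the shape operator $II$ of $\partial B$. One must check that $II$ is defined so that the unit sphere with outward normal has $II=+\opId$ (rather than $-\opId$), so that $\kappa=+1$ gives the stated formula; this is consistent with the convention used in Proposition \ref{PropFormulaForJacobiOperatorOfBoundaryAngle}, which defines $\kappa=II(N,N)$ with $II$ taken with respect to the outward pointing normal $\nu$.
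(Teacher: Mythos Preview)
Your proof is correct and follows exactly the same approach as the paper: apply the general formulas from Lemma \ref{PropFormulaForJacobiOperatorOfMeanCurvature} and Proposition \ref{PropFormulaForJacobiOperatorOfBoundaryAngle}, noting that $e_{\opdisk}$ is a totally geodesic isometric embedding into flat space and that the unit sphere has shape operator equal to the identity. The paper's proof is a single sentence to this effect; your version simply spells out the computations of $\opRic$, $A$, $II$, and the induced Laplacian in more detail.
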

\begin{proof} Observe that $e_\opdisk$ is a totally geodesic isometric embedding, and the result now follows by Propositions \ref{PropFormulaForJacobiOperatorOfMeanCurvature} and \ref{PropFormulaForJacobiOperatorOfBoundaryAngle}.
\end{proof}

\begin{proposition}\label{PropNullityOfExtremalDisk}
$\opKer(\text{\rm J}_{\opdisk})$ is $2$-dimensional.
\end{proposition}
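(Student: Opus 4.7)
The plan is to solve the Jacobi system explicitly by separation of variables. A function $\varphi\in C^\infty(D)$ lies in $\opKer(\text{\rm J}_{\opdisk})$ precisely when
\begin{equation*}
\Delta\varphi = 0 \text{ on } D, \qquad \partial_\nu\varphi = \varphi \text{ on } \partial D,
\end{equation*}
where I have used the formulas for $\text{\rm J}^h_\opdisk$ and $\text{\rm J}^\theta_\opdisk$ stated just above. This is a self-adjoint Robin eigenvalue problem at eigenvalue $1$, so I expect the kernel to be finite-dimensional (which is consistent with Proposition \ref{PropEllipticRegularityOfJacobiOperatorII}) and hope to extract its dimension directly.

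Next I would expand $\varphi$ in polar coordinates $(r,\theta)$ on $D$. Harmonicity forces
\begin{equation*}
\varphi(r,\theta) = a_0 + \sum_{n=1}^{\infty} r^n\bigl(a_n\cos(n\theta) + b_n\sin(n\theta)\bigr),
\end{equation*}
with convergence justified by smoothness up to the boundary. Since $\nu = \partial_r$ along $\partial D$, the boundary condition $\partial_r\varphi|_{r=1} = \varphi|_{r=1}$ yields, upon matching Fourier coefficients, $a_0 = 0$ and $(n-1)a_n = (n-1)b_n = 0$ for all $n\geqslant 1$. Hence all coefficients vanish except possibly $a_1$ and $b_1$, so that
\begin{equation*}
\opKer(\text{\rm J}_{\opdisk}) = \opSpan\bigl\{r\cos\theta,\ r\sin\theta\bigr\} = \opSpan\bigl\{x,\ y\bigr\},
\end{equation*}
which is $2$-dimensional.

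The argument is essentially routine; the only care needed is to justify the termwise differentiation at $r=1$, which is immediate because any $\varphi\in C^\infty(D)$ satisfying the harmonic equation in the interior is smooth up to the boundary (by elliptic regularity, already invoked in Section \ref{SubHeadHoelderSpaces}), so its Fourier series on each circle $\{r=\mathrm{const}\}$ converges in every $C^k$ norm. Geometrically, the two generators correspond to the infinitesimal tiltings of the equatorial disk about the $x$- and $y$-axes, each of which produces a one-parameter family of totally geodesic disks in $B$ meeting $\partial B$ orthogonally, confirming the computation.
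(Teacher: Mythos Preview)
Your proof is correct and follows essentially the same approach as the paper's: both expand the harmonic function in Fourier modes (the paper via the real part of a power series in $z$, you via polar coordinates), invoke elliptic regularity to justify termwise differentiation at the boundary, and then impose the Robin condition mode-by-mode to isolate the $n=1$ terms. The only cosmetic difference is that the paper phrases the expansion in complex notation and appeals to the Cauchy--Riemann equations, whereas you work directly with real trigonometric polynomials; your added geometric remark about infinitesimal tiltings is a nice touch not present in the paper.
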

\begin{proof} Choose $\varphi\in\opKer(\text{\rm J}_\opdisk)$. In particular, $\Delta\varphi=0$, and $\varphi$ is therefore the real part of a holomorphic function defined over $\overline{D}$. There therefore exists a sequence $(a_n)_\ninn\in\Bbb{C}$ such that for all $z\in D$:
$$
\varphi(z) = \opRe\left(\sum_{n=0}^\infty a_nz^n\right).
$$

By elliptic regularity, $\varphi\in C^\infty(\overline{D})$, and so, by classical Fourier analysis, the Taylor series of $\varphi$ and all its derivatives converge uniformly over $\partial D$. Since $\varphi$ satisfies the Robin condition $\text{\rm J}^\theta_{\opdisk}\varphi=0$, using the Cauchy-Riemann equations, we obtain, for all $\theta$:
$$
\opRe\left(\sum_{n=0}^\infty(1-n)a_n e^{in\theta}\right) = 0,
$$
from which it follows that $a_n=0$ for all $n\neq 1$. Consequently:
$$
\varphi(z) = \opRe(a_1 z) = \alpha x + \beta y,
$$
where $a_1 = \alpha -  i\beta$, and we conclude that $\opKer(\text{\rm J}_\opdisk)$ is $2$-dimensional, as desired.
\end{proof}

\begin{proposition}\label{NonDegenerateFamiliesOfDisks}
If $\Sigma=D$ is the disk, then there exists $\delta>0$ such that for all $t\in(-\delta,\delta)$, the family of embeddings $[e]\in\Cal{Z}(\left\{g_t\right\})$ which are invariant under rotation about some unit vector in $\Bbb{R}^3$ constitutes a non-degenerate family diffeomorphic to $\Sph^2$.
\end{proposition}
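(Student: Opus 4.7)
The plan is to parametrise the family by $\Sph^2$ explicitly and then reduce the kernel computation at each pair $(t,v)$ to the single case handled by Proposition \ref{PropNullityOfExtremalDisk}, using rotational covariance in $v$ and upper semicontinuity in $t$.

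\emph{Step 1: Parametrising the family.} For each unit vector $v\in\Sph^2$ and each $t\in(-1,1)$, let $\mathcal{D}^v_t\subseteq B$ be the totally geodesic disk through the origin orthogonal to $v$ with respect to $g_t$. By Proposition \ref{PropExtremalDisk} (and Proposition \ref{PropUniquenessOfRotationalSurfaces} at $t=0$), this is the unique properly embedded rotationally symmetric free boundary minimal disk in $(B,g_t)$ with axis $v$. I would fix smooth parametrisations $e_v^t\colon D\to B$ depending smoothly on $(t,v)$, oriented so that the unit normal to $e_v^t$ at the origin with respect to $g_t$ equals $v$. The resulting map $\mathcal{F}_t\colon\Sph^2\to\mathcal{E}$, $v\mapsto[e_v^t]$, is injective (since $v$ and $-v$ induce opposite orientations on the same unoriented disk and thus yield inequivalent classes in $\mathcal{E}$), and it is an immersion because a non-zero $\xi\in T_v\Sph^2$ generates an infinitesimal rotation whose normal component along $\mathcal{D}_t^v$ is a non-trivial affine function of the planar coordinates. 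By compactness of $\Sph^2$, $\mathcal{F}_t$ is a smooth embedding.

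\emph{Step 2: Kernel dimension at $t=0$.} Once Step 1 is in place, Proposition \ref{lowerboundondimensionofkernel} immediately gives $\opNull(\text{\rm J}_{g_t,e_v^t})\geq 2$ for every pair $(t,v)$. At $t=0$ and $v=e_3$, Proposition \ref{PropNullityOfExtremalDisk} supplies the matching upper bound $\opNull=2$, and by equivariance of the Jacobi operator under the isometric $\opSO(3)$-action on $(B,g_0)$ the same equality holds at every $v\in\Sph^2$.

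\emph{Step 3: Propagating the upper bound to small $t$.} By Proposition \ref{PropEllipticRegularityOfJacobiOperatorII}, for any $\lambda\in[0,\infty[\setminus\Bbb{N}$ the operator $\text{\rm J}_{g_t,e_v^t}$ is Fredholm of index zero from $C^{*,\lambda+2}(D)$ into $C^{*,\lambda}(D)\times C^{*,\lambda+1}(\partial D)$, and it depends continuously on $(t,v)$ in the operator norm (this reduces to the smoothness of $g_t$, of $e_v^t$, and of the second fundamental form of $\partial B$ with respect to $g_t$, all of which enter the formulas of Propositions \ref{PropFormulaForJacobiOperatorOfMeanCurvature} and \ref{PropFormulaForJacobiOperatorOfBoundaryAngle}). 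Since the nullity of a continuous family of Fredholm operators is upper semicontinuous, there is a neighbourhood of $\{0\}\times\Sph^2$ in $(-1,1)\times\Sph^2$ on which $\opNull\leq 2$; by compactness of $\Sph^2$ this neighbourhood contains $(-\delta,\delta)\times\Sph^2$ for some $\delta>0$. Combining with Step 2 yields $\opNull(\text{\rm J}_{g_t,e_v^t})=2=\opDim(\Sph^2)$ throughout, which is the required non-degeneracy condition. The main subtlety I anticipate is precisely this upper semicontinuity step; it is standard but relies on the Jacobi operators acting between \emph{fixed} function spaces as $(t,v)$ varies, which is exactly why the H\"older framework of Section \ref{BanachSpaces} was set up.
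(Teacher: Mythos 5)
Your proposal is correct and follows essentially the same route as the paper: parametrise the critical disks by $\Sph^2$ via the axis/orientation, get the upper bound on the nullity at $t=0$ from Proposition \ref{PropNullityOfExtremalDisk} and propagate it to small $t$ by upper semicontinuity, and get the matching lower bound from Proposition \ref{lowerboundondimensionofkernel}. The extra detail you supply on injectivity, the immersion property, and $\opSO(3)$-equivariance is exactly what the paper leaves implicit in the phrase ``we see that $\Cal{I}_t$ is a smooth embedding.''
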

\begin{proof} We define $\Cal{I}_t:\Sph^2\rightarrow\Cal{Z}(\left\{g_t\right\})$ such that, for all $v\in \Sph^2$, $\Cal{I}_t(v)$ is the critical disk of the metric $g_t$ with axis $v$, oriented such that its normal coincides with $v$. We see that $\Cal{I}_t$ is a smooth embedding. By Proposition \ref{PropExtremalDisk}, $\Cal{I}_t(\Sph^2)$ accounts for all free boundary minimal embeddings in $\Cal{Z}(\left\{g_t\right\})$ which are invariant under rotation. By Proposition \ref{PropNullityOfExtremalDisk}, when $t=0$, the nullity of the Jacobi operator of $\Cal{I}(v)$ with respect to the metric $g_0$ is equal to $2$ for all $v\in \Sph^2$. Since the nullity is upper-semicontinuous, there exists $\delta>0$ such that for all $\left|t\right|<\delta$ and for all $v\in \Sph^2$, the nullity of the Jacobi operator of $\Cal{I}_t(v)$ with respect to the metric $g_t$ is at most $2$. Since $\Cal{I}_t$ is an embedding, by Proposition \ref{lowerboundondimensionofkernel}, the nullity of the Jacobi operator of $\Cal{I}_t(v)$ is also bounded below by the dimension of $\Sph^2$. It follows that the nullity of $\Cal{I}_t(v)$ with respect to the metric $g_t$ is equal to $2$, and we conclude that $\Cal{I}_t(\Sph^2)$ is a non-degenerate family, as desired.\end{proof}

\subsection{Non-degenerate families of catenoids}\label{NonDegenerateFamiliesOfCatenoids}  Let $t_0$ be as in Proposition \ref{PropUniquenessOfRotationalSurfaces}. We parametrise the critical catenoid with axis $e_3$ by the mapping $e_\opcat:[-t_0,t_0]\times \Sph^1\rightarrow \Bbb{R}^3$ given by:
$$
e_\opcat(t,\theta) = (r_0^{-1}\opCosh(t)\opCos(\theta),r_0^{-1}\opCosh(t)\opSin(\theta),r_0^{-1}t).
$$
Let $\text{\rm J}_\opcat=(\text{\rm J}^h_{\opcat},\text{\rm J}^\theta_{\opcat})$ be the Jacobi operator of $e_\opcat$ with respect to the Euclidean metric.

\begin{proposition}\label{PropFormulaForJacobiOperatorOfExtremalAnnulus}
For all $\varphi\in C^\infty([-t_0,t_0]\times \Sph^1)$ and for all $(t,\theta)\in\Bbb{R}\times \Sph^1$:
$$
(\text{\rm J}^h_{\opcat}\varphi)(t,\theta) = -\frac{2r_0^2}{\opCosh^4(t)}\varphi(t,\theta)-\frac{r_0^2}{\opCosh^2(t)}(\Delta\varphi)(t,\theta),
$$
where $\Delta$ is the standard Laplacian of $\Bbb{R}\times \Sph^1$, and, for all $\theta\in \Sph^1$:
$$
(\text{\rm J}^\theta_{\opcat}\varphi)(\pm t_0,\theta) = \varphi(\pm t_0,\theta) \mp t_0(\partial_t\varphi)(\pm t_0,\theta)
$$
\end{proposition}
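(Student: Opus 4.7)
The strategy is to apply the general formulas of Propositions~\ref{PropFormulaForJacobiOperatorOfMeanCurvature} and \ref{PropFormulaForJacobiOperatorOfBoundaryAngle} directly to the explicit parametrisation $e_{\opcat}$, reducing everything to routine computations in the conformally flat induced metric. The plan is to first compute the induced metric. Since $\partial_t e_\opcat$ and $\partial_\theta e_\opcat$ are orthogonal and both of squared norm $r_0^{-2}\cosh^2(t)$, one gets
\[
e_\opcat^*\delta = r_0^{-2}\cosh^2(t)\,(dt^2 + d\theta^2),
\]
which is conformally flat with conformal factor $e^{2\sigma}=r_0^{-2}\cosh^2(t)$. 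Since conformal factors in dimension two transform the Laplacian by $\Delta_g = e^{-2\sigma}\Delta$, we immediately obtain $\Delta_{e_\opcat^*\delta}\varphi = (r_0^2/\cosh^2(t))\Delta\varphi$, which supplies the second term of the claimed formula for $\text{\rm J}^h_\opcat$.

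Next I would compute $\|A\|^2$. Viewing the catenoid as the surface of revolution of the profile $r(z)=r_0^{-1}\cosh(r_0 z)$, the standard formulas for principal curvatures of a surface of revolution yield $\kappa_1 = -r_0/\cosh^2(t)$ and $\kappa_2 = r_0/\cosh^2(t)$ at the point with parameter $t$, so $\|A\|^2 = 2r_0^2/\cosh^4(t)$. Since $\opRic\equiv 0$ on $\Bbb{R}^3$, Proposition~\ref{PropFormulaForJacobiOperatorOfMeanCurvature} gives the stated formula for $\text{\rm J}^h_\opcat$.

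For the boundary part I would apply Proposition~\ref{PropFormulaForJacobiOperatorOfBoundaryAngle}. Since $\partial B$ is the unit Euclidean sphere, its shape operator with respect to the outward normal is the identity, so $\kappa = II(N,N)=\langle N,N\rangle = 1$ (recall that at the boundary the catenoid's normal $N$ is tangent to $\partial B$ by the free-boundary condition). The outward conormal of $\partial\Sigma$ in $\Sigma$ at $t=\pm t_0$ is $\pm\partial_t$, and normalising with respect to the conformal metric above gives the unit conormal $\pm (r_0/\cosh(t_0))\,\partial_t$, hence
\[
\partial_\nu\varphi(\pm t_0,\theta) = \pm\frac{r_0}{\cosh(t_0)}(\partial_t\varphi)(\pm t_0,\theta).
\]
Using the defining identity $r_0 = t_0\cosh(t_0)$ from Proposition~\ref{PropUniquenessOfRotationalSurfaces}, the coefficient $r_0/\cosh(t_0)$ simplifies to $t_0$, and Proposition~\ref{PropFormulaForJacobiOperatorOfBoundaryAngle} yields the formula for $\text{\rm J}^\theta_\opcat$.

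There is no real obstacle; the proof is a direct substitution. The only bookkeeping subtlety is tracking the conformal factor consistently, both when relating $\Delta_g$ to the flat $\Delta$ and when normalising the conormal vector at the boundary, and then noticing that the relation $r_0=t_0\cosh(t_0)$ produces precisely the clean coefficient $t_0$ appearing in the statement.
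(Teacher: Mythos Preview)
Your proposal is correct and follows essentially the same approach as the paper: compute the conformally flat induced metric, use the conformal scaling of the Laplacian, compute $\|A\|^2$ from the principal curvatures of a surface of revolution, and plug into Propositions~\ref{PropFormulaForJacobiOperatorOfMeanCurvature} and~\ref{PropFormulaForJacobiOperatorOfBoundaryAngle}. Your boundary computation is in fact slightly more explicit than the paper's, which simply invokes that the shape operator of $\partial B$ is the identity and writes down the result without spelling out the normalisation of the conormal and the use of $r_0=t_0\cosh(t_0)$.
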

\begin{proof} Observe that the parametrisation $e_\opcat$ is conformal and that, for all $(t,\theta)\in\Bbb{R}\times \Sph^1$:
$$
(e_\opcat^*g_0)(t,\theta) = r_0^{-2}\opCosh^2(t)(dt^2 + d\theta^2).
$$
Thus if $\Delta_\opcat$ denotes the Laplacian operator of the metric $e_\opcat^*\delta$, then:
$$
\Delta_\opcat = \frac{r_0^{2}}{\opCosh^2(t)}\Delta.
$$
Let $I$ be an interval, and let $f:I\rightarrow]0,\infty[$ be a smooth, positive function. We recall that the principle curvature vectors of the surface of revolution of $f$ lie in the directions parallel and normal to the direction of revolution. Moreover, the principle curvature in the direction of revolution (with respect to the outward-pointing normal) is equal to $1/(f\sqrt{1+(f')^2})$. When this surface is minimal, the principle curvature in the other direction is then equal to $-1/(f\sqrt{1+(f')^2})$. Thus, if $A$ denotes the shape operator of $e_\opcat$, then:
$$
\|A\|^2 = \frac{2r_0^2}{\opCosh^4(t)}.
$$
Thus by Lemma \ref{PropFormulaForJacobiOperatorOfMeanCurvature}:
$$
(\text{\rm J}^h_{\opcat}\varphi)(t,\theta) = -\frac{2r_0^2}{\opCosh^4(t)}\varphi(t,\theta)-\frac{r_0^2}{\opCosh^2(t)}(\Delta\varphi)(t,\theta),
$$
as desired. Finally, by Proposition \ref{PropFormulaForJacobiOperatorOfBoundaryAngle}, bearing in mind that the shape operator of the unit sphere in $\Bbb{R}^3$ coincides with $\opId$:
$$
(\text{\rm J}^\theta\varphi)(\pm t_0,\theta) = \varphi(\pm t_0,\theta) \mp t_0(\partial_t\varphi)(\pm t_0,\theta),
$$
and this completes the proof.\end{proof}

For any function $\varphi\in C^\infty([-t_0,t_0]\times \Sph^1)$, we consider the Fourier transform of $\varphi$ in the $\theta$ direction. For all $(t,\theta)\in\Bbb{R}\times \Sph^1$, we write:
$$
\varphi(t,\theta) = \sum_{n\in\Bbb{Z}}\varphi_n(t)e^{in\theta},
$$
where, for all $n\in\Bbb{Z}$, $\varphi_n$ is the $n$'th Fourier mode of $\varphi$.

\begin{proposition}
A function $\varphi\in C^\infty([-t_0,t_0]\times \Sph^1)$ is an element of $\opKer(\text{\rm J}_\opcat)$ if and only if, for all $n\in\Bbb{Z}$:
\begin{eqnarray}\label{EqnFourierTransformedJacobiOperator}
\varphi_n'' + (\frac{2}{\opCosh^2(t)} - n^2)\varphi_n \hfill&=& 0, \nonumber \\
\varphi_n(\pm t_0)\mp t_0\varphi_n'(\pm t_0)\hfill&=&0.
\end{eqnarray}
\end{proposition}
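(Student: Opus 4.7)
The plan is to use the fact that the operator $\text{\rm J}_\opcat$ has coefficients that are independent of the angular variable $\theta$, so that it preserves the Fourier decomposition in $\theta$. The argument will therefore consist in applying Proposition \ref{PropFormulaForJacobiOperatorOfExtremalAnnulus} term-by-term to the Fourier series and reading off the resulting system.

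First, I would write $\varphi(t,\theta)=\sum_{n\in\mathbb{Z}}\varphi_n(t)e^{in\theta}$ in $C^\infty([-t_0,t_0]\times\Sph^1)$. Since the standard Laplacian on $\mathbb{R}\times\Sph^1$ acts as $\Delta=\partial_t^2+\partial_\theta^2$ and the exponentials $e^{in\theta}$ are eigenfunctions of $\partial_\theta^2$ with eigenvalue $-n^2$, differentiating under the summation gives
\[
\Delta\varphi(t,\theta)=\sum_{n\in\mathbb{Z}}\bigl(\varphi_n''(t)-n^2\varphi_n(t)\bigr)e^{in\theta}.
\]
Substituting into the formula of Proposition \ref{PropFormulaForJacobiOperatorOfExtremalAnnulus} for $\text{\rm J}^h_\opcat$ then yields
\[
(\text{\rm J}^h_\opcat\varphi)(t,\theta)=-\frac{r_0^2}{\cosh^2(t)}\sum_{n\in\mathbb{Z}}\Bigl(\varphi_n''(t)+\Bigl(\tfrac{2}{\cosh^2(t)}-n^2\Bigr)\varphi_n(t)\Bigr)e^{in\theta}.
\]
Likewise, the boundary expression from the same proposition decomposes as
\[
(\text{\rm J}^\theta_\opcat\varphi)(\pm t_0,\theta)=\sum_{n\in\mathbb{Z}}\bigl(\varphi_n(\pm t_0)\mp t_0\varphi_n'(\pm t_0)\bigr)e^{in\theta}.
\]

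Finally, I would invoke the uniqueness of Fourier expansions on $\Sph^1$: a smooth function on $\Sph^1$ vanishes identically if and only if all of its Fourier coefficients vanish. Since the prefactor $r_0^2/\cosh^2(t)$ is strictly positive, the equation $\text{\rm J}^h_\opcat\varphi=0$ on $[-t_0,t_0]\times\Sph^1$ is equivalent to the vanishing, for each $n\in\mathbb{Z}$ and each $t\in[-t_0,t_0]$, of the $n$-th Fourier coefficient displayed above; and similarly $\text{\rm J}^\theta_\opcat\varphi=0$ on $\{\pm t_0\}\times\Sph^1$ is equivalent to the corresponding boundary equalities for each mode. Combining these gives exactly the system \eqref{EqnFourierTransformedJacobiOperator}, proving the proposition.

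There is no real obstacle here; the only point requiring any care is justifying term-by-term differentiation and substitution, which is standard for smooth functions on the compact manifold $[-t_0,t_0]\times\Sph^1$ since the partial sums of the Fourier series of $\varphi$ and of its derivatives converge uniformly. The substance of the analysis is deferred to the next step, where one actually solves the ODE system \eqref{EqnFourierTransformedJacobiOperator} to compute $\opKer(\text{\rm J}_\opcat)$.
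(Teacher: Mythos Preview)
Your proposal is correct and follows essentially the same approach as the paper: both exploit that $\varphi$ is smooth so its Fourier series converges in $C^\infty$, that $\text{\rm J}_\opcat$ is linear with $\theta$-independent coefficients, and then apply Proposition~\ref{PropFormulaForJacobiOperatorOfExtremalAnnulus} mode-by-mode. Your write-up is simply more explicit than the paper's two-line version.
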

\begin{proof} Since $\varphi$ is smooth, its Fourier series converges in the $C^\infty$ sense. Since, in addition, the operator $\text{\rm J}_\opcat=(\text{\rm J}^h_{\opcat},\text{\rm J}^\theta_{\opcat})$ is linear, it follows that $\text{\rm J}_\opcat\varphi=0$ if and only if $\text{\rm J}_\opcat\varphi_n=0$ for all $n\in\Bbb{Z}$, and the result follows by Proposition \ref{PropFormulaForJacobiOperatorOfExtremalAnnulus}.\end{proof}

\begin{proposition}\label{PropCaseNEqualsZero}
\noindent There exists no non-trivial solution $\varphi_0\in C^\infty([-t_0,t_0])$ to \eqref{EqnFourierTransformedJacobiOperator} with $n=0$.
\end{proposition}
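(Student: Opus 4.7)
The plan is to solve the ODE $\varphi_0'' + (2/\cosh^2(t))\varphi_0 = 0$ explicitly. Two linearly independent solutions can be written down by inspection: the odd function $\varphi_{\text{odd}}(t) = \tanh(t)$ (using $\tanh''(t) = -2\tanh(t)/\cosh^2(t)$) and the even function $\varphi_{\text{even}}(t) = 1 - t\tanh(t)$, which I would verify by direct differentiation. Thus the general solution is $\varphi_0 = A\tanh(t) + B(1 - t\tanh(t))$.

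Because the ODE and the Robin conditions at $\pm t_0$ are both invariant under the reflection $t \mapsto -t$, the even and odd parts decouple, so I can impose the single boundary condition at $t_0$ on each. The defining relation $t_0 = \coth(t_0)$ translates into two useful identities: $\tanh(t_0) = 1/t_0$ and $\cosh^{-2}(t_0) = 1 - 1/t_0^2$. Substituting $\varphi_{\text{odd}}$ into the BC at $t_0$ and simplifying with these identities gives the condition
\[
A\bigl(\tfrac{2}{t_0} - t_0\bigr) = 0,
\]
while the same procedure with $\varphi_{\text{even}}$ yields $B\, t_0^{\,2} = 0$, which immediately forces $B=0$ since $t_0 > 0$.

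The only remaining obstruction to uniqueness is therefore the odd branch: a non-trivial solution would exist precisely when $2/t_0 - t_0 = 0$, i.e.\ when $t_0 = \sqrt{2}$. The main step of the proof is to rule this out using the transcendental nature of $t_0$. If one had $t_0 = \sqrt{2}$, combining with $t_0 = \coth(t_0)$ and squaring yields $\sinh^2(\sqrt{2}) = 1$, equivalently $e^{\sqrt{2}} = 1 + \sqrt{2}$. This is contradicted by the elementary inequality $e^{x} > 1 + x + x^2/2$ for $x>0$ (from the Taylor expansion of the exponential), which at $x = \sqrt{2}$ gives $e^{\sqrt{2}} > 2 + \sqrt{2} > 1 + \sqrt{2}$. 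Hence $t_0 \ne \sqrt{2}$, so $A = 0$ as well, and $\varphi_0 \equiv 0$, completing the proof. The entire argument is essentially a one-page ODE computation; the only subtle point is the final numeric-looking step of excluding $t_0 = \sqrt{2}$, which I would present as the short convexity estimate above to keep the paper self-contained.
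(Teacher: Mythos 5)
Your proposal is correct and follows essentially the same route as the paper: the paper's proof also writes down the basis $u(t)=1-t\tanh t$, $v(t)=\tanh t$ and then asserts ``by inspection'' that no linear combination satisfies the boundary conditions. You simply carry out that inspection explicitly (the even branch gives $Bt_0^2=0$, the odd branch gives $A(2/t_0-t_0)=0$, and $t_0\neq\sqrt{2}$ follows from $t_0=\coth(t_0)$ together with $e^x>1+x+x^2/2$), which is a welcome filling-in of the detail the paper leaves to the reader.
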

\begin{remark} The functions constructed in the proof of this result are obtained by considering the normal perturbations of $e_\opcat$ arising from dilatations and from translations in the $e_3$ direction.\end{remark}
\begin{proof} The solution space to any second-order, linear ODE (ignoring boundary conditions) is $2$-dimensional. By inspection, we verify that the solution space to \eqref{EqnFourierTransformedJacobiOperator} with $n=0$ is spanned by $u$ and $v$, where:
$$
u(t)=1-t\tanh t,\qquad v(t)=\tanh t.
$$
By inspection, we verify that no linear combination of these solutions satisfies the boundary conditions, and it follows that there exists no non-trivial solution to \eqref{EqnFourierTransformedJacobiOperator} with $n=0$, as desired.\end{proof}

\begin{proposition}\label{PropCaseNGreaterThanTwo}
There exists no non-trivial solution $\varphi_n\in C^\infty([-t_0,t_0])$ to \eqref{EqnFourierTransformedJacobiOperator} with $\left|n\right|\geqslant 2$.
\end{proposition}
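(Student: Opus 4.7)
The plan is to solve the ODE explicitly, decompose solutions into even and odd parts, and show by an elementary computation that neither part can satisfy the Robin boundary condition for $|n|\geq 2$.

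The starting observation is the factorisation
\[
L := -\partial_t^2 - \tfrac{2}{\cosh^2 t} = A^*A - 1, \qquad \text{where } A := \partial_t + \tanh t,
\]
together with the partner identity $AA^* = -\partial_t^2 + 1$. If $\varphi_n$ satisfies the ODE, then $w := A\varphi_n$ satisfies $w'' = n^2 w$, so $w = \alpha e^{nt} + \beta e^{-nt}$. Solving the resulting first-order equation $\varphi_n' + \tanh t\,\varphi_n = w$ with integrating factor $\cosh t$, and noting that for $|n|\geq 2$ the map $A$ between the two-dimensional solution spaces is bijective (since $\ker A = \operatorname{span}(1/\cosh t)$ does not lie in $\ker(L + n^2)$ for $n\neq \pm 1$), yields the explicit formula
\[
\varphi_n(t) = \alpha\, e^{nt}(n - \tanh t) + \beta\, e^{-nt}(n + \tanh t).
\]

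Since the ODE has even coefficients, the solution space splits into even and odd parts, and in both cases the two Robin conditions reduce to the single identity $\varphi_n(t_0) = t_0\varphi_n'(t_0)$. Symmetrising and antisymmetrising the formula above yields
\[
\varphi_e(t) = n\cosh(nt) - \tanh t\,\sinh(nt), \qquad \varphi_o(t) = n\sinh(nt) - \tanh t\,\cosh(nt).
\]
Plugging each into $\varphi(t_0) = t_0\varphi'(t_0)$ and simplifying with the relation $t_0\tanh t_0 = 1$ (hence $t_0/\cosh^2 t_0 = t_0 - 1/t_0$), the two boundary conditions collapse respectively to
\[
2n\coth(nt_0) = t_0(n^2-1) + \tfrac{2}{t_0} \quad (\text{even case}), \qquad 2n\tanh(nt_0) = t_0(n^2-1) + \tfrac{2}{t_0} \quad (\text{odd case}).
\]

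The final step is to rule out both identities for $|n|\geq 2$. In the odd case, since $\tanh(nt_0) < 1$ it suffices to show $t_0 n^2 - 2n + 2/t_0 - t_0 > 0$ for $n\geq 2$; this quadratic in $n$ has roots $(1 \pm \sqrt{t_0^2 - 1})/t_0$, and its larger root is strictly less than $2$ because squaring the equivalent inequality $\sqrt{t_0^2 - 1} < 2t_0 - 1$ reduces it to $3t_0^2 - 4t_0 + 2 > 0$, whose discriminant $16 - 24$ is negative. In the even case, the double-angle formula together with $\tanh t_0 = 1/t_0$ gives $\coth(2t_0) = (t_0^2+1)/(2t_0)$; using monotonicity of $\coth$ to bound $\coth(nt_0) \leq \coth(2t_0)$ for $n\geq 2$ reduces matters to $t_0^2 n^2 - n(t_0^2+1) + 2 - t_0^2 > 0$, which by the same squaring technique collapses ultimately to the trivial inequality $0 < 4t_0^4$. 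The case $n\leq -2$ follows from the $n \mapsto -n$ symmetry of the problem. I expect bookkeeping with the transcendental constant $t_0$ to be the only real subtlety; the conceptual content is entirely encoded in the factorisation $L = A^*A - 1$, which reduces what might otherwise require associated-Legendre machinery to a direct algebraic verification.
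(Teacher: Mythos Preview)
Your proof is correct and takes a genuinely different route from the paper's. The paper argues qualitatively: for $|n|\geq 2$ the potential satisfies $\frac{2}{\cosh^2 t} - n^2 \leq -2$, so after normalising a putative solution to have $\varphi_n(0),\varphi_n'(0)\geq 0$ one shows $\varphi_n,\varphi_n'>0$ on $(0,t_0]$, introduces the Riccati variable $\gamma=\varphi_n'/\varphi_n$, derives the differential inequality $\gamma'\geq 2-\gamma^2$, and compares with $\beta(t)=\sqrt{2}\tanh(\sqrt{2}t)$ to force $\gamma(t_0)>1/t_0$, contradicting the boundary condition.

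Your approach instead exploits the Darboux factorisation $L=A^*A-1$ to write down the general solution explicitly and then checks the Robin condition by hand. This is more computational but has a structural advantage: the same factorisation simultaneously produces the explicit solutions used in the adjacent Propositions for $n=0$ and $n=\pm 1$, so it unifies all three cases into a single mechanism rather than treating $|n|\geq 2$ by a separate comparison argument. The paper's Riccati method, on the other hand, is shorter for this particular case, avoids any algebra with the transcendental constant $t_0$, and would adapt more readily to perturbations of the potential where closed-form solutions are unavailable. Both arguments are clean; yours trades a soft monotonicity estimate for an exact identity.
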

\begin{proof} Choose $|n|\geqslant 2$ and define $f_n:[-t_0,t_0]\longrightarrow \Bbb{R}$ by
$$
f_n(t)=\frac{2}{\cosh^2t}-n^2.
$$
Since $|n|\geqslant 2$, we have that $f_n(t)\leqslant -2$. We now argue by contradiction. Suppose there exists a non-trivial solution, $\varphi_n$ to \eqref{EqnFourierTransformedJacobiOperator} with $\left|n\right|\geqslant 2$. Since \eqref{EqnFourierTransformedJacobiOperator} is linear, upon multiplying by $-1$ if necessary, we may assume that $\varphi_n(0)\geqslant 0$. Since \eqref{EqnFourierTransformedJacobiOperator} is even, upon replacing $\varphi_n(t)$ with $\varphi_n(-t)$ if necessary, we may assume that $\varphi_n'(0)\geqslant 0$. Since $\varphi_n$ is non-trivial, $\varphi_n(0)$ and $\varphi_n'(0)$ cannot both be equal to $0$. Observe that if $\varphi_n>0$ over an interval $I$, then $\varphi_n''=-f_n\varphi_n\geqslant2\varphi_n >0$ over $I$, and so $\varphi_n$ is strictly convex over $I$. We deduce that $\varphi_n(t),\varphi_n'(t)>0$ for all $t\in]0,t_0]$, and we therefore define $\gamma:]0,t_0]\rightarrow\Bbb{R}$ by:
$$
\gamma(t) = \frac{\varphi_n'(t)}{\varphi_n(t)}.
$$
Observe that, for all $t$:
$$
\gamma'(t) = -f_n(t) - \gamma(t)^2 \geqslant 2 - \gamma(t)^2.
$$
Moreover, since $\gamma(t)>0$ for all $t>0$, it follows that:
$$
\mliminf_{t\rightarrow 0}\gamma(t)\geqslant 0.
$$
Observe that $\beta(t):=\sqrt{2}\opTanh(\sqrt{2}t)$ satisfies:
$$
\beta'(t) = 2 - \beta(t)^2,
$$
with initial condition $\beta(0)=0$, and it follows that $\gamma(t)\geqslant\beta(t)=\sqrt{2}\opTanh(\sqrt{2}t)$ for all $t\in]0,t_0]$. In particular, bearing in mind that $t_0>1$:
$$
\gamma(t_0) \geqslant \beta(t_0) = \sqrt{2}\opTanh(\sqrt{2}t_0) > \sqrt{2}\opTanh(\sqrt{2}) > 1 > t_0^{-1}.
$$
However, the boundary condition implies that $\gamma(t_0)=t_0^{-1}$, which is absurd, and there therefore exists no solution to \eqref{EqnFourierTransformedJacobiOperator} with $\left|n\right|\geqslant 2$ as desired.\end{proof}

\begin{proposition}\label{PropCaseNEqualsOne}
The only non-trivial solutions to \eqref{EqnFourierTransformedJacobiOperator} with $n=\pm 1$ are given by:
$$
\varphi_{\pm 1}(t)=a\left(\opSinh(t)+\frac{t}{\opCosh(t)}\right),
$$
for some $a\in \Bbb{C}$.
\end{proposition}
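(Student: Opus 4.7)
The plan is to find two linearly independent solutions of the ODE in \eqref{EqnFourierTransformedJacobiOperator} for $n=\pm 1$, namely
$$\varphi'' + \Bigl(\frac{2}{\cosh^2 t} - 1\Bigr)\varphi = 0,$$
and then apply the boundary conditions using the reflection symmetry $t\mapsto -t$ to split the problem into an even and an odd part. First I would guess two candidate solutions guided by geometry: $\varphi_{\mathrm{odd}}(t)=\sinh t+t/\cosh t$ (an odd function) and $\varphi_{\mathrm{even}}(t)=1/\cosh t$ (an even function coming from translating the catenoid perpendicular to its axis, which produces a mode in $\cos\theta$). A direct computation shows that both functions satisfy the ODE: for $\varphi_{\mathrm{even}}$ one uses $\tanh^2 t + \operatorname{sech}^2 t = 1$, while for $\varphi_{\mathrm{odd}}$ one notes that $\sinh'' = \sinh$ produces a term $2\sinh t/\cosh^2 t$ which is exactly cancelled by the contribution of $t/\cosh t$. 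Since these are linearly independent (one even, one odd, both nontrivial), they span the $2$-dimensional solution space.

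Next I would use parity to decouple the boundary conditions. Writing a general solution $\varphi = a\varphi_{\mathrm{odd}} + b\varphi_{\mathrm{even}}$ and noting that $\varphi_{\mathrm{odd}}'$ is even and $\varphi_{\mathrm{even}}'$ is odd, the two boundary conditions
$$\varphi(t_0)-t_0\varphi'(t_0)=0,\qquad \varphi(-t_0)+t_0\varphi'(-t_0)=0$$
become, upon adding and subtracting,
$$a\bigl(\varphi_{\mathrm{odd}}(t_0)-t_0\varphi_{\mathrm{odd}}'(t_0)\bigr)=0,\qquad b\bigl(\varphi_{\mathrm{even}}(t_0)-t_0\varphi_{\mathrm{even}}'(t_0)\bigr)=0.$$
Thus it suffices to verify that the odd solution satisfies the condition at $t_0$, while the even solution does not.

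For $\varphi_{\mathrm{even}}(t)=\operatorname{sech} t$, one computes
$$\varphi_{\mathrm{even}}(t_0)-t_0\varphi_{\mathrm{even}}'(t_0)=\operatorname{sech} t_0\bigl(1+t_0\tanh t_0\bigr),$$
which is manifestly positive; so the condition forces $b=0$. For $\varphi_{\mathrm{odd}}(t)=\sinh t+t\operatorname{sech} t$, the direct computation gives
$$\varphi_{\mathrm{odd}}(t_0)-t_0\varphi_{\mathrm{odd}}'(t_0)=\sinh t_0 - t_0\cosh t_0+\frac{t_0^2\sinh t_0}{\cosh^2 t_0}.$$
Here is the place where the defining relation for $t_0$ enters crucially: from $t_0=\coth t_0$ we have $\cosh t_0 = t_0\sinh t_0$, hence $\cosh^2 t_0 = t_0^2\sinh^2 t_0$, which together with $\cosh^2 t_0-\sinh^2 t_0=1$ yields $\sinh^2 t_0(1-t_0^2)=-1$. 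Substituting simplifies the expression to $(\sinh^2 t_0(1-t_0^2)+1)/\sinh t_0=0$, so the odd solution satisfies both boundary conditions and $a$ is free.

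The main obstacle is purely computational: making the algebraic identity $\varphi_{\mathrm{odd}}(t_0)-t_0\varphi_{\mathrm{odd}}'(t_0)=0$ fall out cleanly, which depends on the transcendental equation $t_0\tanh t_0=1$ in a nontrivial way. Once this is verified, the claim follows immediately: every solution of the boundary value problem is a scalar multiple of $\sinh t + t/\cosh t$.
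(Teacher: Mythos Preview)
Your proof is correct and follows essentially the same approach as the paper: you identify the same basis $u=\sinh t + t/\cosh t$, $v=1/\cosh t$ for the ODE and then check which combinations satisfy the boundary conditions. The paper simply asserts both steps ``by inspection,'' whereas you supply the details --- in particular the parity decoupling and the explicit verification that $\varphi_{\mathrm{odd}}(t_0)-t_0\varphi_{\mathrm{odd}}'(t_0)=0$ via the defining relation $t_0\tanh t_0=1$ --- so your write-up is strictly more informative but not a different argument.
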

\begin{remark} The functions constructed in the proof of this result are obtained by considering the normal perturbations of $e_\opcat$ arising from rotations about the axes $e_1$ and $e_2$ and from translations in the $e_1$ and $e_2$ directions.\end{remark}
\begin{proof} The solution space to any second-order ODE (ignoring boundary conditions) is $2$-dimensional. By inspection, we verify that the solution space to \eqref{EqnFourierTransformedJacobiOperator} with $n=\pm 1$ is spanned by $u$ and $v$, where:
$$
u = \sinh t+\frac{t}{\cosh t},\qquad v=\frac{1}{\cosh t}.
$$
By inspection $au+bv$ satisfies the boundary condition if and only if $b=0$, and this completes the proof.\end{proof}

\begin{proposition}\label{PropNullityOfExtremalAnnulus}
$\opKer(J)$ is $2$-dimensional.
\end{proposition}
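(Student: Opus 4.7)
My plan is to read off the kernel dimension directly from the Fourier decomposition developed in the three preceding propositions. Since $\text{\rm J}_\opcat = (\text{\rm J}^h_\opcat, \text{\rm J}^\theta_\opcat)$ is a linear operator which, after the change of coordinates $(t,\theta)$, preserves Fourier modes in $\theta$ (this is visible in Proposition \ref{PropFormulaForJacobiOperatorOfExtremalAnnulus}, where only $\partial_t^2$, $\partial_\theta^2$, and a $\theta$-independent coefficient appear), any $\varphi \in \opKer(\text{\rm J}_\opcat)$ decomposes as a $C^\infty$-convergent Fourier series $\varphi(t,\theta) = \sum_{n\in\Bbb{Z}} \varphi_n(t)e^{in\theta}$ with each $\varphi_n$ lying in the kernel. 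So the task reduces to enumerating the solutions of the one-dimensional boundary value problem \eqref{EqnFourierTransformedJacobiOperator} for each $n$.

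First, I would invoke Proposition \ref{PropCaseNEqualsZero} to conclude that the $n=0$ component of any kernel element vanishes. Next, Proposition \ref{PropCaseNGreaterThanTwo} rules out all modes with $|n| \geq 2$. This leaves only $n = \pm 1$, and Proposition \ref{PropCaseNEqualsOne} shows that for each of these two modes, the space of admissible $\varphi_n$ is one-dimensional, spanned by $\psi(t) := \sinh(t) + t/\cosh(t)$. Therefore any element of $\opKer(\text{\rm J}_\opcat)$ has the form
\[
\varphi(t,\theta) = \psi(t)\bigl(a_+ e^{i\theta} + a_- e^{-i\theta}\bigr),
\]
for some $a_+, a_- \in \Bbb{C}$. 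To produce real-valued functions (which is the relevant case for normal perturbations), we impose $a_- = \overline{a_+}$, leaving a two real-dimensional space spanned, for instance, by $\psi(t)\cos\theta$ and $\psi(t)\sin\theta$. Either way, one obtains $\opDim_{\Bbb{R}}\opKer(\text{\rm J}_\opcat) = 2$.

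The substantive content here was already absorbed into the three one-dimensional ODE analyses: the maximum-principle/convexity argument for $|n|\geq 2$ and the explicit solution spaces for $n=0,\pm 1$. What remains for the present proposition is purely bookkeeping: verifying the Fourier decomposition argument and summing the dimensions $0 + 2\cdot 1 + 0 = 2$. I expect no real obstacle in this step; the only minor care needed is to note that the two basis solutions $\psi(t)\cos\theta$ and $\psi(t)\sin\theta$ correspond geometrically to the normal components of perturbations of the critical catenoid by translations in the $e_1$ and $e_2$ directions (or, equivalently by the symmetry of the catenoid, rotations about those axes), confirming both that they are genuine Jacobi fields and that they are linearly independent.
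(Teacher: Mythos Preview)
Your proposal is correct and follows essentially the same approach as the paper: Fourier-decompose an arbitrary kernel element, invoke Propositions \ref{PropCaseNEqualsZero}, \ref{PropCaseNGreaterThanTwo}, and \ref{PropCaseNEqualsOne} to eliminate all modes except $n=\pm 1$, and then impose the reality condition to obtain the two-dimensional real span of $\psi(t)\cos\theta$ and $\psi(t)\sin\theta$. The paper's proof is slightly more terse (it does not spell out the reality condition $a_-=\overline{a_+}$ and cites Proposition \ref{PropEllipticRegularityOfJacobiOperatorII} to justify $C^\infty$ convergence of the Fourier series), but the logic is identical.
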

\begin{proof} Choose $\varphi\in\opKer(\text{\rm J}^h,\text{\rm J}^\theta)$. By Proposition \ref{PropEllipticRegularityOfJacobiOperatorII}, $\varphi\in C^\infty([-t_0,t_0]\times \Sph^1)$, and so its Fourier series converges in the $C^\infty$ sense. For $n\in\Bbb{Z}$, let $\varphi_n\in C^\infty([-t_0,t_0])$ be the $n$'th Fourier mode of $\varphi$. By Proposition \ref{PropCaseNEqualsZero}, $\varphi_0=0$, by Proposition \ref{PropCaseNGreaterThanTwo}, $\varphi_n=0$ for all $\left|n\right|\geqslant 2$, and by Proposition \ref{PropCaseNEqualsOne}:
$$
\varphi_{\pm 1} = a\left(\opSinh(t)+\frac{t}{\opCosh(t)}\right),
$$
for some $a\in\Bbb{C}$. Thus:
$$
\varphi = \left(\opSinh(t)+\frac{t}{\opCosh(t)}\right)(a\opCos(\theta) + b\opSin(\theta)),
$$
for some $a,b\in\Bbb{R}$. The space of all such functions is $2$-dimensional, and this completes the proof.\end{proof}

\begin{proposition}\label{PropNonDegenerateFamiliesOfCatenoids}
If $S=\Sph^1\times[0,1]$ is the annulus, then there exists $\delta>0$ such that for all $t\in(-\delta,\delta)$, the family of embeddings $[e]\in\Cal{Z}(\left\{g_t\right\})$ which are invariant under rotation about some vector constitutes a non-degenerate family diffeomorphic to two disjoint copies of $\Bbb{R}\Bbb{P}^2$.
\end{proposition}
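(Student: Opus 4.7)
The plan is to adapt the argument of Proposition \ref{NonDegenerateFamiliesOfDisks} to the annulus case. First I will parametrise the family: for each axis $v\in\Sph^2$ and each orientation $\epsilon\in\{+,-\}$ of the annulus, Proposition \ref{PropExtremalCatenoid} supplies a unique rotationally symmetric properly embedded free boundary minimal annulus in $(B,g_t)$, and the equivalence class of the corresponding embedding depends smoothly on $(t,v,\epsilon)$ by the implicit-function-theorem construction inside Proposition \ref{PropExtremalCatenoid} combined with the $\mathrm{SO}(3)$-equivariance of the whole setup. This produces, for each fixed $\epsilon$, a smooth map $\Cal{I}_t^\epsilon:\Sph^2\to\Cal{Z}(\{g_t\})$.

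The main topological step is to descend $\Cal{I}_t^\epsilon$ to a smooth embedding of $\Bbb{R}\Bbb{P}^2$. Since the catenoid with axis $v$ coincides as a subset of $B$ with the catenoid with axis $-v$, the parametrisations of the two surfaces (via the $e_\opcat$ convention of Section \ref{NonDegenerateFamiliesOfCatenoids} suitably rotated) differ by the diffeomorphism $(s,\theta)\mapsto(-s,-\theta)$ of $[-t_0,t_0]\times\Sph^1$, which is orientation preserving. Hence $\Cal{I}_t^\epsilon(v)=\Cal{I}_t^\epsilon(-v)$, and $\Cal{I}_t^\epsilon$ descends to a smooth embedding $\tilde{\Cal{I}}_t^\epsilon:\Bbb{R}\Bbb{P}^2\to\Cal{Z}(\{g_t\})$. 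The images of $\tilde{\Cal{I}}_t^+$ and $\tilde{\Cal{I}}_t^-$ are disjoint, because no orientation-preserving diffeomorphism of the annulus can exchange its two unit normal fields; by Proposition \ref{PropExtremalCatenoid} their union exhausts all rotationally invariant elements of $\Cal{Z}(\{g_t\})$. This realises the claimed family as $\Bbb{R}\Bbb{P}^2\sqcup\Bbb{R}\Bbb{P}^2$.

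Non-degeneracy is then immediate from the spectral estimates already in hand. By Proposition \ref{PropNullityOfExtremalAnnulus}, at $t=0$ the Jacobi operator of the reference critical catenoid has nullity exactly $2$, and this value is independent of the axis $v$ by $\mathrm{SO}(3)$-equivariance. Upper semicontinuity of the nullity (invoked in the proof of Proposition \ref{NonDegenerateFamiliesOfDisks}) yields, upon shrinking $\delta$, that $\opNull(\text{\rm J}_{g_t,\tilde{\Cal{I}}_t^\epsilon(v)})\leq 2$ for all $|t|<\delta$, while Proposition \ref{lowerboundondimensionofkernel} applied to the $2$-dimensional family $\Bbb{R}\Bbb{P}^2\sqcup\Bbb{R}\Bbb{P}^2$ gives the reverse inequality. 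Equality of the nullity with the family dimension is precisely the non-degeneracy condition.

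The step I expect to require the most care is the topological bookkeeping: verifying explicitly that the antipodal identification $v\mapsto-v$ is realised by an orientation-preserving reparametrisation of $[-t_0,t_0]\times\Sph^1$ (so the axis space reduces from $\Sph^2$ to $\Bbb{R}\Bbb{P}^2$), while simultaneously the two choices of normal genuinely produce distinct, non-identifiable classes in $\Cal{E}$ (giving two disjoint copies rather than one). Once this bookkeeping is in place, every remaining step is essentially a transcription of the argument for Proposition \ref{NonDegenerateFamiliesOfDisks}.
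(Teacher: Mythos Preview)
Your proposal is correct and follows essentially the same approach as the paper's proof: parametrise by axis $v\in\Sph^2$ and orientation, observe that the map quotients to an embedding of $\Bbb{R}\Bbb{P}^2$ for each orientation, then combine Proposition \ref{PropNullityOfExtremalAnnulus} (nullity $=2$ at $t=0$), upper semicontinuity of the nullity, and Proposition \ref{lowerboundondimensionofkernel} to conclude non-degeneracy. If anything, you are more explicit than the paper on the topological bookkeeping---the paper simply asserts that $\Cal{I}_{t,\pm}$ ``quotients down to a smooth embedding of $\Bbb{R}\Bbb{P}^2$'' without spelling out the orientation-preserving reparametrisation $(s,\theta)\mapsto(-s,-\theta)$ that realises $\Cal{I}_{t,\pm}(v)=\Cal{I}_{t,\pm}(-v)$.
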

\begin{proof} We define $\Cal{I}_{t,+}:\Sph^2\rightarrow\Cal{Z}(\left\{g_t\right\})$ such that, for all $v\in \Sph^2$, $\Cal{I}_{t,+}(v)$ is the extremal catenoid of the metric $g_t$ with axis $v$, oriented such that its normal points towards the axis of rotation. We define $\Cal{I}_{t,-}:\Sph^2\rightarrow\Cal{Z}(\left\{g_t\right\})$ such that for all $v\in \Sph^2$, $\Cal{I}_{t,-}(v)=\Cal{I}_{t,+}(v)$ with the reverse orientation. We see that $\Cal{I}_{t,\pm}$ quotients down to a smooth embedding of $\Bbb{R}\Bbb{P}^2$ into $\Cal{E}$. By Proposition \ref{PropExtremalCatenoid}, $\Cal{I}_{t,\pm}(\Bbb{R}\Bbb{P}^2)$ accounts for all free boundary minimal embeddings in $\Cal{Z}(\left\{g_t\right\})$ which are invariant under rotation. By Proposition \ref{PropNullityOfExtremalAnnulus}, when $t=0$, the nullity of the Jacobi operator of $\Cal{I}_{0,\pm}(v)$ with respect to the metric $g_0$ is equal to $2$ for all $v\in \Bbb{R}\Bbb{P}^2$. Since the nullity is upper-semicontinuous, there exists $\delta>0$ such that for all $\left|t\right|<\delta$ and for all $v\in \Sph^2$, the nullity of the Jacobi operator of $\Cal{I}_{t,\pm}(v)$ with respect to the metric $g_t$ is at most $2$. Since $\Cal{I}_{t,\pm}$ is an embedding, by Proposition \ref{lowerboundondimensionofkernel}, the nullity of the Jacobi operator of $\Cal{I}_{t,\pm}(v)$ is also bounded below by the dimension of $\Bbb{R}\Bbb{P}^2$. It follows that the nullity of $\Cal{I}_{t,\pm}(v)$ with respect to the metric $g_t$ is equal to $2$, and we conclude that $\Cal{I}_{t,\pm}(\Bbb{R}\Bbb{P}^2)$ is a non-degenerate family, as desired.\end{proof}

\subsection{Calculating the degree}\label{CalculatingTheDegree} Let $\Sigma$ be a compact surface with boundary. Let $\delta$ be a positive real number chosen as in Proposition \ref{NonDegenerateFamiliesOfDisks} if $\Sigma$ is diffeomorphic to the disk, $D$; as in Proposition \ref{PropNonDegenerateFamiliesOfCatenoids} if $\Sigma$ is diffeomorphic to the annulus, $\Sph^1\times[0,1]$; and as in Proposition \ref{PropNoOtherNonDegenerateFamilies} otherwise. We have (c.f. \cite{WhiteII}):
\begin{proposition}\label{PropFaithfulActionsOfFiniteGroups}
For all $t\in(-\delta,\delta)$, there exists $N\in\Bbb{N}$ such that if $S \subseteq B$ is an embedded surface in $B$ which is diffeomorphic to $\Sigma$ and free boundary minimal with respect to $g_t$, then either:
\begin{itemize}
\item[(1)] $S$ is invariant by rotation about some unit vector $v$; or
\item[(2)] for all unit vectors $v\in \Sph^2$, and for all $k\geqslant N$, $R_{v,2\pi/k}(S)\neq S$.
\end{itemize}
\end{proposition}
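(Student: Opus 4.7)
The plan is to argue by contradiction. Suppose the conclusion fails for some $t_0\in(-\delta,\delta)$: then for each $m\in\Bbb{N}$ there exists an embedded surface $S_m\subseteq B$ diffeomorphic to $\Sigma$ and free boundary minimal with respect to $g_{t_0}$, together with a unit vector $v_m\in\Sph^2$ and an integer $k_m\geqslant m$, such that $R_{v_m,2\pi/k_m}(S_m)=S_m$ while $S_m$ is itself not invariant by rotation about any axis. Since $\Sph^2$ is compact, after extraction $v_m\to v_\infty$. Since $(B,g_{t_0})$ has nonnegative Ricci curvature and strictly convex boundary, Theorem \ref{ThmFraserLi} applied to the constant sequence of metrics provides, after a further extraction, an embedded surface $S_\infty\subseteq B$ which is free boundary minimal with respect to $g_{t_0}$, diffeomorphic to $\Sigma$, and towards which $(S_m)$ converges in the $C^\infty$ sense (up to reparametrisations).

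The key geometric step is to show that $S_\infty$ is rotationally invariant about $v_\infty$. Fix $\theta\in[0,2\pi)$; for each $m$ choose $n_m\in\{0,1,\ldots,k_m-1\}$ minimising $\left|2\pi n_m/k_m-\theta\right|$, so that this distance is at most $\pi/k_m\to 0$. Iterating the identity $R_{v_m,2\pi/k_m}(S_m)=S_m$ yields $R_{v_m,2\pi n_m/k_m}(S_m)=S_m$ for every $m$. The Euclidean rotations $R_{v_m,2\pi n_m/k_m}$ converge smoothly to $R_{v_\infty,\theta}$, and combining this with the $C^\infty$ convergence of $S_m$ to $S_\infty$ gives $R_{v_\infty,\theta}(S_\infty)=S_\infty$. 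As $\theta$ was arbitrary, $S_\infty$ is invariant under rotation about $v_\infty$.

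The argument concludes by case analysis on the topology of $\Sigma$. If $\Sigma$ is neither a disk nor an annulus, Proposition \ref{PropNoOtherNonDegenerateFamilies} asserts that no rotationally invariant free boundary minimal surface of this topology exists in $(B,g_{t_0})$, contradicting the existence of $S_\infty$. Otherwise, $S_\infty$ belongs to the non-degenerate family $Z$ of rotationally symmetric extremal surfaces furnished by Proposition \ref{NonDegenerateFamiliesOfDisks} or by Proposition \ref{PropNonDegenerateFamiliesOfCatenoids}. The isolation statement of Theorem \ref{ThmNonDegenerateFamilies} then provides a neighbourhood $\Omega\subseteq\Cal{E}$ of $Z$ such that $\Cal{Z}(\{g_{t_0}\})\minter\overline{\Omega}=Z$. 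For $m$ sufficiently large, $C^\infty$ convergence places $[S_m]$ in $\Omega$, whence $[S_m]\in Z$, and so $S_m$ is itself rotationally invariant, contradicting the standing hypothesis. The main subtlety lies in the passage from discrete symmetries of arbitrarily high order to a full continuous symmetry group in the limit; once this is in hand, the local rigidity afforded by the non-degenerate family structure propagates rotational invariance back to the approximating sequence.
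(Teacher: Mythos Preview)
Your proof is correct and follows essentially the same approach as the paper: a contradiction argument using Fraser--Li compactness to extract a limit surface, promotion of the discrete $\Bbb{Z}/k_m$ symmetries to full rotational symmetry in the limit, and then the isolation of the non-degenerate rotationally symmetric family (via Theorem \ref{ThmNonDegenerateFamilies}) to force $S_m$ itself to be rotationally invariant for large $m$. The only cosmetic difference is that the paper invokes Propositions \ref{PropExtremalDisk} and \ref{PropExtremalCatenoid} to first identify $S_\infty$ explicitly before appealing to non-degeneracy, whereas you cite Propositions \ref{NonDegenerateFamiliesOfDisks} and \ref{PropNonDegenerateFamiliesOfCatenoids} directly, which already characterise the full rotationally invariant locus.
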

\begin{proof} Suppose the contrary. There exists a sequence $(k_m)_\minn$ in $\Bbb{N}$ converging to $\infty$, a sequence $(v_m)_\minn$ of unit vectors in $\Bbb{R}^3$ and a sequence $(S_m)_\minn$ of embedded surfaces in $B$ diffeomorphic to $\Sigma$ such that for all $m$, $S_m$ is free boundary minimal with respect to $g_t$, is not invariant under rotation about any vector, but satisfies $R_{v_m,2\pi/k_m}(S_m)=S_m$. Upon extracting a subsequence, we may suppose that $(v_m)_\minn$ converges to a unit vector $v_\infty$ in $\Bbb{R}^3$, say. By Theorem \ref{ThmFraserLi}, upon extracting a further subsequence, we may suppose that $(S_m)_\minn$ converges to an embedded submanifold $S_\infty$ which is also diffeomorphic to $\Sigma$ and free boundary minimal with respect to $g_t$. We claim that $S_\infty$ is invariant under rotation about $v_\infty$. Indeed, choose $\theta\in\Bbb{R}$. Since $(k_m)_\minn$ converges to $\infty$, there exists a sequence $(l_m)_\minn\in\Bbb{Z}$ such that $(2\pi l_m/k_m)_\minn$ converges to $\theta$. However, for all $m$:
$$
R_{v_m,2\pi l_m/k_m}(S_m) = (R_{v_m,2\pi/k_m})^{l_m}(S_m) = S_m.
$$
\noindent Thus, upon taking limits, we find that $R_{v_\infty,\theta}(S_\infty)=S_\infty$, and since $\theta\in\Bbb{R}$ is arbitrary, it follows that $S_\infty$ is invariant under rotation about $v_\infty$, as asserted. If $\Sigma$ is diffeomorphic to the disk, $D$, then by Proposition \ref{PropExtremalDisk}, $S_\infty$ is the critical disk of the metric $g_t$ with axis $v$. By Proposition \ref{NonDegenerateFamiliesOfDisks}, the family of critical disks of the metric $g_t$ is non-degenerate. In particular, by Theorem \ref{ThmNonDegenerateFamilies}, this family is isolated in $\Cal{Z}(\left\{g_t\right\})$. Thus, for sufficiently large $m$, $S_m$ is also a critical disk of $g_t$. In particular, $S_m$ is invariant under rotation about some vector, which is absurd. If $\Sigma$ is diffeomorphic to the annulus, $\Sph^1\times[0,1]$, then, by Proposition \ref{PropExtremalCatenoid}, $S_\infty$ is the critical catenoid of the metric $g_t$ with axis $v$. By Proposition \ref{PropNonDegenerateFamiliesOfCatenoids}, the family of critical annuli of the metric $g_t$ is non-degenerate. In particular, by Theorem \ref{ThmNonDegenerateFamilies}, this family is isolated in $\Cal{Z}(\left\{g_t\right\})$. Thus, for sufficiently large $m$, $S_m$ is also a critical annulus of $g_t$. In particular, $S_m$ is invariant under rotation about some vector, which is absurd. It follows that $S_\infty$ is not diffeomorphic, either to the disk, $D$, or to the annulus, $\Sph^1\times[0,1]$. However, this is absurd by Proposition \ref{PropNoOtherNonDegenerateFamilies}, and the result follows.\end{proof}

\begin{theorem}\label{ThmCalculatingDegree}
$$
\opDeg(\Pi)=
\left\{\begin{array}{cl}\pm 2\hfill&\text{if}\ \Sigma\ \text{is diffeomorphic to}\ D;\hfill \\
\pm 2\hfill&\text{if}\ \Sigma\ \text{is diffeomorphic to}\ \Sph^1\times [0,1];\ \text{and}\\ \hfill
0\hfill&\text{otherwise.}\hfill\end{array}\right.
$$
\end{theorem}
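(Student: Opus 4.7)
The plan is to evaluate $\opDeg(\Pi)$ at the metric $g_t$ for some small $t \neq 0$ with $\left|t\right|<\delta$, which is functionally strictly convex, has positive Ricci curvature, and is $O(3)$-invariant about the origin. Since by Proposition \ref{ThmConnectedness} the space of admissable metrics is path-connected and the degree is a homotopy invariant in this class, this suffices. I will decompose $\Cal{Z}(\{g_t\})$ into the subset of rotationally-symmetric free boundary minimal embeddings and its complement, and show via local degree computations that these contribute $\pm 2$ and $0$ respectively in the disk and annulus cases, while both vanish for all other topological types.

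The rotationally-symmetric contribution is handled directly by Theorem \ref{ThmNonDegenerateFamilies}. When $\Sigma$ is neither a disk nor an annulus, Proposition \ref{PropNoOtherNonDegenerateFamilies} supplies no such embeddings, so the contribution is zero. When $\Sigma\cong D$, Proposition \ref{NonDegenerateFamiliesOfDisks} gives a non-degenerate family $Z\cong\Sph^2$ of critical disks, and Theorem \ref{ThmNonDegenerateFamilies} produces a neighbourhood $\Omega\subseteq\Cal{E}$ of $Z$ with
$$
\opDeg(\Pi|\Omega) = (-1)^{\opInd(Z)}\chi(\Sph^2) = \pm 2.
$$
When $\Sigma\cong\Sph^1\times[0,1]$, Proposition \ref{PropNonDegenerateFamiliesOfCatenoids} gives a non-degenerate family $Z\cong\Bbb{R}\Bbb{P}^2\sqcup\Bbb{R}\Bbb{P}^2$ whose two components correspond to the two normal orientations of the critical catenoid. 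By Lemma \ref{PropFormulaForJacobiOperatorOfMeanCurvature} and Proposition \ref{PropFormulaForJacobiOperatorOfBoundaryAngle}, both $\text{\rm J}^h$ and $\text{\rm J}^\theta$ depend only on the unoriented surface, so the index of the Jacobi operator is constant across $Z$, and Theorem \ref{ThmNonDegenerateFamilies} yields
$$
\opDeg(\Pi|\Omega) = (-1)^{\opInd(Z)}\chi(Z) = \pm 2.
$$

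For the complementary contribution, let $\Omega'\subseteq\Cal{E}$ be an open, $O(3)$-invariant neighbourhood of the remaining elements of $\Cal{Z}(\{g_t\})$, disjoint from $\Omega$ and satisfying $\partial_\omega\Cal{Z}(\{g_t\}|\Omega')=\emptyset$; such a set exists by compactness of $\Cal{Z}(\{g_t\})$ (Proposition \ref{ThmCompactness}) combined with the isolation afforded by Theorem \ref{ThmNonDegenerateFamilies}. I will show $\opDeg(\Pi|\Omega')=0$ by adapting White's symmetry argument. Let $N$ be as in Proposition \ref{PropFaithfulActionsOfFiniteGroups}, fix $v\in\Sph^2$, and for each prime $k\geqslant N$ consider the cyclic group $G_k=\langle R_{v,2\pi/k}\rangle$, which acts by isometries on $(B,g_t)$. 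Since $k$ is prime, every non-trivial subgroup of $G_k$ equals $G_k$; Proposition \ref{PropFaithfulActionsOfFiniteGroups} then implies $G_k$ acts freely on the non-rotationally-symmetric embeddings of $\Omega'$, and every $G_k$-orbit there has size exactly $k$. The extension procedure of Theorem \ref{PropSurjectivityForExtensions} can then be carried out using only $G_k$-invariant conformal factors: given any non-$G_k$-fixed $[e]\in\Omega'$, one chooses $\varphi\in C^\infty(M)$ supported in a neighbourhood $U$ of $e(\Sigma)$ disjoint from its $G_k$-translates and averages over $G_k$, which preserves the action of $\text{\rm P}_{g_t,e}$ on $\varphi|_U$ and thus preserves surjectivity of $\text{\rm P}+\text{\rm J}$ at $[e]$. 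Applying Theorem \ref{ThmIntegerValuedDegree} to the resulting $G_k$-invariant extension expresses $\opDeg(\Pi|\Omega')$ as an algebraic count of signatures; since rotations conjugate Jacobi operators by pullback and hence preserve their signatures, each $G_k$-orbit contributes $k$ identical signatures, and so $\opDeg(\Pi|\Omega')$ is divisible by $k$. This integer is independent of $k$, so letting $k$ range over arbitrarily large primes forces $\opDeg(\Pi|\Omega')=0$.

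Combining the two contributions gives $\opDeg(\Pi)=\opDeg(\Pi|\Omega)+\opDeg(\Pi|\Omega')=\pm 2$ when $\Sigma$ is a disk or annulus, and $0$ otherwise. The main obstacle is the equivariant symmetry argument in the last paragraph: rigorously constructing the $G_k$-invariant extension of $X$ within the admissable class, verifying surjectivity of $\text{\rm P}+\text{\rm J}$ when $\text{\rm P}$ is restricted to $G_k$-invariant perturbations, and confirming that the orientation conventions of Section 4 are $G_k$-equivariant so that orbit elements genuinely contribute equal signatures to the degree count.
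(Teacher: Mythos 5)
Your overall strategy coincides with the paper's: evaluate the degree at $g_t$ for small $t\neq 0$, use Theorem \ref{ThmNonDegenerateFamilies} together with Propositions \ref{NonDegenerateFamiliesOfDisks}, \ref{PropNonDegenerateFamiliesOfCatenoids} and \ref{PropNoOtherNonDegenerateFamilies} to get $(-1)^{\opInd(Z_0)}\chi(Z_0)$ from the rotationally symmetric family ($\chi(\Sph^2)=2$, $\chi(\Bbb{R}\Bbb{P}^2\sqcup\Bbb{R}\Bbb{P}^2)=2$), and kill the remaining contribution with White's prime-order symmetry argument, concluding that an integer divisible by arbitrarily large primes vanishes. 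This is the proof in Section \ref{CalculatingTheDegree}.

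There is, however, one step in your symmetry argument that fails as written. You propose to choose the perturbing conformal factor $\varphi$ ``supported in a neighbourhood $U$ of $e(\Sigma)$ disjoint from its $G_k$-translates.'' No such neighbourhood exists in general: $e(\Sigma)$ and $R_{v,2\pi/k}(e(\Sigma))$ are two distinct properly embedded surfaces in the ball and will typically intersect, so no neighbourhood of the whole image can be disjoint from its rotates. The paper's fix is to localise much further: since $e$ is minimal, the set of points of $e(\Sigma)$ not lying on any nontrivial rotate is open and dense, so one picks a single point $e(q)$ in it and a small neighbourhood $U$ of $e(q)$ whose $G$-translates are disjoint from $U$ and from $e(\Sigma)$. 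One then needs Proposition \ref{PropFindingTheRightPerturbationSpace} (which rests on Aronszajn's unique continuation theorem) to guarantee that conformal perturbations supported in such a tiny $U$ still produce mean-curvature variations pairing non-degenerately with all of $\opKer(\text{\rm J}_{g_t,e})$, i.e.\ still span the cokernel of $\text{\rm J}$. Averaging those localised $\varphi_k$ over $G$ then gives the $G$-invariant extension, and the $G$-translates of $U$ being disjoint from $e(\Sigma)$ ensures the averaged perturbation acts on $e$ exactly as the unaveraged one. Without the unique-continuation input your construction of the equivariant extension does not go through; with it, the rest of your argument (free action of $G_k$ on the non-symmetric orbits, equal signatures along each orbit, divisibility by $k$) is exactly the paper's.
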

\begin{remark}We recall that the degree theory constructed in this paper has been designed to count {\it oriented} surfaces. In the present case, this means that every free boundary minimal surface will be counted twice, once for each orientation, so that the degree will always be even.
\end{remark}
%\begin{remark} The sign of the degree is determined by calculating the Morse indices of $\text{\rm J}_\opdisk$ and $\text{\rm J}_\opcat$, and can be calculated using techniques similar to those employed in the proofs of Propositions \ref{PropNullityOfExtremalDisk} and \ref{PropNullityOfExtremalAnnulus} respectively.\end{remark}
\begin{proof} Let $Z_0\subseteq\Cal{Z}(\left\{g_t\right\})$ be the set of embeddings which are free boundary minimal with respect to $g_t$ and invariant under rotation with respect to some vector. By Propositions \ref{PropNoOtherNonDegenerateFamilies}, \ref{NonDegenerateFamiliesOfDisks} and \ref{PropNonDegenerateFamiliesOfCatenoids}, $Z_0$ constitutes a non-degenerate family. By Theorem \ref{ThmNonDegenerateFamilies}, there exists a neighbourhood $\Omega$ of $Z_0$ in $\Cal{E}$ such that:
$$
\Cal{Z}(\left\{g_t\right\})\minter\overline{\Omega} = Z_0.
$$
\noindent Upon reducing $\Omega$ if necessary, we may suppose that $\Omega$ is also invariant under the action of $\opSO(3)$. We first calculate the contribution to the degree from embeddings in $\overline{\Omega}^c$. Let $N$ be as in Proposition \ref{PropFaithfulActionsOfFiniteGroups} and let $v$ be a unit vector in $\Bbb{R}^3$. Pick $[e]\in\Cal{Z}(\left\{g_t\right\}|\overline{\Omega}^c)$. By definition of $N$, for all $p\geqslant N$, $
R_{v,2\pi/p} \circ e (\Sigma) \neq e(\Sigma)$. If, in addition, $p$ is prime, then for all $1\leqslant k<p$ we also have $R_{v,2\pi k/p}\circ e(\Sigma)\neq e(\Sigma)$. Since $e$ is minimal, there exists an open, dense subset $V$ of $\Sigma$ such that, for all $1\leqslant k<p$, $ R_{v,2\pi k/p}\circ e(V)\minter e(V) = \emptyset$. Choose $q\in V$ and let $U$ be a neighbourhood of $e(q)$ in $B$ such that for all $1\leqslant k<p$:
$$
R_{v,2\pi k/p}(U)\minter U=\emptyset,\qquad R_{v,2\pi k/p}(U)\minter e(\Sigma) = \emptyset.
$$
Let $X_0=\left\{x_0\right\}$ be the manifold consisting of a single point. Denote $g_{x_0}:=g_t$. We define the mapping ${g}:C^\infty(M)\times X_0\times M\rightarrow\opSym^+(TM)$ such that for all $f\in C^\infty(M)$:
$$
{g}_f := {g}(f,x_0,\cdot) = e^f g_t.
$$
Let $E$ be a finite-dimensional, linear subspace of $C^\infty(M)$ and for $r>0$, let $E_r$ be the closed ball of radius $r$ about $0$ in $E$ with respect to some metric. Extend $X_0$ to ${X}=E_r\times \{x_0\}$. Let $G\subseteq\opSO(3)$ be the subgroup generated by $R_{v,2\pi/p}$. Let $f_1,...,f_m$ be a basis of $\opKer(\text{\rm J}_{g_{x_0},e})$, let $\varphi_1,...,\varphi_m$ be as in Proposition \ref{PropFindingTheRightPerturbationSpace} with $U$ as above, and for $1\leqslant k\leqslant m$, define $\varphi'_k$ by:
$$
\varphi'_k = \sum_{l=1}^p \varphi_k\circ R_{v,2\pi l/p}.
$$
By definition, $\varphi'_k$ is invariant under the action of $G$. Let $E_{t,e}\subseteq C^\infty(M)$ be the linear span of $\varphi'_1,...,\varphi'_m$. As in the proof of Theorem \ref{PropSurjectivityForExtensions}, we show that if $E$ contains $E_{t,e}$, then $\text{\rm P}_{x_0,e}+\text{\rm J}_{x_0,e}$ defines a surjective map from $T_{x_0}{X}\times C^\infty(\Sigma)$ into $C^\infty(\Sigma)\times C^\infty(\partial \Sigma)$. Proceeding as in the proof of Theorem \ref{PropSurjectivityForExtensions}, we show that $E$ and $r$ may be chosen such that $g_x$ is invariant under the action of $G$ for all $x\in{X}$, $\partial_\omega\Cal{Z}(X|\Omega)=\partial_\omega \Cal{Z}(X|\overline{\Omega}^c)=\emptyset$, and $\text{\rm P}_{x,e} + \text{\rm J}_{x,e}$ defines a surjective map from $T_{x}{X}\times C^\infty(\Sigma)$ into $C^\infty(\Sigma)\times C^\infty(\partial \Sigma)$ for all $[e]\in\Cal{Z}(\left\{x\right\}|\overline{\Omega}^c)$. Thus, by Theorem \ref{ThmSolutionSpaceIsSmooth}, $\Cal{Z}(X|\overline{\Omega}^c)$ is a smooth manifold of finite dimension equal to $\opDim(X)$ and $\Pi(\partial(\Cal{Z}(X|\overline{\Omega}^c))\subseteq\partial{X}$.

Now, we let $x\in X$ be a regular value of the restriction of $\Pi$ to $\Cal{Z}(X|\overline{\Omega}^c)$. Since $g_x$ and $\Omega^c$ are both invariant under the action of $G$, it follows that $\Cal{Z}(\left\{x\right\}|\overline{\Omega}^c)$ decomposes into disjoint orbits of $G$. By Proposition \ref{PropFaithfulActionsOfFiniteGroups}, none of the orbits of $G$ in $\Cal{Z}(\left\{x_0\right\}|\Omega^c)$ is trivial, and so, by Theorem \ref{ThmFraserLi}, for $x$ sufficiently close to $x_0$, none of the orbits of $G$ in $\Cal{Z}(\left\{x\right\}|\overline{\Omega}^c)$ is trivial either. However, since $p$ is prime, all of the non-trivial orbits of $G$ have order $p$, so:
$$
\opDeg(\Pi|\overline{\Omega}^c)=\sum_{[e]\in\Cal{Z}(\left\{x\right\}|\overline{\Omega}^c)}\opSig(\text{\rm J}_{x,e}) = 0\ \text{mod}\ p.
$$

We now account for the embeddings in $\Omega$. By Theorem \ref{ThmIntegerValuedDegree}, there exists an extension $\tilde{X}$ of ${X}$ such that $\partial_\omega\Cal{Z}(\tilde{X}|\Omega)=\emptyset$ and $\Cal{Z}(\tilde{X})$ is a smooth manifold of finite dimension equal to $\opDim(\tilde{X})$. By Theorem \ref{ThmNonDegenerateFamilies}:
$$
\opDeg(\Pi|\Omega) = (-1)^{\opInd(Z_0)}\chi(Z_0).
$$
Combining these relations yields:
$$
\opDeg(\Pi) = (-1)^{\opInd(Z_0)}\chi(Z_0)\ \text{mod}\ p,
$$
and since $p>0$ is arbitrary, it follows that:
$$
\opDeg(\Pi) = (-1)^{\opInd(Z_0)}\chi(Z_0),
$$
and the result now follows by Propositions \ref{PropNoOtherNonDegenerateFamilies}, \ref{NonDegenerateFamiliesOfDisks} and \ref{PropNonDegenerateFamiliesOfCatenoids}.\end{proof}

\subsection{Proof of Theorem \ref{ThmMainTheorem}}We now complete the proof of Theorem \ref{ThmMainTheorem}. For $s\in\Bbb{R}$, denote ${g}_s:=e^{-2sf} g$, and let $\Rc^s$ be the Ricci-curvature tensor of this metric. Then:
$$
\left.\frac{\partial}{\partial_s}\right|_{\tiny s=0}\hspace{-15pt }{\Rc}^s = (n-2)\opHess f + \Delta f g > 0.
$$
Thus, for sufficiently small, positive $s$, ${g}_s$ has positive Ricci curvature. Trivially, for $s$ sufficiently small, $f$ is still strictly convex with respect to ${g}_s$. We now use Theorem \ref{ThmCalculatingDegree} to prove existence. Indeed, let $t_m$ be any sequence of positive numbers converging to $0$. Fix $m$ and let $X=\{g_{t_m}\}$ be the manifold consisting of a single point. By Theorem \ref{ThmIntegerValuedDegree}, there exists an extension $\tilde{X}$ of $X$ such that $\mathcal{Z}(\tilde{X})$ has the structure of a differential manifold of finite dimension equal to $\opDim(\tilde{X})$ and the canonical projection $\Pi:\mathcal{Z}(\tilde{X})$ has a well-defined integer valued degree. By Theorem \ref{ThmCalculatingDegree}, $\opDeg(\Pi)=\pm 2$, and, in particular, for any regular value $x$ of $\Pi$ in $\tilde{X}$, there exists an embedding $e_m:\Sph^1\times[0,1]\rightarrow B$ which is free boundary minimal with respect to $g_x$. Moreover, by Sard's Theorem, $g_m:=g_x$ may be chosen as close to $g_{t_m}$ as we wish, and we may therefore suppose that $(g_m)_\minn$ also converges to $g$. It now follows by Theorem \ref{ThmFraserLi} that there exists an embedded submanifold $\Sigma_\infty\subseteq B$ towards which $(\Sigma_m)_\minn$ converges. In particular, $\Sigma_\infty$ is diffeomorphic to $\Sph^1\times[0,1]$ and is free boundary minimal with respect to $g$, and this completes the proof.

\begin{remark}\label{RmkFinalRemark}
Observe that Theorem \ref{ThmCalculatingDegree} and the same argument as above also recovers the result \cite{GrueterJost} of Grueter and Jost.
\end{remark}

%%%%%%%%%%%%%%%%%%%%%%%%%%%%%%%%%%%%%%%%%%
%%%%%%%%%%%%Appendix A: Some formulas%%%%
%%%%%%%%%%%%%%%%%%%%%%%%%%%%%%%%%%%%%%%%%

%%%%%%%%%%%%%%%%%%%%%%%%%%%%%%%%%%%%%%
%%%%%%%%%%%%%Appendix A%%%%%%%%%%%%%%%%%%
%%%%%%%%%%%%%%%%%%%%%%%%%%%%%%%%%%%%%%

%%%%%%%%%%%%%%%%%%%%%%%%%%%%%
%%%Bibliography%%%%%%%%%%%%%%%
%%%%%%%%%%%%%%%%%%%%%%%%%%%%%%%%
\bibliographystyle{ams}

\begin{thebibliography}{99}

\bibitem{Ambrozio}
{L. Ambrozio.}
\newblock{\it Rigidity of area minimizing free boundary surfaces in mean convex three-manifolds.} {J. Geom. Anal.} to appear.

\bibitem{Aronszajn}
{N. Aronszajn.}
\newblock{\it A unique continuation theorem for solutions of elliptic partial differential equations or inequalities of second order.} {J. Math. Pures Appl.} {\bf 36}, (1957), 235--249

\bibitem{doCarmo}
{M. P. do Carmo.}
\newblock{\it Geometria riemanniana.} Projeto Euclides, {\bf 10}, IMPA, Rio de Janeiro (1979)

%\bibitem{ChoiSchoen}
%{ H. I. Choi and R. Schoen.}
%\newblock{\it The space of minimal embeddings of a surface into a three-dimensional manifold of positive Ricci curvature}, {Invent. Math.}, {\bf 81}, (1985), no. 3, 387--394

%\bibitem{ChowKnopf}
%{B. Chow and D. Knopf.}
%{\it The Ricci flow: an introduction}, Mathematical Surveys and Monographs {\bf 110} American Mathematical Society, Providence, RI, (2004)

%\bibitem{ColdingMinicozzi}
%{T. H. Colding and W. P. Minicozzi II.} {\it Minimal surfaces.} Courant Lecture Notes in Mathematics {\bf 4} New York University, Courant Institute of Mathematical Sciences, New York

\bibitem{Cou}
{R. Courant.}
{\it The existence of minimal surfaces of given topological structure under prescribed
boundary conditions.} Acta Math. {\bf 72} (1940), 51--98.

\bibitem{DLP}
{C. De Lellis and F. Pellandini.}
{\it Genus bounds for minimal surfaces arising from min-max constructions.}
J. Reine Angew. Math. {\bf 644} (2010), 47--99.

\bibitem{DHT}
{U. Dierkes, S. Hildebrandt and A. J. Tromba.}  {\it Regularity of minimal surfaces.} Revised and enlarged second edition. With assistance and contributions by A. K\"uster. Grundlehren der Mathematischen Wissenschaften, 340. Springer, Heidelberg, 2010. xviii+623 pp. ISBN: 978-3-642-11699-5

%\bibitem{EelsSalamon}
%{J. Eells and S. Salamon.}
%{\it Twistorial construction of harmonic maps of surfaces into four-manifolds,} {Ann. Scuola Norm. Sup. Pisa Cl. Sci. 4} {\bf 12} (1985), no. 4, 589--640

%\bibitem{ElworthyTromba}
%{K. D. Elworthy and A. J. Tromba.}
%{Degree theory on Banach manifolds, in {\it Nonlinear Functional Analysis} (Proc. Sympos. Pure Math.), Vol. XVIII, Part 1, Chicago, Ill., (1968), 86--94, Amer. Math. Soc., Providence, R.I.}

\bibitem{FraserLi}
{A. Fraser and M. Li.}
\newblock{\it Compactness of the space of embedded minimal surfaces with free boundary in three-manifolds with nonnegative Ricci curvature and convex boundary.} J. Differential Geom. to appear.

\bibitem{FrSc1}
{A. Fraser and R. Schoen.}
\newblock {\it The first Steklov eigenvalue, conformal geometry, and minimal surfaces.} Adv. Math. {\bf 226} (2011), no. 5, 4011--4030.

\bibitem{FrSc2}
{A. Fraser and R. Schoen.}
\newblock{\it Sharp eigenvalue bounds and minimal surfaces in the ball.}
\newblock Preprint (2013) arXiv:1209.3789.

\bibitem{GilbTrud}
{D. Gilbarg and N. S. Trudinger.}
\newblock{\it Elliptic partical differential equations of second order.} Die Grundlehren der mathemathischen Wissenschagten {\bf 224}, Springer-Verlag, Berlin, New York (1977)

\bibitem{Huisken}
{G. Huisken.}
{\it Contracting convex hypersurfaces in Riemannian manifolds by their mean curvature.} {Invent. Math.} {\bf 84} (1986), no. 3, 463--480

\bibitem{GrueterJost}
{M. Gr\"uter and J. Jost.}
\newblock{\it On embedded minimal disks in convex bodies.} {Ann. Inst. H. Poincar\'e, Anal. Non Lin\'eaire} {\bf 3} (1986), no. 5, 345--390

\bibitem{GuillemanPollack}
{V. Guillemin and A. Pollack.} {\it Differential Topology.} Prentice-Hall, Englewood Cliffs, N.J., (1974)

\bibitem{Kato}
{T. Kato.} {\it Perturbation theory for linear operators.} Grundlehren der Mathematischen Wissenschaften {\bf 132} Springer-Verlag, Berlin, New York, (1976)

\bibitem{Lew}
{H. Lewy.}
{\it On minimal surfaces with partially free boundary.} Comm. Pure Appl. Math. {\bf 4} (1951), 1--13.

\bibitem{Li}
{M. Li.}
\newblock{\it A General Existence Theorem for Embedded Minimal Surfaces with Free Boundary.} Comm. Pure Appl. Math. to appear.

%\bibitem{Mantegazza}
%{C. Mantegazza.}  {\it Lecture Notes on Mean Curvature Flow}, Progress in Mathematics, {\bf 290}, Birkh\"auser Verlag, Basil, (2011)

\bibitem{MY}
{W. Meeks and S.T. Yau.}
{\it Topology of three-dimensional manifolds and the embedding problems in minimal surface theory.} Ann. of Math. (2) {\bf 112} (1980), no. 3, 441Ð484.

\bibitem{Ni1}
{J. Nitsche.}
{\it Stationary partitioning of convex bodies.} Arch. Rational Mech. Anal. {\bf 89} (1985), no. 1, 1--19.

\bibitem{Ros1}
{A. Ros.}
{\it Stability of minimal and constant mean curvature surfaces with free boundary.} Mat. Contemp. {\bf 35} (2008), 221--240.

\bibitem{SmiRos}
{H. Rosenberg and G. Smith.}
{\it  Degree Theory of Immersed Hypersurfaces,} arXiv:1010.1879

\bibitem{Smy}
{B. Smyth.}
{\it Stationary minimal surfaces with boundary on a simplex.} Invent. Math. {\bf 76} (1984), no. 3, 411Ð420.

\bibitem{Stru}
{Struwe, M.}
{\it On a free boundary problem for minimal surfaces.} Invent. Math. {\bf 75} (1984), no. 3, 547--560.

\bibitem{TaylorI}
{M. E. Taylor.} {\it Partial differential equations I. Basic theory.} Applied Mathematical Sciences {\bf 115} Springer, New York, (2011)

%\bibitem{Tromba}
%{A. J. Tromba.} {\it The Euler characteristic of vector fields on Banach manifolds and a globalization of Leray-Schauder degree.} { Adv. in Math.} {\bf 28}, (1978), no. 2, 148--173

\bibitem{WhiteI}
{B. White.}
\newblock{\it The space of m-dimensional surfaces that are stationary for a parametric elliptic functional.} {Indiana Univ. Math. J.} {\bf 36} (1987), no. 3, 567--602

\bibitem{WhiteII}
{B. White.}
\newblock{\it The space of minimal submanifolds for varying Riemannian metrics.} Indiana Math. Journal \textbf{40} (1991), no.1, 161-200.





\end{thebibliography}

\end{document}